\newcommand{\leqnos}{\tagsleft@true\let\veqno\@@leqno}
\newcommand{\reqnos}{\tagsleft@false\let\veqno\@@eqno}
\numberwithin{equation}{section}
\newcommand{\ind}{{\sf 1}}
\newcommand{\bP}{\mathbf{P}}
\newcommand{\bE}{\mathbf{E}}
\newcommand{\bbP}{\mathbb{P}}
\newcommand{\Pb}{\mathbb{P}}
\newcommand{\bbE}{\mathbb{E}}
\newcommand{\Pbspin}{\mathbb{P}_{\pm1}}		
\newcommand{\Pspinx}{\mathbb{P}_{\pm x}}		
\newcommand{\Pbspinx}{\mathbb{P}_{\pm x}}		
\newcommand{\bbPspinx}{\mathbb{P}_{\pm x}}	
\newcommand{\bbEspinx}{\mathbb{E}_{\pm x}}	
\newcommand{\tP}{\tilde\bbP}			
\newcommand{\tE}{\tilde\bbE}			
\newcommand{\bbR}{\mathbb{R}}
\newcommand{\R}{\mathbb{R}}
\newcommand{\bbN}{\mathbb{N}}
\newcommand{\N}{\mathbb{N}}
\newcommand{\bbZ}{\mathbb{Z}}
\newcommand{\Qb}{\overline Q}			
\newcommand{\C}{\mathcal{C}}
\newcommand{\G}{\mathcal{G}}
\newcommand{\A}{\mathcal{A}}
\newcommand{\cA}{{\ensuremath{\mathcal A}} }
\newcommand{\cF}{{\ensuremath{\mathcal F}} }
\newcommand{\cC}{{\ensuremath{\mathcal C}} }
\newcommand{\cN}{{\ensuremath{\mathcal N}} }
\newcommand{\cG}{{\ensuremath{\mathcal G}} }
\newcommand{\dd}{\mathrm{d}}
\renewcommand{\epsilon}{\varepsilon}
\renewcommand{\phi}{\varphi}
\renewcommand{\tilde}{\widetilde}
\newcommand{\ol}{\overline}
\newcounter{cst}[section]
\newcounter{svf}[section]
\newcommand{\cntc}{{\stepcounter{cst}\arabic{cst}}}
\newcommand{\cntf}{{\stepcounter{svf}\arabic{svf}}}
\newcommand{\rcntc}[1]{{\refstepcounter{cst}\arabic{cst}\label{#1}}}
\newtheorem{theorem}{Theorem}[section]
\newtheorem{assumption}[theorem]{Assumption}
\newtheorem{proposition}[theorem]{Proposition}
\newtheorem{lemma}[theorem]{Lemma}
\theoremstyle{definition}
\newtheorem{remark}[theorem]{Remark}
\newcommand{\ga}{\alpha}
\newcommand{\gb}{\beta}
\newcommand{\gd}{\delta}
\newcommand{\gep}{\varepsilon}  
\newcommand{\eps}{\varepsilon}      
\newcommand{\gr}{\rho}
\newcommand{\gz}{\zeta}
\newcommand{\go}{\omega}
\newcommand{\hgo}{\hat\omega}
\newcommand{\bgo}{\bar\omega}
\newcommand{\gl}{\lambda}
\newcommand{\gs}{\sigma}
\newcommand{\gh}{\eta}
\renewcommand{\tilde}{\widetilde}
\renewcommand{\hat}{\widehat}
\newcommand{\tf}{\mathtt{F}}
\newcommand{\ann}{\mathrm{a}}
\newcommand{\quen}{\mathrm{q}}
\newcommand{\btau}{\boldsymbol{\tau}}
\newcommand{\bgs}{\boldsymbol{\sigma}}
\newcommand{\bfrakS}{\boldsymbol{\mathfrak{S}}}
\newcommand{\bnu}{\boldsymbol{\nu}}
\newcommand{\bgn}{\boldsymbol{\nu}}
\newcommand{\bgr}{\boldsymbol{\rho}}
\newcommand{\gsb}{\bar \sigma}
\newcommand{\bi}{\textbf{\textit{i}}}
\newcommand{\bj}{\textbf{\textit{j}}}
\newcommand{\bn}{\textbf{\textit{n}}}
\newcommand{\bm}{\textbf{\textit{m}}}
\newcommand{\ba}{\textbf{\textit{a}}}
\newcommand{\bb}{\textbf{\textit{b}}}
\newcommand{\bc}{\textbf{\textit{c}}}
\newcommand{\bd}{\textbf{\textit{d}}}
\newcommand{\bk}{\textbf{\textit{k}}}
\newcommand{\bl}{\textbf{\textit{l}}}
\newcommand{\bone}{{\boldsymbol{1}}}
\newcommand{\bzero}{{\boldsymbol{0}}}
\newcommand{\binfty}{{\boldsymbol{\infty}}}
\newcommand{\Var}{\mathbb{V}\mathrm{ar}}
\newcommand{\llbbrack}{{\boldsymbol{\llbracket}}}
\newcommand{\rrbbrack}{{\boldsymbol{\rrbracket}}}
\newcommand{\bnsquare}{\llbbrack \bone,\bn\rrbbrack}			
\newcommand{\intsquaretwo}[2]{\llbbrack #1,#2\rrbbrack}			
\newcommand{\boldsquare}[1][\bn]{\llbbrack \bone,#1\rrbbrack}		
\newcommand{\bsquare}[1][\bn]{\llbbrack \bone,#1\rrbbrack} 		
\newcommand{\bsquaretwo}[2]{\llbbrack #1,#2\rrbbrack}			
\newcommand{\sumtwo}[2]{\sum_{\substack{#1 \\ #2}}}
\newcommand{\prodtwo}[2]{\prod_{\substack{#1 \\ #2}}}
\newcommand{\limtwo}[2]{\lim_{\substack{#1 \\ #2}}}
\renewcommand{\preceq}{\preccurlyeq}		
\renewcommand{\succeq}{\succcurlyeq}		
\newcommand{\ligned}{\leftrightarrow}
\newcommand{\blue}{}
\title[The disordered gPS model for DNA denaturation]{Influence of disorder on DNA denaturation:\\ the disordered generalized Poland-Scheraga model}
\author[A. Legrand]{Alexandre Legrand} 
\address{Universit\'e de Nantes, Laboratoire Jean Leray, UFR Sciences et Techniques, 2 rue de la Houssini\`ere, BP 92208 F-44322 Nantes Cedex 3, France
}
\email{alexandre.legrand@univ-nantes.fr}
\date{}
\begin{document}
\begin{abstract}
The Poland-Scheraga model is a celebrated model for the denaturation transition of DNA, which has been widely used in the bio-physical literature to study, and  investigated by mathematicians. In the original model,  only opposite bases of the two strands can be paired together, but a generalized version of this model has recently been introduced, and allows for mismatches in the pairing of the two strands, and for different strand lengths. This generalized Poland-Scheraga (gPS) model has only been studied recently in the case of homogeneous interactions, then with disordered interactions perturbed by an i.i.d. field.
The present paper considers a disordered version of the gPS model which is more  appropriate to depict the inhomogeneous composition of the two strands (in particular interactions are perturbed in a strongly dependent manner): we study the question of the influence of disorder on the denaturation transition,
and our main results provide criteria for disorder (ir)-relevance, both in terms of critical points and of order of the phase transition.
Surprisingly, we find that criteria for disorder relevance depend on the law of the disorder field.
We discuss this with regards to Harris' prediction for disordered systems.
\end{abstract}

\maketitle
\tableofcontents

\section{Introduction}
The Poland-Scheraga (PS) model has been introduced in \cite{PS70} to formally study the DNA denaturation phenomenon, that is the unbinding of two strands of DNA as temperature increases. It has proven to be relevant from a quantitative point of view (see e.g. \cite{BBBDDKMS99, BD98}) and has been subject to much interest from the mathematical, physical and biophysical communities (see e.g.  \cite{Fish84, Giac07, FdH07, KMP01}). In the \emph{homogeneous} version of the model, \textit{i.e.}\ when bases in each strand are all the same (for instance AAA$\ldots$ and TTT$\ldots$), an interesting feature is that the model is solvable: 
it is proven to undergo a denaturation (or delocalization) phase transition, and its critical behavior can be described precisely, cf. \cite[Ch.~2]{Giac07}.

In the PS model, it is assumed that the two strands are of equal length, and that only bases from each strand with the same index can be paired. To depict DNA denaturation more accurately, the generalized Poland-Scheraga (gPS) model has been introduced more recently, where those assumptions are relaxed, see \cite{GO03, GO04, NG06}. From the mathematical point of view, the gPS model can be described as a pinning model based on a two-dimensional renewal process, see  \cite{GK17}. Interestingly, the homogeneous version of the model remains solvable, despite having a much more complex behavior ---in particular it has (in general) other critical points, corresponding to ``condensation'' phase transitions, see \cite{GK17} and~\cite{BGK18}.

The PS and gPS models can naturally embody the inhomogeneous character of DNA.  In the PS model, one introduces a sequence of random variables---referred to as \emph{disorder} in statistical mechanics--- describing the inhomogeneous binding energies of successive pairs. A disordered version of the gPS model has been studied recently in \cite{BGK}, with the introduction of a two-dimensional disorder field: the random variable of index $(i,j)\in\N^2$ corresponds to the binding energy of the $i$-th base of the first strand with the $j$-th base of the second strand. In \cite{BGK}, the authors chose the disorder field to be i.i.d.: this assumption is relevant when using the gPS model to portray the pinning of a polymer on a inhomogeneous surface, or a directed (stretched) polymer in a random environment (in the spirit of \cite{Com06,Wei16}). However this choice is not satisfactory when describing the denaturation phenomenon between two inhomogeneous chains: the binding energy of a pair $(i,j)$ should be a function of the $i$-th and $j$-th bases of each strand ---in particular the binding energies of two pairs $(i,j)$ and $(i,k)$ are not independent because they share a common base.

The purpose of this paper is twofold:
\begin{itemize}
\item  study the gPS model in a setting which portrays more faithfully the pinning of two inhomogeneous polymers, as in DNA denaturation;
\item  make progress on the understanding of disordered systems when disorder/random-ness is slightly elaborate, in particular not i.i.d.
\end{itemize}

\begin{figure}[h]
 \centering
  \begin{subfigure}[b]{0.48\linewidth}
    \includegraphics[width=\linewidth]{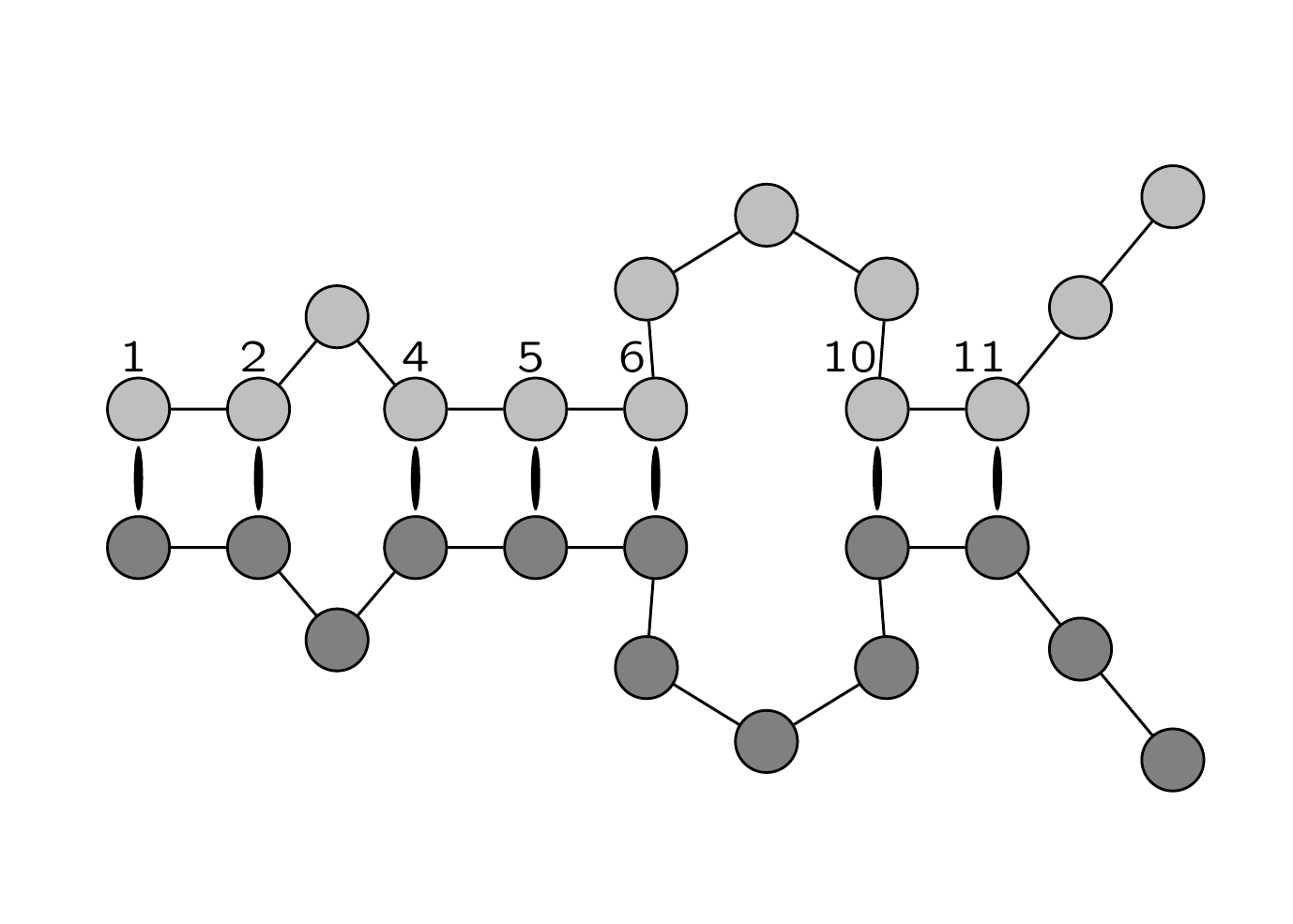}
    \caption{Poland-Scheraga.}
  \end{subfigure}
  \begin{subfigure}[b]{0.48\linewidth}
    \includegraphics[width=\linewidth]{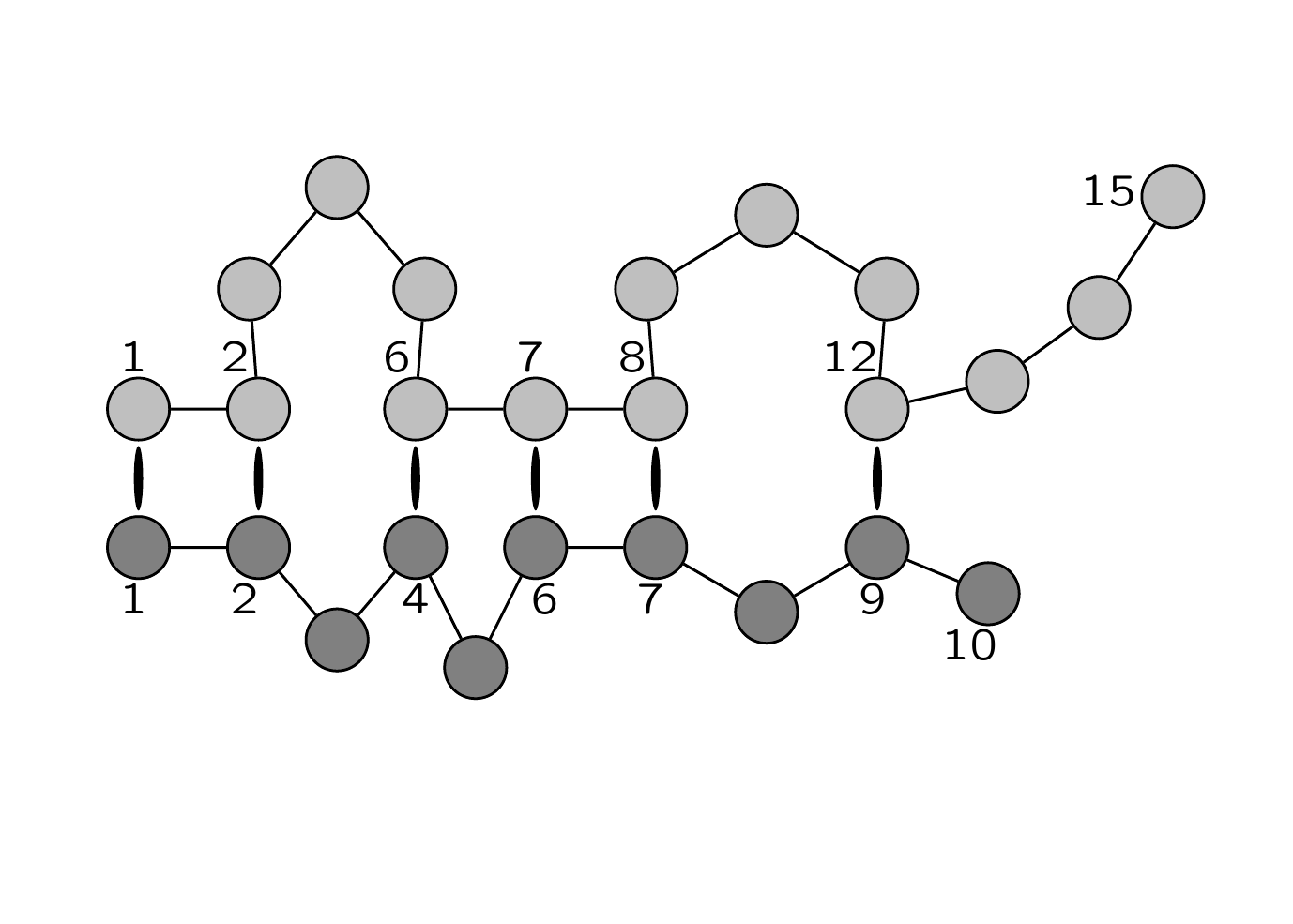}
    \caption{Generalized Poland-Scheraga.}
  \end{subfigure}
  \caption{\footnotesize Representations of the PS and gPS models. In the first figure, the two strands are of length 13 and are bounded symmetrically, on pairs 1, 2, 4, 5, 6, 10 and 11. In the other one, the two strands are of different lengths and are bound on pairs $(1,1)$, $(2,2)$, $(6,4)$, $(7,6)$, $(8,7)$ and $(12,9)$.}
  \label{fig:PSmodel}
\end{figure}

\noindent
{\it Acknowledgment.} I am very grateful to Quentin Berger for the support, prolific discussions and sound advice throughout my first steps researching in the domain of polymer models.
I am also very thankful to the anonymous referees for their constructive comments which substantially improved the quality of this paper.

\smallskip \noindent
{\it Some notation.} In the remainder of the paper, bold characters $\bn,\bi,\bj,\ldots$ will denote elements of $\N^2$ (or $\bbZ^2$), and plain characters elements of $\N$ or $\R$. In particular we denote $\bone:=(1,1)$, $\bzero:=(0,0)$. For $r\in\{1,2\}$, the projection of any element $\bn\in\N^2$ on its $r$-th coordinate will be denoted $\bn^{(r)}\in\N$.

\subsection{The generalized Poland-Scheraga model}
Let $\btau=(\btau_i)_{i\geq0}$ be a bivariate renewal process: $\btau_0 =(0,0)$, and $(\btau_i-\btau_{i-1})_{i\geq 1}$ are i.i.d., $\bbN^2$-valued random variables. We denote $\bP$ its law, and we assume that the inter-arrival distribution satisfies for all $a,b\in\N$,
\begin{equation}\label{eq:interarrival:tau}
\bP(\btau_1= (a,b)) \,=\, K(a+b) \,:=\, \frac{L(a+b)}{(a+b)^{2+\ga}}\; ,
\end{equation}
where $\ga> 0$, and $L(\cdot)$ is a slowly varying function (that is $L(ux)/L(x)\to1$ as $x\to\infty$ for any $u>0$, see \cite{BGT87}).
We also assume that $\btau$ is persistent, \textit{i.e.}\ $\sum_{n,m\geq 1} K(n+m)=1$.
With a slight abuse of notation, we write $\btau := \{\btau_0,\btau_1, \btau_2, \ldots\} \subset \N^2$ the set of renewal points, and from now on we will omit the point $\btau_0=(0,0)$.
Notice that $\btau^{(r)}:=\{\btau_1^{(r)}, \btau_2^{(r)}, \ldots\}$ is a univariate renewal process with inter-arrival distribution $\bP(\btau_1^{(r)}=a)=\tilde L(a) a^{-(1+\ga)}$, with $\tilde L(n)\sim (1+\ga)^{-1} L(n)$ some slowly varying function.

Let $\go=(\go_\bi)_{\bi\in\N^2}$ be a field of real random variables indexed in $\N^2$, whose law is denoted~$\bbP$
($\go_{(i,j)}$ represents the binding potential between the $i$-th and $j$-th bases of the first and second strand respectively). We assume that they all have the same law, and that there exists some $\gb_0\in(0,\infty]$ such that for all $\gb \in [0,\gb_0)$,
\begin{equation}\label{def:lambda}
\lambda(\gb)\,:=\, \log \bbE[ e^{\gb \go_{\boldsymbol{1}}}]\,<\infty\;,
\end{equation}
(this is satisfied by bounded laws and by many unbounded laws, notably Gaussian or the product of two independent Gaussian variables). 

\smallskip
For a fixed realization of $\go$ (\emph{quenched} disorder), we define, for $\gb\in [0,\gb_0)$ (the disorder strength) and $h\in \bbR$ (the pinning potential), the following \emph{polymer (Gibbs) measure}: for any renewal set $\btau\subset \N^2$ and $\bn=(\bn^{(1)},\bn^{(2)})\in\N^2$,
\begin{equation}
\label{def:polymermeasure}
\frac{\dd \bP_{\bn,h}^{\gb,\go,\quen}}{\dd \bP} (\btau)\,:=\, \frac{1}{Z_{\bn,h}^{\gb,\go,\quen}}  \exp\bigg( \sum_{\bi\in \bnsquare} \big( \gb \go_\bi -\lambda(\gb)+h\big) \ind_{\{\bi \in \btau\}} \bigg)  \ind_{\{ \bn\in \btau \}} \; ,
\end{equation}
where $Z_{\bn,h}^{\gb,\go,\quen}$ is the \emph{partition function} with \emph{quenched disorder},
\begin{equation}
Z_{\bn,h}^{\gb,\go,\quen}\,:=\, \bE\bigg[  \exp\bigg( \sum_{\bi\in\bnsquare } \big( \gb \go_\bi -\lambda(\gb)+h\big) \ind_{\{\bi \in \btau\}} \bigg)  \ind_{\{ \bn\in \btau \}} \bigg]\;,
\end{equation}
and $\bnsquare$ denotes $\llbracket1,\bn^{(1)}\rrbracket\times \llbracket1,\bn^{(2)}\rrbracket\subset\N^2$. This represents the binding of two strands with respective lengths $\bn^{(1)}$ and $\bn^{(2)}$, and $\bi\in\btau$ if and only if the base $\bi^{(1)}$ of the first strand is paired with the base $\bi^{(2)}$ of the second strand. The polymers are constrained to be bound on the last pair $\bn$, and we give a reward (or a penalty if negative) $\gb \go_\bi-\lambda(\gb)+h$ for each bound pair $\bi\in\btau$. Notice that the term $-\gl(\gb)$ in the reward is present only for renormalization purposes, see for instance~\eqref{eq:annealed} below. From now on, we will drop the superscript $\go$ in the quenched partition function and Gibbs measure to lighten notations, even though they are functions of $\go$.

Let us now precise our choice of disorder field. In \cite{BGK}, the authors studied the gPS model under an i.i.d. disorder field $\go=(\go_\bi)_{\bi\in\N^2}$. In this paper we want the disorder field to depict the inhomogeneous composition of the two strands: we pick two independent sequences $\hat \go = (\hat \go_{i_1})_{i_1\in\N}$ and $\bar \go = (\bar \go_{i_2})_{i_2\in\N}$ of i.i.d. random variables, whose distributions are denoted $\hat \bbP$ and $\bar \bbP$ respectively. These random variables are thought as being charges attached to the two strands. For each $\bi\in\N^2$, we fix
\begin{equation}
\go_{\bi}\;:=\; f(\hat \go_{\bi^{(1)}} ,\, \bar \go_{\bi^{(2)}})\;,
\end{equation}
where $f(\cdot, \cdot)$ is a function describing the interactions between the monomers. We will write $\bbP:=\hat \bbP \otimes \bar \bbP$ with an abuse of notation. We stress right away that $\go:=(\go_\bi)_{\bi\in\bbN^2}$ is a strongly correlated field, but that $\go_{\bi}$ and $\go_{\bj}$ are independent as soon as $\bi,\bj\in\N^2$ are not aligned, \textit{i.e.} are not on the same line or column: $\bi^{(1)}\neq \bj^{(1)}$ and $\bi^{(2)}\neq \bj^{(2)}$.

\subsection{The free energy and the denaturation transition}

A physical quantity central to the study of the model is the free energy, defined in the following proposition.
\begin{proposition}
\label{prop:existF}
For all $\gamma>0$, $h \in \bbR$, $\beta \ge 0$ and every sequence $\{m(n)\}_{n=1,2, \ldots}$ such that $\lim_{n\to\infty}  m(n)/n=\gamma$,
the following limit exists:
\begin{equation}
\label{eq:saf}
\lim_{n\to\infty} \frac 1n \log Z^{\gb,\quen}_{\bn,h} \, =\,  \lim_{n\to\infty} \frac 1n \bbE \log Z^{\gb,\quen}_{\bn,h}\, =:\, \tf_{\gamma}(\beta,h) \;,
\end{equation}
where $\bn:=(n,m(n))$, both $\bbP(\dd \go)$-almost surely and in $L^1(\bbP)$. 
Also, $(\gb,h) \mapsto \tf_{\gamma}(\gb,h+\gl(\gb))$ is non-negative and convex (therefore continuous on $(0,\infty)\times\R$), $h\mapsto \tf_{\gamma}(\gb,h)$ and $\gb\mapsto \tf_{\gamma}(\gb,h+\gl(\gb))$ are non-decreasing, and $\gamma \mapsto \tf_{\gamma}(\gb,h)$ is non-decreasing and 
 continuous. Moreover, we have, for any $0<\gamma_1 \leq \gamma_2$,
 \begin{equation}
 \label{eq:boundsF}
  \tf_{\gamma_1} (\beta,h) \,\leq\, \tf_{\gamma_2} (\gb,h) \,\leq\, \frac{\gamma_2}{\gamma_1}\, \tf_{\gamma_1}(\gb,h) \; .
 \end{equation}
\end{proposition}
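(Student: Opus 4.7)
The plan is to follow the classical three-step scheme for disordered pinning/polymer models, adapted to the bivariate renewal and to the tensorial disorder field: convergence of $\frac{1}{n}\bbE\log Z^{\gb,\quen}_{\bn,h}$ by superadditivity, an upgrade to a.s.\ and $L^1$ convergence by concentration, and derivation of the structural properties by passing from finite volume to the limit. For the first step, the starting point is the deterministic lower bound obtained by restricting trajectories to visit an intermediate point $\bn$: for every $\bn,\bm\in\N^2$,
\begin{equation*}
Z^{\gb,\quen}_{\bn+\bm,h}(\go)\;\geq\;Z^{\gb,\quen}_{\bn,h}(\go)\cdot Z^{\gb,\quen}_{\bm,h}(\theta_\bn\go)\,,\qquad\theta_\bn\go:=(\go_{\bi+\bn})_{\bi}\;.
\end{equation*}
The tensor form $\go_\bi=f(\hat\go_{\bi^{(1)}},\bar\go_{\bi^{(2)}})$ is crucial: it guarantees that the two factors above are \emph{independent} (they involve disjoint blocks of the i.i.d.\ sequences $(\hat\go_i)_i$ and $(\bar\go_j)_j$), and that $\theta_\bn\go$ has the same law as $\go$. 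Taking $\bbE\log$ gives superadditivity of $F(\bn):=\bbE\log Z^{\gb,\quen}_{\bn,h}$ on $\N^2$; combined with trivial monotonicity of $F$ in each coordinate, a 2D-Fekete argument along the subsequence $\bn_k=(k,\lfloor\gamma k\rfloor)$ yields the existence of $\tf_\gamma(\gb,h)=\lim_{n\to\infty}\tfrac{1}{n}F((n,m(n)))$ for any sequence $m(n)/n\to\gamma$, up to bounded corrections handled by monotonicity.

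The main obstacle lies in the second step. Because the disorder is not i.i.d., altering a single variable $\hat\go_i$ modifies an \emph{entire row} of $\go$. The key observation, however, is twofold: $\log Z^{\gb,\quen}_{\bn,h}$ depends on only the $n+m(n)=O(n)$ independent variables $\hat\go_1,\dots,\hat\go_n,\bar\go_1,\dots,\bar\go_{m(n)}$; and, because the renewal inter-arrivals lie in $\N^2$ with strictly positive components, each row $\{\bi:\bi^{(1)}=i\}$ contains \emph{at most one} renewal point of $\btau$. Running a Doob-martingale argument on the short filtration generated by revealing these variables one by one, the corresponding bounded-difference estimate is
\begin{equation*}
\big|\log Z^{\gb,\quen}_{\bn,h}(\hat\go_i)-\log Z^{\gb,\quen}_{\bn,h}(\hat\go_i')\big|\;\leq\;|\gb|\,|\hat\go_i-\hat\go_i'|\,\big\langle\ind_{\{i\in\btau^{(1)}\}}\big\rangle^{\gb,\quen}_{\bn,h}\;\leq\;|\gb|\,|\hat\go_i-\hat\go_i'|\,,
\end{equation*}
assuming $f$ is $1$-Lipschitz in $\hat\go$ (otherwise one includes a multiplicative constant). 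Combined with the exponential moment \eqref{def:lambda} (and a truncation for unbounded $\go$, as in \cite[Ch.~8]{Giac07}), this delivers Gaussian-type concentration at scale $\sqrt{n}$ around the mean, hence $\bbP$-a.s.\ convergence by Borel-Cantelli and $L^1$ convergence by uniform integrability. This step is the most delicate because, in contrast with the i.i.d.\ setting of \cite{BGK}, here one only has $O(n)$ independent atoms of randomness in $\bnsquare$ (rather than $O(n^2)$), which happens to be exactly compatible with the $O(n)$-scaling of $\log Z$.

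For the third step, non-negativity of $\tf_\gamma(\gb,h+\gl(\gb))$ follows from the single-point lower bound $Z^{\gb,\quen}_{\bn,h+\gl(\gb)}\geq e^{\gb\go_\bn+h}K(\bn^{(1)}+\bn^{(2)})$ (restrict to $\btau\cap\bnsquare=\{\bn\}$), whose $\bbE\log$ divided by $n$ vanishes. Convexity of $(\gb,h)\mapsto\tf_\gamma(\gb,h+\gl(\gb))$ is inherited from finite volume: after the shift $h\mapsto h+\gl(\gb)$, the exponent $\gb\go_\bi+h$ is affine in $(\gb,h)$ for each fixed trajectory, so $\log Z^{\gb,\quen}_{\bn,h+\gl(\gb)}$ is convex as the log of a sum of log-affine terms. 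Monotonicity in $h$ is obvious; monotonicity of $\gb\mapsto\tf_\gamma(\gb,h+\gl(\gb))$ follows from Jensen applied to $\bbE[e^{\gb\go_\bi}]=e^{\gl(\gb)}$. Finally, for \eqref{eq:boundsF} one uses a short-jump argument: for $\gamma_1\leq\gamma_2$, extending a trajectory reaching $(n-1,m_1(n)-1)$ to $(n,m_2(n))$ by one jump $(1,m_2(n)-m_1(n)+1)$ yields $\tf_{\gamma_2}\geq\tf_{\gamma_1}$; the symmetric trick on the first coordinate, going from $(n-1,m_2(n)-1)$ to $(\lfloor n\gamma_2/\gamma_1\rfloor,m_2(n))$, gives $(\gamma_2/\gamma_1)\tf_{\gamma_1}\geq\tf_{\gamma_2}$. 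Continuity of $\gamma\mapsto\tf_\gamma$ on $(0,\infty)$ is then immediate from the sandwich.
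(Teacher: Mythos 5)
Your route is self-contained and genuinely different from the paper's, which simply observes that any renewal trajectory visits points with pairwise distinct rows and columns, so the disorder seen along every trajectory is i.i.d., and then declares the proof an immediate replica of \cite[Thm.~1.1, Prop.~2.1]{BGK}. The superadditivity step and the finite-volume arguments for non-negativity, convexity, \eqref{eq:boundsF} and continuity in $\gamma$ are essentially fine (modulo the remark that the constrained $F(\bn)=\bbE\log Z^{\gb,\quen}_{\bn,h}$ is \emph{not} trivially monotone in each coordinate because the endpoint constraint moves; one must insert an extra jump, at an $O(\log n)$ cost, to compare nearby $\bn$'s).

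The genuine gap is in your concentration step, which you yourself flag as the delicate one. The bounded-difference estimate $|\log Z^{\gb,\quen}_{\bn,h}(\hgo_i)-\log Z^{\gb,\quen}_{\bn,h}(\hgo_i')|\le|\gb|\,|\hgo_i-\hgo_i'|$ requires $\partial_1 f$ to be uniformly bounded, and this fails precisely in the paper's central case $f(x,y)=xy$ with unbounded marginals (e.g.\ Gaussian $\hgo,\bgo$, which the paper explicitly allows): differentiating in $\hgo_i$ produces $\gb\,\bar\go_{j(i)}$, where $j(i)$ is the column paired with row $i$, so the ``Lipschitz constant'' is the random, unbounded quantity $|\gb|\max_j|\bgo_j|$. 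The proposition is stated under the sole assumption \eqref{def:lambda} on $\go_\bone$, with no Lipschitz or boundedness hypothesis on $f$ or on $\hat\bbP$, so Azuma/bounded-differences does not apply, and the appeal to ``truncation as in \cite{Giac07}'' is not a proof: truncating $\hgo,\bgo$ changes $\gl(\gb)$ and the free energy, and controlling that error uniformly in $n$ is exactly the missing work. The clean repairs are either the paper's reduction to \cite{BGK}, or to drop concentration altogether and get the a.s.\ and $L^1$ convergence from Kingman's superadditive ergodic theorem, using that the field $\go_{(i,j)}=f(\hgo_i,\bgo_j)$ is stationary and ergodic under the diagonal shift $\theta_{(1,1)}$ and that $\bbE|\log Z^{\gb,\quen}_{\bn,h}|<\infty$ (lower bound by the single-jump trajectory, upper bound by $\gl(\gb)<\infty$). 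Finally, your one-line ``Jensen'' justification of the monotonicity of $\gb\mapsto\tf_\gamma(\gb,h+\gl(\gb))$ is not an argument as stated; the standard proof differentiates the shifted finite-volume free energy, whose $\gb$-derivative at $\gb=0$ equals $\bbE[\go_\bone]\,\bE^{0}_{\bn,h}[\,|\btau\cap\bnsquare|\,]$, and combines the sign of $\bbE[\go_\bone]$ (here $m_1^2\ge0$ for the product form) with convexity in $\gb$ — a point worth spelling out since it is not a pure Jensen consequence.
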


This proposition is analogous to \cite[Thm.~1.1 and Prop~2.1]{BGK} : the proof of these results is not affected by our choice of (correlated) disorder in any way whatsoever, because any trajectory of $\btau$ contributing to $Z_{\bn,h}^{\gb,\quen}$ only involves an i.i.d. subfamily of the field $\go$: indeed, if $\bi,\bj \in \btau$ and $\bi\neq \bj$, then necessarily they are not aligned because the inter-arrivals of $\btau$ are in $\bbN^2$, hence $\go_{\bi}$ and $\go_{\bj}$ are independent. Therefore the proof of Proposition~\ref{prop:existF} is an immediate replica of that of  \cite[Thm.~1.1 and Prop~2.1]{BGK}.\smallskip

Proposition~\ref{prop:existF} allows us to define the \emph{(quenched) critical point}:
\begin{equation}
h_c^{\quen}(\gb)\,=\,h_c(\beta)\,:=\, \inf \lbrace h \,: \,  \tf_\gamma (\beta,h) >0 \rbrace \, .
\end{equation}
We stress that $h_c(\gb)$ does not depend on $\gamma >0$, thanks to \eqref{eq:boundsF}. 
 

\smallskip
The critical point $h_c(\gb)$ marks the transition between a \emph{localized}  and a \emph{delocalized} phase: this is the so-called \emph{denaturation} (or \emph{(de)-localization}) transition.
Indeed, a standard calculation gives that $\partial_h \log Z^{\gb,\quen}_{\bn,h}  = \bE_{\bn,h}^{\gb,\quen}\big[ \sum_{\bi\in\bnsquare}  \ind_{\{\bi \in \btau\}}  \big]$: by exploiting the convexity of the free energy and Proposition~\ref{prop:existF}, we get that
\begin{equation}
\label{contactfraction}
\partial_h \tf_\gamma (\beta,h)  \,=\, \limtwo{n\to+\infty,}{m(n)/n\to \gamma} \bE_{\bn,h}^{\gb,\quen} \Big[ \frac{1}{n} \sum_{\bi\in\llbracket\bone,\bn\rrbracket}  \ind_{\{\bi \in \btau\}} \Big]\;,
\end{equation}
whenever $\partial_h \tf_\gamma (\beta,h)$ exists.
Therefore, for $h>h_c(\gb)$ we have $\partial_h \tf_\gamma (\beta,h) >0$, and in view of \eqref{contactfraction}, there is a positive density of contacts between the two strands: they stick to each other. On the other hand, for $h<h_c(\gb)$ we have $\partial_h \tf_\gamma (\beta,h) = 0$, and there is a zero density of contacts: the two strands wander away from one another.

\subsection{The homogeneous and annealed models}

The \emph{homogeneous} model corresponds to the case when there is no disorder, \textit{i.e.}\ $\beta = 0$.
This model has been proven to be {\sl exactly solvable}, and a fine analysis of $\tf_\gamma (0,h)$ has been performed in~\cite{GK17}.

\begin{theorem}[Thm.~1.2 in \cite{GK17}]
\label{thm:hom}
For any $\gamma \ge 1$, $h_c(0):= \inf \{ h\,:\, \tf_{\gamma}(0,h)>0\}= 0$. Moreover there are a slowly varying function $L_{\ga}(\cdot)$ and a constant $c_{\alpha,\gamma}$ such that
\begin{equation}
\tf_\gamma (0,h) \,\sim\,  c_{\ga,\gamma} L_{\ga} ( 1/h) \, h^{1/ \min (1, \ga)} \;,\quad \text{as }h \searrow 0\,.
\end{equation}
\end{theorem}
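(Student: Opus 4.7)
I would exploit the exact solvability of the homogeneous model through its renewal generating function. Decomposing on the number of contacts,
\begin{equation*}
Z^{0}_{\bn,h} \,=\, \sum_{k\ge 1} e^{hk}\,\bP(\btau_k=\bn),
\end{equation*}
and taking the two-variable Laplace transform yields $\sum_\bn e^{-F_1\bn^{(1)}-F_2\bn^{(2)}}\,Z^{0}_{\bn,h}=\Phi/(1-\Phi)$, where
\begin{equation*}
\Phi(F_1,F_2,h)\,:=\,e^{h}\sum_{a,b\ge 1}K(a+b)\,e^{-aF_1-bF_2}.
\end{equation*}
Polynomial tails of $K$ force $\{\Phi<\infty\}=\{F_1,F_2\ge 0\}$, and on this quadrant $\Phi$ is convex and strictly decreasing in each variable. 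The strategy is to extract a variational formula for $\tf_\gamma(0,h)$ from this identity, then solve it asymptotically as $h\searrow 0$.

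\textbf{Step 1: variational formula and $h_c(0)=0$.} My first goal would be
\begin{equation*}
\tf_\gamma(0,h)\,=\,\inf\bigl\{F_1+\gamma F_2\,:\,F_1,F_2\ge 0,\ \Phi(F_1,F_2,h)\le 1\bigr\}.
\end{equation*}
The $\le$ direction follows from the Laplace identity: $\Phi\le 1-\epsilon$ gives $Z^{0}_{\bn,h}\le C_\epsilon\,e^{F_1\bn^{(1)}+F_2\bn^{(2)}}$ uniformly in $\bn$. For the matching lower bound I would use a tilted renewal: when $\Phi(F_1,F_2,h)=1$, the measure $K_{F_1,F_2}(a,b):=K(a+b)\,e^{h-aF_1-bF_2}$ is a probability law on $\bbN^2$, so $Z^{0}_{\bn,h}\,e^{-F_1\bn^{(1)}-F_2\bn^{(2)}}=\bP_{F_1,F_2}(\bn\in\btau)$, and a two-dimensional renewal local limit estimate in direction $(1,\gamma)$ supplies a polynomial lower bound. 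At $h=0$, persistence gives $\Phi(0,0,0)=1$ while $\Phi(F_1,F_2,0)<1$ strictly elsewhere in the quadrant, so the feasible set is the full quadrant, the infimum is attained at the origin with value $0$, and $h_c(0)=\tf_\gamma(0,0)=0$ for every $\gamma>0$.

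\textbf{Step 2: small-$h$ asymptotics.} For $h>0$ small, the binding constraint becomes $1-\Phi(F_1,F_2,0)\sim h$. I would parametrize minimizers as $(F_1,\theta F_1)$ with $\theta\in[0,1]$ (the restriction justified a posteriori for $\gamma\ge 1$ by convexity and the symmetry $K(a+b)=K(b+a)$) and analyze
\begin{equation*}
1-\Phi(F_1,\theta F_1,0)\,=\,\sum_{a,b\ge 1}K(a+b)\bigl(1-e^{-(a+\theta b)F_1}\bigr)\quad\text{as }F_1\searrow 0.
\end{equation*}
For $\alpha>1$, the step has finite mean $\mu:=\bE[\btau_1^{(1)}]$ and a first-order expansion gives $(1+\theta)\mu F_1+o(F_1)$; minimizing $F_1(1+\gamma\theta)$ under this constraint over $\theta\in[0,1]$ for $\gamma\ge 1$ yields the value $h/\mu$ (attained at $\theta^*=0$), hence $\tf_\gamma(0,h)\sim h/\mu$. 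For $\alpha\in(0,1)$ the mean is infinite, but $\bar K(a):=\sum_{n>a}K(n)\sim \tilde c\,L(a)/a^{1+\alpha}$, and a Karamata--Tauberian estimate gives $1-\Phi(F_1,\theta F_1,0)\sim c_\alpha(\theta)\,F_1^{\alpha}\,L(1/F_1)$, which inverts to $F_1\sim c'\,h^{1/\alpha}L_\alpha(1/h)$; the boundary case $\alpha=1$ produces a logarithmic factor absorbed into $L_\alpha$. Inverting and optimizing in $\theta$ yields in every case $\tf_\gamma(0,h)\sim c_{\alpha,\gamma}L_\alpha(1/h)\,h^{1/\min(1,\alpha)}$.

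\textbf{Main obstacle.} The two delicate points are (i) the matching lower bound in the variational formula, which requires a sharp two-dimensional renewal local estimate for the tilted walk $K_{F_1,F_2}$ whose mean may be infinite when $\alpha<1$, and (ii) the Karamata inversion with the slowly varying $L$, especially the precise identification of the prefactor $c_{\alpha,\gamma}$ and the handling of $\alpha=1$. The assumption $\gamma\ge 1$ intervenes precisely to place the optimum $\theta^*$ in $[0,1]$; the case $\gamma\le 1$ reduces to this one by exchanging the two coordinates.
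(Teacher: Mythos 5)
The statement you are asked to prove is cited verbatim as Theorem~1.2 of \cite{GK17} and is not re-derived in the present paper, so there is no in-paper proof to compare against; your route through the bivariate Laplace transform, the convex region $\{\Phi\le 1\}$, and the tilt $K_{F_1,F_2}$ is indeed the mechanism of \cite{GK17} (and the natural generalisation of the textbook treatment of the homogeneous PS model), and the Karamata expansion of $1-\Phi$ near the origin correctly produces the exponent $1/\min(1,\alpha)$.

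There is, however, a genuine gap in the matching lower bound, and you have located the obstruction in the wrong place. You worry that the tilted kernel ``may have infinite mean when $\alpha<1$''; this is not the issue --- as soon as $F_1,F_2>0$ the kernel $K_{F_1,F_2}$ has exponential tails and all moments are finite. The real issue is directional: for $\bP_{F_1,F_2}\bigl((n,\gamma n)\in\btau\bigr)$ to decay only polynomially you need the tilted drift $(-\partial_{F_1}\Phi,-\partial_{F_2}\Phi)$ to be parallel to $(1,\gamma)$, i.e.\ the first-order condition of your variational problem must hold at an \emph{interior} point $F_1^*,F_2^*>0$. But your own Step~2 shows the minimiser is pushed to the edge $F_2=0$ whenever $\gamma>1$ in the case $\alpha>1$ (the gradient ratio $\partial_{F_2}\Phi/\partial_{F_1}\Phi\to1$ near the origin, so it cannot equal a fixed $\gamma>1$), and whenever $\gamma>1/\alpha$ in the case $\alpha<1$. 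At that corner the tilted drift ratio is close to $1$ (resp.\ $1/\alpha$), not $\gamma$, so $(n,\gamma n)$ is a large-deviation direction for the tilted walk and a CLT-type local limit estimate gives exponential, not polynomial, decay; your lower bound collapses. This is exactly the condensation/big-jump regime of \cite{GK17,BGK18}. To close the argument you need either (i) a one-big-jump lower bound on $\bP_{F_1,0}\bigl((n,\gamma n)\in\btau\bigr)$, exploiting the power-law tail of the second marginal of $K_{F_1,0}$, in place of the local limit theorem; or (ii) to first prove the asymptotic at a single $\gamma^*$ where the interior minimiser exists ($\gamma^*=1$, or $\gamma^*=1/\alpha$ when $\alpha<1$) and then propagate to all $\gamma\ge1$ using the monotonicity $\tf_{\gamma}\ge\tf_{\gamma'}$ for $\gamma\ge\gamma'$ from Proposition~\ref{prop:existF}, together with the observation that your variational upper bound is $\gamma$-independent beyond the condensation threshold. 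Without one of these additions the lower bound fails for every $\gamma>1$ when $\alpha>1$, which is most of the range of the theorem.
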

The exponent $1/ \min (1, \ga)$ is often referred to as the \emph{critical exponent}: it is the main quantification of the behavior of the model around its phase transition. Explicit expressions of $L_\ga$ are given in \cite{GK17}, in particular it is some constant if $\ga>1$.

\smallskip
The \emph{annealed} model, on the other hand, corresponds to averaging the partition function over the disorder: the annealed partition function is, for $\gb\in[0,\gb_0)$,
\begin{align}
Z_{\bn,h}^{\gb,\ann}\,:=\,\bbE \big[Z^{\gb,\quen}_{\bn,h}\big] \,&=\, \bbE \, \bE\Big[ \exp\Big( \sum_{\bi\in\bnsquare} \big( \gb \go_\bi- \lambda(\gb)+h\big) \ind_{\{\bi \in \btau\}} \Big)  \ind_{\{ \bn\in \btau \}}  \Big] \notag\\
&=\, \bE\Big[ \exp\Big( \sum_{\bi\in\bnsquare} h\, \ind_{\{\bi\in \btau\}} \Big)  \ind_{\{ \bn\in \btau \}}  \Big]\,=\,Z_{\bn,h}^{0,q}    \; .
\label{eq:annealed}
\end{align}
Here we used that for any fixed trajectory of $\btau$ the non-zero terms $( \gb \go_\bi- \lambda(\gb)+h ) \ind_{\{\bi \in \btau\}}$ are independent and that $\lambda(\gb)=\log \bbE[e^{\gb \go_\bone}] <+\infty$ for $\gb\in[0,\gb_0)$ (in particular this implies $Z^{\gb,\quen}_{\bn,h}\in L^1(\bbP)$).

Notice that the annealead model matches exactly the homogeneous model. Recalling Proposition~\ref{prop:existF}, the annealed free energy is therefore
\begin{equation}\label{eq:fa}
\tf_\gamma^{\ann}(\beta,h)\,:=\, \limtwo{n\to\infty,}{m(n)/n \to \gamma} \frac 1n \log Z_{\bn,h}^{\gb,\ann} = \tf_\gamma (0, h ) \;.
\end{equation}
We also directly have that the annealed critical point is 
$h_c^{\ann}(\beta):= \min \lbrace h \,: \,  \tf^{\ann}_\gamma (\beta,h) >0 \rbrace = 0$ (recall Theorem~\ref{thm:hom}).

\smallskip
Now, a simple use of Jensen's inequality in \eqref{eq:saf} gives that $\tf_\gamma(\beta,h) \le \tf_\gamma^{\ann}(\beta,h)$. Moreover, 
we have that $\tf_\gamma (0,h) \le \tf_\gamma (\beta ,h + \gl(\gb))$ (recall that $\beta \mapsto \tf_\gamma (\beta,h+\gl(\gb))$ is non-decreasing). As a conclusion, we obtain the following bounds for the quenched critical point: for every~$\beta$ we have
\begin{equation}
\label{eq:inh}
0 \,=\, h_c^{\ann}(\beta)\,  \le\,  h_c(\beta) \,\le\, h_c(0) +\gl(\gb)\,=\,\gl(\gb)\;.
\end{equation} 
An adaptation of the proof of
\cite[Th.~5.2]{Giac07} would easily give that the second inequality is strict for every $\gb > 0$.
The first inequality may or may not be strict and this is an important issue which is directly linked to disorder relevance or irrelevance.

\smallskip
In the rest of the paper, we will work in the case $\gamma=1$: recall that having $\gamma \neq 1$ changes neither the value of the critical point $h_c(\gb)$, nor the homogeneous critical behavior (up to a constant factor, see inequality~\eqref{eq:boundsF} and Theorem~\ref{thm:hom}). To simplify notations, we will drop the dependence on $\gamma$ in the free energy.

\section{Presentation of the results: the question of disorder relevance}
In general, going from a homogeneous model to a disordered one is a complex matter in statistical mechanics (even in the PS model, see \cite[Ch. 5]{Giac07}). A first issue is wether the phase transition ---in this paper we focus on the denaturation transition--- {\blue remains when we introduce a small disorder}
; if so, at what critical value and with what critical behavior compared to the homogeneous model.
If any disorder with any strength ---parametrized by $\gb$ in our setting--- changes the critical behavior (notably the critical exponent) of the model from the homogeneous case, disorder is said to be \emph{relevant}; if a disorder of small strength does not change the critical behavior, it is said to be \emph{irrelevant}. 

The physicist Harris~\cite{H74} predicts that disorder (ir)-relevance for a $d$-dimensional system can be determined from the correlation length exponent $\nu$ in the homogeneous model.
If we admit that the correlation length is given by the reciprocal of the free energy, we obtain from Theorem~\ref{thm:hom} that $\nu=1/\min(1,\ga)$. Then Harris' criterion predicts that when $\nu>2/d$ disorder should be irrelevant, and when $\nu<2/d$ it should be relevant (the case $\nu=2/d$, dubbed \emph{marginal}, is much harder to treat, even with heuristic methods).

Notice that in our setting, determining the dimension $d$ of the system is a more delicate issue than it seems: even though the disorder field $\go$ is indexed in $\N^2$, it is constructed from two sequences $\hgo$ and $\bgo$, therefore has a 1-dimensional degree of freedom. It is not obvious if one should pick $d=1$ or $d=2$ in Harris' criterion. Actually we prove that there are two possible criteria for disorder (ir)-relevance depending on the law $\bbP$, which correspond to Harris' prediction for each value $d\in\{1,2\}$. We prove disorder irrelevance (same critical point and exponent), and disorder relevance (shift of the critical point and smoothing of the phase transition) for small disorder intensity in both cases.

\smallskip {\blue
We will study disorder (ir)-relevance in the case of $\hat \go$ and $\bar \go$ having the same distribution $\hat \bbP = \bar \bbP$, and with a product interaction function $f(\cdot,\cdot)$:
\begin{equation}\label{eq:disorder:product}
\go_{\bi}\,:=\, f(\hgo_{\bi^{(1)}} , \bgo_{\bi^{(2)}} ) \,=\,\hgo_{\bi^{(1)}} \times \bgo_{\bi^{(2)}} \; .
\end{equation}
We assume that $\hgo_1,\bgo_1$ are not constant a.s. (otherwise the model is homogeneous). The condition that $\bbE[e^{\gb \go_\bone}]$ is finite for $\beta< \gb_0$ can be guaranteed simply by asking that $\bbE[ e^{\frac{1}{2} \gb \hgo_1^2 }] < +\infty$ for $\beta< \gb_0$, using that $xy \leq (x^2+ y^2)/2$.
This is verified for example when $\hgo, \bgo$ are sequences of Gaussian variables, or when $\hgo, \bgo$ are bounded.
Let us denote the moments of $\hgo_1$ by $m_k:=\bbE[\hgo_{1}^k]=\bbE[\bgo_{1}^k]$ for all $k\in\N$. In particular $\bbE[\go_\bone^k]=m_k^2$ for all $k\in\N$.

} 


\subsection{Main results I: disorder irrelevance}

Our first result is the following theorem, showing disorder irrelevance for $\ga<1/2$, regardless of the law $\bbP$.

\begin{theorem}\label{thm:irrel}
Let $\btau$ and $\btau'$ be two independent copies of a renewal process with law $\bP$. If $\btau^{(1)}\cap\btau'^{(1)}$ (or equivalently $\btau^{(2)}\cap\btau'^{(2)}$) is terminating  (in particular if $\ga<1/2$), then there exists $\beta_1>0$ such that for every $\beta\in [0,\beta_1)$, one has:
(i) $h_c(\beta)=h_c^{\ann}(\beta)=0$; (ii) for any $h\in [0,1]$,
\begin{equation}\label{eq:thm:irrel}
L_\cntf(1/h) \, h^{1/\ga} \,\leq\, \tf(\gb,h) \,\leq\, L_\ga(1/h) \, h^{1/\ga}\;,
\end{equation}
for some (explicit) slowly varying functions $L_{\arabic{svf}}, L_\ga$.
\end{theorem}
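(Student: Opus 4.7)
The plan is as follows. The upper bound in \eqref{eq:thm:irrel} is immediate: Jensen's inequality applied to \eqref{eq:saf}, combined with \eqref{eq:fa} and Theorem~\ref{thm:hom}, yields $\tf(\gb, h) \leq \tf^{\ann}(\gb, h) = \tf(0, h) \leq L_\ga(1/h)\, h^{1/\ga}$ up to absorbing the constant $c_{\ga,1}$ into $L_\ga$.

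For the matching lower bound---which in particular implies $h_c(\gb) \leq 0$, and combined with Jensen's inequality gives $h_c(\gb) = h_c^{\ann}(\gb) = 0$---I would use the second moment method. Writing $\tilde\bP_{\bn,h}$ for the homogeneous (annealed) Gibbs measure and $\btau, \btau'$ for two independent copies of the renewal, one has
\begin{equation*}
\frac{\bbE[(Z^{\gb, \quen}_{\bn, h})^2]}{(Z^{\ann}_{\bn, h})^2}\, =\, \tilde\bE^{\otimes 2}_{\bn,h}\bigg[\bbE\Big[\exp\Big(\sum_{\bi\in\bnsquare}\big(\gb\go_\bi - \gl(\gb)\big)\big(\ind_{\{\bi\in\btau\}} + \ind_{\{\bi\in\btau'\}}\big)\Big)\Big]\bigg].
\end{equation*}
The product structure $\go_\bi = \hgo_{\bi^{(1)}}\bgo_{\bi^{(2)}}$ lets one integrate out $\hgo$ row by row, and then $\bgo$ column by column. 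After doing so, the rows $i$ lying in exactly one of $\btau^{(1)}, \btau'^{(1)}$ cancel against the $e^{-\gl(\gb)}$ normalization, and the surviving multiplicative corrections are of the form $1 + O(\gb^2)$ attached to each row in $\btau^{(1)}\cap\btau'^{(1)}$, each column in $\btau^{(2)}\cap\btau'^{(2)}$, and each same-point intersection in $\btau\cap\btau'$ (in contrast with the i.i.d.\ disorder case of \cite{BGK}, where only the last type of intersection appears).

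The main step---which I expect to be the main obstacle---is then to bound
\begin{equation*}
\tilde\bE^{\otimes 2}_{\bn, h}\Big[\exp\Big(C\gb^2\big(|\btau^{(1)}\cap\btau'^{(1)}| + |\btau^{(2)}\cap\btau'^{(2)}| + |\btau\cap\btau'|\big)\Big)\Big]
\end{equation*}
uniformly in $\bn$ at a scale comparable to the correlation length $N(h):=1/\tf(0,h)$, for some small $\gb_1 > 0$ and all $\gb < \gb_1$. The hypothesis that $\btau^{(1)}\cap\btau'^{(1)}$ is terminating---equivalent, via a Borel--Cantelli/local-renewal argument, to $\sum_n \bP(n\in\btau^{(1)})^2 < \infty$, hence to $\ga < 1/2$ under \eqref{eq:interarrival:tau} since then $\bP(n\in\btau^{(1)})\sim c\, n^{\ga-1}$---gives exponential tails for $|\btau^{(1)}\cap\btau'^{(1)}|$ under $\bP^{\otimes 2}$ and analogously for $|\btau^{(2)}\cap\btau'^{(2)}|$ by symmetry, the same-point term being dominated by these. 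Transferring this bound to the $h$-tilted measure $\tilde\bP^{\otimes 2}_{\bn, h}$ for $h\in (0,1]$ on the chosen scale requires a careful Radon--Nikodym estimate, which I expect to be the most delicate part of the computation.

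Once the second moment bound holds, the conclusion is standard. Paley--Zygmund gives $\bbP(Z^{\gb, \quen}_{\bn, h} \geq \tfrac{1}{2} Z^{\ann}_{\bn, h}) \geq p > 0$ uniformly in $\bn$ of size comparable to $N(h)$, for every $\gb \in [0, \gb_1)$. Iterating this on disjoint blocks (using the superadditivity built in the construction of $\tf$ in Proposition~\ref{prop:existF}), together with a deterministic lower bound on $\log Z^{\gb, \quen}_{\bn, h}$ ensuring integrability (from the polynomial tail of $K$ in \eqref{eq:interarrival:tau}, one has $\log Z^{\gb, \quen}_{\bn, h}\geq -C \log|\bn| + \gb\go_\bn - \gl(\gb)$ by considering the single-arc trajectory), one deduces $\tf(\gb, h) \geq c\, \tf(0, h)$ for a universal $c > 0$ and every $h \in (0, 1]$. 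This yields both (i) and the lower bound in \eqref{eq:thm:irrel}.
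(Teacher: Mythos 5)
Your overall strategy (upper bound by Jensen, lower bound by a second moment / Paley--Zygmund argument based on the two-replica intersection structure) is the right one, and you correctly identify that after integrating out the correlated field the relevant quantities are $|\btau^{(1)}\cap\btau'^{(1)}|$, $|\btau^{(2)}\cap\btau'^{(2)}|$ and $|\btau\cap\btau'|$, with the first two dominating. But there are two points where your route diverges from the paper's in ways that matter.

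First, you propose to estimate the normalized second moment under the $h$-tilted measure $\tilde\bP_{\bn,h}^{\otimes 2}$ at the scale of the correlation length, and you flag the Radon--Nikodym transfer from $\bP^{\otimes 2}$ to the tilted measure as ``the most delicate part.'' The paper avoids this difficulty entirely: it only computes the second moment at $h=0$ (so the reference measure is just $\bP^{\otimes 2}$, with no tilt), and then gets the $h$-dependence through convexity of $h\mapsto\log Z^{\gb,\quen,\textit{free}}_{\bn,h}$, namely $\bbE\log Z^{\gb,\quen,\textit{free}}_{\bn,h}\geq h\,\bbE\bE_{\bn,0}^{\gb,\quen,\textit{free}}[|\btau\cap\bnsquare|]+\bbE\log Z_{\bn,0}^{\gb,\quen,\textit{free}}$. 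Paley--Zygmund together with a contact-fraction estimate under $\bP$ (Lemma~\ref{lem:countact}) then lower-bounds the expected contact number under $\bP_{\bn,0}^{\gb,\quen,\textit{free}}$, and this gives $\tf(\gb,h)\geq\frac{C}{n}(hf(n)-C\log n)$ for all $n\leq n_\gb$; optimizing over $n$ produces both (i) and the free energy lower bound. So the route you propose can certainly be made to work (it is essentially the approach of Alexander and Tonelli for the PS model), but it requires a genuine extra estimate that the paper's linearization trick circumvents.

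Second, your concluding step ``iterating on disjoint blocks'' is vaguer than what the theorem requires: concatenating blocks on which $Z^{\gb,\quen}\geq\frac12 Z^{\ann}$ holds with probability $p$ does not directly give a lower bound on $\tf(\gb,h)$, because the bad blocks could contribute large negative terms in the $\log$. You would need either the convexity argument above, or a rare-stretch argument in the spirit of Lemma~\ref{lem:gain-cost}, to turn the Paley--Zygmund lower bound into a free energy lower bound. Also a minor point: terminating of $\btau^{(1)}\cap\btau'^{(1)}$ is not equivalent to $\ga<1/2$; the precise criterion (Proposition~\ref{prop:intertau}) is $\sum_n n^{2\ga-2}/L(n)^2<\infty$, of which $\ga<1/2$ is only a sufficient condition, and the theorem is stated under the general hypothesis.

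Finally, be a little careful with the claim that one can ``integrate out $\hgo$ row by row, then $\bgo$ column by column'': because $\go_\bi=\hgo_{\bi^{(1)}}\bgo_{\bi^{(2)}}$ is multiplicatively entangled, integrating out $\hgo$ on a row leaves a residual dependence on the $\bgo$'s through products along shared columns, and the paper organizes the bookkeeping via a chain decomposition of $\btau\cup\btau'$ (Proposition~\ref{prop:decomp}) together with Cauchy--Schwarz on each chain. The upshot---the $1+O(\gb^2)$ correlation per shared row/column---is what you describe, so your heuristic is sound, but the bookkeeping is less innocent than ``row by row, column by column'' suggests.
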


Recall that $\btau^{(1)}$ is a univariate renewal process such that $\bP(\btau^{(1)}=a)=\tilde L(a) a^{-(1+\ga)}$, where $\tilde L\sim (1+\ga)^{-1}L$. Proposition~\ref{prop:intertau} gives a necessary and sufficient condition for $\btau^{(1)}\cap\btau'^{(1)}$ to be terminating ---in particular Theorem~\ref{thm:irrel} holds for $\ga<1/2$. The upper bound in \eqref{eq:thm:irrel} is a direct consequence of Jensen's inequality, \eqref{eq:fa} and Theorem~\ref{thm:hom}: so the interesting features are (i) and the lower bound in \eqref{eq:thm:irrel}. This result implies that, provided that $\gb$ is small enough, the quenched critical point and the quenched critical exponent are the same as those given by Theorem~\ref{thm:hom} for the homogeneous and annealed models---which means that disorder is irrelevant.

{\blue When $\ga>1/2$, we will state below that disorder is relevant for \emph{almost} all disorder laws. Indeed, let us define for any $x>0$ the distribution
\begin{equation}\label{def:Pbspinx}
\Pbspinx(\hgo_1=x)\,=\,\Pbspinx(\hgo_1=-x)\,=\,1/2\;,
\end{equation}
(and $\bgo_1$ has same law). Note that this family of distributions $(\Pbspinx)_{x>0}$ is characterized by the identities $\bbEspinx[\hgo_1]=m_1=0$, and $\Var_{\pm x}(\hgo_1^2)=m_4-m_2^2=0$ ---that is $\hgo_1^2=x^2$ a.s..}


\begin{theorem}\label{thm:irrel:spin}
Assume that $\bbP=\Pspinx$ for some $x>0$. Then the results of Theorem~\ref{thm:irrel} hold as soon as $\btau\cap\btau'$ is terminating (in particular if $\ga<1$). That is, there exists $\beta_1>0$ such that for every $\beta\in [0,\beta_1)$, one has (i) $h_c(\beta)=0$; (ii) \eqref{eq:thm:irrel} holds.
\end{theorem}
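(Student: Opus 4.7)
The plan is to follow the second-moment method used to prove Theorem~\ref{thm:irrel}, but with a key computational simplification exploiting the $\pm x$ structure. First I would perform an Ising-type expansion: set $\gs_i := \hgo_i/x$ and $\gs'_j := \bgo_j/x$, which are i.i.d.\ $\{\pm 1\}$-valued with zero mean, and define $\eps_\bi := \gs_{\bi^{(1)}}\gs'_{\bi^{(2)}}\in\{\pm 1\}$. Then $\lambda(\gb)=\log\cosh(\gb x^2)$ and the elementary identity
\[
e^{\gb\go_\bi-\lambda(\gb)}\,=\,1+u\,\eps_\bi,\qquad u:=\tanh(\gb x^2),
\]
rewrites the quenched partition function as $Z_{\bn,h}^{\gb,\quen}=\bE\big[e^{h|\btau\cap\bnsquare|}\prod_{\bi\in\btau\cap\bnsquare}(1+u\eps_\bi)\,\ind_{\bn\in\btau}\big]$. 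The workhorse combinatorial fact is that $\bbE\big[\prod_{\bi\in A}\eps_\bi\big]=1$ when every row and every column of $A$ contains an even number of points, and $0$ otherwise.

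Next I would compute the second moment by introducing an independent replica $\btau'$, expanding both products, and using $\eps_\bi^2=1$ so that $\prod_{A}\eps\cdot\prod_{B}\eps=\prod_{A\triangle B}\eps$. This gives
\[
\bbE\big[(Z_{\bn,h}^{\gb,\quen})^2\big]\,=\,\bE\otimes\bE'\Big[e^{h(|\btau|+|\btau'|)}\!\!\sum_{\substack{A\subset\btau\cap\bnsquare\\ B\subset\btau'\cap\bnsquare}}u^{|A|+|B|}\,\ind_{\{A\triangle B\text{ has even rows/cols}\}}\,\ind_{\bn\in\btau,\,\bn\in\btau'}\Big].
\]
The crucial claim, which replaces the univariate intersection assumption of Theorem~\ref{thm:irrel} by its bivariate counterpart, is that \emph{the only $C=A\triangle B\subset\btau\cup\btau'$ with every row and every column of even parity is $C=\emptyset$}.

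Proving this claim is the main obstacle. I view $C$ as a subgraph of the bipartite graph on rows and columns whose edges are the points of $\btau\cup\btau'$. Since $\btau$ and $\btau'$ each contribute at most one point per row and per column, the graph has maximum degree $2$, so any all-even-degree subgraph $C$ is a disjoint union of simple cycles. Consider such a cycle $r_1{-}c_1{-}r_2{-}c_2{-}\cdots{-}r_\ell{-}c_\ell{-}r_1$. At each row $r_k$ the two incident cycle-edges cannot both belong to $\btau$ (the row contains at most one $\btau$-point) nor both to $\btau'$; the same holds at each column. Propagating this alternation around the cycle yields a consistent labelling $r_k{-}c_k\in\btau$ and $r_k{-}c_{k-1}\in\btau'$ (indices mod $\ell$). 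Now reorder so that $r_1<r_2<\cdots<r_\ell$: strict monotonicity of $\btau$ forces $c_1<c_2<\cdots<c_\ell$, while monotonicity of $\btau'$ forces $c_0<c_1<\cdots<c_{\ell-1}$; but $c_0=c_\ell$, so $c_\ell$ is simultaneously the largest and strictly smaller than $c_{\ell-1}$, a contradiction. Hence $C=\emptyset$, forcing $A=B\subset\btau\cap\btau'$, and the sum collapses to
\[
\bbE\big[(Z_{\bn,h}^{\gb,\quen})^2\big]\,=\,\bE\otimes\bE'\Big[e^{h(|\btau|+|\btau'|)}(1+u^2)^{|\btau\cap\btau'\cap\bnsquare|}\,\ind_{\bn\in\btau,\,\bn\in\btau'}\Big].
\]

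From this point on the argument mirrors that of Theorem~\ref{thm:irrel} with the bivariate intersection $\btau\cap\btau'$ in place of the univariate one $\btau^{(1)}\cap\btau'^{(1)}$. Assuming $\btau\cap\btau'$ is terminating, for $\gb$ small enough (so that $u^2$ is small) and $h$ in a suitable range, classical renewal estimates bound the above second moment by $C\cdot(Z_{\bn,h}^{\ann})^2$ uniformly in $\bn$. Paley--Zygmund then yields $\bbP(Z_{\bn,h}^{\gb,\quen}\geq\tfrac{1}{2}Z_{\bn,h}^{\ann})\geq c>0$, and combined with the $L^1$ convergence in Proposition~\ref{prop:existF} and standard concentration of $\log Z$, this propagates to $\bbE[\log Z_{\bn,h}^{\gb,\quen}]\geq\log Z_{\bn,h}^{\ann}-C'$; dividing by $n$ and sending $n\to\infty$ gives $\tf(\gb,h)\geq\tf(0,h)$. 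Together with the reverse Jensen bound $\tf(\gb,h)\leq\tf(0,h)$, this yields $\tf(\gb,h)=\tf(0,h)$, from which~(i) and the lower bound in \eqref{eq:thm:irrel} follow directly via Theorem~\ref{thm:hom}.
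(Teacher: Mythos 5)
Your second-moment computation is correct and takes a genuinely different route from the paper. Where the paper decomposes $\btau\cup\btau'$ into double points, isolated points and \emph{chains}, and kills each chain by successive conditionings (using $\cosh(\gb x\hgo)=e^{\gl(\gb)}$ a.s.), you expand $e^{\gb\go_\bi-\gl(\gb)}=1+u\,\eps_\bi$ with $u=\tanh(\gb x^2)$ and use the row/column parity rule for $\bbE[\prod_{\bi\in C}\eps_\bi]$; since $\gl(2\gb)-2\gl(\gb)=\log(1+u^2)$ when $\gl(\gb)=\log\cosh(\gb x^2)$, your identity $(1+u^2)^{|\btau\cap\btau'\cap\bnsquare|}$ is exactly the paper's \eqref{eq:proof:L2bound:8zero}. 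One caveat: in the cycle argument, after you ``reorder so that $r_1<\cdots<r_\ell$'' the $\btau'$-adjacency is no longer $r_k\ligned c_{k-1}$, so the displayed chain of inequalities is not literally justified; the conclusion is nevertheless correct and is repaired by noting that both $\btau$ and $\btau'$ induce \emph{increasing} bijections between the (common) row-set and column-set of the cycle, and such a bijection is unique, forcing the $\btau$-point and the $\btau'$-point of each row to coincide, i.e.\ a double point, which cannot lie on a cycle.

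The genuine gap is in the concluding step. You assert that for fixed $h>0$ the second moment is bounded by $C\,(Z^{\ann}_{\bn,h})^2$ uniformly in $\bn$, and deduce $\tf(\gb,h)\geq\tf(0,h)$. Neither holds. The ratio of your second moment to $(Z^{\ann}_{\bn,h})^2$ is the expectation of $(1+u^2)^{|\btau\cap\btau'\cap\bnsquare|}$ under the product of two annealed polymer measures at parameter $h$; in the localized phase these measures are positively recurrent along the diagonal, the expected number of replica intersections grows with $n$ (of order $\sqrt{\bn^{(1)}}$ by a local limit estimate in the transverse direction), so the ratio diverges as $\bn\to\binfty$ for any fixed $h>0$ and $u>0$. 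Consequently the Paley--Zygmund/concentration step cannot yield $\bbE\log Z^{\gb,\quen}_{\bn,h}\geq\log Z^{\ann}_{\bn,h}-C'$, and the target $\tf(\gb,h)=\tf(0,h)$ is in any case stronger than what Theorem~\ref{thm:irrel:spin} claims (the lower bound in \eqref{eq:thm:irrel} involves a \emph{different} slowly varying function) and is not expected to be true at fixed $h>0$.

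What is needed instead is the paper's Proposition~\ref{prop:irrel}: a bound on the second moment of the \emph{free} partition function at $h=0$ only, uniformly in $n$ --- which your parity computation does provide, since when $\btau\cap\btau'$ is terminating $|\btau\cap\btau'|$ is geometric and $\log(1+u^2)$ can be made small by taking $\gb$ small. From that $L^2$ bound, the free-energy lower bound and $h_c(\gb)=0$ follow not by comparing partition functions at fixed $h$, but by Paley--Zygmund applied to the contact fraction under $\bP^{\gb,\quen,\textit{free}}_{\bn,0}$, convexity of $h\mapsto\log Z^{\gb,\quen,\textit{free}}_{\bn,h}$, and the choice of system size $n\approx 1/\tf(0,h)$. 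Note also that working with the free rather than the constrained partition function is not cosmetic: your formula carries the constraint $\{\bn\in\btau,\bn\in\btau'\}$, and controlling $|\btau\cap\btau'|$ under this doubly pinned law would require additional renewal estimates that the free version avoids.
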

The fact that $\ga<1$ is a sufficient condition for having $\btau\cap\btau'$ terminating (while $\ga\leq1$ is necessary)
is ensured by Proposition~\ref{prop:intertau} below. In particular,
Theorem~\ref{thm:irrel:spin} shows that disorder with distribution $\Pbspinx$ is irrelevant for all $\ga\in(0,1)$.

\subsection{Main results II: disorder relevance}
In the two previous theorems, we stated that if $\Pb\neq\Pbspinx$ for all $x>0$, disorder is irrelevant when $\ga<1/2$; and if $\Pb=\Pbspinx$ for some $x>0$, it is irrelevant when $\ga<1$. We now prove disorder relevance in each case for $\ga>1/2$ and $\ga>1$ respectively,

We first focus on the shift of the critical point, starting with the case $\Pb\neq\Pbspinx$.

\begin{theorem}\label{thm:rel:shift}
Assume $\alpha>1/2$ and $\Pb\neq\Pbspinx$ for all $x>0$. Then for any fixed $\eps>0$ (small), there exists $\gb_{\eps}>0$ such that for every $\gb\in(0,\gb_\eps)$, one has
{\blue \begin{equation}
h_c(\gb)\;\geq\;
\left\{\begin{aligned} \gb^{\max(\frac{2\ga}{2\ga-1},2)\,+\,\eps} \qquad & \text{if}\quad m_1\neq 0, \\
\gb^{\max(\frac{4\ga}{2\ga-1},4)\,+\,\eps} \qquad& \text{if}\quad m_1= 0,\, m_4>m_2^2. \end{aligned}\right. 
\end{equation}}
In particular $h_c(\gb)>h_c^\ann(\gb)=0$ for all $\gb\in(0,\gb_\eps)$.
\end{theorem}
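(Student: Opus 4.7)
The natural approach is the \emph{fractional moment method with coarse graining}, adapted to handle the correlated disorder $\go_\bi = \hgo_{\bi^{(1)}}\bgo_{\bi^{(2)}}$. Pick $\theta\in(0,1)$ close to $1$ and a block scale $\ell=\ell(\gb)$ to be tuned later (I will choose $\ell \asymp \gb^{-\max(2\ga/(2\ga-1),\,2)}$ in case $m_1\neq 0$ and $\ell \asymp \gb^{-\max(4\ga/(2\ga-1),\,4)}$ in case $m_1=0,\,m_4>m_2^2$). Partition $\llbbrack\bone,\bn\rrbbrack$ into $\ell\times\ell$ blocks, decompose $Z_{\bn,h}^{\gb,\quen}$ according to the coarse-grained trajectory of $\btau$, and use $(x+y)^\theta\leq x^\theta+y^\theta$ to obtain a bound of the form
\[
\bbE\bigl[(Z_{\bn,h}^{\gb,\quen})^\theta\bigr] \,\leq\, \sum_{(B_k)} \prod_k \bbE\bigl[(Z_{B_k})^\theta\bigr],
\]
summed over sequences of visited blocks. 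The goal is to prove $\bbE[(Z_B)^\theta]\leq\eta$ with $\eta$ small enough so that the coarse-grained sum is summable; standard arguments (cf.\ the approach of Giacomin--Lacoin--Toninelli, used in \cite{BGK}) then yield $\tf(\gb,h)=0$.

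\textbf{Block estimate via tilt.} For a single block $B$ with row/column indices $I,J$ (of size $\ell$), introduce a change of measure $Q_B$ acting only on $(\hgo_i)_{i\in I}$ and $(\bgo_j)_{j\in J}$ (leaving the remaining disorder untouched), and apply Hölder:
\[
\bbE\bigl[(Z_B)^\theta\bigr] \,\leq\, \bbE\!\left[\bigl(\tfrac{dP}{dQ_B}\bigr)^{\!\theta/(1-\theta)}\right]^{1-\theta} \cdot \bbE_{Q_B}[Z_B]^\theta.
\]
The key point is that since each column index $i$ and row index $j$ appears in at most one term of $\sum_{\bi\in\btau}\go_\bi$ (renewal increments lie in $\bbN^2$), the tilt on $\hgo$ and $\bgo$ produces a clean perturbation on each contact of $\btau$ inside $B$.

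\textbf{Choice of tilt by case.} If $m_1\neq 0$, take $Q_B$ to be a mean shift: replace $(\hgo_i)_{i\in I}$ by an i.i.d.\ sequence with mean $-\delta\,\mathrm{sgn}(m_1)$ (and similarly for $\bgo$). The entropy cost is $\bbE[(dP/dQ_B)^{\theta/(1-\theta)}]^{1-\theta}\lesssim e^{c\theta\delta^2\ell/(1-\theta)}$. Under $Q_B$, the expectation of $\go_\bi=\hgo_{\bi^{(1)}}\bgo_{\bi^{(2)}}$ drops from $m_1^2$ to $(|m_1|-\delta)^2$, yielding a gain $\lesssim e^{-c'\gb\delta|m_1|\,\bE_\btau[|B\cap\btau|]}$ on $\bbE_{Q_B}[Z_B]$. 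Using the bivariate renewal estimates of \cite{GK17,BGK18} to control $\bE_\btau[|B\cap\btau|]\asymp \ell^{\min(1,\ga)}$ (or an iterated renewal argument inside the block), then balancing $\delta\asymp\gb$ against $\ell$ produces the threshold $h\ll\gb^{\max(2\ga/(2\ga-1),2)+\eps}$. In the second case $m_1=0,\,m_4>m_2^2$, a mean shift is ineffective at first order, so instead I take the quadratic tilt $dQ_B/dP\propto \prod_{i\in I}\exp(-\delta(\hgo_i^2-m_2))$ (and similarly for $\bgo$). The assumption $m_4>m_2^2$ ensures $\Var(\hgo_1^2)>0$, making the tilt nontrivial with cost of order $e^{c\delta^2\ell}$. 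The effect on $\bbE_{Q_B}[e^{\gb\hgo\bgo}]$ appears only at order $\gb^2$ (not $\gb$), so the balancing equations effectively replace $\gb$ by $\gb^2$, doubling the exponents.

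\textbf{Main obstacles.} The principal difficulty is twofold. First, producing a sharp enough block estimate requires precise renewal-theoretic input on the two-dimensional renewal $\btau$: one needs control on the probability that $\btau$ makes a prescribed number of contacts inside a block and on the cost of constrained trajectories connecting two blocks; this is where the exponents $2\ga/(2\ga-1)$ (resp.\ $2$) for $\ga\in(1/2,1)$ (resp.\ $\ga>1$) really arise from. Second, in the quadratic-tilt case one must expand $\log\bbE_{Q_B}[\exp(\gb\sum \hgo_{\bi^{(1)}}\bgo_{\bi^{(2)}}\ind_{\bi\in\btau})]$ to second order jointly in $\gb$ and $\delta$, showing that the leading shift has the right sign and that higher-order terms are negligible; the correlated structure of $\go$ makes this a nontrivial bookkeeping task, because the tilt on $\hgo$ effectively perturbs the disorder seen along one column at a time in a $\btau$-dependent way.
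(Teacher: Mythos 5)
Your overall strategy is the one the paper actually uses: coarse graining plus fractional moments, with a change of measure acting simultaneously on the two one-dimensional sequences $\hgo,\bgo$ --- a first-order tilt when $m_1\neq0$ and a quadratic tilt when $m_1=0$, $m_4>m_2^2$ --- and the mechanism you describe (gain of order $\gd\gb$, resp.\ $\gd\gb^2$, per contact against an entropic cost $e^{c\gd^2\ell}$) is exactly the one behind Lemma~\ref{lem:Afrac2}. However, as written your parameter choices break down precisely in the new regime $\ga\in(1/2,1)$. The coarse-graining scale must be the \emph{annealed correlation length} $k=1/\tf(0,h)\asymp h^{-1/\ga}$ for $\ga\le1$ (see \eqref{eq:k}), i.e.\ $\ell\asymp\gb^{-2/(2\ga-1)}$ (resp.\ $\gb^{-4/(2\ga-1)}$ in the centered case) up to $\eps$-corrections, and the tilt must be taken of size $\gd\asymp\ell^{-1/2}$. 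With your stated $\ell\asymp\gb^{-\max(\frac{2\ga}{2\ga-1},2)}\asymp h^{-1}$ and $\gd\asymp\gb$, for $\ga\in(1/2,1)$ the H\"older cost explodes, since $\gd^2\ell\asymp\gb^{(2\ga-2)/(2\ga-1)}\to\infty$; and if one repairs this by taking $\gd=\ell^{-1/2}$, the total gain per block is $\gb\gd\cdot\ell^{\ga}\asymp\gb\,\ell^{\ga-1/2}\asymp\gb^{1-\ga}\to0$, so the block estimate yields nothing. Only at the correlation-length scale do cost and gain balance, with the extra $\gb^{\eps}$ in $h$ providing the margin that makes the gain diverge; your numbers are consistent only for $\ga>1$, where $k\asymp h^{-1}$.

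Two further points. When $m_1\neq0$ the change of measure should be an \emph{exponential} tilt $e^{\gd\hgo_i}$ (as in \eqref{eq:def:Q:m1neq0}), not a literal mean shift: for a general law $\neq\Pbspinx$ --- in particular a discrete one --- the translated law need not even be absolutely continuous with respect to $\bbP$, and the $O(\gd^2)$ entropy cost you invoke is exactly the content of Assumption~\ref{hyp:entropy}, which the paper imposes only for the smoothing result and deliberately avoids here; the exponential tilt always costs $e^{c\gd^2\ell}$ under \eqref{def:lambda} and produces, per contact, the factor $1+\gd\gb\,m_1(m_2-m_1^2)+o(\gd\gb)$ that drives the bound (note the variance factor: the gain vanishes for constant disorder, as it must). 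Finally, bounding the tilted annealed block quantity requires more than controlling $\bE[|B\cap\btau|]$: after the tilt one faces a homogeneous gPS partition function with a small \emph{negative} pinning parameter, constrained to end at a prescribed point, and the paper controls it through Lemma~\ref{lem:zhom} together with the bivariate renewal estimates of Appendix~\ref{section:appendixB}; you correctly flag this as the main technical obstacle, but it is an essential ingredient, not a routine step, and without it (and without Lemma~\ref{lem:Afrac1} for the small-$\bi$ terms in the coarse-grained sum) the summability of the coarse-grained decomposition is not established.
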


{\blue Notice that all (non-constant) distributions other than $\Pbspinx$, $x>0$ are covered by the assumptions $m_1\neq0$ or $m_4>m_2^2$.} To complete this result, we provide an upper bound on~$h_c(\gb)$.

\begin{proposition}\label{prop:hcub:nonopt}
Assume $\alpha>1/2$ and $\Pb\neq\Pbspinx$ for all $x>0$. Then there exist $\gb_1>0$ and some slowly varying function $L_\cntf$, such that for any $\gb\in[0,\gb_1)$,
\begin{equation}\label{eq:hcub:nonopt}
h_c(\gb)\,\leq\, L_{\arabic{svf}}(1/\gb) \, \gb^{\max(\frac{2\ga}{2\ga-1},2)} \;.
\end{equation}
\end{proposition}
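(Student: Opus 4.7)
The plan is a second-moment argument combined with Paley--Zygmund and a block-gluing (coarse-graining) argument, in the spirit of the i.i.d.\ treatment in \cite{BGK} and of \cite[Ch.~5]{Giac07}. We set $h_\gb := c_0\, \gb^{\max(2\ga/(2\ga-1),2)} L(1/\gb)$ with $c_0$ small and $L$ a well-chosen slowly varying function, and fix the scale $N = N(\gb)$ by $N\, \tf(0, h_\gb) \asymp 1$, which by Theorem~\ref{thm:hom} yields $N \asymp h_\gb^{-1/\min(1,\ga)}$ up to slowly varying factors. The aim is to prove $\tf(\gb, h_\gb) > 0$ for $\gb$ small enough.

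The heart of the proof is a bound on the second-moment ratio
\[
R_N \;:=\; \frac{\bbE\bigl[(Z_{(N,N),h_\gb}^{\gb,\quen})^2\bigr]}{\bbE\bigl[Z_{(N,N),h_\gb}^{\gb,\quen}\bigr]^2}\;.
\]
Expanding the square as an expectation over two i.i.d.\ copies $\btau,\btau'$ of the renewal, using the product form $\go_\bi = \hgo_{\bi^{(1)}}\bgo_{\bi^{(2)}}$ and the independence of $\hgo,\bgo$, one integrates out the $\hgo_{i_1}$ row-by-row --- each row $i_1$ contributes the log-moment generating function of $\hgo_1$ evaluated at $\gb\, S_{i_1}$, where $S_{i_1}$ is a sum of $\bgo_j$'s over the columns of $\btau \cup \btau'$ on row $i_1$, with multiplicity $2$ for points in $\btau \cap \btau'$ --- and then integrates out the $\bgo_j$ symmetrically. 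After accounting for the renormalization by $\lambda(\gb)$, a Taylor expansion near $\gb = 0$ yields a bound of the form
\[
R_N \;\le\; \bE^{\otimes 2}\!\Bigl[\exp\!\bigl(C\gb^2\, \Xi(\btau,\btau') + o(\gb^2)\bigr)\Bigr]\;,
\]
where $\Xi$ is a linear combination of the row-alignment count $|\btau^{(1)}\cap \btau'^{(1)} \cap \llbracket 1,N\rrbracket|$, its column analogue, and the 2D intersection $|\btau \cap \btau' \cap \llbracket 1,N\rrbracket^2|$. The multiplicative constants involve $m_1^2(m_2-m_1^2)$ for the alignment terms and $m_2^2-m_1^4$ for the intersection term; at least one of these is strictly positive precisely because $\Pb \ne \Pbspinx$ for all $x > 0$.

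Classical univariate renewal estimates (cf.\ \cite{BGT87}) give $\bE[|\btau^{(r)}\cap \btau'^{(r)} \cap \llbracket 1,N\rrbracket|] \asymp N$ if $\ga > 1$ and $\asymp N^{2\ga-1}/\tilde L(N)^2$ if $\ga \in (1/2,1)$, while the 2D intersection is of strictly lower order. The scale $N$ is calibrated precisely so that $\gb^2\bE[\Xi] = O(1)$, which, together with a Jensen- or Markov-type bound to absorb the random exponent, gives $R_N \le C'$ uniformly in $\gb$ small. Paley--Zygmund then yields $\bbP(Z_{(N,N),h_\gb}^{\gb,\quen} \ge \tfrac12 \bbE[Z_{(N,N),h_\gb}^{\gb,\quen}]) \ge 1/(4C')$, and since $\bbE[Z_{(N,N),h_\gb}^{\gb,\quen}] = Z_{(N,N),h_\gb}^{0,\quen}$ grows like $\exp(N\tf(0, h_\gb)) \asymp 1$, one concludes $\tf(\gb, h_\gb) > 0$ by the standard block-gluing / coarse-graining argument (concatenating independent ``good'' blocks of side $N$, using the superadditivity of $\log Z$ and the independence of $\hgo,\bgo$ across disjoint row/column blocks).

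The main technical obstacle is controlling $R_N$ uniformly over trajectory configurations. The row-by-row integration is problematic when many columns of $\btau \cup \btau'$ accumulate on a single row, producing a large $S_{i_1}$ which invalidates the $\gb^2$-expansion of the log-moment generating function at $\gb S_{i_1}$. One must therefore either restrict a priori to trajectories in which no row carries too many aligned columns (via a Cram\'er-type tail bound based on the exponential-moment assumption on $\hgo$) or invoke a uniform second-derivative bound on the log-moment generating function valid for $\gb$ small. Once such a control is secured, the remainder of the argument parallels the i.i.d.\ treatment of \cite{BGK}.
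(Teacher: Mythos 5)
Your second-moment set-up and the order-$\gb^2$ coefficients ($m_1^2(m_2-m_1^2)$ for aligned pairs, $m_2^2-m_1^4$ for double points) are correct, and the calibration of $N$ matches the paper's $n_\gb$. But the final step is a genuine gap. From Paley--Zygmund you only get that, with probability bounded below by a constant $\delta$, the block partition function at scale $N\asymp 1/\tf(0,h_\gb)$ is of order $\bbE[Z]\asymp 1$. That does \emph{not} yield $\tf(\gb,h_\gb)>0$ by ``standard block-gluing'': the gain on a good block is $O(1)$, while crossing or jumping over the (positive-density) bad blocks costs an entropy of order $\log N$ per block, so both naive concatenation and the rare-stretch argument of Lemma~\ref{lem:gain-cost} give gain $O(1)$ against cost $O(\log N)$ and fail at this exact $h_\gb$. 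This is precisely why the known conclusions of the second-moment method carry a logarithmic correction. The paper instead works at $h=0$ with the free partition function: Paley--Zygmund (with the bounded second moment) shows the polymer measure at $h=0$ keeps a contact number of order $f(n)$, and convexity of $h\mapsto\log Z$ then gives $\tf(\gb,h)\geq \tfrac{c}{n}\big(h f(n)-C\log n\big)$ for $n\leq n_\gb$, hence $h_c(\gb)\leq C\log n_\gb/f(n_\gb)$ (Proposition~\ref{prop:irrel}); the $\log$ is absorbed into the slowly varying function in \eqref{eq:hcub:nonopt}. To repair your argument you would either have to reproduce this contact-fraction/convexity step, or take $h_\gb$ larger by a logarithmic factor and re-run the second-moment bound at $h=h_\gb$, which you have not done.

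Two further soft spots. First, your row-by-row integration plus Taylor expansion requires uniform control of the remainder on rows where $\btau\cup\btau'$ has many aligned columns; you flag this as ``the main technical obstacle'' but do not resolve it. The paper avoids it altogether: by the chain decomposition (Proposition~\ref{prop:decomp}) the odd- and even-indexed points of each chain are i.i.d.\ families, so one Cauchy--Schwarz per chain gives the clean bound $e^{(\gl(2\gb)-2\gl(\gb))|\bgs|/2}$ with no expansion and no configuration-dependent error, leading to \eqref{eq:proof:L2bound:5} and then, after a further Cauchy--Schwarz, to \eqref{eq:hcubnonopt:cs}. Second, boundedness of the second moment requires an \emph{exponential}-moment estimate $\bE_{(\btau,\btau')}\big[e^{c\gb^2|\btau^{(1)}\cap\btau'^{(1)}\cap[1,N]|}\big]\leq C$ up to the calibrated scale, not just $\gb^2\,\bE[\Xi]=O(1)$; Jensen or Markov does not ``absorb the random exponent'', and this is exactly the (nontrivial, known) univariate estimate from the works of Alexander and Toninelli that the paper invokes at this point.
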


{\blue Let us stress that this upper bound (almost) matches the lower bound found in Theorem~\ref{thm:rel:shift} when $m_1\neq 0$; however it is not satisfactory when $m_1=0$.}
 We will discuss in Section~\ref{sec:hcub:nonopt} why we strongly believe that this upper bound can be improved to (almost) match the lower bound when $m_1=0$, see Remark~\ref{rem:bornesup} below ---actually, when $\hgo,\bgo$ are two sequences of i.i.d., centered Gaussian variables, computations of Section~\ref{subsection:secondmoment} can be carried out exactly and give an upper bound on the shift of order $L_{\arabic{svf}}(1/\gb) \gb^{\max(\frac{4\ga}{2\ga-1},4)}$, which (almost) matches the lower bound from Theorem~\ref{thm:rel:shift}.

\smallskip
{\blue When $m_1=0$, $m_4=m_2^2$ ---that is $\bbP=\Pbspinx$ for some $x>0$--- and $\ga>1$, we prove (almost) optimal bounds on the critical point shift. }

\begin{theorem}\label{thm:rel:shift:m4=1}
Assume $\bbP=\Pbspinx$ for some $x>0$, and $\ga>1$. Then for every $\eps>0$, there exist $\gb_{\eps}>0$ such that for any $\gb\in[0,\gb_{\eps})$,
\begin{equation}\label{eq:rel:shift:m4=1}
\gb^{\max(\frac{2\ga}{\ga-1},4) + \eps} \,\leq\, h_c(\gb)\,\leq\, L_{\cntf}(1/\gb) \, \gb^{\max(\frac{2\ga}{\ga-1},4)}\;,
\end{equation}
with $L_{\arabic{svf}}$ a slowly varying function. In particular $h_c(\gb)>0$ for all $\gb\in(0,\gb_\eps)$.
\end{theorem}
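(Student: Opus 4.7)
The plan is to prove the two bounds separately, both following the general disordered-pinning recipe (fractional moment plus coarse graining for the lower bound on $h_c$, second moment / rare stretch for the upper bound), but with modifications dictated by the special structure $\hgo_i^2 = \bgo_j^2 = x^2$ a.s.\ inherent to $\Pbspinx$. The heuristic behind the exponent $\max(\frac{2\alpha}{\alpha-1}, 4)$ is that, in this spin case, the relevant correlation is captured by pairs of aligned renewal points across two replicas, so the "effective" disorder sits on a 2-dimensional index set and enters the computation at order $\beta^4$ rather than $\beta^2$.

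For the \emph{lower bound} $h_c(\gb) \geq \gb^{\max(\frac{2\alpha}{\alpha-1},4)+\eps}$, I would follow the fractional moment/change-of-measure method as in Theorem~\ref{thm:rel:shift}. Fix $\theta\in(0,1)$ and a coarse-graining scale $\ell=\ell(\gb)$ chosen so that $\ell \sim \gb^{-2/(\ga-1)-o(1)}$ (for $1<\ga\leq 2$) or $\ell\sim \gb^{-1-o(1)}$ (for $\ga>2$). Partition $\bnsquare$ into blocks of side $\ell$, decompose $Z^{\gb,\quen}_{\bn,h}$ into a sum over admissible block sequences visited by $\btau$, and use superadditivity of $x\mapsto x^\theta$ together with an independence-across-blocks argument (valid because $\hgo,\bgo$ are i.i.d.) to reduce the problem to bounding the fractional moment of a single-block partition function. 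The crucial step is the choice of tilted measure $\tilde\bbP$: when $m_1\neq 0$ or $m_4>m_2^2$ one tilts the mean of the $\hgo_i$, but under $\Pbspinx$ one has $\hgo_i=\pm x$ so only sign correlations are adjustable. I would use a quadratic tilt of the form
\begin{equation*}
\frac{\dd\tilde\bbP}{\dd\bbP}(\hgo,\bgo)\;\propto\;\exp\Big(-\eta\sum_{i\neq j} g(i,j)\,\hgo_i\hgo_j\;-\;\eta\sum_{i\neq j}g(i,j)\,\bgo_i\bgo_j\Big)
\end{equation*}
with a suitable kernel $g(\cdot,\cdot)$ (e.g.\ constant on the block) and $\eta=\eta(\gb)$ small. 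This tilt is at level $\gb^2$ in the disorder and hence requires paying an entropic cost at order $\eta^2 \ell^2$ (two-dimensional because every pair $(i,j)$ contributes), which, balanced against the gain $\eta\,\gb^2 \ell^2$ in $\tilde\bbE[Z]$, leads naturally to the exponents stated. The fractional moment of each block must then be shown to be $o(1)$, and the standard entropy bound $\sum\prod(\cdot) \leq 1$ is reduced to verifying $\bbE[(Z^{\gb,\quen}_\ell)^\theta] \ll$ (number of blocks in a sequence)$^{-1}$.

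For the \emph{upper bound} $h_c(\gb)\leq L(1/\gb)\gb^{\max(\frac{2\ga}{\ga-1},4)}$, I would use a second moment argument applied on a suitable constrained partition function, following the scheme of Section~\ref{subsection:secondmoment} referenced in Remark~\ref{rem:bornesup}. Introduce two independent replicas $\btau,\btau'$ of the renewal and compute
\begin{equation*}
\bbE\big[(Z^{\gb,\quen}_{\bn,h})^2\big]\,=\,\bE\otimes\bE'\Big[\exp\Big(\sum_{\bi\in\bnsquare}(2h-2\gl(\gb))\,\big(\ind_{\bi\in\btau}+\ind_{\bi\in\btau'}\big)+\Psi(\gb,\bi)\Big)\ind_{\bn\in\btau\cap\btau'}\Big]
\end{equation*}
where $\Psi(\gb,\bi)=\log\bbE[e^{\gb\go_\bi(\ind_{\bi\in\btau}+\ind_{\bi\in\btau'})}]$. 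The simplification proper to $\Pbspinx$ is that $\hgo_i^2=x^2$ makes the two-replica energy collapse to a sum of \emph{line/column overlap} terms $\sum_{i_1}\big(\sum_{i_2}(\ind_{\bi\in\btau}+\ind_{\bi\in\btau'})\big)^{\!2}$ plus a column analogue, i.e.\ the dominant contribution comes from pairs of points $\bi\in\btau$, $\bi'\in\btau'$ that share a coordinate. Using the independent copies $\btau^{(1)},\btau'^{(1)}$ (which are univariate $\alpha$-renewals) and the fact that, for $\alpha>1$, $\btau^{(1)}\cap\btau'^{(1)}$ is positive-recurrent with its renewal function growing like $L(n)^{-1}n^{\alpha-1}$ or a constant, one obtains that this two-replica partition function is bounded when $h\geq C L(1/\gb)\gb^{\max(\frac{2\ga}{\ga-1},4)}$. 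A Paley–Zygmund lower bound then gives $\tf(\gb,h)>0$, proving the upper bound on $h_c(\gb)$.

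The main obstacle is the \emph{design and analysis of the quadratic change of measure} for the lower bound. Tilting on \emph{pairs} of disorder variables produces off-diagonal interactions among the $\hgo_i$'s (and $\bgo_j$'s) which must be diagonalized or decoupled carefully to evaluate $\tilde\bbE[Z^{\gb,\quen}_\ell]$; simultaneously, one needs a sharp control of the entropic cost $\bbE[(\dd\tilde\bbP/\dd\bbP)^{1/(1-\theta)}]$, which, because of the quadratic form, reduces to estimating a determinant/Gaussian-type integral. Balancing the gain of order $\eta\gb^2\ell^2$ against a cost of order $\eta^2\ell^2$ must be done so that the resulting constraint on $\ell$ matches the correlation length $\ell\sim\tf(0,h)^{-1}\sim h^{-1/\min(1,\ga)}$, which is what produces the exponent $\max(\frac{2\alpha}{\alpha-1},4)$. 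Once this balance is achieved, the coarse-graining argument and the second moment computation are essentially routine adaptations of the techniques in \cite{BGK} and in the preceding sections of the paper.
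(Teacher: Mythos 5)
There are genuine gaps on both halves of your argument, and they both stem from missing the one structural fact that drives this theorem. For the \emph{upper bound}, you claim that under $\Pbspinx$ the two-replica energy is dominated by ``line/column overlap'' terms, i.e.\ by pairs $\bi\in\btau$, $\bi'\in\btau'$ sharing a coordinate, and you then invoke $\btau^{(1)}\cap\btau'^{(1)}$. This is exactly backwards: for the spin law the chained (same-line/same-column) contributions cancel \emph{identically} --- conditioning on one strand gives factors $\cosh(\gb x\hgo)=\cosh(\gb x^2)=e^{\gl(\gb)}$ which telescope along each chain to $1$ (this is \eqref{eq:proof:L2bound:7}), so the second moment equals $\bE_{(\btau,\btau')}[e^{(\gl(2\gb)-2\gl(\gb))|\btau\cap\btau'|}]$, governed only by the \emph{bivariate} intersection, as in \eqref{eq:proof:L2bound:8zero}. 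That cancellation is precisely what pushes the second-moment scale up to $n_\gb\sim L(1/\gb)\,\gb^{-\max(\frac{2\ga}{\ga-1},4)}$ and, via Proposition~\ref{prop:irrel}, gives the claimed upper bound. If the projections' intersection really dominated, then for $\ga>1$ it has positive density, the second moment would already blow up at $n\sim\gb^{-2}$, and your argument could only yield $h_c(\gb)\lesssim\gb^2$ --- too weak to prove \eqref{eq:rel:shift:m4=1}.

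For the \emph{lower bound}, your quadratic within-strand tilt $\exp(-\eta\sum_{i\neq j}g(i,j)\hgo_i\hgo_j-\eta\sum_{i\neq j}g(i,j)\bgo_i\bgo_j)$ does not deliver the gain you assert. Under $\Pbspinx$ the variables are symmetric signs, each line and column index is visited at most once by $\btau$, and a short computation (condition on $\bgo$, use $\bbE[\tanh(\gb x\bgo_j)]=0$) shows the effect of such a tilt on $\tilde\bbE[Z]$ vanishes at first order in $\eta$; the leading correction is $O(\eta^2\gb^2\ell^2)$, which can never beat the entropy cost $O(\eta^2\ell^2)$ for small $\gb$, so the balance ``gain $\eta\gb^2\ell^2$ vs.\ cost $\eta^2\ell^2$'' on which your exponent rests is not available. (Your coarse-graining scales $\gb^{-2/(\ga-1)}$, resp.\ $\gb^{-1}$, also do not match the annealed correlation length $k=1/\tf(0,h)\approx h^{-1}$ with $h=\gb^{\max(\frac{2\ga}{\ga-1},4)+\eps}$.) The proof in the paper keeps the coarse graining of Section~\ref{section:shift} but tilts the \emph{cross-strand product field} $\go_\bi$ linearly on the diagonal strip $J_\bn$ of width $\ell_\bn$, as in \cite{BGK}; the correlated structure is then handled by conditional $\cosh$ identities (Lemma~\ref{lem:QJn}) and an Azuma--Hoeffding martingale estimate (Lemma~\ref{lem:azuma}), producing a first-order gain $\propto\gd\gb|\btau\cap J_\bn|$ at entropic cost controlled by $\gd^2\bn^{(1)}\ell_\bn$. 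Since you explicitly leave your change-of-measure analysis as ``the main obstacle'', the lower bound is not established in your proposal.
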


This fully covers the shift of the critical point for all disorder laws (except for the marginal cases: $\ga=1/2$ when $\Pb\neq\Pbspinx$ and $\ga=1$ when $\Pb=\Pbspinx$; we will discuss them at the end of this section).

\smallskip
With regards to the critical exponent, in the case $\Pb\neq\Pbspinx$ and $\ga>1/2$ we prove that the phase transition is smoother in the disordered model than in the homogeneous one. This smoothing phenomenon has been first highlighted for the disordered pinning model by \cite{GTsmooth}, and our proof follows the same lines. Unfortunately we could not prove a smoothing phenomena in the case $\Pb=\Pbspinx$ ---this will be further discussed in Section~\ref{subsection:comments}.

As in \cite{GTsmooth} we need an additional assumption on the disorder law $\Pb$ (mostly for technical reasons).

\begin{assumption}\label{hyp:entropy}
Let $\tP_\delta$ denote the law of $(1+\delta)\hgo_1$ for any $\gd\in\R$. There are $c>0$ and $\gd_0>0$ such that for all $\gd \in (-\gd_0,\gd_0)$,
\begin{equation}\label{eq:hyp:entropy}
H(\tP_\gd|\Pb)\,:=\, \tE_\delta\Big[\log\frac{\dd \tP_\delta}{\dd \bbP}\Big]
\,\leq\, c\, \delta^2 \;,
\end{equation}
(Recall that $H(\tP_\gd|\Pb)$ is well-defined as soon as $\frac{\dd \tP_\delta}{\dd \bbP}$ exists, and it is non-negative).
\end{assumption}
Of course we make the same assumption regarding $\bgo$ (we assumed $\hat \bbP = \bar \bbP$). Notice that Assumption~\ref{hyp:entropy} is verified when $\hgo,\bgo$ are Gaussian sequences, and for many unbounded laws; however it does not hold for bounded disorder, in particular it does not hold for $\bbP_{\pm x}$.

\begin{theorem}\label{thm:rel:smoothing}
Suppose that Assumption~\ref{hyp:entropy} holds for $\Pb$ {\blue (in particular $\Pb\neq\Pbspinx$ for all $x>0$)}. Then {\blue for any $\gb\in(0,\gb_0)$}, there are constants $c_\gb$ and $t_\gb>0$ such that for any $t \in (0,t_\gb)$, one has
\begin{equation}\label{eq:thm:rel}
\tf (\beta,h_c(\gb) +t) \,\leq\, c_\gb \, t^2\;.
\end{equation}
\end{theorem}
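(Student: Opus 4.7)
The plan is to adapt the rare-stretch argument of Giacomin-Toninelli \cite{GTsmooth} to our correlated gPS setting. The idea is to use a small disorder tilt, whose entropy cost is controlled by Assumption~\ref{hyp:entropy}, to derive a quadratic upper bound on the free energy near criticality.

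First, for $\delta>0$ small, one defines the tilted law $\tilde\bbP_\delta$ under which $\hgo_i$ and $\bgo_j$ are respectively replaced by $(1+\delta)\hgo_i$ and $(1+\delta)\bgo_j$, independently. Assumption~\ref{hyp:entropy} yields $H(\tilde\bbP_\delta\,|\,\bbP)\le c\delta^2$ per tilted coordinate (this automatically excludes $\bbP=\bbPspinx$, whose rescaling has infinite relative entropy). Under $\tilde\bbP_\delta$, the field $\go_\bi$ has the same law as $(1+\delta)^2\go_\bi$ under $\bbP$, which yields the key reparametrization identity
\[
\tE_\delta\!\left[\log Z^{\gb,\quen}_{\bn,h}\right] \;=\; \bE\!\left[\log Z^{\gb(1+\delta)^2,\quen}_{\bn,\,h+u_\delta(\gb)}\right], \qquad u_\delta(\gb):=\lambda(\gb(1+\delta)^2)-\lambda(\gb),
\]
with $u_\delta(\gb)\sim 2\gb\lambda'(\gb)\delta>0$ as $\delta\to 0^+$ (since $\lambda'(\gb)>0$ on $(0,\gb_0)$ whenever $\go_\bone$ is not a.s.\ constant).

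The core step is to prove, via a rare-stretch estimate in the spirit of \cite[Ch.~5]{Giac07}, the key inequality
\[
\tf(\gb,h) \;\ge\; \tf\bigl(\gb,\, h+u_\delta(\gb)\bigr) \;-\; 2c\delta^2.
\]
To this end one partitions $\bnsquare$ into $\ell\times\ell$ sub-blocks; on the rare event (of $\bbP$-probability $\gtrsim e^{-2\ell c\delta^2}$ per sub-block, by Sanov-type estimates) that the corresponding segments of $\hgo$ and $\bgo$ look tilted, the local partition function behaves like $Z^{\gb(1+\delta)^2,\quen}$ at pinning $h+u_\delta(\gb)$, picking up $\ell\,\tf(\gb,h+u_\delta(\gb))$ in log-contribution. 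The total entropy cost for tilting the disorder on $\bnsquare$ is at most $2nc\delta^2$, linear in the perimeter $2n$ of the box rather than its area $n^2$ thanks to the 1D structure of our disorder field $\go_\bi=\hgo_{\bi^{(1)}}\bgo_{\bi^{(2)}}$---this is what yields the $2c\delta^2$ correction per unit length. Then, setting $h:=h_c(\gb)-t$ (so $\tf(\gb,h)=0$ by definition of $h_c$) and choosing $\delta$ such that $u_\delta(\gb)=2t$ (forcing $\delta\asymp t/[\gb\lambda'(\gb)]$), one deduces $\tf(\gb,h_c(\gb)+t)\le 2c\delta^2\asymp c_\gb t^2$ with $c_\gb\asymp [\gb\lambda'(\gb)]^{-2}$.

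The main obstacle will be carrying out the rare-stretch inequality rigorously in the bivariate gPS setting. One needs to (i)~define the good-block events on $\hgo,\bgo$ so that on their intersection the local partition function is well-approximated by its tilted counterpart; (ii)~handle renewal inter-arrivals of $\btau$ crossing sub-block boundaries, using the polynomial tail of $K$ and persistence of $\btau$ to show that paths can transition between diagonal good sub-blocks at sub-exponential cost; (iii)~exploit concentration of the quenched log-partition function to replace typical values by their expectations. The fact that the entropy cost scales with the perimeter rather than the area of $\bnsquare$ is exactly what allows the GT strategy to survive in our 2D setting---with genuinely i.i.d.\ 2D disorder the entropy would scale like $n^2$, making this approach infeasible.
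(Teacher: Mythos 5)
Your overall strategy --- dilating $\hgo$ and $\bgo$ by $(1+\delta)$ and running a rare-stretch argument, with the entropy cost scaling as the \emph{perimeter} $2n$ rather than the area $n^2$ --- is exactly the one the paper uses, and your reparametrization identity
\begin{equation*}
\tE_\delta\big[\log Z^{\gb,\quen}_{\bn,h}\big] \;=\; \bE\big[\log Z^{\gb(1+\delta)^2,\quen}_{\bn,\,h+u_\delta(\gb)}\big], \qquad u_\delta(\gb)=\lambda(\gb(1+\delta)^2)-\lambda(\gb),
\end{equation*}
is correct. The gap is in your ``key inequality''
\begin{equation*}
\tf(\gb,h) \;\geq\; \tf\bigl(\gb,\, h+u_\delta(\gb)\bigr) \;-\; 2c\delta^2.
\end{equation*}
As you yourself note one line earlier, the tilted block contributes $\tf\bigl(\gb(1+\delta)^2, h+u_\delta(\gb)\bigr)$ per unit length, \emph{not} $\tf\bigl(\gb, h+u_\delta(\gb)\bigr)$: the dilation changes the effective disorder strength to $\gb(1+\delta)^2$, and this cannot be silently dropped. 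The two quantities are not comparable in the direction you need --- $\gb \mapsto \tf(\gb, h_0)$ at fixed $h_0$ is not non-decreasing (Jensen gives $\tf(\gb,h)\leq\tf(0,h)$, so if anything it is pushed \emph{down}), so you cannot replace $\tf(\gb(1+\delta)^2,\cdot)$ by $\tf(\gb,\cdot)$. Consequently the rare-stretch argument, applied at $h=h_c(\gb)$, only yields a smoothing in the $\gb$-direction: writing $\tilde\tf(\gb,h):=\tf(\gb,h+\lambda(\gb))$ and $\tilde h_c(\gb):=h_c(\gb)-\lambda(\gb)$, and using $u_\delta(\gb)-\lambda(\gb(1+\delta)^2)=-\lambda(\gb)$, the bound you get is $\tilde\tf\bigl(\gb(1+\delta)^2,\tilde h_c(\gb)\bigr)\leq c'\delta^2$, which is Proposition~\ref{prop:proof:rel:smoothing}.

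The missing ingredient is the step that converts this $\gb$-smoothing into an $h$-smoothing at fixed $\gb$. The paper does this via the joint convexity of $\tilde\tf$ (Proposition~\ref{prop:existF}), which forces the critical curve $\gb\mapsto\tilde h_c(\gb)$ to be concave and decreasing. Given $t>0$ small, one finds $\gb_t<\gb$ with $\tilde h_c(\gb_t)=\tilde h_c(\gb)+t$, rewrites $\tf(\gb,h_c(\gb)+t)=\tilde\tf(\gb_t+u,\tilde h_c(\gb_t))$ with $u=\gb-\gb_t$, applies Proposition~\ref{prop:proof:rel:smoothing} to get a bound $c_{\tilde h_c(\gb_t)}(u/\gb_t)^2$, and then uses concavity of $\tilde h_c$ to control $u$ linearly by $t$ near $\gb$. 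Without this convexity step your argument does not close; you would need to add it, or else justify a direct comparison between $\tf$ at the dilated and undilated disorder strengths (which is false in the direction required).
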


When $\ga>1/2$, Theorem~\ref{thm:rel:smoothing} shows that disorder has a smoothing effect  on the phase transition. Indeed Theorem~\ref{thm:hom} claims that that the homogeneous model has a critical exponent $1/\min(1,\alpha)$, which is stricly smaller than $2$ if $\alpha>1/2$. {\blue Notice also that Theorem~\ref{thm:rel:smoothing} is not restricted to small values of $\gb$ (as opposed to other results in this paper).}

\subsection{Some comments on the results and the techniques of the proofs}
\label{subsection:comments}
\subsubsection*{Criteria for disorder (ir)-relevance: dependence on $\bbP$.}
An interesting feature of our setting is that, unlike the PS model or gPS with i.i.d. disorder, criteria on $\bP$ for disorder (ir)-relevance are not the same for all disorder distributions $\bbP$. We may foresee this peculiarity by looking at the correlation between rewards given by two different indices, that is $\bbE[e^{\gb(\go_\bi+\go_\bj)}]-\bbE[e^{\gb\go_\bi}]\bbE[e^{\gb\go_\bj}]$, $\bi,\bj\in\N^2$ (in particular those correlations appear in the proof of Theorems~\ref{thm:irrel} and \ref{thm:irrel:spin} in Section~\ref{subsection:secondmoment}, when computing the second moment of the partition function). It is obviously 0 if $\bi,\bj$ are on different lines and columns, and it is greater than $0$ if $\bi=\bj$. However if $\bi\neq\bj$ are on the same line or column (that is $\bi^{(1)}=\bj^{(1)}$ or $\bi^{(2)} =\bj^{(2)}$) then this correlation is $0$ \emph{if and only if} $\bbP=\Pbspinx$ for some $x>0$ ---otherwise it is positive for $\gb$ small (this follows from a Taylor expansion).

Therefore, when $\bbP\neq\Pbspinx$ for all $x>0$, the field $(e^{\gb \go_\bi})_{\bi\in\N^2}$ of rewards given by the disorder has strong correlations on each line and column, and the criteria for disorder (ir)-relevance in that case correspond to Harris' prediction for one-dimensional systems, \textit{i.e.} the marginal regime is at the value $\ga=1/2$. Whereas when $\bbP=\Pbspinx$ for some $x>0$, that field is much less correlated (all its two-point correlations are 0), and it matches Harris' prediction for two-dimensional systems, \textit{i.e.} the marginal regime is at $\ga=1$.

\subsubsection*{Comparison with the existing literature and other models.}
Let us compare our results to~\cite{BGK}, where the authors studied the gPS model with an i.i.d. disorder field $(\go_\bi)_{\bi\in\N^2}$. In this setting, they prove that disorder is irrelevant as soon as $\btau\cap\btau'$ terminates, and that the critical point is shifted for all $\ga>1$ by $\gb^{\max(\frac{2\ga}{\ga-1},4)+\eps}$. Those results are the same as our Theorems~\ref{thm:irrel:spin} and \ref{thm:rel:shift:m4=1}. This supports the idea that in our setting, when $\bbP=\Pbspinx$ for some $x>0$, the field $(e^{\gb \go_\bi})_{\bi\in\N^2}$ is poorly correlated (even though it is not i.i.d.) and has the same influence on the system as an i.i.d. field. However, when disorder has another distribution, the field becomes highly correlated and the comparison with i.i.d. disorder falls appart ---in particular the criteria for disorder (ir)-relevance are not the same.

Another interesting comparison is to the standard PS model (or the pinning model). The question of disorder relevance has been studied extensively in that context, see \cite{A08, BL18, DGLT09, Giac07,GTsmooth,Ton08} among others. For that model, it has been proven that disorder is irrelevant if and only if the process $\tau\cap\tau'$ terminates, with $\tau,\tau'$ two independent copies of the univariate renewal process ---in particular it is irrelevant for all $\ga<1/2$ and relevant for all $\ga>1/2$, where we assume that the inter-arrival distribution is $\bP(\tau_1=a)=L(a) a^{-(1+\ga)}$. With regards to Theorems~\ref{thm:irrel} and \ref{thm:rel:shift}, and noticing that the processes $\btau^{(r)}$ in our setting and $\tau$ in the PS model are very similar, we observe analogous criteria for disorder (ir)-relevance in both systems ---the standard PS model and our setting with $\bbP\neq\Pbspinx$--- with a marginal value $\ga=1/2$.

{\blue 
Finally, the order of the shift of the critical point when $m_1\neq0$ in our setting matches the standard PS model ---that is $\gb^{\max(\frac{2\ga}{2\ga-1},2)}$--- but not when $m_1=0$. Rewriting the disorder field $\go_{(i,j)} = \gz_i\xi_j + m_1 \gz_i + m_1 \xi_j + m_1^2$ with $\gz_i$, $\xi_j$ independent, centered variables, this seems to imply that when $m_1\neq0$, the main contribution to the disorder are the terms $m_1 \gz_i + m_1 \xi_j$, which are very similar to the disorder of the PS model ; whereas if $m_1=0$, the remaining term $\gz_i\xi_j$ carries a lower disorder intensity, leading to a shift of the critical point of smaller order.
}

\subsubsection{About the marginal cases.} 
In our paper we did not fully treat the marginal cases, that is $\ga=1/2$ when $\bbP\neq\Pbspinx$ and $\ga=1$ when $\bbP=\Pbspinx$. By comparing our setting to the PS model, it is expected that the assumptions of Theorems~\ref{thm:irrel} and \ref{thm:irrel:spin} are optimal, and that a shift of the critical point can be proven whenever $\btau^{(r)}\cap\btau'^{(r)}$ is persistent for $\bbP\neq\Pbspinx$, respectively whenever $\btau\cap\btau'$ is persistent for $\bbP=\Pbspinx$ (notice that in \cite{BGK} the authors make the same conjecture for the gPS model with i.i.d. disorder). This is only conjectural for now, and we expect that a great amount of technical work is needed in both cases (for instance, bivariate renewal estimates are very complex in the case $\ga=1$, see~\cite{B18}).

{\blue
\subsubsection{About higher disorder intensities.} 
To answer the issue of disorder (ir)-relevance, we only had to consider small disorder intensities $\gb$ (similarly to \cite{BGK}). It would still be interesting to study the disordered model for any $\gb\in(0,\gb_0)$, yet it seems particularly challenging. First, there is no clear reason for $h_c(\gb)$ to be non-decreasing in $\gb$, hence for $h_c(\gb)$ to be positive for all $\gb>0$ under the assumptions of Theorems~\ref{thm:rel:shift} or \ref{thm:rel:shift:m4=1} (the proof from \cite{GLT11} for the PS model does not apply here due to the correlations of $\go$). Second, apart for the smoothing of the phase transition, our methods rely heavily on taking small values of $\gb$: regarding disorder irrelevance in Section~\ref{section:irrel}, we require that $\gl(2\gb)-2\gl(\gb)$ is sufficiently small to ensure that some exponential moments are finite; and regarding the shift of the critical point, we require an upper bound like \eqref{eq:Afrachom:taylorexp} in the change of measure procedure, which we obtained with a Taylor expansion argument. Both those estimate are not guaranteed for larger values of $\gb$, and may even depend heavily on the choice of disorder distribution $\bbP$.

Nonetheless when $\gb$ is very large (close to $\gb_0$), one could adapt \cite[Theorem~2.1]{Ton08annealed} to prove that there is a strong disorder regime even when $\ga<1/2$ (see \cite[Corollary~3.2, Remark~3.3]{Ton08annealed}): this is based on a quite straightforward fractional moment estimate.
}

\subsubsection{About the proof of disorder relevance} 
Let us make some technical comments on our results for disorder relevance, starting with the shift of the critical point. 
Our lower bounds on the critical point shift are obtained with a coarse graining procedure, together with estimates on the fractional moments of the partition function obtained via a change of measure argument. This method has first been applied in \cite{DGLT09} for the PS model, and was adapted to the i.i.d. gPS model in \cite{BGK}. In this paper we use the same coarse-graining procedure as \cite{BGK}, but the change of measure argument ---which in \cite{BGK} relies on an i.i.d. tilt of the field $\go$--- cannot be replicated straightforwardly in the general case, because of the non-independent structure of $\go$ (we only do it for distributions $\Pbspinx$ in Section~\ref{section:shift:m4=1}, when proving the lower bound of Theorem~\ref{thm:rel:shift:m4=1}). In the case $\bbP\neq\Pbspinx$, we introduce another change of measure ---a simultaneous tilt of both sequences $\hgo$ and $\bgo$--- to prove Theorem~\ref{thm:rel:shift}. Interestingly enough, this change of measure relies on the correlated structure of the disorder, and does not lead to pertinent estimates in the case $\bbP=\Pbspinx$. Moreover, it is less costly than the tilt of the field $\go$ ---for a system of size $\bn=(n,n)$, we tilt $2n$ variables instead of $n^2$. In comparison to \cite{BGK} or the case $\bbP=\Pbspinx$, this induces the appearance of a shift of the critical point when $\ga\in(1/2,1]$, and a greater shift when $\ga\in(1,2]$.

Let us stress that we have not pursued optimal upper and lower bounds on the critical point shift: the $\gb^\eps$ in Theorems~\ref{thm:rel:shift}-\ref{thm:rel:shift:m4=1} can certainly be replaced by slowly varying functions with a more sophisticated coarse graining technique, (see \cite{BL18,CTT15} and \cite[Ch. 6]{Giac10} for the PS model). However the proofs of Theorems~\ref{thm:rel:shift} and \ref{thm:rel:shift:m4=1} are already rather technical, and getting sharper bounds would have been even more laborious; so we decided to stick with these ``almost'' optimal lower bounds for the sake of clarity.

\smallskip
As far as Theorem~\ref{thm:rel:smoothing} is concerned, a small disappointment in our result is that our proof relies heavily on Assumption~\ref{hyp:entropy}, whereas one could expect to prove the same smoothing inequality for any disorder law other than $\Pbspinx$---the same way smoothing inequalities have been more recently proven for all laws in some disordered polymer models (including PS and copolymer models, see \cite{CdH}). Because the disorder we consider is highly correlated, many technical difficulties appear when trying to prove a smoothing inequality, even under strong technical assumptions (\textit{e.g.} bounded and symmetric disorder). We do not try to expand Theorem~\ref{thm:rel:smoothing} under other assumptions in this paper, because we could not handle the computations without very restrictive assumptions, and even so the computations remain extremely cumbersome.

In the case $\bbP=\Pbspinx$, we do not have any result on the smoothing of the phase transition when $\ga>1$. Actually, proving a smoothing inequality is also an issue in the gPS model with i.i.d.\ disorder. It is conjectured in \cite[Conj. 1.5]{BGK} that the gPS model model with (say Gaussian) i.i.d.\ disorder undergoes a smoothing phenomenon with exponent $\min(\frac{2\ga}{\ga+1},\frac{4}{3})$ when $\ga>1$. Regarding our previous comments about the similarity between the i.i.d. gPS model and our setting with $\bbP=\Pbspinx$, and comments in \cite{BGK} leading the authors to their conjecture, it is reasonable to expect that a similar smoothing phenomenon occurs in our setting for disorder with distribution $\Pbspinx$, but it is purely conjectural for now.

\subsubsection{Open questions and perspectives}
Let us list here some questions on this model that are not answered yet, and perspectives for further study:

(i) Improving some of our results which are not fully satisfactory, namely: proving a smoothing inequality for $\bbP=\Pbspinx$ (this matter seems highly related to the gPS model with i.i.d. disorder), proving a smoothing without additional assumptions when $\bbP\neq\Pbspinx$ (which seems very technical), improving the upper bound from Proposition~\ref{prop:hcub:nonopt} to match Theorem~\ref{thm:rel:shift} when $m_1=0$.

(ii) Dealing with the marginal cases, in particular $\bbP=\Pbspinx$ and $\ga=1$, because bivariate renewal processes with $\ga=1$ are not extensively understood yet. 

(iii) We also omitted the case $\ga=0$, for two reasons:
the case $\ga=0$ should be ``strongly irrelevant'', in the sens that the quenched and annealed critical points should \emph{always} be equal (in the same spirit as in \cite{AZ10}), so it should somehow be easier to prove disorder irrelevance;
 from a technical point of view, there are no estimates for bivariate renewals in the case $\ga=0$ (there is no $\alpha$-stable domain of attraction), and this should therefore require a separate analysis.

{\blue
(iv) Dealing with higher values of $\gb$. As mentioned above, proving a strong disorder regime when $\gb$ approaches $\gb_0$ should be feasible even when $\ga<1/2$ by adapting \cite[Theorem~2.1]{Ton08annealed}. However handling intermediate values of $\gb$ may require much more technicalities or even different methods.
}


(v) As stated in \cite{GK17}, the gPS model undergoes other phase transitions. What is the effect of disorder on those? Do they still occur in the disordered model and are their features modified?
These questions are addressed in \cite{NG06}, but only via heuristic and numerical arguments.
{\blue
More recently, in~\cite{GH20}, 
the authors treat generalized (one-dimensional) pinning models whose homogeneous versions exhibit a \emph{big-jump} phase transition ---analogous to the other phase transitions in the homogeneous gPS model, see~\cite{BGK18}. They show that 
the presence of disorder is incompatible with the presence of a big jump, and therefore completely smears out the \emph{big-jump} phase transition (contradicting the conclusions of~\cite{NG06}). This suggests in our case that only the localization/delocalization phase transition survives in the disordered gPS model.
}

(vi) About the choice of disorder : with regards to the interaction function $f$, we chose a product function in \eqref{eq:disorder:product}, but we can conjecture that a generic (symmetric) function should lead to the same criteria for disorder (ir)-relevance, depending on the correlations of the field $(e^{\gb\go_\bi})_{\bi\in\N^2}$. However we assumed the two sequences $\hgo$, $\bgo$ ({\it i.e.} the two strands) to be independent, while it is known that two DNA strands have a strong symmetry (an A-base on one strand faces a T-base on the other, same for C- and G-bases). Therefore, a more pertinent choice of disorder field would be
\begin{equation}
\go_{\bi}\,:=\, f(\hgo_{\bi^{(1)}} ,\, \hgo_{\bi^{(2)}})\;,
\end{equation}
with $\hgo$ an (i.i.d.) sequence of random variables. This setting represents even more faithfully the denaturation of DNA, and it has been considered in numerical studies (see \cite{GO03, GO04}). But it is much more difficult to handle than the one in this paper: the argument of Propostition~\ref{prop:existF} does not apply (even if the sequence $\hgo$ is still assumed i.i.d.), hence even the convergence of the quenched free energy is not clear; moreover the annealed model does not match the homogeneous one anymore, and its analysis seems very challenging.


\subsection{Notation and organisation of the paper}
Let us introduce some notations for the subsequent sections. $C_1, C_2,\ldots$ will denote some constants, and $L_1, L_2,\ldots$ some slowly varying functions. Unless otherwise specified, they may depend on $\ga$ but will not depend on any other parameter $\bn, h, \gb,\ldots$. Moreover $L$ will always denote the function of the inter-arrival distribution (see \eqref{eq:interarrival:tau}), and $L_\ga$ the function introduced in Theorem~\ref{thm:hom} for the critical behavior of the homogeneous model.

For any $\bi, \bj\in\N^2$, we will note $\bi\leftrightarrow\bj$ if $\bi=\bj$ or $\bi$ and $\bj$ are on the same line or column (\textit{i.e.}\ if $\bi^{(r)}=\bj^{(r)}$ for some $r\in\{1,2\}$, recall the notation $\bi = (\bi^{(1)},\bi^{(2)})$ for all $\bi\in\N^2$). We also introduce an order on $\N^2$:
\begin{equation}\label{eq:defprec}
\begin{aligned}
\bi \prec \bj &\quad\text{if}\quad \bi^{(1)} < \bj^{(1)},\,\bi^{(2)} < \bj^{(2)}\;,\\
\bi \preceq \bj &\quad\text{if}\quad \bi^{(1)} \leq \bj^{(1)},\,\bi^{(2)} \leq \bj^{(2)}\;.
\end{aligned}
\end{equation}
For all $a<b\in\N$, we define $\intsquaretwo{a}{b}:=[a,b]\cap\N$. For all $\ba\preceq\bb\in\N^2$, $\bsquaretwo{\ba}{\bb}$ denotes the rectangle in $\N^2$ with bottom-left corner $\ba$ and top-right corner $\bb$: $\bsquaretwo{\ba}{\bb}:=\{\bm\in\N^2\,; \ba\preceq\bm\preceq\bb\}$. 
Finally we will note $\|\cdot\|$ the $L^1$ norm on $\N^2$: $\|\bi\|:=\bi^{(1)}+\bi^{(2)}$ for any $\bi\in\N^2$.

\smallskip
The remainder of the paper is organized into four sections. In Section~\ref{section:irrel} we show how to use a second moment method together with second moment estimates to prove both disorder irrelevance (Theorems~\ref{thm:irrel} and \ref{thm:irrel:spin}) and upper bounds on the shift of the critical point (Proposition~\ref{prop:hcub:nonopt} and right-hand-side inequality in Theorem~\ref{thm:rel:shift:m4=1}).
In Section~\ref{section:smoothing} we prove the smoothing inequality of Theorem~\ref{thm:rel:smoothing} under Assumption~\ref{hyp:entropy}, via a rare-stretch strategy.
In Section~\ref{section:shift}, we display the coarse-graining method, and compute estimates on the fractional moments to obtain lower bounds on the shift of the critical point when $\Pb\neq\Pbspinx$ for all $x>0$ (Theorem~\ref{thm:rel:shift})
Finally in Section~\ref{section:shift:m4=1}, we adapt the change of measure argument from \cite{BGK} to distributions $\Pbspinx$ to prove the same estimates on the fractional moments, thereby proving the left-hand side inequality in Theorem~\ref{thm:rel:shift:m4=1}. Section~\ref{section:shift:m4=1} relies on the coarse-graining procedure introduced in Section~\ref{section:shift}, the other sections are independent.

Additionally, we provide in Appendix~\ref{section:appendixA} some computations on the partition function of the homogeneous gPS model that are needed in Section~\ref{section:shift}, in Appendix~\ref{section:appendixB} we collect useful estimates on bivariate renewals, and in Appendix~\ref{section:appendixC} we compute the upper bound claimed below Proposition~\ref{prop:hcub:nonopt} for Gaussian disorder.

\section{Disorder irrelevance}\label{section:irrel}
In this section, we choose $\bn=(n,n)$ where $n\in\N$ (recall that we assume $\gamma=1$). 
Let us define the \emph{free} version of the gPS model, where the constraint $\ind_{\{\bn\in \btau\}}$ is removed in \eqref{def:polymermeasure}, \textit{i.e.}\ the endpoints are free: its partition function is defined as
\begin{equation} Z_{\bn,h}^{\gb, \quen, \textit{free}}\,:=\, \bE\Big[  \exp\Big( \sum_{\bi\in\bnsquare} \big( \gb \go_\bi - \lambda(\gb)+h\big) \ind_{\{\bi \in \btau\}} \Big)  \Big] \; .\end{equation}
We claim that the constrained and free partition functions are comparable: more precisely for any $\ga_+>\ga$ and $\bm\in\N^2$,
\begin{equation}\label{eq:compZ}
Z_{\bm,h}^{\gb,\quen}\,\leq\, Z_{\bm,h}^{\gb,\quen,\textit{free}} \,\leq\, Z_{\bm,h}^{\gb,\quen} \big(1+C_{\cntc}\|\bm\|^{3+\ga_+}\!\sup_{\bl\leq\bm,\,\bl\ligned\bm}\!(e^{\gb(\go_\bl-\go_\bm)})\big)\;,
\end{equation}
where $C_{\arabic{cst}}$ is a uniform constant (see \cite[Lem. 2.2]{BGK}: here again the proof is not altered by our setting of disorder). In particular, the free energy is not modified with respect to \eqref{eq:saf}: we have that $\tf(\beta,h) = \lim_{n\to\infty} \frac {1}{n} \log Z^{\gb,\quen,\textit{free}}_{\bn,h}$, both $\bbP(\dd \go)$-a.s.\  and in $L^1(\bbP)$.

Note that, as in \cite{BGK18}, we defined the free partition function simply by removing the constraint $\{\bn\in \btau\}$, but let us mention that in the literature the free ends may be assigned some entropic cost: for instance see \cite{GK17}, where the authors consider the (homogeneous) free partition function $\sum_{i,j\leq n} K_f(i)K_f(j) Z^{0,\quen}_{\bn,h}$, with $K_f$ some regularly varying function. However this would not affect our results, since it has no effect at the level of the free energy.


\subsection{Second moment method}
Our proof of disorder irrelevance relies on the idea that, if $\sup_{n\in \bbN} \bbE\big[  (Z^{\gb,\quen,\textit{free}}_{\bn,0})^2 \big] <+\infty$, then $Z^{\gb,\quen,\textit{free}}_{\bn,h}$ should remain concentrated around its mean, at least in a certain regime for $n,h$; in particular the quenched and annealed free energy should remain close to each other (this idea has been exploited and turned into a proof in \cite{A08,Ton08} for the PS model).
In \cite{L10} (for the PS model again), it is roughly showed that as long as $\bbE\big[  (Z^{\gb,\quen,\textit{free}}_{\bn,0})^2 \big]$ is of order~$1$, the measure $\bP_{\bn,0}^{\gb,\quen,\textit{free}}$ does not differ much from $\bP$---this has also been exploited in \cite{BL18}. We use this idea to obtain the following statement.

\begin{proposition}\label{prop:irrel}
Fix some constant $C >1$, and define
\begin{equation}\label{eq:def:ngb}
n_\gb\,:=\,\sup\big\{n\in\N\,;\,\bbE\big[(Z_{\bn,0}^{\gb,\quen, \textit{free}})^2\big]\leq C\big\}\;.
\end{equation}
Then there is some (explicit) slowly varying function $L_\cntf$ such that the critical point satisfies
\begin{equation}\label{eq:hcupperbound}
0\,\leq\, h_c(\gb)\,\leq\, L_{\arabic{svf}}(n) \, n^{-\min(\ga,1)}\;,
\end{equation}
for any $n\leq n_\gb$.\\
If $(Z_{\bn,0}^{\gb,\quen,\textit{free}})_{n\in\N}$ is bounded in $L^2(\Pb)$, then $n_{\gb}=+\infty$ (provided that $C$ had been fixed large), so $h_c(\gb)=0$; moreover there exists a slowly varying function $L_2$ such that for all $h \in (0,1)$,
\begin{equation}\label{eq:flowerbound}
\tf(\gb,h) \,\geq\, L_2(1/h) \, h^{1/\min(1,\ga)}\;.
\end{equation}
\end{proposition}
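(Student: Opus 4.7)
The plan is a standard second-moment/Paley--Zygmund argument, followed by a concatenation along the diagonal (where the block partition functions are independent thanks to the product structure of the disorder), and finally an optimization over $n$ and $h$. First, apply Paley--Zygmund to $X_n := Z_{\bn,0}^{\gb,\quen,\textit{free}}$ with $\bn=(n,n)$. By \eqref{eq:annealed} at $h=0$, $\bbE[X_n] = Z_{\bn,0}^{0,\quen,\textit{free}} = \bE[1] = 1$, while by definition of $n_\gb$, $\bbE[X_n^2]\leq C$ for $n\leq n_\gb$. This gives $\bbP(A_n)\geq 1/(4C) =: p_0$, where $A_n:=\{X_n\geq 1/2\}$.

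Next, decompose $\llbracket\bone,N\bn\rrbracket$ into $N$ diagonal blocks of size $\bn$ and restrict $\btau$ to pass through every $k\bn$: since $\go_\bi = \hgo_{\bi^{(1)}}\bgo_{\bi^{(2)}}$, the $k$-th block depends only on the sub-sequences $(\hgo_{(k-1)n+1},\ldots,\hgo_{kn})$ and $(\bgo_{(k-1)n+1},\ldots,\bgo_{kn})$, which are independent across $k$. Hence the block partition functions $(Z^{(k)})_k$ are i.i.d.\ copies of $Z_{\bn,h}^{\gb,\quen}$, and the law of large numbers applied to $Z_{N\bn,h}^{\gb,\quen}\geq\prod_k Z^{(k)}$ yields
$$\tf(\gb,h) \;\geq\; \frac{1}{n}\,\bbE\!\big[\log Z_{\bn,h}^{\gb,\quen}\big].$$

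The core step is to lower-bound $\bbE[\log Z_{\bn,h}^{\gb,\quen}]$ for $h$ at the scale $L_{\arabic{svf}}(n)n^{-\min(\ga,1)}$. On $A_n$, combine the monotonicity $h\mapsto Z_{\bn,h}^{\gb,\quen,\textit{free}}$ with \eqref{eq:compZ} to bound $Z_{\bn,h}^{\gb,\quen}$ from below by $(2V_\bn)^{-1}$, with $V_\bn:=1+C\|\bn\|^{3+\ga_+}\sup_{\bl\ligned\bn}e^{\gb(\go_\bl-\go_\bn)}$ having controllable $\bbP$-moments. The $h$-dependence is then extracted from the expected contact number on $A_n$ (of order $n^{\min(\ga,1)}$, matching the homogeneous scale from Theorem~\ref{thm:hom}) via the derivative relation \eqref{contactfraction}. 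On $A_n^c$, use the deterministic single-contact lower bound $Z_{\bn,h}^{\gb,\quen}\geq K(2n)e^{\gb\go_\bn-\lambda(\gb)+h}$, whose $\bbP$-log has finite mean of order $-(2+\ga)\log n$. Balancing these contributions gives
$$\bbE\!\big[\log Z_{\bn,h}^{\gb,\quen}\big] \;\geq\; c\,h\,n^{\min(\ga,1)} - C\log n,$$
strictly positive as soon as $h\geq L_{\arabic{svf}}(n)n^{-\min(\ga,1)}$ for a slowly varying $L_{\arabic{svf}}$ absorbing the logarithmic corrections; this establishes \eqref{eq:hcupperbound}. If $(X_n)_n$ is bounded in $L^2(\bbP)$, pick $C$ larger than this uniform bound, so $n_\gb=+\infty$ and hence $h_c(\gb)=0$. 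The quantitative bound \eqref{eq:flowerbound} then follows by inverting the scale relation: given $h\in(0,1)$, choose $n=n(h)\asymp L(1/h)^{1/\min(\ga,1)}h^{-1/\min(\ga,1)}$ so that $L_{\arabic{svf}}(n)n^{-\min(\ga,1)}\asymp h$, and conclude $\tf(\gb,h)\gtrsim 1/n(h)\asymp L_2(1/h)h^{1/\min(\ga,1)}$.

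The main obstacle lies in the third step: $\log Z_{\bn,h}^{\gb,\quen}$ can diverge to $-\infty$ on the low-probability event $A_n^c$, so the Paley--Zygmund bound alone is insufficient. The delicate balancing between the second-moment-controlled lower bound on $A_n$ (via \eqref{eq:compZ}) and the deterministic single-contact lower bound on $A_n^c$ is what pins down $h\asymp n^{-\min(\ga,1)}$ as the correct threshold for $\tf(\gb,h)>0$, and this is also what dictates the exact form of the slowly varying function $L_{\arabic{svf}}$.
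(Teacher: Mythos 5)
Your high-level strategy (Paley--Zygmund on $X_n:=Z_{\bn,0}^{\gb,\quen,\textit{free}}$, concatenation along the diagonal to invoke super-additivity, convexity in $h$) matches the paper's, but the core step you flag as ``the main obstacle'' is not actually resolved, and the way you set it up cannot be made to work without adding a key ingredient. You define $A_n:=\{X_n\geq 1/2\}$ as an event on the \emph{disorder} space and then assert that ``the expected contact number on $A_n$'' is of order $n^{\min(\ga,1)}$, citing Theorem~\ref{thm:hom}. But Theorem~\ref{thm:hom} is a free-energy asymptotic, not a finite-volume contact estimate, and in any case having $Z_{\bn,0}^{\gb,\quen,\textit{free}}\geq 1/2$ tells you nothing directly about how many contacts the quenched polymer measure $\bP_{\bn,0}^{\gb,\quen,\textit{free}}$ places on a typical trajectory: the partition function can be of order one while the measure concentrates on trajectories with $O(1)$ contacts. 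The $h$-dependence in $\bbE\log Z_{\bn,h}^{\gb,\quen}$ (via the derivative $\partial_h\log Z=\bE_{\bn,\cdot}^{\gb,\quen}[\#\text{contacts}]$ and convexity) must come from a lower bound on $\bbE\,\bE_{\bn,0}^{\gb,\quen,\textit{free}}[\#\text{contacts}]$, and your proposal supplies no mechanism for that lower bound.

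What is missing is a finite-volume renewal estimate: an event $A_n$ defined on the \emph{renewal} space (not the disorder space), namely $A_n:=\{|\btau\cap\boldsquare|\geq C\,f(n)\}$ with $f(n)\asymp n^{\min(\ga,1)}$ (up to a slowly varying factor), together with the law-of-large-numbers-type statement that $\bP(A_n)\geq 1-\gep$ for large $n$ --- this is exactly the content of Lemma~\ref{lem:countact} in the paper. That renewal estimate is then transferred to the quenched polymer measure by the inequality
\begin{equation*}
\bP_{\bn,0}^{\gb,\quen,\textit{free}}(A_n^c) \,\leq\, \ind_{\{Z_{\bn,0}^{\gb,\quen,\textit{free}}\leq 1/2\}} + 2\,\bE\big[\ind_{A_n^c}\,e^{\sum_{\bi}(\gb\go_\bi-\gl(\gb))\ind_{\{\bi\in\btau\}}}\big]\,,
\end{equation*}
which after taking $\bbE$ and using Fubini gives $\bbE[\bP_{\bn,0}^{\gb,\quen,\textit{free}}(A_n^c)]\leq \bbP(X_n\leq1/2)+2\bP(A_n^c)\leq 1-\tfrac{1}{4C}+2\gep$ by Paley--Zygmund; this is the step where your $\{X_n\geq 1/2\}$ event belongs. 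Only after combining the renewal LLN with this change-of-measure bound can one deduce $\bbE\,\bE_{\bn,0}^{\gb,\quen,\textit{free}}[\#\text{contacts}]\geq c\,f(n)$ and hence $\bbE\log Z_{\bn,h}^{\gb,\quen}\geq c\,h\,f(n)-C\log n$, which is the estimate you want. Your subsequent optimisation in $n$ and $h$ is correct once this estimate is established.
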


Before we prove this proposition, notice that it fully implies the non-relevance of the disorder as soon as $(Z_{\bn,0}^{\gb,\quen,\textit{free}})_{n\in\N}$ is bounded in $L^2(\Pb)$. We show in Section~\ref{subsection:secondmoment} that this holds under the assumptions of either Theorem~\ref{thm:irrel} or \ref{thm:irrel:spin} with $\gb$ small, proving both theorems. Otherwise one can use \eqref{eq:hcupperbound} and an estimate of $n_\gb$ to obtain an upper bound for the shift of the critical point, which we do in Section~\ref{sec:hcub:nonopt}.

(Note that it is not self-evident that $\bbE\big[(Z_{\bn,0}^{\gb,\quen, \textit{free}})^2\big]<\infty$ for any $n\in\N$. We actually prove it in Section~\ref{subsection:secondmoment} for $\gb<\gb_0/2$).

\begin{proof} We already stated the left inequality of \eqref{eq:hcupperbound} with Jensen's inequality in \eqref{eq:inh}. Let us prove the right inequality. Fix $h\in\R$. 
We note that the sequence $(\bbE \log Z_{\bn,h}^{\gb,\quen})_{n\in\N}$ is super-additive, so \eqref{eq:saf} and \eqref{eq:compZ} give for any $n\in\N$,
\begin{equation}\label{eq:tech:irrel:1}
\tf(\gb,h) \,=\, \sup_{n\in\N}\frac{1}{n}\bbE\log Z^{\gb,\quen}_{\bn,h} \,\geq\, \frac{1}{n}\bbE\log Z^{\gb,\quen, \textit{free}}_{\bn,h} - C_\cntc \frac{\log n}{n}\;.
\end{equation}
Moreover we have:
\begin{equation}\label{eq:tech:irrel:1bis}\begin{aligned}
\bbE\log Z^{\gb,\quen, \textit{free}}_{\bn,h} \,&\geq\, h\,\bbE\big[\partial_h \log Z_{\bn,h}^{\gb,\quen, \textit{free}}\big|_{h=0}\big] + \bbE\big[\log Z_{\bn,0}^{\gb,\quen, \textit{free}}\big]\\
&\geq\, h\,\bbE\,\bE_{\bn,0}^{\gb,\quen, \textit{free}}\Big[\sum_{\bi\in\bnsquare}\ind_{\{\bi\in\btau\}}\Big] - C_\cntc \log n ,
\end{aligned}\end{equation}
where we used the convexity of $h\mapsto \log Z_{\bn,h}^{\gb,\quen, \textit{free}}$, and the obvious bound $Z_{\bn,h}^{\gb,\quen, \textit{free}} \geq \bP(\|\btau_1\|>2n)$. Here we have to estimate the contact fraction of the renewal process under $\bP_{\bn,0}^{\gb,\quen, \textit{free}}$. We first do it under $\bP$.

\begin{lemma}\label{lem:countact} For any $\eps>0$, there exist $C_\cntc>0$ and $n_0$ such that for any $n\geq n_0$,
\begin{equation}
\bP\big(|\btau\cap\bnsquare|\geq C_{\arabic{cst}}\, f(n)\big) \,\geq\, 1-\eps\;,
\end{equation}
where $ f(n) := n^\ga/L(n)$ if $\ga\in(0,1)$ and $f(n):= n/\mu(n)$ if $\ga\geq 1$, with $\mu(n)= \bE[  \btau_1^{(1)}]$ if $\bE[  \btau_1^{(1)}]<+\infty$ and $\mu(n):= \bE[  \btau_1^{(1)} \ind_{\{\btau_1^{(1)} \leq n\}}]$ if $\bE[  \btau_1^{(1)}]=+\infty$.
\end{lemma}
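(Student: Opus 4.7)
The plan is to reduce the bivariate contact count in $\bnsquare$ to univariate estimates on the two marginal renewals, then invoke standard renewal theorems. Since the inter-arrivals of $\btau$ lie in $\bbN^2$, each sequence $(\btau_i^{(r)})_{i\geq 0}$, $r\in\{1,2\}$, is strictly increasing. Setting $N_r(n):=|\btau^{(r)}\cap\llbracket 1,n\rrbracket|=\max\{i\geq 0:\btau_i^{(r)}\leq n\}$, I first observe that $\btau_i\in\bnsquare$ iff $\btau_i^{(r)}\leq n$ for both $r\in\{1,2\}$, which by monotonicity is equivalent to $i\leq\min(N_1(n),N_2(n))$. Hence $|\btau\cap\bnsquare|=\min(N_1(n),N_2(n))$, and a union bound yields
\begin{equation*}
\bP\bigl(|\btau\cap\bnsquare|<c\, f(n)\bigr)\,\leq\, 2\,\bP\bigl(N_1(n)<c\, f(n)\bigr)\;,
\end{equation*}
so it will suffice to bound the left tail of $N_1(n)$ by $\eps/2$ for a suitable constant $c>0$ (which will play the role of $C_{\arabic{cst}}$).

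The second step invokes classical renewal theory applied to the univariate process $\btau^{(1)}$, whose inter-arrival tail behaves as $\bP(\btau_1^{(1)}>n)\sim\ga^{-1}\tilde L(n)\,n^{-\ga}$ with $\tilde L\sim(1+\ga)^{-1}L$. When $\ga\in(0,1)$, the stable limit theorem for heavy-tailed renewals (see \textit{e.g.}~\cite[Thm.~8.6.3]{BGT87}) asserts that $N_1(n)\,L(n)/n^{\ga}$ converges in distribution to a positive Mittag--Leffler random variable; choosing $c>0$ small enough gives $\bP(N_1(n)\geq c\,n^{\ga}/L(n))\geq 1-\eps/2$ for $n$ large, which matches $f(n)=n^{\ga}/L(n)$. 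When $\ga\geq 1$ and $\bE[\btau_1^{(1)}]=\mu<\infty$, the strong law $N_1(n)/n\to 1/\mu$ almost surely directly yields $\bP(N_1(n)\geq n/(2\mu))\geq 1-\eps/2$ for $n$ large, consistent with $f(n)=n/\mu(n)$ and $\mu(n)\equiv\mu$. When $\bE[\btau_1^{(1)}]=+\infty$ (which, under our standing assumptions, requires $\ga=1$ with suitable slowly varying behavior), I will combine Karamata's theorem ---which gives that $\mu(n)=\bE[\btau_1^{(1)}\ind_{\{\btau_1^{(1)}\leq n\}}]$ is slowly varying--- with the tightness and relative stability of the first-passage time, see~\cite[\S8.6.2]{BGT87}, to conclude that $N_1(n)\,\mu(n)/n$ is bounded away from zero in probability.

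The main (but minor) obstacle will be the critical regime $\ga=1$ with infinite mean, where slowly varying factors must be tracked carefully through Karamata's estimates and where concentration is weaker than in the other two cases; still, no idea beyond classical univariate renewal theory is required. Combining the reduction step with the appropriate renewal estimate in each regime and absorbing the factor $2$ from the union bound into $c$ yields the statement of the lemma.
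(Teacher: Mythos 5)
Your proof is correct and follows essentially the same route as the paper: identify $|\btau\cap\bnsquare|=\min(N_1(n),N_2(n))$ (valid since the inter-arrivals lie in $\N^2$), reduce by a union bound to the left tail of the univariate marginal $\btau^{(1)}$, and invoke the law of large numbers for $\ga>1$ and the weak law $N_1(n)\mu(n)/n\to 1$ in probability in the $\ga=1$ infinite-mean case. The only difference is cosmetic: for $\ga\in(0,1)$ the paper simply cites \cite[Lem.~A.2]{BGK}, whereas you argue directly through the Mittag--Leffler limit for $N_1(n)L(n)/n^{\ga}$, which is the same classical input made self-contained.
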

\begin{proof}
When $\ga\in(0,1)$, this result is already proved in \cite[Lem. A.2]{BGK}. When $\ga \geq 1$, we have that
\begin{equation}
\frac{|\btau^{(1)} \cap [1,n] |}{f(n)} \,\to\, 1 \quad \text{ in probability} \, .
\end{equation}
Indeed, when $\bE[\btau^{(1)}_1]<+\infty$ then this is simply the law of large numbers, whereas when $\ga=1$ and $\bE[\btau^{(1)}_1]=+\infty$, this is for example in \cite{DHR79}.
Hence, provided that  $C_{\arabic{cst}}<1$, then $\bP\big(|\btau\cap\bnsquare|<  C_{\arabic{cst}} f(n) \big) \to 0$ as $n\to\infty$, which proves the lemma.
\end{proof}

Let us denote $A_n:=\{|\btau\cap\bnsquare|\geq C_{\arabic{cst}}\, f(n) \}$. We can estimate the contact fraction under $\bP_{\bn,0}^{\gb,\quen,\textit{free}}$ with the simple inequality:
\begin{equation}
\bP_{\bn,0}^{\gb,\quen,\textit{free}} (A_n^c) \,\leq\, \ind_{\{Z_{\bn,0}^{\gb,\quen,\textit{free}}\leq 1/2\}} \,+\, 2 \bE\big[\ind_{A_n^c} \, e^{\sum_{\bi\in\bnsquare}(\gb\go_\bi-\gl(\gb))\ind_{\{\bi\in\btau\}}}\big]\;,
\end{equation}
so we obtain for any $\eps>0$ and $n\geq n_0$,
\begin{equation}\begin{aligned}
\bbE \big[\bP_{\bn,0}^{\gb,\quen,\textit{free}} (A_n^c)\big] \,&\leq\, \Pb\big(Z_{\bn,0}^{\gb,\quen,\textit{free}}\leq 1/2\big) + 2 \bP(A_n^c)\\
&\leq\, 1 - \frac{1}{4\bbE\big[(Z_{\bn,0}^{\gb,\quen,\textit{free}})^2\big]} + 2\eps,
\end{aligned}\end{equation}
where we used Paley-Zygmund inequality and Lemma \ref{lem:countact}. If we choose $\eps$ small enough (more precisely $\eps<(8C)^{-1}$), and $n\leq n_\gb$, this implies
\begin{equation}
\bbE\bE_{\bn,0}^{\gb,\quen, \textit{free}}\big[\,|\btau\cap\bnsquare|\,\big] \,\geq\, C_{\arabic{cst}} \, f(n) \,\bbE \big[\bP_{\bn,0}^{\gb,\quen,\textit{free}} (A_n)\big] \,\geq\, C_\cntc \, f(n)\;,
\end{equation}
(where we get rid of the condition $n\geq n_0$ by adjusting $C_{\arabic{cst}}$ for finitely many terms). Going back to \eqref{eq:tech:irrel:1} and \eqref{eq:tech:irrel:1bis}, we finally obtain the following lower bound for any $n\leq n_\gb$:
\begin{equation}\label{eq:tech:irrel:2}
\tf(\gb,h)\,\geq\, \frac{ C_\cntc }{n} \big( h f(n)  - C_\cntc  \log n \big) \;,
\end{equation}
where we recall $f(n) := n/\mu(n)$ if $\ga\geq 1$ (with $\mu(n)$ either a constant, or going to $+\infty$ as a slowly varying function in the case $\ga=1$, $\bE[  \btau_1^{(1)}]=+\infty$.), and $f(n) := n^\ga/L(n)$ if $\ga\in(0,1)$. If we take $h=h_c(\gb)$ in \eqref{eq:tech:irrel:2},  then $\tf(\gb,h_c(\gb))=0$, so we get
\begin{equation}
h_c(\gb)\, \leq\, C_\cntc \frac{\log n}{f(n)}\;,
\end{equation}
which concludes the proof of \eqref{eq:hcupperbound}, with $L_{\arabic{svf}}(n)=C_\arabic{cst}\, L(n)\log(n)$ if $\ga\in(0,1)$; $L_{\arabic{svf}}(n)=C_\arabic{cst} \mu(n) \log n$ if $\ga \geq 1$.

\smallskip
If $(Z_{\bn,0}^{\gb,\quen,\textit{free}})_{n\in\N}$ is bounded in $L^2(\Pb)$, then we can choose $C$ such that $n_\gb=\infty$, so \eqref{eq:hcupperbound} holds for any $n\in\N$ and we obviously have $h_c(\gb)=0$.
Furthermore \eqref{eq:tech:irrel:2} also holds for any $n\in\N$, so if we take $h>0$ and $n=C_\cntc\, V_\ga(1/h)$ where $V_\ga$ is the asymptotical inverse of $b\mapsto b^{\min(\ga,1)} L_{\arabic{svf}}(b)^{-1}$ as $b\to\infty$ and $C_{\arabic{cst}}$ is a suitable constant, we finally obtain \eqref{eq:flowerbound} from \eqref{eq:tech:irrel:2}.
\end{proof}

\subsection{Second moment estimates}
\label{subsection:secondmoment}
With regard to Proposition \ref{prop:irrel}, it suffices to estimate $\bbE\big[(Z^{\gb,\quen,\textit{free}}_{\bn,0})^2\big]$ to prove disorder irrelevance, or at least an upper bound on the shift of the critical point. To do so we introduce two independent copies $\btau$, $\btau'$ of a renewal process with law $\bP$.

\begin{proposition}\label{prop:L2bound} 
For any $\gb\in[0,\gb_0/2)$ one has $Z^{\gb,\quen,\textit{free}}_{\bn,0} \in L^2(\Pb)$ for all $n\in\N$ and:
\begin{align}\label{eq:proof:L2bound:6}
\limsup_{n\rightarrow\infty}\bbE\Big[\big(Z^{\gb,\quen,\textit{free}}_{\bn,0}\big)^2\Big] 
&\;\leq\; \bE_{(\btau,\btau')}\Big[e^{\frac{3}{2}\big(\gl(2\gb)-2\gl(\gb)\big)\big(|\btau^{(1)}\cap\btau'^{(1)}|+|\btau^{(2)}\cap\btau'^{(2)}|\big)}\Big]\;,
\intertext{
When $\Pb=\Pbspinx$ for some $x>0$, one has for any $\gb\in\R_+$:
}\label{eq:proof:L2bound:8}
\lim_{n\rightarrow\infty}\bbEspinx\Big[\big(Z^{\gb,\quen,\textit{free}}_{\bn,0}\big)^2\Big] &\;=\; \bE_{(\btau,\btau')}\Big[e^{\big(\gl(2\gb)-2\gl(\gb)\big)|\btau\cap\btau'|}\Big]\;.
\end{align}
\end{proposition}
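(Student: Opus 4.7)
The plan is to introduce an independent copy $\btau'$ of $\btau$ with law $\bP$ and then exploit the product structure $\go_\bi = \hgo_{\bi^{(1)}}\bgo_{\bi^{(2)}}$ of the disorder field. Setting $W_\bi := e^{\gb\go_\bi - \gl(\gb)}$, $A_\bn := (\btau\cup\btau')\cap\bnsquare$ and $c_\bi := \ind_{\{\bi\in\btau\}}+\ind_{\{\bi\in\btau'\}}\in\{1,2\}$, the starting point is the identity
\begin{equation*}
\bbE\big[(Z^{\gb,\quen,\textit{free}}_{\bn,0})^2\big] \,=\, \bE_{\btau,\btau'}\Big[\bbE\Big[\prod_{\bi\in A_\bn} W_\bi^{c_\bi}\Big]\Big].
\end{equation*}
For the $L^2$-finiteness claim when $\gb<\gb_0/2$, I would apply Cauchy--Schwarz to the inner expectation and use that within a single renewal trajectory all points are pairwise non-aligned (inter-arrivals lie in $\bbN^2$), so $(\go_\bi)_{\bi\in\btau}$ is an i.i.d.\ family and $\bbE[\prod_{\btau\cap\bnsquare}W_\bi^2]=e^{(\gl(2\gb)-2\gl(\gb))|\btau\cap\bnsquare|}$ is finite whenever $\gl(2\gb)<\infty$.

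The central structural observation is the acyclicity of a bipartite graph $H_\bn$ with vertex set $\mathcal{I}_1\cup\mathcal{I}_2$, where $\mathcal{I}_r:=(\btau^{(r)}\cup\btau'^{(r)})\cap\llbracket1,\bn^{(r)}\rrbracket$, and with edges indexed by the points of $A_\bn$. Each vertex has degree at most $2$, and $H_\bn$ contains no cycle: a cycle would have edges alternating between $\btau$ and $\btau'$, and the strict $\prec$-increase of bivariate renewals rules out such an alternation (it would force one of $\btau,\btau'$ to contain two $\prec$-incomparable points). Thus $H_\bn$ is a disjoint union of paths. Integrating the i.i.d.\ field $\hgo$ first factorizes over rows:
\begin{equation*}
\bbE\Big[\prod_{\bi\in A_\bn}W_\bi^{c_\bi}\Big] \,=\, \bar\bbE\Big[\prod_{a\in\mathcal{I}_1}\exp\big(\hat\gl(\gb Y_a)-C_a\gl(\gb)\big)\Big],
\end{equation*}
with $Y_a:=\sum_{\bi:\bi^{(1)}=a}c_\bi\bgo_{\bi^{(2)}}$, $C_a:=\sum_{\bi:\bi^{(1)}=a}c_\bi$, and $\hat\gl(t):=\log\hat\bbE[e^{t\hgo_1}]$; by acyclicity the remaining $\bar\bbE$-integral factorizes over the connected components of $H_\bn$.

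For \eqref{eq:proof:L2bound:8}, the calculation in the $\Pbspinx$ case becomes explicit. Writing $\hgo_a=x\eta_a$, $\bgo_b=x\xi_b$ with $\eta_a,\xi_b$ Rademacher, one has $\hat\gl(t)=\log\cosh(xt)$ and $\gl(\gb)=\log\cosh(\gb x^2)$. A direct case analysis shows that rows $a$ with a unique point in $A_\bn$ and $c_\bi=1$ contribute $\cosh(\gb x\bgo_b)/\cosh(\gb x^2)=1$ (since $\bgo_b=\pm x$), rows $a$ with two distinct points in $A_\bn$ give a factor averaging to $1$ over the relevant $\bgo_b,\bgo_{b'}$ via $\bar\bbE[\cosh(\gb x(\bgo_b+\bgo_{b'}))]=\cosh^2(\gb x^2)$, and each row corresponding to an intersection point $\bi\in\btau\cap\btau'$ contributes exactly $u:=e^{\gl(2\gb)-2\gl(\gb)}$. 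These contributions telescope along each path of $H_\bn$, giving $\bbE[\prod_{A_\bn}W_\bi^{c_\bi}]=u^{|\btau\cap\btau'\cap\bnsquare|}$ exactly for every $n$; monotone convergence as $n\to\infty$ then yields \eqref{eq:proof:L2bound:8}.

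For \eqref{eq:proof:L2bound:6} in the general case, the path contributions are no longer exactly $1$ or $u$, but I would bound them using Jensen's inequality. The basic estimate at a degree-$2$ shared coordinate $b\in\btau^{(2)}\cap\btau'^{(2)}$ is
\begin{equation*}
\bar\bbE\big[(\hat\bbE[e^{\gb\hgo\bgo_b}])^2\big]\,e^{-2\gl(\gb)} \,\leq\, \bbE[e^{2\gb\hgo\bgo_b}]\,e^{-2\gl(\gb)} \,=\, u,
\end{equation*}
and symmetrically for shared rows; an iterative use of Hölder/Jensen along each path, combining the row- and column-contributions, yields a bound of $u^{3/2}$ per shared coordinate, which summed over all components gives the claimed exponential in $|\btau^{(1)}\cap\btau'^{(1)}|+|\btau^{(2)}\cap\btau'^{(2)}|$. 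Fatou's lemma as $n\to\infty$ then delivers \eqref{eq:proof:L2bound:6}. The main difficulty is this general-case bound: acyclicity of $H_\bn$ makes a path-by-path analysis possible, but extracting the $u^{3/2}$ factor per shared coordinate requires careful control of the mixed row-column correlations along longer paths through iterated Hölder/Jensen estimates.
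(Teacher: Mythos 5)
Your structural framework---the bipartite graph $H_\bn$ on row and column indices with edges indexed by points of $(\btau\cup\btau')\cap\bnsquare$, the degree-$\le 2$ and acyclicity observations, and the factorization over connected components---is exactly equivalent to the paper's decomposition of $\btau\cup\btau'$ into double points, isolated points, and ``chains of points'' (Proposition~\ref{prop:decomp}). Your $\Pbspinx$ calculation via the $\cosh$ identity and telescoping along each path likewise matches the paper's argument leading to \eqref{eq:proof:L2bound:7}--\eqref{eq:proof:L2bound:8zero}, and your Cauchy--Schwarz argument for the $L^2$-finiteness is fine.

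There is, however, a genuine gap in your treatment of the general-case bound \eqref{eq:proof:L2bound:6}. You establish the ``basic estimate'' only for a length-$2$ chain (one shared coordinate), getting a factor $\le u$. For a path of $k\ge 3$ edges, after integrating $\hgo$, the remaining $\bar\bbE$-integrand is a genuinely entangled function of the $\bgo$-variables along the path (e.g.\ for $k=3$ one faces $\bar\bbE\hat\bbE\big[e^{\hat\gl(\gb\bgo_{b_1})}e^{\gb\hgo_{a_2}\bgo_{b_1}}e^{\bar\gl(\gb\hgo_{a_2})}\big]e^{-3\gl(\gb)}$), and it is not at all clear how an ``iterative Hölder/Jensen along each path'' decouples it. Moreover the claimed output --- $u^{3/2}$ per shared coordinate --- does not follow from the stated basic estimate (which gives $u$ per shared coordinate), nor is it what a chain-by-chain bound should produce: a chain of $k$ points has $k-1$ shared coordinates and, correctly bounded, contributes at most $u^{k/2}$, i.e.\ $u^{k/(2(k-1))}\le u$ per shared coordinate. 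The missing idea is the paper's one-shot trick: split each chain $\bgs=\{\bi_1,\dots,\bi_k\}$ into its odd- and even-indexed subsets, which lie respectively in $\btau$ and $\btau'$ (or vice versa) and hence each consist of pairwise non-aligned points carrying i.i.d.\ $\go$-values. A single Cauchy--Schwarz on this split gives
\begin{equation*}
\bbE\Big[\prod_{\bi\in\bgs}e^{\gb\go_\bi-\gl(\gb)}\Big] \,\le\, e^{(\gl(2\gb)-2\gl(\gb))|\bgs|/2}
\end{equation*}
uniformly in $k$, with no iteration needed. The $\tfrac32$ in the final statement is then not a per-shared-coordinate constant at all: it arises from the global accounting step, writing the total exponent as $|\btau\cap\btau'|+\tfrac12\big|\bigcup_m\bgs_m\big|$ and then invoking the combinatorial inequalities $|\btau\cap\btau'|\le |\btau^{(1)}\cap\btau'^{(1)}|+|\btau^{(2)}\cap\btau'^{(2)}|$ and $|\btau\cap\btau'|+\big|\bigcup_m\bgs_m\big|\le 2\big(|\btau^{(1)}\cap\btau'^{(1)}|+|\btau^{(2)}\cap\btau'^{(2)}|\big)$ (the paper's \eqref{eq:comparechain}). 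You should replace the unproved iteration by this odd/even Cauchy--Schwarz step and the subsequent combinatorial bookkeeping.
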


Note that this proposition also applies for any sequence of indices $\bn\in\N^2$ such that $\bn^{(1)}, \bn^{(2)}\to\infty$. Plugging those estimates into Proposition \ref{prop:irrel}, this proves Theorems~\ref{thm:irrel} and \ref{thm:irrel:spin}.

\begin{proof}[Proof of Theorems~\ref{thm:irrel}-\ref{thm:irrel:spin}]
In the general case we use the upper bound \eqref{eq:proof:L2bound:6}. If $\btau^{(1)}\cap\btau'^{(1)}$ is terminating, then $|\btau^{(1)}\cap\btau'^{(1)}|$ follows a geometric distribution (recall that $\btau^{(1)}$ and $\btau'^{(1)}$ are independent univariate renewal processes), so it has some finite exponential moment. The same holds for $\btau^{(2)}\cap\btau'^{(2)}$, and for their sum. Because $\gl(2\gb)-2\gl(\gb)\sim c\gb^2$ for some $c>0$ when $\gb\rightarrow0$, there exists $\gb_1$ such that the right hand side of \eqref{eq:proof:L2bound:6} is finite for any $\gb\in[0,\gb_1)$, so $(Z^{\gb,\quen,\textit{free}}_{\bn,0})_{n\in\N}$ is bounded in $L^2(\bbP)$. Applying Proposition~\ref{prop:irrel}, this proves Theorem~\ref{thm:irrel}.

When $\Pb=\Pbspinx$ for some $x>0$, the same argument applies with \eqref{eq:proof:L2bound:8} and $\btau\cap\btau'$ ---if $\btau\cap\btau'$ is terminating then $|\btau\cap\btau'|$ also follows a geometric distribution--- so this proves Theorem~\ref{thm:irrel:spin}.
\end{proof}

Before proving Proposition~\ref{prop:L2bound}, we need to introduce some new notations. Using a replica trick and Fubini-Tonelli theorem, we can write the second moment of the partition function as
\begin{equation}\label{eq:proof:L2bound:1}
\bbE\Big[\big(Z^{\gb,\quen,\textit{free}}_{\bn,0}\big)^2\Big] \,=\, \bE_{(\btau,\btau')}\bbE\Big[\exp\Big(\sum_{\bi\in\bnsquare}(\gb\go_\bi-\lambda(\gb))(\ind_{\{\bi\in\btau\}}+\ind_{\{\bi\in\btau'\}})\Big)\Big]\;.
\end{equation}
For any trajectories $\btau,\btau'\subset\N^2$, $\btau$ can be written as a sequence $\btau=\{\btau_1,\btau_2,\ldots\}$ with $\btau^{(r)}_k<\btau^{(r)}_{k+1}$ for every $r\in\{1,2\}, k\in\N$, and the same holds for $\btau'$. One can rewrite \eqref{eq:proof:L2bound:1} by taking the sum on $\bi\in(\btau\cup\btau')\cap\bnsquare$ (other terms are 0). We claim that there are three kind of points in $\btau\cup\btau'$ contributing to this sum:
\begin{itemize}
\item $\bi\in\btau\cap\btau'$. We will call such $\bi$ \emph{double points} of $\btau\cup\btau'$.
\item $\bi\in\btau\cup\btau'\setminus\btau\cap\btau'$, and $\bi^{(1)}\notin\btau^{(1)}\cap\btau'^{(1)}$ and $\bi^{(2)}\notin\btau^{(2)}\cap\btau'^{(2)}$: that is, $\bi$ is in either $\btau$ or $\btau'$, and no other point from $\btau\cup\btau'$ is on the same line or column than $\bi$. We will call those \emph{isolated points} of $\btau\cup\btau'$.
\item $\bi\in\btau\cup\btau'\setminus\btau\cap\btau'$, and $\bi^{(1)}\in\btau^{(1)}\cap\btau'^{(1)}$ or $\bi^{(2)}\in\btau^{(2)}\cap\btau'^{(2)}$. We will call those \emph{chained points} of $\btau\cup\btau'$.
\end{itemize}
Let us explain the denomination \emph{chained points}. Let $\bi_1\in\btau\cup\btau'$ be the first chained point for the lexical order on $\N^2$. Assume $\bi_1\in\btau\setminus\btau'$ without loss of generality. Then there exists $\bi_2\in\btau'\setminus\btau$ such that $\bi_1\leftrightarrow\bi_2$ and $\bi_1^{(r)}=\bi_2^{(r)}, \bi_1^{(3-r)}<\bi_2^{(3-r)}$ for one $r\in\{1,2\}$ (because $\bi_1$ comes first in the lexical order). Moreover there is no other point $\bi\in\btau\cup\btau', \bi\neq\bi_2$ such that $\bi\leftrightarrow\bi_1, \bi\neq\bi_1$: indeed, it would imply $\bi\in\btau'\setminus\btau$, which is impossible since $\btau'$ is strictly increasing on each coordinate.

Let us now assume that there exists another $\bi_3\in\btau\cup\btau', \bi_3\notin\{\bi_1,\bi_2\}$ such that $\bi_3\leftrightarrow\bi_2$. Then we obviously have $\bi_3\in\btau\setminus\btau'$ (because $\bi_2\in\btau'$ and the sequence $\btau'$ has strictly increasing coordinates), and $\bi_2^{(3-r)}=\bi_3^{(3-r)}, \bi_2^{(r)}<\bi_3^{(r)}$ with $r\in\{1,2\}$ given above, since $\btau$ has strictly increasing coordinates. Moreover, this $\bi_3$ is unique (if it exists), because of the same argument that proved $\bi_2$ is unique.

By repeating this process until there is no more $\bi\in\btau\cup\btau'$ satisfying $\bi\leftrightarrow\bi_k, \bi\notin\{\bi_1,\ldots,\bi_k\}$, we define a sequence $(\bi_1,\bi_2,\ldots,\bi_k)$ of points in $\btau\cup\btau'$ with $k\geq2$, such that:
\begin{itemize}
\item $\bi_1, \bi_3, \ldots \in \btau\setminus\btau'$ and $\bi_2, \bi_4, \ldots \in \btau'\setminus\btau$ (or the other way arround).
\item There is $r\in\{1,2\}$ such that for any $1\leq l\leq k-1$, one has $\bi_l^{(r)}=\bi_{l+1}^{(r)}, \bi_l^{(3-r)}<\bi_{l+1}^{(3-r)}$ if $l$ is odd, and $\bi_l^{(3-r)}=\bi_{l+1}^{(3-r)}, \bi_l^{(r)}<\bi_{l+1}^{(r)}$ if $l$ is even. In particular, $\bi_l\leftrightarrow\bi_{l+1}$ for any $1\leq l\leq k-1$.
\item There is no $\bi\in\btau\cup\btau'$, $\bi\notin\{\bi_1,\ldots, \bi_k\}$ such that $\bi\leftrightarrow\bi_l$ for some $1\leq l \leq k$.
\end{itemize}
We call $\bgs_1=(\bi_1,\ldots,\bi_k)$ a \emph{chain of points} in $\btau\cup\btau'$. Note that $\bi_1,\ldots,\bi_k$ are all \emph{chained points} as defined previously. Note also that this construction may lead to an infinite chain in $\N^2$, but is always finite if we restrict $\btau\cup\btau'$ to $\bnsquare$.

Furthermore, if we apply the same construction process to $\btau\cup\btau'\setminus\bgs_1$, we can define another chain of points $\bgs_2$ satisfying the same properties ; and by repeating it again, we obtain a sequence $(\bgs_1, \bgs_2,\ldots)$ of chains of points in $\btau\cup\btau'$ satisfying the same properties (once again this sequence is finite for $\btau\cup\btau'$ restricted to $\bnsquare$, and may be infinite in $\N^2$). Moreover, this sequence covers all chained points (indeed, any chained point $\bi\in\btau\cup\btau'$ is only preceded by a finite number of points in $\btau\cup\btau'$ for the lexical order, so the construction process always reaches $\bi$).

\begin{figure}[h]
 \centering
  \includegraphics[width=0.5\linewidth]{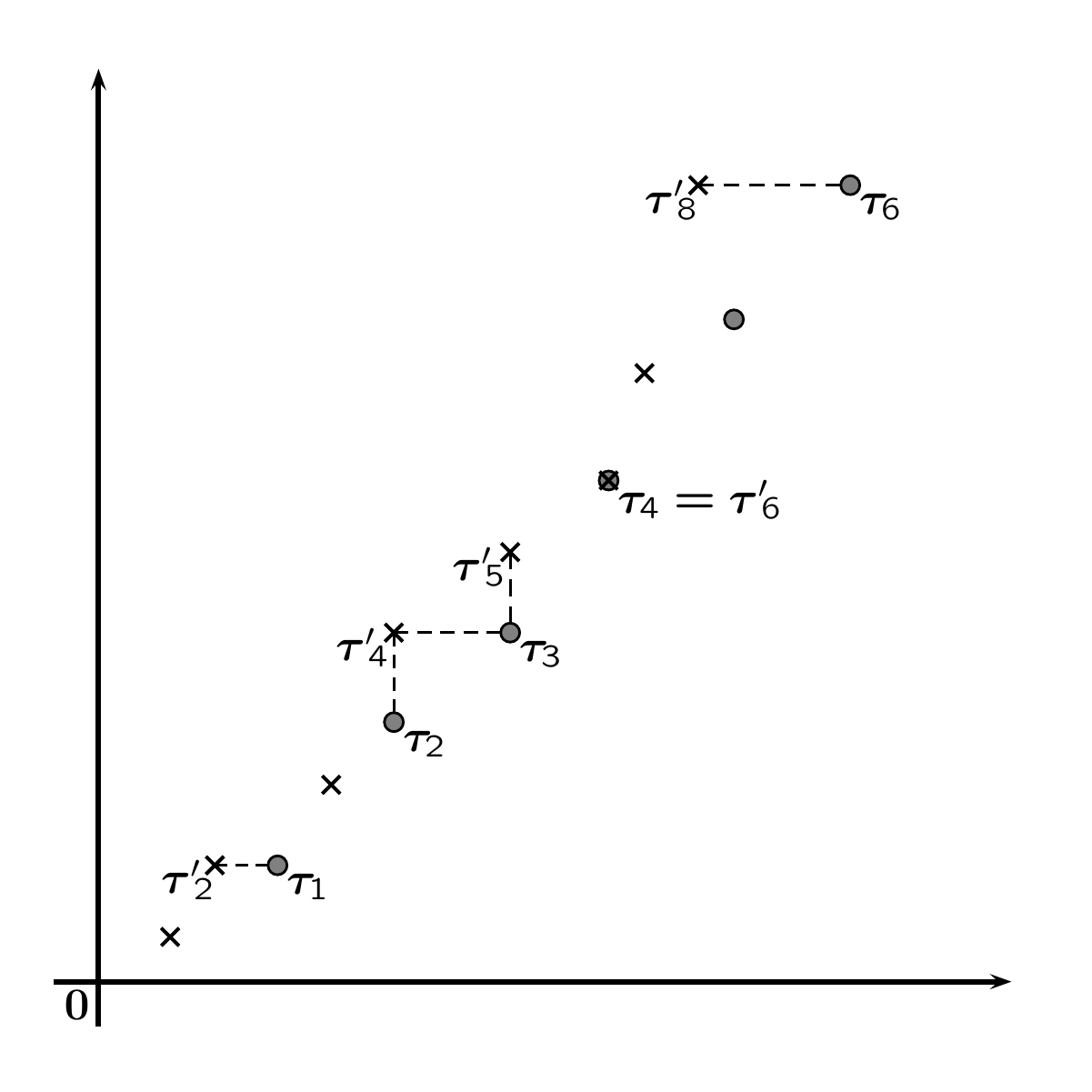}
  \caption{\footnotesize Representation of the union of two renewal sets $\btau,\btau'\subset\N^2$. It has one double point $\btau_4=\btau'_6$, several isolated points ($\btau'_1,\btau'_3,\btau'_7$ and $\btau_5$ in lexical order), and three chains of points $(\btau'_2,\btau_1)$, $(\btau_2,\btau'_4,\btau_3,\btau'_5)$ and $(\btau'_8,\btau_6)$.}
  \label{fig:Chains}
\end{figure}

\smallskip
Using this construction, we can partition $\btau\cup\btau'$ as stated in the following proposition.
\begin{proposition}\label{prop:decomp}
Let $\btau$ and $\btau'$ be two subsets of $\N^2$ such that they can be written as sequences $\btau=\{\btau_1,\btau_2,\ldots\}$ with $\btau^{(r)}_k<\btau^{(r)}_{k+1}$ for every $r\in\{1,2\}, k\in\N$ (same for $\btau'$). Then the subset $\btau\cup\btau'$ can be partitioned like this:
\begin{equation}\label{eq:prop:decomp}
\btau\cup\btau' \,=\, \bgn\cup\bgr\cup\Big(\bigcup_{m\in\N}\bgs_m\Big)\;,
\end{equation}
where $\bgn:=\btau\cap\btau'$, $\bgr$ is the set of isolated points of $\btau\cup\btau'$, $\bfrakS:=\bigcup_{m\in\N}\bgs_m$ is the set of chained points of $\btau\cup\btau'$, and $\bgs_m, m\in\N$ are chains of points. All the sets $\bgn, \bgr$ and $\bgs_m, m\in\N$ are separated.\\
Moreover, if $\bi, \bj\in\btau\cup\btau', \bi\neq\bj$ and $\bi\leftrightarrow\bj$, then there exists $m\in\N$ such that $\bi, \bj\in\bgs_m$.
\end{proposition}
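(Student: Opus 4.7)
The decomposition \eqref{eq:prop:decomp} mostly codifies the construction described informally just before the statement; the role of the proof is to organize it and verify disjointness and maximality. I would proceed in four short steps.

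\textbf{Step 1: Trichotomy.} I would first argue that every $\bi \in \btau \cup \btau'$ falls into exactly one of the three categories defined earlier (double, isolated, chained). The key preliminary observation is that any double point $\bi \in \btau \cap \btau'$ is automatically row/column-isolated: if $\bj \ligned \bi$ with $\bj \neq \bi$ and, say, $\bj \in \btau$, then $\btau$ would contain two points sharing a row or column, contradicting the fact that $\btau$ is indexed as a sequence with strictly increasing coordinates in both components (same argument for $\btau'$). This, together with a direct case analysis on membership in $\btau \cap \btau'$ and on the existence of a row/column neighbor in $\btau \cup \btau'$, partitions $\btau \cup \btau'$ into $\bgn$, $\bgr$ and the set $\bfrakS$ of chained points.

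\textbf{Step 2: Existence and uniqueness of the first extension.} Given $\bfrakS \neq \emptyset$, let $\bi_1$ be its lexically smallest element; by symmetry assume $\bi_1 \in \btau \setminus \btau'$. I would show there is a unique $\bi_2 \in \btau \cup \btau'$, $\bi_2 \neq \bi_1$, with $\bi_2 \ligned \bi_1$, and that $\bi_2 \in \btau' \setminus \btau$. Existence comes from the chained status of $\bi_1$; $\bi_2 \notin \btau$ follows from the strict monotonicity of $\btau$; $\bi_2 \notin \btau \cap \btau'$ follows from Step~1. For uniqueness: because $\bi_1$ is lexically minimal in $\bfrakS$, any candidate $\bj \ligned \bi_1$ must be lexically larger than $\bi_1$; if two such $\bj,\bj' \in \btau' \setminus \btau$ existed (one sharing a row, the other a column with $\bi_1$), the strictly increasing nature of $\btau'$ in both coordinates would be violated. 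This step fixes the shared coordinate $r \in \{1,2\}$ for the pair $(\bi_1,\bi_2)$.

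\textbf{Step 3: Inductive extension and termination.} I would iterate: having built $(\bi_1,\ldots,\bi_\ell)$ with the announced alternating structure, I look for $\bi_{\ell+1} \in (\btau \cup \btau') \setminus \{\bi_1,\ldots,\bi_\ell\}$ with $\bi_{\ell+1} \ligned \bi_\ell$; if none exists the chain terminates. Otherwise, strict monotonicity of $\btau,\btau'$ forces $\bi_{\ell+1}$ into the side opposite to $\bi_\ell$; the alternation of the shared coordinate between $r$ and $3-r$ is forced, because otherwise $\bi_{\ell+1}$ would share coordinate $r$ with $\bi_{\ell-1}$, contradicting the uniqueness of neighbors of $\bi_{\ell-1}$ already established. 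The direction of the inequality on the non-shared coordinate is then read off from the position of $\bi_{\ell-1}$. Restricted to $\bnsquare$ this terminates after finitely many steps, yielding the chain $\bgs_1$. Repeating the construction on $\bfrakS \setminus \bgs_1$ produces $\bgs_2$, and so on, and covers all of $\bfrakS$ by construction. Pairwise disjointness of the $\bgs_m$ is immediate from the iterative removal.

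\textbf{Step 4: Two neighbors lie in a common chain.} If $\bi \neq \bj \in \btau \cup \btau'$ satisfy $\bi \ligned \bj$, then by Step~1 neither of them is double nor isolated, so both belong to $\bfrakS$. Maximality of each $\bgs_m$ (third bullet in its definition, established inductively in Step~3) forbids a point of one chain from having a row/column neighbor in another chain; hence $\bi$ and $\bj$ lie in a common $\bgs_m$. The main (mild) obstacle is Step~3: the combinatorial bookkeeping for the alternation of the shared coordinate and the uniqueness of the extension. All of it, however, reduces to the same elementary observation that $\btau$ and $\btau'$ each contain at most one point per row and per column, which makes the argument genuinely routine rather than technical.
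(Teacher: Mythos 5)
Your proof follows essentially the same route as the paper: the paper does not give a separate proof but points back to the lexical-order construction of chains presented just before the proposition statement, and that construction is exactly what you formalize in Steps 2–3 (existence and uniqueness of the extension via strict monotonicity of each coordinate of $\btau$ and $\btau'$, alternation of the shared coordinate, iterated removal to get $\bgs_1,\bgs_2,\ldots$). Your Step 1 (double points are row/column-isolated, so the three types are mutually exclusive and exhaustive) and Step 4 (appeal to the maximality clause in the definition of a chain) likewise coincide with the paper's remarks that "the decomposition comes from the construction above" and that the last claim follows "because of the last property of chains". The proof is correct.
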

The decomposition in \eqref{eq:prop:decomp} comes from the construction above. The latter claim is obvious: if $\bi\leftrightarrow\bj$ and $\bi\neq\bj$, then neither $\bi$ or $\bj$ can be a double point or isolated. So they are chained points, and they must be in the same chain (because of the last property of chains). This claim also ensures that disorder values $(\go_\bi)_{\bi\in\btau\cup\btau'}$ are independent from one set of the partition to another. We now have all the needed tools to prove Proposition~\ref{prop:L2bound}.

\begin{proof}[Proof of Proposition~\ref{prop:L2bound}]
We partition $(\btau\cup\btau')\cap\bnsquare$ into $\bnu_n, \bgr_n, \bgs_{n,m},m\in\N$ using Proposition~\ref{prop:decomp}, and we use it to separate the right hand side of \eqref{eq:proof:L2bound:1} in independent products:
\begin{equation}\begin{aligned}\label{eq:proof:L2bound:3}
&\bbE\Big[\big(Z^{\gb,\quen,\textit{free}}_{\bn,0}\big)^2\Big] \,=\, \bE_{(\btau,\btau')}\bbE\bigg[\prod_{\bi\in\bnu_n} e^{2(\gb\go_\bi-\gl(\gb))} \prod_{\bi\in\bgr_n}e^{\gb\go_\bi-\gl(\gb)} \prod_{m\in\N}\prod_{\bi\in\bgs_{n,m}}e^{\gb\go_\bi-\gl(\gb)}\bigg]\\
&\qquad=\, \bE_{(\btau,\btau')}\bigg[\bbE\Big[\prod_{\bi\in\bnu_n} e^{2(\gb\go_\bi-\gl(\gb))}\Big] \bbE\Big[\prod_{\bi\in\bgr_n}e^{\gb\go_\bi-\gl(\gb)}\Big] \prod_{m\in\N}\bbE\Big[\prod_{\bi\in\bgs_{n,m}}e^{\gb\go_\bi-\gl(\gb)}\Big]\bigg]\,.
\end{aligned}\end{equation}
We used that $\ind_{\{\bi\in\btau\}}+\ind_{\{\bi\in\btau'\}}=2$ if and only if $\bi\in\btau\cap\btau'$, and $\ind_{\{\bi\in\btau\}}+\ind_{\{\bi\in\btau'\}}=1$ for any other $\bi\in\btau\cup\btau'\setminus\btau\cap\btau'$. Note that an isolated point in $(\btau\cup\btau')\cap\bnsquare$ can be chained for a higher $n$, thus we write the dependance on $n$ of the decomposition in \eqref{eq:proof:L2bound:3} with a subscript.

\smallskip
Now we have to compute those expectations. Note that $(\go_\bi)_{\bi\in\bgn_n}$ is an independent family, and so is $(\go_\bi)_{\bi\in\bgr_n}$. Thus $\bbE\big[\prod_{\bi\in\bgr_n}e^{\gb\go_\bi-\gl(\gb)}\big]=\prod_{\bi\in\bgr_n} \bbE[e^{\gb\go_\bi-\gl(\gb)}]=1$ (recall that $e^{\gl(\gb)}=\bbE[e^{\gb\go_\bone}]$ for any $\gb\in[0,\gb_0)$), and $\bbE\big[\prod_{\bi\in\bgn_n}e^{2(\gb\go_\bi-\gl(\gb))}\big]=e^{|\bgn_n|(\gl(2\gb)-2\gl(\gb))}$. 

Let $\bgs=\{\bi_1,\bi_2,\ldots,\bi_k\}$, $k\in\N$ be a (finite) chain of $(\btau\cup\btau')\cap\bnsquare$ as defined previously, and let us estimate $\bbE\big[\prod_{\bi\in\bgs}e^{\gb\go_\bi-\gl(\gb)}\big]$.

\smallskip
\paragraph{\textit{General case}}
This expectation cannot be computed in the general case, but we can obtain an upper bound with Cauchy-Schwarz inequality. In Remark~\ref{rem:bornesup} we will discus why we think this upper bound is not sharp and can be improved, but it is fully sufficient to prove disorder irrelevance.

Let us note $\bgs^{\textit{odd}}=\{\bi_1,\bi_3,\ldots\}$ and $\bgs^{\textit{even}}=\{\bi_2,\bi_4,\ldots\}$. Notice that $\bgs^{\textit{odd}}\subset\btau$ and $\bgs^{\textit{even}}\subset\btau'$ (or the other way around), and recall that $\btau$ and $\btau'$ are strictly increasing on each coordinate. Hence $(\go_\bi)_{\bi\in\bgs^{\textit{even}}}$ is a family of independent variables, and so is $(\go_\bi)_{\bi\in\bgs^{\textit{odd}}}$.
Applying Cauchy-Schwarz inequality, we obtain
\begin{equation}\label{eq:proof:L2bound:4zero}\begin{aligned}
\bbE\Big[\prod_{\bi\in\bgs}e^{\gb\go_\bi-\gl(\gb)}\Big] \,&\leq\, \bbE\Big[\prod_{\bi\in\bgs^{\textit{odd}}}e^{2(\gb\go_\bi-\gl(\gb))}\Big]^{\frac{1}{2}}  \bbE\Big[\prod_{\bi\in\bgs^{\textit{even}}}e^{2(\gb\go_\bi-\gl(\gb))}\Big]^{\frac{1}{2}}\\
&\leq\, \prod_{\bi\in\bgs^{\textit{odd}}}\bbE\big[e^{2(\gb\go_\bi-\gl(\gb))}\big]^{\frac{1}{2}}  \prod_{\bi\in\bgs^{\textit{even}}}\bbE\big[e^{2(\gb\go_\bi-\gl(\gb))}\big]^{\frac{1}{2}}\\
&\leq\, e^{(\gl(2\gb)-2\gl(\gb)) \frac{|\bgs^{\textit{odd}}|+|\bgs^{\textit{even}}|}{2}}\,=\, e^{(\gl(2\gb)-2\gl(\gb))\frac{|\bgs|}{2}}\,,
\end{aligned}\end{equation}
where we recall the definition of $\gl$ (see \eqref{def:lambda}), and that those expectations are finite if $\gb<\gb_0/2$. Therefore we obtain the following upper bound on \eqref{eq:proof:L2bound:3}:
\begin{equation}\label{eq:proof:L2bound:4}
\bbE\Big[\big(Z^{\gb,\quen,\textit{free}}_{\bn,0}\big)^2\Big] \,\leq\, \bE_{(\btau,\btau')}\Big[e^{\big(\gl(2\gb)-2\gl(\gb)\big)\big(|\bgn_n|+\frac{|\bfrakS_n|}{2}\big)}\Big]\,.
\end{equation}
(Recall that $\bfrakS_n:=\bigcup_{m\in\N}\bgs_{n,m}$). In particular this proves that $Z^{\gb,\quen,\textit{free}}_{\bn,0}\in L^2(\Pb)$ for any $n\in\N$ and $\gb<\gb_0/2$.

\smallskip
Let us note $\btau_{\preceq \bn}:=\btau\cap\bnsquare$ (resp. $\btau'_{\preceq \bn}:=\btau'\cap\bnsquare$). Using the decomposition of $\btau_{\preceq \bn}\cup\btau'_{\preceq \bn}$ from Proposition~\ref{prop:decomp}, one can prove by induction:
\begin{equation}
\label{eq:comparechain}
\frac{1}{2}\,(|\btau^{(1)}_{\preceq \bn}\cap\btau'^{(1)}_{\preceq \bn}|+|\btau_{\preceq \bn}^{(2)}\cap\btau'^{(2)}_{\preceq \bn}|) \,\leq\, |\bgn_n|+|\bfrakS_n| \,\leq\, 2\,(|\btau^{(1)}_{\preceq \bn}\cap\btau'^{(1)}_{\preceq \bn}|+|\btau_{\preceq \bn}^{(2)}\cap\btau'^{(2)}_{\preceq \bn}|)\,. 
\end{equation}
(The constants are optimal: the left hand size is an equality if $|\bgn_n|>0, \bfrakS_n=\emptyset$, and the right hand size is an equality if $\bgn_n=\emptyset$ and $\bfrakS_n$ only contains chains of length 2). So \eqref{eq:proof:L2bound:4} becomes
\begin{equation}\begin{aligned}\label{eq:proof:L2bound:5}
\bbE\Big[\big(Z^{\gb,\quen,\textit{free}}_{\bn,0}\big)^2\Big] \,&\leq\, \bE_{(\btau,\btau')}\Big[e^{\big(\gl(2\gb)-2\gl(\gb)\big)\big(\frac{|\bgn_n|}{2}+|\btau^{(1)}_{\preceq \bn}\cap\btau'^{(1)}_{\preceq \bn}|+|\btau_{\preceq \bn}^{(2)}\cap\btau'^{(2)}_{\preceq \bn}|\big)}\Big]\\
&\leq\, \bE_{(\btau,\btau')}\Big[e^{\frac{3}{2}\big(\gl(2\gb)-2\gl(\gb)\big)\big(|\btau^{(1)}_{\preceq \bn}\cap\btau'^{(1)}_{\preceq \bn}|+|\btau_{\preceq \bn}^{(2)}\cap\btau'^{(2)}_{\preceq \bn}|\big)}\Big]\,,
\end{aligned}\end{equation}
where we used $|\bgn_n|\leq|\btau_{\preceq \bn}^{(1)}\cap\btau_{\preceq \bn}'^{(1)}|+|\btau_{\preceq \bn}^{(2)}\cap\btau_{\preceq \bn}'^{(2)}|$, and $\gl(2\gb)-2\gl(\gb)\geq0$ for any $\gb\geq0$. Moreover $|\btau^{(1)}_{\preceq \bn}\cap\btau'^{(1)}_{\preceq \bn}|+|\btau_{\preceq \bn}^{(2)}\cap\btau'^{(2)}_{\preceq \bn}|$ is non-decreasing in $\bn$, and converges to $|\btau^{(1)}\cap\btau'^{(1)}|+|\btau^{(2)}\cap\btau'^{(2)}|$. 
Using the monotone convergence theorem, this finishes the proof of \eqref{eq:proof:L2bound:6}.

\smallskip
\paragraph{\textit{Case $\go\sim\Pbspinx$, $x>0$}}
In that case we can compute $\bbEspinx\big[\prod_{\bi\in\bgs}e^{\gb\go_\bi-\gl(\gb)}\big]$ exactly for any finite chain $\bgs=\{\bi_1,\bi_2,\ldots,\bi_k\}$, $k\in\N$. Indeed, we can write without loss of generality
\begin{equation}
\go_{\bi_1} = \hgo_1\bgo_1,\; \go_{\bi_2} = \hgo_1\bgo_2,\; \go_{\bi_3} = \hgo_2\bgo_2,\;\ldots
\end{equation}
and so on, 
with $(\hgo_1, \bgo_1,\hgo_2,\ldots)$ a family of i.i.d.\ random variables. Then we can compute
\begin{align*}
\bbEspinx\Big[\prod_{\bi\in\bgs}e^{\gb\go_\bi-\gl(\gb)}\Big] \,&=\, e^{-k\gl(\gb)}\bbEspinx\Big[\bbEspinx\big[e^{\gb \hgo_1 \bgo_1}\big|(\hgo_j)_{j\geq1}, (\bgo_j)_{j\geq2}\big]\prod_{l=2}^ke^{\gb\go_{\bi_l}}\Big] \\
&=\, e^{-k\gl(\gb)}\bbEspinx\Big[\frac{e^{\gb x \hgo_1}+e^{-\gb x \hgo_1}}{2}\prod_{l=2}^ke^{\gb\go_{\bi_l}}\Big] = e^{-(k-1)\gl(\gb)}\bbEspinx\Big[\prod_{l=2}^ke^{\gb\go_{\bi_l}}\Big]\,,
\end{align*}
where we used that $\tfrac12(e^{\gb x \hgo_1} + e^{-\gb x \hgo_1}) = \cosh(\gb x \hgo_1) = \cosh(\gb x^2) = e^{\gl(\gb)}$ (because $|\hgo_1|=x$ $\bbPspinx$-a.s. and $\cosh$ is symmetric). By iteration, we finally obtain for any finite chain $\bgs$ and $\gb\in\R_+$,
\begin{equation}\label{eq:proof:L2bound:7}
\bbEspinx\Big[\prod_{\bi\in\bgs}e^{\gb\go_\bi-\gl(\gb)}\Big] \,=\, 1\;.
\end{equation}
Recalling \eqref{eq:proof:L2bound:3}, we get the exact formula:
\begin{equation}\label{eq:proof:L2bound:8zero}
\bbEspinx\Big[\big(Z^{\gb,\quen,\textit{free}}_{\bn,0}\big)^2\Big] \,=\, \bE_{(\btau,\btau')}\Big[e^{\big(\gl(2\gb)-2\gl(\gb)\big)|\bgn_n|}\Big]\,.
\end{equation}
And we obtain \eqref{eq:proof:L2bound:8} by applying the monotone convergence theorem.
\end{proof}

Notice that the particular behavior of distributions $\Pbspinx$ relies solely on \eqref{eq:proof:L2bound:7}. When $\bgs$ is a chain of length two, it corresponds to computing the correlations of the field $(e^{\gb\go_\bi})_{\bi\in\N^2}$, as discussed in Section~\ref{subsection:comments}.


\subsection{Upper bounds for the shift of the critical point}\label{sec:hcub:nonopt}
When we cannot bound the second moment of the partition function for all $n\in\N$, we can still estimate $n_\gb$ with the finite volume equivalent of Proposition~\ref{prop:L2bound} (that is inequality~\eqref{eq:proof:L2bound:5} when $\Pb\neq\Pbspinx$ for all $x>0$, and identity~\eqref{eq:proof:L2bound:8zero} when $\Pb=\Pbspinx$ for some $x>0$): thanks to Proposition~\ref{prop:irrel}, we are able to obtain an upper bound for the shift of the critical point.

\subsubsection{Case $\Pb\neq\Pbspinx$ for all $x>0$: Proposition~\ref{prop:hcub:nonopt}}
Let us consider \eqref{eq:proof:L2bound:5}: we can split the two intersections of the projections with a Cauchy-Schwarz inequality
\begin{equation}\label{eq:hcubnonopt:cs}
\bbE\Big[\big(Z^{\gb,\quen,\textit{free}}_{\bn,0}\big)^2\Big] 
\,\leq\, \bE_{(\btau,\btau')}\Big[e^{3(\gl(2\gb)-2\gl(\gb))|\btau^{(1)}_{\preceq \bn}\cap\btau'^{(1)}_{\preceq \bn}|}\Big]\,,
\end{equation}
where we also used $\bn=(n,n)$ and that both projections have the same law. Recall that $\btau^{(1)}, \btau'^{(1)}$ are independent univariate renewal processes with inter-arrival distribution $\bP(\btau^{(r)}=a)=\tilde L(a)a^{-(1+\ga)}$, and recall that $\gl(2\gb)-2\gl(\gb)\sim c\gb^2$ as $\gb\searrow0$ for some $c>0$. In particular this upper bound has already been studied in \cite{A08, Ton08} for $\ga\in(1/2,1)$, and gives the estimate \stepcounter{svf}$n_\gb\geq L_{\cntf}(1/\gb) \gb^{\frac{-2}{2\ga-1}}$ (we do not write the details here).
{\blue
In the case $\ga \ge 1$, one easily gets from~\eqref{eq:hcubnonopt:cs} that the second moment is bounded by $\exp( c \gb^2 n)$, giving the estimate $n_\gb\geq  c' \gb^{-2}$. Recollecting \eqref{eq:hcupperbound}, this fully proves Proposition~\ref{prop:hcub:nonopt} (notice that when $m_1=0$ and $\ga\geq1$, this does not lead to a better bound for the critical point than the trivial one $h_c(\gb)\leq\gl(\gb)=\gb m_1^2+\gb^2m_2^2/2 (1+o(1))$).
}

\begin{remark}\rm
\label{rem:bornesup}
{\blue As far as Theorem~\ref{thm:rel:shift} is concerned, Proposition~\ref{prop:hcub:nonopt} yields a satisfactory upper bound when $m_1\neq 0$, but not when $m_1=0$, $m_4>m_2^2$. We strongly believe that our lower bound is of the right order (\textit{i.e.} $\gb^{\max(\frac{4\ga}{2\ga-1},4)}$ when $m_1=0$, $m_4>m_2^2$), and that our upper bound is too rough when we estimate the second moment ---even though it is sufficient to prove disorder irrelevance and  give a satisfactory upper bound when $m_1\neq0$.} Furthermore, a work in progress with Q. Berger \cite{BL20} is leading us to the following claim:\smallskip

\begin{displayquote}\emph{
\hspace{-2mm}Suppose $\ga\in(1/2,1)$, $m_1=0$ and $m_4>m_2^2$. Let $\gb>0$ and $\bm_\gb:=(m_\gb,m_{\gb})$ with $m_\gb\sim \gb^{\frac{-4}{2\ga-1}}L(1/\gb)^{\frac{2}{2\ga-1}}$ as $\gb\searrow0$. Then $Z^{\gb,\quen,\textit{free}}_{\bm_\gb,0}$ converges in distribution to some non-trivial random variable ${\bf{Z}}^{\textit{free}}$ as $\gb\searrow0$. \vspace{0.8mm}
This convergence also holds in $L^2$ under some appropriate coupling.}
\end{displayquote}\smallskip

In particular this claim implies that $n_\gb\sim L_{\cntf}(1/\gb) \gb^{-4/(2\ga-1)}$ when $\ga\in(1/2,1)$: once this result is proven, it will directly give an upper bound $h_c(\gb)\leq L_\cntf(1/\gb)\gb^{4\ga/(2\ga-1)}$, which is fully satisfactory with regards to Theorem~\ref{thm:rel:shift} in the case $m_1=0$, $m_4>m_2^2$.

Moreover, when $\hgo,\bgo$ are two sequences of centered, i.i.d. Gaussian variables, one can fully compute the contribution of chains of points to the second moment of the partition function, which eventually leads to (for $\gb$ sufficiently small)
\begin{equation}
\label{momentGauss}
\bbE\Big[\big(Z^{\gb,\quen,\textit{free}}_{\bn,0}\big)^2\Big] 
\,\leq\,  2 \, \bE_{(\btau,\btau')}\Big[e^{C_\cntc\gb^4 |\btau^{(1)}_{\preceq \bn}\cap\btau'^{(1)}_{\preceq \bn}|}\Big]\,.
\end{equation}
This is the same as \eqref{eq:hcubnonopt:cs} with an exponent of order $\gb^4$ instead of $\gl(2\gb)-2\gl(\gb)\sim c\gb^2$. In particular it gives the expected upper bound $h_c(\gb)\leq L_\cntf(1/\gb)\gb^{\max(\frac{4\ga}{2\ga-1},4)}$ on the shift of the critical point, supporting again the idea that Theorem~\ref{thm:rel:shift} gives the correct order of the lower bound. We carry out the full computation in Appendix~\ref{section:appendixC}.
\end{remark}

\subsubsection{Case $\Pb=\Pbspinx$ for some $x>0$} 
In that case we have an exact computation of the second moment \eqref{eq:proof:L2bound:8zero}, where the right hand side is the same as in the proof of \cite[Prop. 3.3]{BGK} for the gPS model with i.i.d.\ disorder. Thus we obtain the same estimate as in \cite{BGK}, that is $n_\gb\sim L_\cntf(1/\gb) \gb^{-\max(\frac{2\ga}{\ga-1},4)}$ for any $\ga>1$ (see the proof of \cite[Prop. 3.3]{BGK} for the details). Therefore Proposition~\ref{prop:irrel} immediately gives an upper bound on $h_c(\gb)$ of order $L_\cntf(1/\gb)\gb^{\max(\frac{2\ga}{\ga-1},4)}$ which proves the right inequality in Theorem~\ref{thm:rel:shift:m4=1}.

\section{Disorder relevance: smoothing of the phase transition when $\Pb\neq\Pbspinx$}\label{section:smoothing}

The inequality of Theorem~\ref{thm:rel:smoothing} is proven via a rare-stretch strategy, as done in  \cite{GTsmooth} (or more recently \cite{CdH}). 
We introduce some notations that we  use in this section and the next one to lighten upcoming formulae: we write $Z^{\quen}_{\bn}:=Z_{\bn,h}^{\gb,\quen}$ for the (constrained) partition function with quenched disorder (the choice of parameters $\gb$ and $h$ will always be explicit), and $Z_{\bn,h}:=Z_{\bn,h}^{0,q} $ for the homogeneous (or annealed) partition function. Moreover we will denote by $Z^{\quen}_{\ba,\bb}$ the partition function conditioned to start from $\ba$ and constrained to end in $\bb$. More precisely for any $\bzero\preceq\ba\prec\bb$,
\begin{equation}\label{eq:defZcond}
Z^{\quen}_{\ba, \bb} \,:=\, \bE\Big[\exp\Big(\sum_{\bi \in \llbracket \ba+\bone , \bb \rrbracket } (\gb\go_{\bi} -\gl(\gb)+ h)\ind_{\{\bi\in\btau\}}\Big)\ind_{\{\bb\in\btau\}} \Big| \ba \in\btau \Big]\,,
\end{equation}
and $Z^{\quen}_{\ba, \bb}:=0$ if $\ba\nprec\bb$. Note that $Z^{\quen}_{\ba, \bb}$ has same law as $Z^{\quen}_{\bb-\ba}$, and for any $\ba\preceq\bb\preceq\bc\preceq\bd$, $Z^{\quen}_{\ba, \bb}$ and $Z^{\quen}_{\bc, \bd}$ are independent.

\subsection{Rare-stretch strategy}

The rare-stretch strategy consists in obtaining a lower bound on the partition function by considering the contribution of only one type of trajectories, which target favorable (but sparse) regions in the environment.

Fix $\gb>0$ and $h\in\R$, and let $\A_l\subset\R^{l^2}, l\in\N$ be a sequence of Borel sets. We will write $\bl:=(l,l)$,
 and assume that there is $\cG\geq0$, $\cC\geq0$ such that for any $l\in\N$ (or at least infinitely many $l\in\N$):
 \begin{equation}
 \label{def:gaincost}
\begin{aligned}
&\bullet\ \frac{1}{l} \log Z_{\bl}^{\quen} \geq \cG\,, \quad \text{ for any $\go$ such that $\go_{\boldsquare[\bl]}:=(\go_\bi)_{\bi\in\bsquare[\bl]}\in\A_l$, }\\
&\bullet\ \frac{1}{l} \log \bbP(\go_{\bsquare[\bl]}\in\A_l)\geq-\C\,.
\end{aligned}  
 \end{equation}
Here $\G$ stands for \emph{gain}, and $\cC$ for \emph{cost}. 

\begin{lemma}\label{lem:gain-cost}
For any $\gb>0$ and $h\in\R$, if $\G$, $\C$ are defined as in \eqref{def:gaincost} (for some sequence of $(\A_l)_{l\in \bbN}$), then the following holds:
\begin{equation}\label{eq:lem:gain-cost}
\G-(2+\ga)\C \,>\,0 \quad \Longrightarrow \quad \tf(\gb,h)\,>\,0\;.
\end{equation}
\end{lemma}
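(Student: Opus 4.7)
The plan is to run a rare-stretch argument in the spirit of \cite{GTsmooth}: along the diagonal, the renewal targets favorable blocks where the disorder configuration lies in $\A_l$, paying the inter-arrival tail cost for the long jumps between them.

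Fix $l \in \bbN$ for which \eqref{def:gaincost} holds, set $q_l := \bbP(\go_{\bsquare[\bl]} \in \A_l) \geq e^{-\C l}$, and partition the diagonal into blocks $B_k := \llbbrack (k-1)\bl + \bone, k\bl \rrbbrack$, $k \geq 1$. Let $E_k := \{ (\go_{(k-1)\bl + \bi})_{\bi \in \bsquare[\bl]} \in \A_l \}$. The key observation is that, despite $\go_\bi = f(\hgo_{\bi^{(1)}}, \bgo_{\bi^{(2)}})$ being a correlated field, $E_k$ depends only on $(\hgo_j, \bgo_j)_{j \in \llbracket (k-1)l+1, kl \rrbracket}$, so the $E_k$'s are i.i.d.\ Bernoulli$(q_l)$. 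Let $0 =: T_0 < T_1 < T_2 < \cdots$ enumerate the indices with $E_{T_i}$ occurring; the gaps $T_i - T_{i-1}$ are i.i.d.\ geometric with mean $1/q_l$.

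Second, for $\bn = (n, n)$ with $n = Nl$, I lower-bound $Z^{\quen}_{\bn}$ by restricting to renewal trajectories that visit, for each $i$ with $T_i \leq N$, both corners $\ba_i := ((T_i-1)l, (T_i-1)l)$ and $\bb_i := (T_i l, T_i l)$ of the favorable block $B_{T_i}$, with a single renewal step between $\bb_{i-1}$ and $\ba_i$ (setting $\bb_0 := \bzero$, $\ba_{M+1} := \bn$, and $M := |\{i : T_i \leq N\}|$). Because $\btau$ is strictly increasing in both coordinates, its points split cleanly into those inside each block and those at the between-block boundary, so the partition function factorizes as a product of between-block and within-block contributions. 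Using \eqref{def:gaincost} to bound $Z^{\quen}_{\ba_i, \bb_i} \geq e^{\G l}$ and \eqref{eq:interarrival:tau} to lower-bound each between-block single step by $K(2(T_i - T_{i-1}-1)l)$, I obtain
\begin{equation*}
\log Z^{\quen}_{\bn} \,\geq\, M\,\G\, l \,+\, \sum_{i=1}^{M+1} \log K\bigl(2(T_i - T_{i-1} - 1) l\bigr) \,+\, R,
\end{equation*}
where $R$ collects the $O(M)$-many boundary reward terms $\gb \go_{\ba_i} - \gl(\gb) + h$ at the transition points, whose expectation is $O(M)$.

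Third, I take $\bbE$ and divide by $n = Nl$. By the LLN for the geometric gaps, $M/N \to q_l$ a.s.\ and in $L^1$. For the jump contribution, Jensen's inequality applied to the concave $\log$, together with Potter's bounds on the slowly varying function $L$ (in particular $\log L(x) = o(\log x)$), yields
\begin{equation*}
\bbE \log K\bigl(2(T_1 - 1) l\bigr) \,\geq\, -(2+\ga)\bigl[\log(2l) + \log \bbE (T_1 - 1)\bigr] + o(l) \,\geq\, -(2+\ga)\,\C\, l \,+\, o(l)\,,
\end{equation*}
using $\bbE(T_1 - 1) < 1/q_l \leq e^{\C l}$. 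Combining,
\begin{equation*}
\tf(\gb, h) \,\geq\, \frac{q_l}{l}\bigl( \G l - (2+\ga)\C l + o(l)\bigr) \,=\, q_l\bigl( \G - (2+\ga)\C + o(1)\bigr) \qquad \text{as } l \to \infty.
\end{equation*}
Under the assumption $\G - (2+\ga)\C > 0$, choosing $l$ large enough along the subsequence for which \eqref{def:gaincost} holds makes the bracket strictly positive, hence $\tf(\gb, h) > 0$.

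The main conceptual ingredient is the independence of the $E_k$'s, which is what allows the rare-stretch strategy to survive the correlated structure of $\go$; the remaining estimates (Jensen, Potter's bounds, LLN for geometric gaps, and the $O(M)$ control of the boundary rewards) are entirely standard.
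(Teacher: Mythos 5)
Your proposal is correct and follows essentially the same rare-stretch argument as the paper: same block decomposition along the diagonal, same observation that the correlated structure of $\go$ still leaves the events $E_k$ i.i.d., same lower bound on $Z^{\quen}_\bn$ by trajectories that make one long jump between consecutive favorable blocks, and the same Jensen/Potter estimates controlling the jump cost by $(2+\ga)\C l + o(l)$. The only difference is bookkeeping — you take a deterministic system size $n = Nl$ and pass to the limit via the law of large numbers / renewal–reward, whereas the paper evaluates the free energy along the random subsequence $n = T_k l$ and invokes the strong law; both are valid and lead to the same final inequality.
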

\begin{proof}
We replicate here the proof of \cite{CdH}, but with our disorder indexed by $\N^2$. Fix $l$ such that the above conditions hold, and let $T_i(\go), i\in\N$ be the indices of blocks of size $l\times l$ on the diagonal satisfying the event $\A_l$. More precisely,  let $T_0(\go):=0$, and
\begin{equation}
T_i(\go)\,:=\,\inf\{n>T_{i-1}(\go)\;;\, \go_{\bsquaretwo{(n-1)\bl+\bone}{n\bl}} \in\A_l \}\,,
\end{equation}
for any $i\geq 1$. Note that $\big(T_{i+1}(\go)-T_i(\go)\big)_{i\in\N}$ is an i.i.d.\ sequence with law \emph{Geom}$\big(\Pb(\A_l)\big)$, so $\bbE[T_1]=\Pb(\A_l)^{-1}\leq e^{\C l}$. We can give a lower bound of the partition function on the block $T_k\bl$,
\begin{equation}
Z_{T_k\bl}^{\quen} \,\geq\, \prod_{i=1}^k Z_{T_{i-1}\bl,\,(T_i-1)\bl}^{\quen} \; Z_{(T_i-1)\bl,\, T_i\bl}^{\quen}\,.
\end{equation}
Note that $\go_{\llbracket(T_i-1)\bl,\, T_i\bl\rrbracket}\in\A_l$ for all $i$ by definition of $T_i$, so that $Z_{T_{i-1}\bl,\,(T_i-1)\bl}^{\quen}\geq e^{\G l}$ by definition of $\G$. We also have the obvious bound $Z_{\ba,\bb}^{\quen}\geq K(\|\bb-\ba\|)e^{\gb\go_\bb-\gl(\gb)+h}$ for any $\ba\prec\bb$. Therefore we have
\begin{equation}
\begin{aligned}
Z_{T_k\bl}^{\quen} \,&\geq\, \prod_{i=1}^k \Big(e^{\gb \go_{(T_i-1)\bl}-\gl(\gb)+h}K(\|(T_i-1-T_{i-1})\bl\|) \times e^{l\G}\Big)\\
&\geq\, e^{kl\G} \prod_{i=1}^k \frac{c_{\ga_+} e^{\gb \go_{(T_i-1)\bl}-\gl(\gb)+h}}{\big(l(T_i-T_{i-1})\big)^{2+\ga_+}}\,,
\end{aligned}
\end{equation}
where the last inequality holds for any $\ga_+>\ga$ and a convenient $c_{\ga_+}>0$, by Potter's bound (cf.~\cite{BGT87}).  We can now estimate from below the free energy, using the strong law of large numbers:
\begin{equation}
\begin{aligned}
\tf  & (\gb,h)  \,=\, \lim_{k\rightarrow\infty} \frac{1}{l\,T_k} \log Z_{T_k\bl}^{\quen}\\
&\geq\, \lim_{k\rightarrow\infty} \frac{k}{T_k} \frac{1}{kl}\Big(kl\G + \sum_{i=1}^k\big[ c_{\ga_+, \gb, h}  + \gb \go_{(T_i-1)\mathbf{l}}- (2+\ga_+) \log\big(l(T_i-T_{i-1})\big)\big]\Big)\\
&\geq\, \frac{1}{\bbE[T_1]}\Big(\G + \frac{c_{\ga_+, \gb, h} + \gb \bbE[\go_{\bone}]}{l} - (2+\ga_+) \frac{\log l + \bbE[\log (T_1)]}{l} \Big)\\
&\geq\, e^{-\C l} \Big(\G - (2+\ga_+) \C  + \frac{c'_{\ga_+, \gb, h}-  (2+\ga_+)\log l}{l} \Big) \,,
\end{aligned}
\end{equation}
where we set $c_{\ga_+, \gb, h}:= \log c_{\ga_+} -\lambda(\gb)+h$. For the last inequality, we used Jensen's inequality to get that $\bbE[\log (T_1)] \leq \log (1/\bbP(\cA_l)) \leq \C l$.
Finally, if $\G - (2+\ga) \C>0$, then it also holds for some $\ga_+>\ga$, and the right hand side is strictly positive for $l$ large enough, which implies $\tf(\gb,h)>0$ and concludes the proof.
\end{proof}

\subsection{Smoothing of the phase transition in $\gb$}\label{section:smoothing:2}
Let us discuss the strategy of the proof first. For the PS model (with i.i.d. disorder), the method used in \cite{GTsmooth} to prove a smoothing is to fix $h=h_c(\gb)$, so that $\tf(\gb,h_c(\gb))=0$ and Lemma~\ref{lem:gain-cost} implies $\cG\leq(2+\ga)\cC$. Then one chooses a gain $\cG$ close to $\tf(\gb,h_c(\gb)+u)$---which matches the free energy of the model with a \emph{shifted} disorder, \textit{i.e.}\ with $\go$ replaced by $\go + u/\gb$--- and expresses the corresponding cost $\cC$ with the cost of the change of measure from $\go$ to $\go + u/\gb$ (which can be estimated via a relative entropy inequality), therefore obtaining an upper bound on the free energy near the critical point.

However this method doesn't apply well to our model, mostly because of the dimension of the field $\go$ (this is also discussed in \cite{BGK} for an i.i.d. disorder). A direct shift of the disorder field $\go$ is too costly (we shift $n^2$ variables in a model of size $n$). On the other hand an i.i.d. shift of the sequences $\hgo$ and $\bgo$ by $u/\sqrt{\gb}$ ---although involving only $2n$ variables---is not easily related to a free energy with different parameters. Therefore, we needed to adapt this method. We first prove a smoothing inequality \emph{with respect to $\gb$} instead of $h$, using a \emph{dilation} of the disorder instead of a shift, {\it i.e.} we replace the sequence $\hgo$ (resp. $\bgo$) by $(1+\gd)\hgo$ (resp. $(1+\gd)\bgo$). This change of measure matches the same model with disorder intensity $\gb(1+\gd)^2\approx\gb(1+2\gd)$ instead of $\gb$, and is not too costly (we change the law of $2n$ variables in a system of size $n$).

Let us introduce the ``shifted'' free energy $\tilde \tf(\gb,h):= \tf(\gb, h+\lambda(\gb))$ (\textit{i.e.}\ if we omit the term $-\gl(\gb)$ in the definition of the partition function): in view of Proposition~\ref{prop:existF}, we get that $(\gb,h)\mapsto \tilde \tf(\gb,h)$ is convex, so the critical line $\gb\mapsto \tilde h_c(\gb) := \inf \{ h \colon \tilde \tf(\gb,h) >0\} = h_c(\gb) -\lambda(\gb)$ is concave. Actually it is decreasing and continuous (recall that the upper bound in \eqref{eq:inh} is strict for $\gb>0$), so one can consider the inverse map of $\gb\mapsto \tilde h_c(\gb)$, that we denote $h \mapsto \tilde  \gb_c(h)$: for each $h>0$, the value $\tilde \gb_c(h)$ is the critical value for the map $\gb \mapsto \tilde \tf(\gb,h)$ corresponding to the localization transition. One can therefore consider the transition as $\gb$ varies, and the next proposition tells this phase transition is smooth, \textit{i.e.}\ for fixed $h$, the growth of $\gb \mapsto \tilde \tf(\gb,h)$ close to $\tilde \gb_c(h)$ is at most quadratic.

\begin{proposition}\label{prop:proof:rel:smoothing}
Under Assumption~\ref{hyp:entropy}, for any $h>0$, there exists $c_h>0$ such that for any $\gd \in (0,1)$, one has
\begin{equation}\label{e0q:prop:proof:rel:smoothing}
\tilde \tf\big( (1+\gd) \tilde \gb_c(h) , h \big) \,\leq\, c_h \gd^2\,.
\end{equation}
\end{proposition}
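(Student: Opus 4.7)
The plan is to adapt the rare-stretch strategy of~\cite{GTsmooth}, combining Lemma~\ref{lem:gain-cost} with a \emph{dilation} change of measure on the sequences $\hgo, \bgo$ (rather than an additive shift of $\go$), as motivated in the discussion preceding the proposition. I will work with the \emph{shifted} partition function $\tilde Z_\bn^{\quen}(\gb, h) := Z_{\bn, h+\gl(\gb)}^{\gb, \quen}$, which removes the $-\gl(\gb)$ term from the exponential. Lemma~\ref{lem:gain-cost} translates immediately to this setting: if $\cA_l \subset \R^{l^2}$ satisfies $\frac{1}{l}\log \tilde Z_\bl^{\quen}(\gb, h) \geq \cG$ on $\{\go_{\bsquare[\bl]} \in \cA_l\}$ and $\frac{1}{l}\log \bbP(\go_{\bsquare[\bl]} \in \cA_l) \geq -\cC$, then $\cG > (2+\ga)\cC$ forces $\tilde \tf(\gb,h) > 0$; the proof of Lemma~\ref{lem:gain-cost} carries over verbatim with the one-jump bound $\tilde Z_{\ba,\bb}^{\quen} \geq K(\|\bb-\ba\|)e^{\gb \go_\bb + h}$.

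Next, set $\eta := \sqrt{1+\gd}-1$, so $(1+\eta)^2 = 1+\gd$ and $\eta \leq \gd/2$, and let $\mathbb{Q}_\eta$ denote the joint law on $(\hgo, \bgo)$ obtained by independently replacing each $\hgo_i, \bgo_j$ by $(1+\eta)\hgo_i, (1+\eta)\bgo_j$. Since $\go_\bi = \hgo_{\bi^{(1)}}\bgo_{\bi^{(2)}}$, this dilation multiplies each $\go_\bi$ by $(1+\eta)^2 = 1+\gd$, hence with $\gb^\star := (1+\gd)\tilde\gb_c(h)$ one has the key identity
\begin{equation*}
\tilde Z_\bl^{\quen}\bigl(\tilde\gb_c(h), h; (1+\eta)\hgo, (1+\eta)\bgo\bigr) \,=\, \tilde Z_\bl^{\quen}\bigl(\gb^\star, h; \hgo, \bgo\bigr).
\end{equation*}
Combining this with the $L^1$-convergence~\eqref{eq:saf} (applied at parameters $(\gb^\star, h)$), the random variable $\frac{1}{l}\log \tilde Z_\bl^{\quen}(\tilde\gb_c(h), h; \cdot)$ converges in $\mathbb{Q}_\eta$-probability to $\tilde\tf(\gb^\star, h)$. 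For fixed $\eps > 0$, define the favorable set $\cA_l := \{\go :\, \frac{1}{l}\log \tilde Z_\bl^{\quen}(\tilde\gb_c(h), h; \go) \geq \tilde\tf(\gb^\star, h) - \eps\}$, so that $\mathbb{Q}_\eta(\cA_l) \geq 1/2$ for $l$ large.

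Since $\mathbb{Q}_\eta$ acts independently on each of the $2l$ coordinates $\hgo_1, \ldots, \hgo_l, \bgo_1, \ldots, \bgo_l$, Assumption~\ref{hyp:entropy} gives $H(\mathbb{Q}_\eta | \bbP) = 2l \, H(\tP_\eta|\hat\bbP) \leq 2cl\eta^2 \leq \tfrac{1}{2}cl\gd^2$, provided $\gd$ is small enough that $\eta < \gd_0$. The standard entropy inequality $\mathbb{Q}_\eta(\cA_l) \log \frac{\mathbb{Q}_\eta(\cA_l)}{\bbP(\cA_l)} \leq H(\mathbb{Q}_\eta | \bbP) + 1/e$ combined with $\mathbb{Q}_\eta(\cA_l) \geq 1/2$ then yields $\cC := \frac{1}{l} \log(1/\bbP(\cA_l)) \leq C \gd^2$ for $l$ large, with $C$ depending only on $c$. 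Applying the shifted gain-cost lemma at parameters $(\tilde\gb_c(h), h)$ with $\cG := \tilde\tf(\gb^\star, h) - \eps$ and the above $\cC$: if $\cG > (2+\ga)\cC$ we would obtain $\tilde\tf(\tilde\gb_c(h), h) > 0$, contradicting $\tilde\tf(\tilde\gb_c(h), h) = 0$ (by definition of $\tilde\gb_c$). Hence $\tilde\tf(\gb^\star, h) - \eps \leq (2+\ga) C \gd^2$, and letting $\eps \to 0$ gives~\eqref{e0q:prop:proof:rel:smoothing} with $c_h := (2+\ga)C$.

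The main technical point is the choice of change of measure. A direct additive shift of each $\go_\bi$ (as in~\cite{GTsmooth}) would tilt all $l^2$ variables of a block, making the entropy cost $\cC$ of order $\gd^2 \cdot l$ rather than $\gd^2$, which is prohibitively large; the dilation of $\hgo, \bgo$ only touches the $2l$ underlying coordinates and, crucially, matches exactly the reparametrization of the coupling strength $\gb \to (1+\gd)\gb$, which is what allows us to compare the critical partition function at $\tilde\gb_c(h)$ to the supercritical one at $\gb^\star$. Values of $\gd$ too large to ensure $\eta < \gd_0$ are handled a posteriori by enlarging $c_h$, using that $\tilde\tf(\gb^\star, h)$ stays bounded on the corresponding compact range of $\gb^\star$.
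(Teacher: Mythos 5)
Your proof is correct and takes essentially the same route as the paper: a dilation change of measure on $\hgo,\bgo$ whose entropy cost is $O(l\gd^2)$, combined with the gain–cost lemma at $\gb=\tilde\gb_c(h)$. The only cosmetic differences are that the paper dilates by $(1+\gd)$ and then invokes monotonicity of $\tilde\tf$ in $\gb$ to drop from $(1+\gd)^2$ to $(1+\gd)$, whereas you choose $\eta=\sqrt{1+\gd}-1$ upfront; and the paper takes gain $\tfrac12\tilde\tf\big((1+\gd)^2\tilde\gb_c(h),h\big)$ while you take $\tilde\tf(\gb^\star,h)-\eps$ and let $\eps\to0$.
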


\begin{proof}
For any $\gd>0$, we define $\tP_{l,\gd}$ the law of the disorder in $\bsquare[\bl]$ \emph{dilated} by $(1+\gd)$ on each coordinate ({\it i.e.} $\hgo_{\llbracket1,l\rrbracket}$ is replaced with $(1+\gd)\hgo_{\llbracket1,l\rrbracket}$, same for $\bgo_{\llbracket1,l\rrbracket}$). We denote $\tP_{\gd}$ the infinite product law. Note that $H(\tP_{l,\gd}|\Pb)= l H(\tP_{1,\gd}|\Pb)$, and that there is some $c>0$ such that $H(\tP_{1,\gd}|\Pb)\leq c\,\gd^2$ by Assumption~\ref{hyp:entropy}.
For any $\gb>0$, $h\in\R$ and $l\in\N$, let us define
\begin{equation}
\A_l\,:=\,\Big\{ \go_{\bsquare[\bl]} \in \R^{l^2}\;;\; \frac{1}{l} \log \tilde Z_{\bl}^{\quen} \geq  \frac12 \tilde \tf\big(\gb(1+\gd)^2,h\big)\Big\} \,,
\end{equation}
where $\tilde Z_{\bl}^{\quen} := Z_{\bl, h+\lambda(\gb)}^{\gb, \quen}$, so that $\lim_{l\to +\infty} \frac1l \log \tilde Z_{\bl}^{\quen} := \tilde \tf(\gb,h)$.

The set $\A_l$ is chosen so that the \emph{gain} (as defined for Lemma~\ref{lem:gain-cost}) is $\G=\tfrac12 \tilde \tf\big(\gb(1+\gd)^2,h\big)$, where $\tilde \tf\big(\gb(1+\gd)^2,h\big)$ is exactly the free energy of the gPS model with partition function $\tilde  Z_{\bl}^{\quen}$, where we changed the disorder law from $\Pb$ to $\tP_\gd$ (because multiplying $\hgo$ and $\bgo$ by $(1+\gd)$ is the same thing as multiplying $\gb$ by $(1+\gd)^2$). Thus $\frac{1}{l} \log \tilde Z_{\bl}^{\quen}$ converges $\tP_\gd$-a.s. to $\tf\big(\gb(1+\gd)^2,h\big)$ when $l\rightarrow\infty$, so we obviously have that 
\begin{equation}\label{eq:proof:entropy:1}
\tP_{l,\gd}(\A_l) \underset{l\rightarrow\infty}{\longrightarrow}1\;.
\end{equation}
Now we can estimate $\Pb(\A_l)$ via a standard relative entropy inequality, which gives
\begin{equation}
\begin{aligned}\label{eq:proof:entropy:2}
\Pb(\A_l)
\,&\geq\, \tP_{l,\gd}(\A_l)  \exp\Big( -\frac{1}{\tP_{l,\gd}(\A_l)} \big(H(\tP_{l,\gd} | \Pb) + e^{-1}\big) \Big)\;.
\end{aligned}
\end{equation}
We therefore get that $\Pb(\A_l) \geq \tfrac{e^{-2/e}}{2} e^{- 2 c \gd^2 l  }$, for $l$ large enough (so that $\tP_{l,\gd}(\A_l)\geq 1/2$). Therefore, for $l$ large (how large depends on $\gd$), we get that 
\begin{equation}
\frac{1}{l} \log \Pb(\A_l)\, \geq\,  - 3c \gd^2\;.
\end{equation}
Now, for any $h>0$, fix $\gb = \tilde \gb_c(h)$, so that $h = \tilde h_c(\gb)$. Then $\tilde \tf(\gb, h_c(\gb)) =\tilde \tf ( \tilde \gb_c(h),h)=0$, so Lemma~\ref{lem:gain-cost} implies $\G-(2+\ga)\C\leq0$, thereby
\begin{equation}
\tfrac12 \tilde \tf\big(  (1+\gd)^2 \tilde \gb_c(h),h)\big)- 3c (2+\ga) \gd^2 \,\leq\, 0 \; ,
\end{equation}
which gives $\tilde \tf\big((1+\gd)^2 \tilde \gb_c(h)  ,h)\big) \leq c' \gd^2$.
This concludes the proof by simply recalling that $\tilde \tf$ is non-decreasing in its first coordinate, so $\tilde \tf\big( (1+\gd) \tilde \gb_c(h),h)\big) \leq\tilde \tf\big( (1+\gd)^2 \tilde \gb_c(h) ,h)\big)$.
%
%
\end{proof}

\subsection{Conclusion: smoothing of the phase transition in $h$}

Once we have the smoothing inequality with respect to $\gb$ of Proposition~\ref{prop:proof:rel:smoothing}, we are able transcribe it to a smoothing in $h$ using the convexity properties of $\tilde \tf(\gb,h)$.
We now conclude the proof of Theorem~\ref{thm:rel:smoothing}, thanks to Proposition~\ref{prop:proof:rel:smoothing}.

\begin{proof}[Proof of Theorem~\ref{thm:rel:smoothing}]
We fix $\gb>0$, and we take $t>0$ small enough such that $\tilde h_c(\gb) + t < 0$. Recall that $\tilde h_c(\cdot)$ is a concave, non-increasing, continuous function: there exists $\gb_{t}\in(0,\gb)$ such that $\tilde h_c(\gb_t)= \tilde h_c(\gb)+t$.
Hence,
\begin{equation}\label{eq:proof:rel:smooth}
\tf(\gb,h_c(\gb)+t) \,=\, \tilde \tf(\gb, \tilde h_c(\gb)+t) \,=\, \tilde \tf( \gb_t + u, \tilde h_c(\gb_t))\,,
\end{equation}
where $u=\gb-\gb_t>0$. Using Proposition~\ref{prop:proof:rel:smoothing} and the fact that $\tilde \gb_c( \tilde h_c(\gb_t))= \gb_t$, we have
\begin{equation}\label{eq:proof:rel:smooth:2}
\tf(\gb,h_c(\gb)+t) \,=\, \tilde \tf \big( \gb_t (1+u/\gb_t), \tilde h_c(\gb_t) \big) \,\leq\, c_{\tilde h_c( \gb_t)} (u/\gb_t)^2\,.
\end{equation}
Note that $\gb_t\nearrow\gb>0$ as $t\searrow0$, so the factor $c_{\tilde h_c(\gb_t)} \gb_t^{-2}$ is bounded by a constant that depends only on $\gb$, uniformly in $t$ sufficiently small.
Using that $\tilde h_c(\cdot)$ is concave, we also have
\begin{equation}
-\frac{t}{u}\,=\,\frac{\tilde h_c(\gb)-\tilde h_c(\gb_t)}{\gb-\gb_t} \,\leq\, \tilde h'_c(\gb_t^+) \,\leq\, \tilde h'_c((\gb/2)^+)\,<\,0\,,
\end{equation}
where $\tilde h'_c(\cdot^+)$ is the right derivative of $\tilde h_c$, and the second inequality holds for any $t$ sufficiently small (because $\tilde h'_c(\gb_t^+)<0$ as soon as $\gb_t>0$, and it decreases as $t\searrow0$). 
We deduce that $t\geq c_{\gb} \,u$ for some $c_{\gb} >0$, and plugging it in \eqref{eq:proof:rel:smooth:2}, we finally obtain
\begin{equation}
\tf(\gb,h_c(\gb)+t) \leq c'_\gb \, t^2,
\end{equation}
where $c'_\gb>0$ depends only on $\gb$. This concludes the proof of Theorem~\ref{thm:rel:smoothing}.
\end{proof}

\section{Disorder relevance: shift of the critical point when $\Pb\neq\Pbspinx$} \label{section:shift}

In this section, we prove Theorem~\ref{thm:rel:shift}, that is a  the lower bound for the shift of the critical point when $\Pb\neq\Pbspinx$ for all $x>0$. We will discuss the case $\Pb=\Pbspinx$ in Section~\ref{section:shift:m4=1}.

\subsection{Coarse-graining and fractional moment method}
Our proof is based on a fractional moment method, introduced in \cite{DGLT09} for the original PS model, and slightly adapted to the gPS model with independent disorder in \cite{BGK}. The first part of the proof (the coarse-graining procedure) is identical to that of \cite{BGK}, but the different estimates are specific to our setting.

\smallskip
Recall the notation $Z_{\bn}^{\quen}:=Z_{\bn,h}^{\gb,\quen}$ from Section~\ref{section:smoothing}. Let us define for any $\bn \in\bbN^2$ (note that we do not assume $\bn=(n,n)$ in this section) the \emph{fractional moment} of the partition function: 
\begin{equation}\label{eq:frac:mom}
A_{\bn}\,:=\,\bbE\big[\big(Z_{\bn}^{\quen}\big)^\gh \big]\,,
\end{equation}
where $\gh\in(0,1)$ is a constant we will fix later on (notice that $A_\bn<\infty$ because $Z_{\bn,h}^{\gb,\quen}\in L^1(\bbP)$). For any $k\in\N$ we set $\bk := (k,k)$, and for $\bn \succeq \bk$, we decompose the partition function $Z^{\quen}_{\bn}$ according to the first point $\bn-\bi$ of $\btau$ which lies in the square $\llbracket \bn-\bk+\bone, \bn\rrbracket$ (in particular $\bzero\preceq\bi\prec\bk$), and the point $\bn-\bi-\bj$ before: in particular it is the last point of $\btau$ which isn't in the previous square, so it can only be in one of the three boxes $\llbracket 0,\bn^{(1)}-k \rrbracket \times \llbracket 0,\bn^{(2)}-k \rrbracket$, $ \rrbracket \bn^{(1)}-k,\bn^{(1)} \rrbracket \times  \llbracket 0,\bn^{(2)}-k \rrbracket$ or $\llbracket 0,\bn^{(1)}-k \rrbracket \times \rrbracket \bn^{(2)}-k,\bn^{(2)} \rrbracket$. This decomposition gives us for any $\bn\succeq \bk$,
\begin{equation}\label{eq:decomp:Z}
Z^{\quen}_{\bn} \,=\, Z^{\quen,1}_{\bn} + Z^{\quen,2}_{\bn} + Z^{\quen,3}_{\bn}\,,
\end{equation}
where we write for any $s\in\{1,2,3\}$,
\begin{equation}
Z^{\quen,s}_{\bn} \,:=\, \sum_{(\bi,\bj)\in D_{\bk,\bn}^s} Z_{\bn-\bi-\bj}^\go \times \bP(\btau_1=\bj) e^{\gb\go_{\bn-\bi}-\gl(\gb)+h} \times Z_{\bn-\bi,\bn}^\go\,,
\end{equation}
where $Z_{\bn-\bi,\bn}^\go$ is defined in \eqref{eq:defZcond}, and 
the sets $D_{\bk,\bn}^s, s\in\{1,2,3\}$ are defined as follow (with $\N_0:=\N\cup\{0\}$):
\begin{equation}
\begin{aligned}
D_{\bk,\bn}^1\,&:=\,\{(\bi,\bj)\in\N_0^2\times\N^2\;;\; \bi\prec \bk\preceq \bi+\bj\preceq\bn\}\,,\\
D_{\bk,\bn}^2\,&:=\,\{(\bi,\bj)\in\N_0^2\times\N^2\;;\; \bi\prec \bk,\, \bi^{(1)}+\bj^{(1)} <k,\, \bi^{(2)}+\bj^{(2)} \geq k,\, \bi+\bj\preceq\bn\}\,,\\
D_{\bk,\bn}^3\,&:=\,\{(\bi,\bj)\in\N_0^2\times\N^2\;;\; \bi\prec \bk,\, \bi^{(1)}+\bj^{(1)} \geq k,\, \bi^{(2)}+\bj^{(2)} < k,\, \bi+\bj\preceq\bn\}\,.
\end{aligned}
\end{equation}

\vspace{-0.3cm}

\begin{figure}[h]
 \centering
  \includegraphics[width=0.5\linewidth]{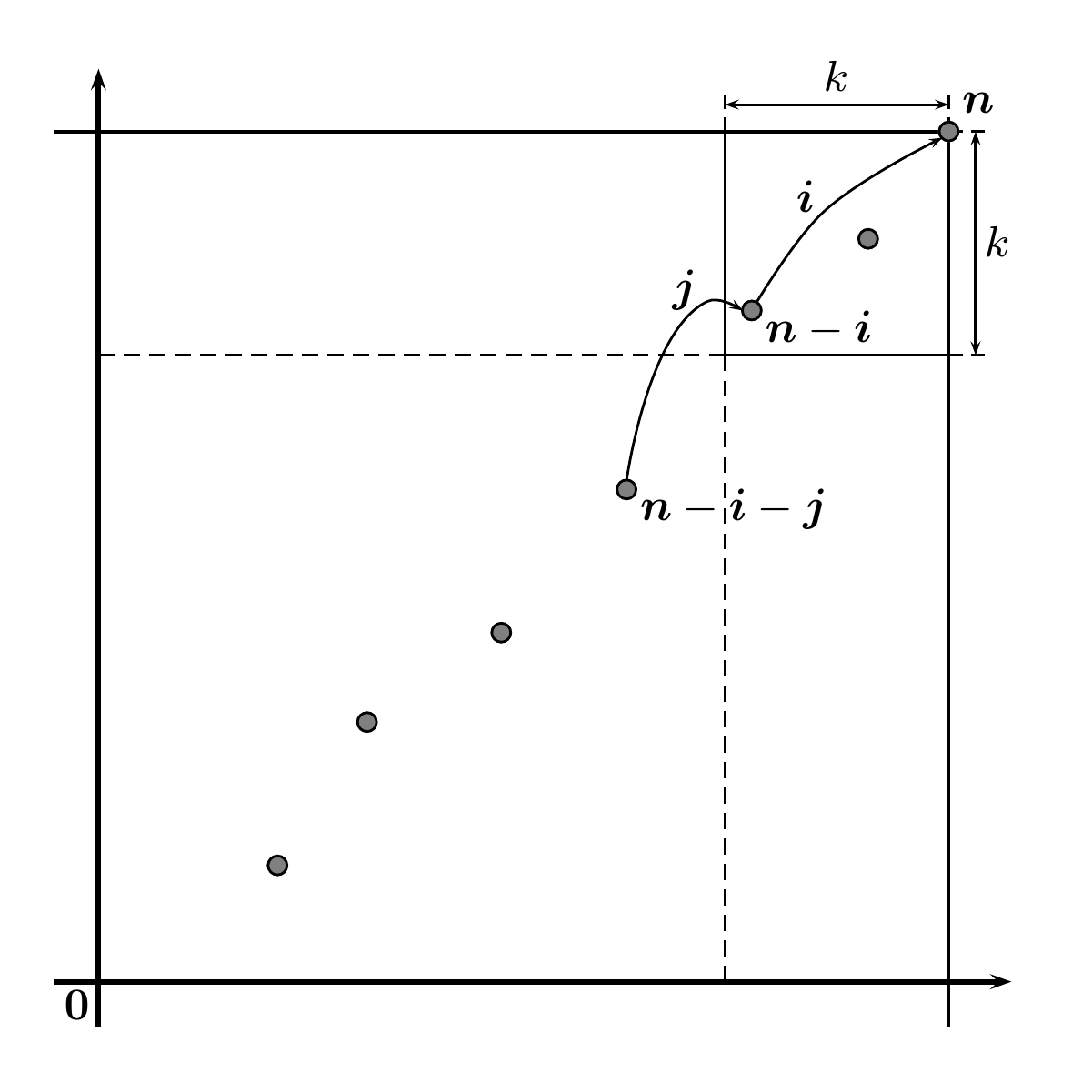}
  \caption{\footnotesize Decomposition of the partition function for the coarse-graining procedure. $\bn-\bi$ denotes the first renewal epoch in the square $\llbracket \bn-\bk+\bone, \bn\rrbracket$, and $\bn-\bi-\bj$ is the renewal epoch before: it is either in the bottom left rectangle ({\it e.g.} the representation in the figure; those are the trajectories contributing to $Z^{\quen,1}_{\bn}$), bottom right ($Z^{\quen,2}_{\bn}$) or top left ($Z^{\quen,3}_{\bn}$).}
  \label{fig:Coarse_graining}
\end{figure}

We also define $D_{\bk,\binfty}^s=\cup_{\bn\in\N^2}D_{\bk,\bn}^s$ (which means we drop the condition $\bi+\bj\preceq \bn$). Recall that $Z^{\quen}_{\bn - \bi,\bn}$ and $Z^{\quen}_{\bi}$ have same law, and that $Z^{\quen}_{\bn-\bi-\bj}$, $e^{\gb\go_{\bn- \bi}-\gl(\gb)+h}$ and $Z^{\quen}_{\bn- \bi,\bn}$ are independent. Using this together with the definition of $A_{\bn}$ and the standard inequality $(\sum_i a_i)^\gh \leq \sum_i a_i^\gh$ for any $\gh\in(0,1)$ and $a_i\geq0, i\in\N$, we obtain
\begin{equation}\label{eq:decomp:A:1}
A_{\bn} \,\leq\, A^1_{\bn} + A^2_{\bn} + A^3_{\bn},
\end{equation}
where for any $s\in\{1,2,3\}$,
\begin{equation}\label{eq:decomp:A:2}
A^{s}_{\bn} \,:=\, c_{\gb,h,\gh} \sum_{(\bi,\bj)\in D_{\bk,\bn}^s} A_{\bn-\bi-\bj} \times K(\|\bj\|)^\gh \times A_{\bi}\,,
\end{equation}
and $c_{\gb,h,\gh}:=\bbE[e^{(\gb\go_{\bone}-\gl(\gb)+h)\gh}]=e^{\gl(\gh\gb)-\gh\gl(\gb)+\gh h}$. Note that $c_{\gb,h,\gh}$ is uniformly bounded for $\gb,h$ small and $0\leq\gh\leq1$, so we can bound it by a constant $C_\cntc$.

\smallskip
As in \cite{BGK}, the proof of Theorem~\ref{thm:rel:shift} relies on the following claim.
\begin{lemma}\label{lem:rel:shift}
For fixed $\gb>0$ and $h\in\R$, if there exist $\gh \in(0,1)$ and $k\in\N$ such that $\gr_1+\gr_2+\gr_3\leq1$, where
\begin{equation}\label{eq:rho}
\gr_s\,:=\,C_{\arabic{cst}} \sum_{(\bi,\bj)\in D_{\bk,\binfty}^s} K(\|\bj\|)^\gh A_\bi\,,\qquad s\in\{1,2,3\}\,,
\end{equation}
then $\tf(\gb,h)=0$.
\end{lemma}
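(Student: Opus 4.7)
The strategy is to deduce from $\gr_1+\gr_2+\gr_3\leq 1$ that $\sup_\bn A_\bn<\infty$, which immediately gives $\tf(\gb,h)=0$ via Jensen's inequality: the concavity of $x\mapsto x^\gh$ (for $\gh\in(0,1)$) yields $\bbE[\log Z_\bn^{\quen}]\leq \gh^{-1}\log A_\bn$, so a uniform bound $A_\bn\leq M_\infty$ gives $\tf(\gb,h)\leq \lim_{n\to\infty}(n\gh)^{-1}\log M_\infty = 0$; combined with the non-negativity of $\tf$ from~\eqref{eq:inh}, this forces $\tf(\gb,h)=0$.

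To establish the uniform bound, the plan is an induction on $N$ for the quantity $M^{(N)}:=\max_{\|\bm\|\leq N} A_\bm$, which is finite for each $N$ (by Jensen, $A_\bm\leq \bbE[Z_\bm^{\quen}]^\gh<\infty$). For $\bn\succeq \bk$, the recursion \eqref{eq:decomp:A:1}--\eqref{eq:decomp:A:2}, together with the monotonicity of $M^{(\cdot)}$ and the fact that $\|\bn-\bi-\bj\|\leq \|\bn\|-2$ (because $\bj\in\N^2$ forces $\|\bj\|\geq 2$), yields
\[
A_\bn^s \;\leq\; M^{(\|\bn\|-1)}\cdot C\!\!\sum_{(\bi,\bj)\in D_{\bk,\bn}^s}\!\! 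K(\|\bj\|)^\gh A_\bi \;\leq\; \gr_s\, M^{(\|\bn\|-1)},
\]
where the second inequality uses the inclusion $D_{\bk,\bn}^s\subset D_{\bk,\binfty}^s$. Summing over $s\in\{1,2,3\}$ and invoking the hypothesis $\gr_1+\gr_2+\gr_3\leq 1$ gives $A_\bn\leq M^{(\|\bn\|-1)}$ for every $\bn\succeq \bk$.

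It remains to control the boundary values $A_\bn$ for $\bn$ with $\bn^{(1)}<k$ or $\bn^{(2)}<k$, which the recursion does not reach but which must be dominated uniformly for the induction to close. For any such $\bn$, the strict monotonicity of $\btau$ on each coordinate forces any trajectory contributing to $Z_\bn^{\quen}$ to contain at most $\min(\bn^{(1)},\bn^{(2)})\leq k-1$ renewal points in $\bnsquare$. Hence
\[
\bbE[Z_\bn^{\quen}] \;=\; \bE\big[e^{h|\btau\cap\bnsquare|}\ind_{\{\bn\in\btau\}}\big] \;\leq\; e^{(k-1)|h|},
\]
and Jensen applied once more (since $\gh\leq 1$) gives $A_\bn\leq e^{\gh(k-1)|h|}$ uniformly over all such $\bn$. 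Combined with the inductive step, $M^{(N)}\leq \max\big(M^{(N-1)},\,e^{\gh(k-1)|h|}\big)$, hence $\sup_\bn A_\bn<\infty$.

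The main subtlety is precisely this boundary estimate: the ``thin'' region $\{\bn\colon \bn^{(r)}<k \text{ for some } r\}$ contains points of arbitrarily large $\|\bn\|$, and without uniform control along it the induction would be vacuous. What saves us is the fact that the strict increase of $\btau$ on each coordinate caps the number of contacts at $k-1$, which is exactly what makes the annealed partition function bounded uniformly in the ``long'' direction. Once this point is handled, the rest is a routine fractional-moment induction in the spirit of~\cite{DGLT09,BGK}.
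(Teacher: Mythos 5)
Your proof is correct and takes essentially the same approach as the paper's: bound $A_\bn$ uniformly by (i) a Jensen-based boundary estimate for $\bn$ with a coordinate below $k$ (where the strict increase of $\btau$ caps the contact number at $k-1$) and (ii) an induction on $\|\bn\|$ via the coarse-grained recursion \eqref{eq:decomp:A:1}--\eqref{eq:decomp:A:2} and the hypothesis $\gr_1+\gr_2+\gr_3\leq 1$, then conclude by Jensen that $\tf(\gb,h)=0$. You spell out the induction on $N=\|\bn\|$ more explicitly than the paper (which simply says ``by induction'') and you use $|h|$, making the boundary bound valid for any sign of $h$; neither changes the substance.
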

\begin{proof}
The proof is straightforward. Define $\overline A:= \sup \{A_\bi,\, \bi^{(1)}<k\text{ or }\bi^{(2)}<k\}$. By Jensen's inequality, one obviously has $A_\bi\leq\bbE[Z^{\quen}_\bi]^\gh\leq e^{\gh  h\min(\bi^{(1)},\bi^{(2)})}$, because there are at most $\min(\bi^{(1)},\bi^{(2)})$ renewal points in $\llbracket1,\bi\rrbracket$. Thus we have $\overline A \leq e^{\gh hk}$. Using the decomposition \eqref{eq:decomp:A:1}, \eqref{eq:decomp:A:2}  of $A_\bn$ and $\gr_1+\gr_2+\gr_3\leq 1$, we deduce (by induction) $A_\bn\leq \overline A$ for any $\bn\in\N^2$. By applying Jensen's inequality, we conclude
\begin{equation}
\tf(\gb,h)\,=\,\limtwo{\bn\to\infty}{\bn=(n,n)}\frac{1}{\gh\, n}\bbE\log\big[(Z^{\quen}_\bn)^\gh\big] \,\leq\, \limtwo{\bn\to\infty}{\bn=(n,n)}\frac{1}{\gh \,n}\log A_\bn \,=\, 0\,.
\end{equation}
\end{proof}

\begin{proof}[Proof of Theorem~\ref{thm:rel:shift}]
We now assume $\Pb\neq\Pbspinx$ for all $x>0$. We fix $h$ as in Theorem~\ref{thm:rel:shift}:
{\blue
\begin{equation}\label{eq:gD}
h\;=\; h(\gb)\;:=\;
\left\{\begin{aligned} \gb^{\max(\frac{2\ga}{2\ga-1},2)\,+\,\eps} \qquad & \text{if}\quad m_1\neq 0, \\
\gb^{\max(\frac{4\ga}{2\ga-1},4)\,+\,\eps} \qquad& \text{if}\quad m_1= 0,\, m_4>m_2^2, \end{aligned}\right. 
\end{equation}
}
where $\eps>0$ is arbitrarily small, but fixed. Our goal is to choose $\gh\in(0,1)$ and $k\in\N$ such that $\gr_1$, $\gr_2$ and $\gr_3$ (which is symmetric to $\gr_2$) are small, so that Lemma~\ref{lem:rel:shift} implies $\tf(\gb,h)=0$ and $h_c(\gb)\geq h$. First we pick
\begin{equation}\label{eq:k}
k\,=\,k(\gb)\,:=\,\frac{1}{\tf(0,h)}\,,
\end{equation}
which is the correlation length of the annealed system (actually we take the integer part of it, but we omit to write it for clarity purpose). Notice that $k\to\infty$ as $h\to0$, and recall that Theorem~\ref{thm:hom} allows us to write $k=L_{\ga}(1/h) h^{-1/\ga}$ with $L_{\ga}$ a slowly varying function when $\ga\leq1$; and $k\sim c_\ga h^{-1}$ for some $c_\ga>0$ as $h\searrow0$ when $\ga>1$ (here we slightly changed the notations from the theorem, to lighten upcoming computations).

\smallskip
Note that, if $\gh$ is picked such that $(2+\ga)\gh>2$, then we have 
\begin{equation}
\sum_{\bj\succeq \bk-\bi} K(\|\bj\|) \,\leq\, \frac{L_\cntf(\|\bk-\bi\|)}{\|\bk-\bi\|^{(2+\ga)\gh-2}}\,.
\end{equation} Therefore,
\begin{equation}\label{eq:gr1}
\gr_1\,\leq\, \sum_{\bi\prec\bk} \frac{L_\arabic{svf}(\|\bk-\bi\|)}{\|\bk-\bi\|^{(2+\ga)\gh-2}} A_\bi \,\leq\, \sum_{\bi\prec\bk} \frac{L_\cntf(k-\bi^{(2)})}{(k-\bi^{(2)})^{(2+\ga)\gh-2}} A_\bi\,,
\end{equation}
(because $\|\bk-\bi\| \geq k-\bi^{(2)}$). Similarly,
\begin{equation}
\sumtwo{\bj^{(1)}< k-\bi^{(1)}}{\bj^{(2)}\geq k-\bi^{(2)}} K(\|\bj\|) \,\leq\,  \sum_{\bj^{(2)}\geq k-\bi^{(2)}}\frac{L_\cntf(\bj^{(2)})}{(\bj^{(2)})^{(2+\ga)\gh-1}} \,\leq\,  \frac{L_\cntf(k-\bi^{(2)})}{(k-\bi^{(2)})^{(2+\ga)\gh-2}}\,.
\end{equation}
Thus,
\begin{equation}\label{eq:gr2}
\gr_2\,\leq\,\sum_{\bi\prec\bk} \frac{L_\arabic{svf}(k-\bi^{(2)})}{(k-\bi^{(2)})^{(2+\ga)\gh-2}} A_\bi\,=:\,S\,.
\end{equation}

Let us denote this sum $S$. Because $\gr_3$ is symmetric to $\gr_2$, and because $\gr_1$ and $\gr_2$ are bounded by the same sum $S$ (up to the slowly varying function, which doesn't change the behavior of the sum), Theorem~\ref{thm:rel:shift} will be proven as soon as we show that $S$ can be made small, by applying Lemma~\ref{lem:rel:shift}. For that we need some precise estimates on $A_\bi$ for $\bi\prec \bk$.
We will handle $A_{\bi}$ differently depending on whether $\bi$ is small or not, using Lemmas~\ref{lem:Afrac1}-\ref{lem:Afrac2} below.

\begin{lemma}\label{lem:Afrac1}
There exist constants $C_\cntc>0$, $h_1>0$ and a slowly varying function $L_\cntf$ such that for any $h\in(0,h_1)$ and $\bn$ with $1\leq\|\bn\|\leq 1/\tf(0,h)$, one has
\begin{equation}\label{eq:Afrac1}
Z_{\bn,h}\;\leq\, \left\{\begin{array}{ll} 
C_{\arabic{cst}} \,L(\|\bn\|)^{-1} \,\|\bn\|^{-(2-\ga)}& \text{if }\ga\in(0,1)\;,\vspace{1mm}\\
L_{\arabic{svf}}(\|\bn\|) \, \|\bn\|^{-1/\min(\ga,2)} &\text{if }\ga>1\;;
\end{array}\right.\end{equation}
where $Z_{\bn,h}$ is the partition function of the homogeneous model with parameter $h$.\\
When $\ga=1$, the upper bound 
$Z_{\bn,h}\;\leq\, L_{\arabic{svf}}(1/\|\bn\|) \, \|\bn\|^{-1}
$ 
holds for $1\leq\|\bn\|\leq \tf(0,h)^{-(1-\eps^2)}$.
\end{lemma}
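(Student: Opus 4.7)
The strategy is to compare the partition function $Z_{\bn,h}$ to the $h=0$ renewal mass function $\bP(\bn\in\btau)=Z_{\bn,0}$, and then invoke the bivariate renewal estimates collected in Appendix~\ref{section:appendixB} to read off the stated decay. The physical content is that once $\|\bn\|$ lies below the correlation length $1/\tf(0,h)$, the pinning reward $h$ per contact cannot significantly boost $Z_{\bn,h}$ beyond its value at $h=0$.

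I would formalize this through a tilted-renewal representation. Define $\bar K_h(\bu):=e^{h-\tf(0,h)\|\bu\|}K(\|\bu\|)$ for $\bu\succ\bzero$; by the implicit equation characterising $\tf(0,h)$ in the exactly solvable homogeneous model one has $\sum_{\bu}\bar K_h(\bu)=1$, so $\bar K_h$ is a genuine probability law giving rise to a bivariate renewal process $\btau^{(h)}$. Expanding $Z_{\bn,h}=\sum_{k\geq 1}e^{kh}\,\bP(\btau_k=\bn)$ and factoring out $e^{-\tf(0,h)\|\bn\|}$ in each term yields the clean identity
\[
Z_{\bn,h}\;=\;e^{\tf(0,h)\|\bn\|}\,U_h(\bn),\qquad U_h(\bn):=\bP^{(h)}\!\big(\bn\in\btau^{(h)}\big).
\]
For $\|\bn\|\leq 1/\tf(0,h)$ the prefactor is bounded by $e$, so it remains to estimate $U_h(\bn)$.

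On scales $\|\bu\|\leq 1/\tf(0,h)$ the tilt factor $e^{-\tf(0,h)\|\bu\|}$ is pinned between $e^{-1}$ and $1$, and taking $h_1$ small ensures $e^{h}\leq 2$; hence $\bar K_h(\bu)$ agrees with $K(\|\bu\|)$ up to a multiplicative constant on this range, being merely exponentially truncated beyond. The inter-arrival of $\btau^{(h)}$ thus carries the same regularly varying tail on the scales relevant for $\bn$ with $\|\bn\|\leq 1/\tf(0,h)$, and I would invoke the bivariate renewal mass estimates of Appendix~\ref{section:appendixB}, applied to $\btau^{(h)}$ uniformly in $h$ small. These give $U_h(\bn)\lesssim 1/(L(\|\bn\|)\|\bn\|^{2-\alpha})$ for $\alpha\in(0,1)$, $U_h(\bn)\lesssim L_*(\|\bn\|)\|\bn\|^{-1/\min(\alpha,2)}$ for $\alpha>1$, and $U_h(\bn)\lesssim L_*(1/\|\bn\|)\|\bn\|^{-1}$ for $\alpha=1$. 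Combining with the $e^{\tf(0,h)\|\bn\|}\leq e$ factor concludes.

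The main technical obstacle is verifying that the Appendix~\ref{section:appendixB} estimates apply uniformly to $\btau^{(h)}$ with implicit constants independent of $h$; this amounts to tracking how the exponential cut-off at scale $1/\tf(0,h)$ interacts with the polynomial tail. The cleanest route is to use the domination $\bar K_h(\bu)\leq e^{h}K(\|\bu\|)$ together with the observation that the renewal mass at a point $\bn$ with $\|\bn\|\leq 1/\tf(0,h)$ is determined only by inter-arrivals of comparable size, where the tilt is inactive. The marginal case $\alpha=1$ is the most delicate, since slowly varying corrections in the renewal estimates become visible exactly at the correlation length: this is precisely why the stated range is tightened to $\|\bn\|\leq\tf(0,h)^{-(1-\eps^2)}$, the extra $\eps^2$ of margin providing room to absorb those corrections.
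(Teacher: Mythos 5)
There is a genuine gap at the heart of your argument, namely the step where you bound $U_h(\bn)=\bP^{(h)}(\bn\in\btau^{(h)})$ by the renewal mass function estimates of Appendix~\ref{section:appendixB}. Those estimates are proved for a bivariate renewal whose inter-arrival law is exactly of the regularly varying form \eqref{eq:interarrival:tau}; the tilted law $\bar K_h$ carries an exponential cut-off at scale $1/\tf(0,h)$, and nothing in the paper (or in \cite{B18}) gives a local estimate for such a truncated law \emph{uniformly in the tilt}. The two bridges you propose do not close this gap: the domination $\bar K_h(\bu)\leq e^h K(\|\bu\|)$ only yields $\bP^{(h)}(\btau^{(h)}_k=\bn)\leq e^{kh}e^{-\delta\|\bn\|}\bP(\btau_k=\bn)$, and summing over $k$ returns exactly $e^{-\delta\|\bn\|}Z_{\bn,h}$, i.e.\ the identity you started from; and the assertion that the mass at $\bn$ ``is determined only by inter-arrivals of comparable size, where the tilt is inactive'' is precisely the statement that paths with an atypically large number of contacts (which is where $e^{kh}$ is rewarded, since $h\|\bn\|\to\infty$ when $\ga<1$) contribute negligibly --- this is the whole difficulty, not a remark. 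In effect your reduction restates the lemma as a uniform local limit theorem for exponentially tilted bivariate renewals, a result that would itself require a proof at least as involved as the one it is meant to replace. A smaller but real miscalibration: for $\gamma=1$ the implicit equation characterising the localized free energy is $e^h\sum_{\bu}e^{-\delta\|\bu\|}K(\|\bu\|)=1$ with $\delta=\tf(0,h)/2$ (the free energy is normalised by $n$, not by $\|\bn\|=2n$), so your ``clean identity'' $Z_{\bn,h}=e^{\tf(0,h)\|\bn\|}U_h(\bn)$ is false as written, though the corrected prefactor $e^{\tf(0,h)\|\bn\|/2}$ is still $O(1)$ in the stated range.

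For comparison, the paper avoids tilting altogether. For $\ga\geq 1$ it uses the trivial bound $h|\btau\cap\bnsquare|\leq h\|\bn\|\leq C$ (valid because $\tf(0,h)\asymp h$ up to a slowly varying factor, and, for $\ga=1$, thanks to the restricted range $\|\bn\|\leq\tf(0,h)^{-(1-\eps^2)}$), so that $Z_{\bn,h}\leq C\,\bP(\bn\in\btau)$ and Appendix~\ref{section:appendixB} applies to the \emph{untilted} renewal. For $\ga\in(0,1)$, where $h\|\bn\|$ is unbounded, it decouples the endpoint constraint by time reversal and Cauchy--Schwarz, extracts the factor $L(\|\bn\|)^{-1}\|\bn\|^{-(2-\ga)}$ through Lemma~\ref{lem:cond}, and controls the remaining free expectation $\bE[e^{2h|\btau\cap T_\bn|}]$ by projecting onto the univariate renewal $\|\btau\|$ and invoking the known PS-model bound of \cite{DGLT09}. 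If you want to salvage your route, you would need to prove the uniform tilted mass-function estimate for bivariate renewals yourself; as it stands, the proposal assumes the key estimate rather than establishing it.
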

This lemma implies that $Z_{\bn,h}$ remains of the same order as $Z_{\bn,0}$ as long as $n$ does not exceed the correlation length $1/\tf(0,h)$ (in the case $\alpha=1$ we added the assumption $\|\bn\|\leq \tf(0,h)^{-(1-\eps^2)}$ only to avoid technicalities). Then we have by Jensen's inequality that $A_{\bi}\leq (\bbE Z_{\bi}^{\quen})^\gh= (Z_{\bi,h})^\gh$, therefore Lemma~\ref{lem:Afrac1} gives us a first bound on $A_{\bi}$ valid for all~$\bi$ not too large.


When $\bi$ is of greater order, we can obtain an improved bound on $A_\bi$ as soon as $\Pb\neq\Pbspinx$ for all $x>0$.

{\blue
\begin{lemma}\label{lem:Afrac2}
Assume that $\Pb\neq\Pbspinx$ for all $x>0$. If $\eps$ has been fixed small enough, then there exist some constants $C_{\rcntc{cst:lemnum}}, C_{\rcntc{cst:lemexpeps}}, C_{\rcntc{cst:lemexpeps:ga>1}}>0$ such that for any $k^{(1-\eps^2)}\leq  \| \bn \|\leq 2 k$, and for $h$ defined as in \eqref{eq:gD}, one has
\begin{equation}\label{eq:Afrac2}
A_{\bn}\;\leq\,\bigg\{ \begin{array}{ll}
C_{\ref{cst:lemnum}} \|\bn\|^{-(2-\ga\,+\,\eps\,C_{\ref{cst:lemexpeps}})\gh} &\text{if }\ga\in(1/2,1],\vspace{1mm}\\
C_{\ref{cst:lemnum}} \|\bn\|^{-(\ga\,+\,\eps\,C_{\ref{cst:lemexpeps:ga>1}})\gh}  &\text{if }\ga>1. \end{array}
\end{equation}
\end{lemma}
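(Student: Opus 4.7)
The trivial Jensen bound $A_\bn \leq Z_{\bn,h}^\gh$, combined with Lemma~\ref{lem:Afrac1}, yields the exponent $(2-\ga)\gh$ for $\ga\in(1/2,1]$ (resp.\ $\gh/\min(\ga,2)$ for $\ga>1$), which falls short of the target by exactly the factor $\|\bn\|^{\eps C\gh}$. To close this gap one needs to exploit the assumption $\Pb\neq\Pbspinx$ via a change-of-measure / fractional-moment argument \`a la \cite{DGLT09,BGK}. Concretely, I would introduce a tilted probability measure $\tilde\bbP$, of product form, acting simultaneously on the two strand-sequences $(\hgo_i)_{i\leq\bn^{(1)}}$ and $(\bgo_j)_{j\leq\bn^{(2)}}$ --- involving only $\bn^{(1)}+\bn^{(2)}=\|\bn\|$ variables, much fewer than the $\bn^{(1)}\bn^{(2)}$ variables of the full field $\go$. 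H\"older's inequality then gives
\begin{equation*}
A_\bn \;\leq\; \bbE\bigl[(d\tilde\bbP/d\bbP)^{-\gh/(1-\gh)}\bigr]^{1-\gh}\;\tilde\bbE\bigl[Z^{\quen}_\bn\bigr]^\gh,
\end{equation*}
splitting the estimate into a cost term and a tilted-annealed term.

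A useful simplification is that, for any single realisation of $\btau$, the family $(\go_\bi)_{\bi\in\btau}$ is i.i.d.\ under both $\bbP$ and $\tilde\bbP$ --- indeed $\btau$ has strictly increasing coordinates, so no two of its points are aligned and hence they share no common strand variable. Consequently the tilted-annealed term admits the clean identity
\begin{equation*}
\tilde\bbE[Z^{\quen}_\bn] \;=\; Z_{\bn,\,h-u(\gb,\gd)}, \qquad u(\gb,\gd)\,:=\,\gl(\gb)-\tilde\gl(\gb),
\end{equation*}
with $\tilde\gl(\gb):=\log\tilde\bbE[e^{\gb\go_\bone}]$. The tilt is then chosen to render $u(\gb,\gd)>0$: when $m_1\neq 0$, a small mean-shift $\hgo_i\mapsto\hgo_i-\gd$, $\bgo_j\mapsto\bgo_j-\gd$ produces $u(\gb,\gd)\asymp 2m_1\gb\gd$ at leading order (Taylor expansion of $\gl$); when $m_1=0$ but $m_4>m_2^2$, the same mean-shift is only $O(\gd^2)$ and one uses instead a multiplicative rescaling $\hgo_i\mapsto(1-\gd)\hgo_i$, $\bgo_j\mapsto(1-\gd)\bgo_j$, which yields $u(\gb,\gd)\asymp 2m_2^2\gb^2\gd$. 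For discrete disorder laws (where those ``classical'' tilts may fail to be absolutely continuous), one replaces them by an exponential tilt of the form $d\tilde\bbP/d\bbP\propto e^{\gd(\hgo_i+\bgo_j)}$, or an analogous variance tilt, adjusted so as to produce the same leading-order $u(\gb,\gd)$; in any case the assumption $\Pb\neq\Pbspinx$ is precisely what guarantees that an effective, absolutely-continuous tilt exists.

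The cost factor $\bbE[(d\tilde\bbP/d\bbP)^{-\gh/(1-\gh)}]^{1-\gh}$ factorises over the $\|\bn\|$ independent strand variables thanks to the tensor structure of $\tilde\bbP$; a Taylor expansion of each one-variable factor around $\gd=0$ gives a bound $1+C\gd^2$, hence a total cost of at most $\exp(C\gd^2\|\bn\|)$. Inserting this together with the identity above into H\"older's inequality reduces the proof to estimating $Z_{\bn,\,h-u}^\gh$ in the range $k^{1-\eps^2}\leq\|\bn\|\leq 2k$. Here Lemma~\ref{lem:Afrac1} --- together with the sharper renewal-theoretic estimates of Appendix~\ref{section:appendixB} when $\ga>1$ --- yields the required polynomial decay; the key improvement over Jensen comes from the fact that, once $u$ is large enough to push $h-u$ below $0$, $Z_{\bn,h-u}$ is driven by the bivariate renewal tail $\bP(\bn\in\btau)$ rather than by the fluctuation exponent attached to $h>0$.

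The last step is to optimise $\gd$ as a small (negative) power of $\|\bn\|$, chosen so that $u(\gb,\gd)\gg h$ while $\gd^2\|\bn\|$ remains bounded by a small multiple of $\log\|\bn\|$, so that the cost $e^{C\gd^2\|\bn\|}$ stays below $\|\bn\|^{\eps\gh}$; this yields \eqref{eq:Afrac2} with suitable constants $C_{\ref{cst:lemnum}}$, $C_{\ref{cst:lemexpeps}}$, $C_{\ref{cst:lemexpeps:ga>1}}$ provided $\eps$ has been taken small enough with respect to the exponents appearing in \eqref{eq:gD}. The main obstacle will be the precise analysis of $Z_{\bn,\,h-u}$ when the effective chemical potential is slightly negative and $\|\bn\|$ is close to the annealed correlation length $k$: one has to quantify exactly how much additional polynomial decay is gained by pushing $h$ below zero, separately in the regimes $\ga\in(1/2,1]$ and $\ga>1$ (which explains the different exponents $2-\ga$ versus $\ga$ in the conclusion of the lemma), and this is where the bivariate renewal estimates of Appendix~\ref{section:appendixB} must be invoked with care.
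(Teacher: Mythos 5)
Your overall architecture is the same as the paper's: a product-form tilt acting simultaneously on the two strand sequences $(\hgo_i)_{i\leq\bn^{(1)}}$, $(\bgo_j)_{j\leq\bn^{(2)}}$ (hence only $\|\bn\|$ variables), H\"older's inequality splitting $A_\bn$ into a cost term and a tilted-annealed term, the exact identity $\tilde\bbE[Z^{\quen}_\bn]=Z_{\bn,h-u}$ coming from the non-alignment of renewal points, a Taylor expansion showing $u\asymp\gb\gd$ when $m_1\neq0$ (resp.\ $u\asymp\gb^2\gd$ when $m_1=0$, $m_4>m_2^2$, via a quadratic/variance tilt), and the choice $\gd\asymp\|\bn\|^{-1/2}$ keeping the cost bounded. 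These steps all match Section~5.2 of the paper, and your fallback to exponential tilts for discrete laws is exactly the paper's choice (note, though, that the role of $\Pb\neq\Pbspinx$ is not the \emph{existence} of an absolutely continuous tilt --- exponential and quadratic tilts always exist --- but the non-vanishing of the leading coefficients, e.g.\ $m_1(m_2-m_1^2)$ and $m_2(m_4-m_2^2)$, in the expansion of the gain).

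The genuine gap is in the decisive last step, where your stated mechanism would not deliver the lemma. You claim that once $h-u<0$ the partition function is ``driven by the bivariate renewal tail $\bP(\bn\in\btau)$''. But for $\ga\in(1/2,1]$ one has $\bP(\bn\in\btau)\asymp L(\|\bn\|)^{-1}\|\bn\|^{-(2-\ga)}$, which is exactly the order already given by Jensen and Lemma~\ref{lem:Afrac1}: if $Z_{\bn,h-u}$ were only of that order, there would be no $\eps$-improvement at all, and the whole coarse-graining argument of Theorem~\ref{thm:rel:shift} would collapse. For $\ga>1$ the situation is worse: the renewal mass function decays only like $\|\bn\|^{-1/\min(\ga,2)}$, very far from the target exponent $\ga$ (your opening remark that Jensen falls short ``by exactly $\|\bn\|^{\eps C\gh}$'' is also false in this regime). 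The correct mechanism, which you neither state nor prove, is the one-big-jump estimate of Lemma~\ref{lem:zhom}: $Z_{\bn,-u}\leq C\,K(\|\bn\|)/u^2+\bP(\bn\in\btau)\,e^{-Cu\|\bn\|^{\min(\ga_-,1)}}$, in which, for $u\asymp\gb\|\bn\|^{-1/2}$ (resp.\ $\gb^2\|\bn\|^{-1/2}$) and $h$ as in \eqref{eq:gD} with $k^{1-\eps^2}\leq\|\bn\|\leq2k$, the \emph{first} term dominates and is smaller than $\bP(\bn\in\btau)$ precisely by the factor $\|\bn\|^{-c\eps}$ (for $\ga\leq1$), respectively gives the jump from $1/\min(\ga,2)$ up to $\ga+c\eps$ (for $\ga>1$). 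Without this estimate --- and the accompanying verification that $u\gg h$ and that the stretched-exponential second term is negligible for your choice of $\gd$ --- the proof is not complete; everything before it is a faithful reproduction of the paper's change-of-measure scheme, but the quantitative heart of Lemma~\ref{lem:Afrac2} is missing.
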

}
We postpone the proof of Lemma~\ref{lem:Afrac1} to Appendix~\ref{section:appendixA} (because it only concerns the homogeneous gPS model), and we prove Lemma~\ref{lem:Afrac2} later in this section.
We now have all the tools we need to finish the proof. Fix $\eps>0$ sufficiently small for Lemma~\ref{lem:Afrac2} to apply, then define $\gh$ depending on $\ga>1/2$.\smallskip

\noindent \emph{Case $\ga\in(1/2,1)$.} Choose $\gh$ such that
\begin{equation}\label{eq:gga0}
\max\left(\frac{4}{4+\eps\,C_{\ref{cst:lemexpeps}}}\,,\,\frac{4-2\eps^2}{4-2\eps^2+\eps^2 \ga}\,,\, \frac{2}{2+\ga} \,,\, \frac{1}{2-\ga}\right) \;<\;\gh\;<\;1\;.
\end{equation}
In particular $(2+\ga)\gh>2$ so \eqref{eq:gr1} and \eqref{eq:gr2} hold and we only have to control the sum $S$ defined in \eqref{eq:gr2}. We introduce the notation $u_k:=k^{(1-\eps^2)}$, and we part the sum in two:
\begin{equation}\label{eq:splitS}
S:= S_1+S_2:=  \sumtwo{\bi\prec\bk,}{\|\bi\|\geq u_k} \frac{L_4(k-\bi^{(2)})}{(k-\bi^{(2)})^{(2+\ga)\gh-2}} A_\bi \; + \sumtwo{\bi\prec\bk,}{\|\bi\| < u_k} \frac{L_4(k-\bi^{(2)})}{(k-\bi^{(2)})^{(2+\ga)\gh-2}} A_\bi.
\end{equation}
To bound $S_1$, we use Lemma~\ref{lem:Afrac2} to estimate $A_\bi$.
\begin{equation}
\begin{aligned}
S_1 &\leq  \sumtwo{\bi\prec\bk,}{\|\bi\|\geq u_k} \frac{L_4(k-\bi^{(2)})}{(k-\bi^{(2)})^{(2+\ga)\gh-2}} \frac{C_{\ref{cst:lemnum}}}{\|\bi\|^{(2-\ga+\eps\,C_{\ref{cst:lemexpeps}})\gh}}\\
&\leq \sum_{\bi^{(2)}=0}^{k-1} \frac{L_4(k-\bi^{(2)})}{(k-\bi^{(2)})^{(2+\ga)\gh-2}} \frac{C_\cntc}{(\max(\bi^{(2)},u_k))^{(2-\ga+\eps C_{\ref{cst:lemexpeps}})\gh-1}},
\end{aligned}
\end{equation}
where we used $(2-\ga+\eps \,C_{\ref{cst:lemexpeps}})\gh \geq (2-\ga)\gh>1$ (recall \eqref{eq:gga0} and $\gh>\gh_0$), and summed over $\bi^{(1)}$. We split this sum once more according to whether $\bi^{(2)}\leq k/2$ or $\bi^{(2)}>k/2$, which gives
\begin{equation}
\begin{aligned}
S_1 &\leq \Bigg(\sum_{\bi^{(2)}=0}^{k/2}+\sum_{\bi^{(2)}=k/2+1}^{k-1}\Bigg)\frac{L_4(k-\bi^{(2)})}{(k-\bi^{(2)})^{(2+\ga)\gh-2}} \frac{C_\cntc}{(\max(\bi^{(2)},u_k))^{(2-\ga+\eps C_{\ref{cst:lemexpeps}})\gh-1}}\\
&\leq \frac{L_\cntf(k)}{k^{(2+\ga)\gh-2}}\frac{C_\cntc}{k^{(2-\ga+\eps\,C_{\ref{cst:lemexpeps}})\gh-2}} + \frac{L_\cntf(k)}{k^{(2+\ga)\gh-3}}\frac{C_\cntc}{k^{(2-\ga+\eps\,C_{\ref{cst:lemexpeps}})\gh-1}}
\leq \frac{L_\cntf(k)}{k^{(4+\eps\,C_{\ref{cst:lemexpeps}})\gh-4}},
\end{aligned}
\end{equation}
where, in the first term we used $(2-\ga+\eps\,C_{\ref{cst:lemexpeps}})\gh<2$ and bounded uniformly in the first factor, and in the second we used $(2+\ga)\gh<3$ and bounded uniformly in the second factor. Finally we have $(4+\eps\,C_{\ref{cst:lemexpeps}})\gh-4>0$ because of \eqref{eq:gga0}, so $S_1$ can be made small if $k$ is large enough (\textit{i.e.}\ if $\gb$ is small enough). Note that it is crucial here to have $C_{\ref{cst:lemexpeps}}>0$ thanks to Lemma~\ref{lem:Afrac2}, which relies on the assumption that $\bbP\neq\Pbspinx$ for all $x>0$.
\smallskip

For $S_2$, we bound the first factor uniformly in $\bi^{(2)}$ (note that $(k-\bi^{(2)})\geq k/2$ for $\|\bi\|<u_k$), and we estimate $A_\bi$ with Lemma~\ref{lem:Afrac1}.
\begin{equation}
\begin{aligned}
S_2 &\leq \frac{L_\cntf(k)}{k^{(2+\ga)\gh-2}}  \sumtwo{\bi\prec\bk,}{1\leq \|\bi\| < u_k} \frac{C_1}{\|\bi\|^{(2-\ga)\gh}L(\|\bi\|)^\gh} \; + \; \frac{L_4(k)}{k^{(2+\ga)\gh-2}} A_\bzero\\
&\leq \frac{L_\cntf(k)}{k^{(2+\ga)\gh-2}\,u_k^{(2-\ga)\gh-2}} +\frac{L_4(k)}{k^{(2+\ga)\gh-2}}\\
&\leq \frac{L_{\arabic{svf}}(k)}{k^{(2+\ga)\gh-2 + (1-\eps^2)((2-\ga)\gh-2)}} + \frac{L_\cntf(k)}{k^{(2+\ga)\gh-2}},
\end{aligned}
\end{equation}
where we used $(2-\ga)\gh\in(1,2)$, and we recall $u_k=k^{(1-\eps^2)}$ (note that we had to write separately the term $\bi=\bzero$). The exponent in the denominator of the first term can be written $(4-2\eps^2+\eps^2\ga))\gh-4+2\eps^2$ which is positive because of \eqref{eq:gga0}, and the other exponent is also positive. Thus $S_2$ is also small for $k$ large (\textit{i.e.}\ $\gb$ small).\smallskip

Therefore, there is some $\gb_\eps>0$ such that for any $\gb\in(0,\gb_\eps)$, $S\leq1/3$. Thereby $\gr_1+\gr_2+\gr_3\leq 1$ for our choice of $k$ and $\gh$. Applying Lemma~\ref{lem:rel:shift}, this concludes the proof of Theorem~\ref{thm:rel:shift} in the case $\ga\in(1/2,1)$. \smallskip

\noindent \emph{Case $\ga>1$.} This is very similar to the previous case. Choose $\gh$ such that
\begin{equation}\label{eq:gga0:ga>1}
\max\left(\frac{3}{2+\ga}\,,\,\frac{1}{\ga + \eps \,C_{\ref{cst:lemexpeps:ga>1}}} \,,\, \frac{4-2\eps^2}{2+\ga+\frac{1-\eps^2}{\min(\ga,2)}}\right) \;<\;\gh\;<\;1\;.
\end{equation}
The first two terms in the maximum are obviously strictly smaller than $1$; regarding the third one, one should first notice that $2-\frac{1}{\min(\ga,2)}$ is strictly smaller than $\ga$ (it is obvious if $\ga\geq2$, and follows from $(\ga-1)^2>0$ if $1<\ga<2$). This implies  
$(1-\eps^2)(2-\frac{1}{\min(\ga,1)}) < \ga$,
which is equivalent to
$4-2\eps^2\,<\,2+\ga+\frac{(1-\eps^2)}{\min(\ga,2)}$, thus $\gh$ is well defined. Moreover $(2+\ga)\gh>2$ so \eqref{eq:gr1} and \eqref{eq:gr2} hold again and we only have to control $S$.

As in the case $\ga\in(1/2,1)$, we split it according to $\|\bi\|\geq u_k:=k^{(1-\eps^2)}$ and $\|\bi\|<u_k$ to obtain \eqref{eq:splitS}. We bound $S_1$ with Lemma~\ref{lem:Afrac2}.
\begin{equation}
\begin{aligned}
S_1 &\leq  \sumtwo{\bi\prec\bk,}{\|\bi\|\geq u_k} \frac{L_4(k-\bi^{(2)})}{(k-\bi^{(2)})^{(2+\ga)\gh-2}} \frac{C_{\ref{cst:lemnum}}}{\|\bi\|^{(\ga+\eps\,C_{\ref{cst:lemexpeps:ga>1}})\gh}}\\
&\leq \Bigg(\sum_{\bi^{(2)}=0}^{k/2}+\sum_{\bi^{(2)}=k/2+1}^{k-1}\Bigg)\frac{L_4(k-\bi^{(2)})}{(k-\bi^{(2)})^{(2+\ga)\gh-2}} \frac{C_\cntc}{(\max(\bi^{(2)},u_k))^{(\ga+\eps C_{\ref{cst:lemexpeps:ga>1}})\gh-1}}\\
&\leq \frac{L_\cntf(k)}{k^{(2+\ga)\gh-2}}\frac{C_\cntc}{k^{(\ga+\eps\,C_{\ref{cst:lemexpeps:ga>1}})\gh-2}} + \frac{L_\cntf(k)}{k^{(2+\ga)\gh-3}}\frac{C_\cntc}{k^{(\ga+\eps\,C_{\ref{cst:lemexpeps:ga>1}})\gh-1}}
\leq \frac{L_\cntf(k)}{k^{(2+2\ga+\eps\,C_{\ref{cst:lemexpeps:ga>1}})\gh-4}},
\end{aligned}
\end{equation}
where we used $(2+\ga)\gh>3$ and $(\ga+\eps\,C_{\ref{cst:lemexpeps:ga>1}})\gh>1$ to estimate the sums, and the last bound goes to $0$ as $k\to\infty$ because $(2+2\ga+\eps\,C_{\ref{cst:lemexpeps:ga>1}})\gh>4$.\smallskip

For $S_2$, we bound the first factor uniformly in $\bi^{(2)}$ and estimate $A_\bi$ with Lemma~\ref{lem:Afrac1}.
\begin{equation}
\begin{aligned}
S_2 &\leq \frac{L_\cntf(k)}{k^{(2+\ga)\gh-2}}  \sumtwo{\bi\prec\bk,}{1\leq \|\bi\| < u_k} \frac{L_\cntf(\|\bi\|)}{\|\bi\|^{\gh/\min(\ga,2)}} \; + \; \frac{L_4(k)}{k^{(2+\ga)\gh-2}} A_\bzero\\
&\leq \frac{L_\cntf(k)}{k^{(2+\ga)\gh-2}\,u_k^{\gh/\min(\ga,2)-2}} + \frac{L_4(k)}{k^{(2+\ga)\gh-2}}\\
&\leq \frac{L_{\arabic{svf}}(k)}{k^{(2+\ga)\gh-2 + (1-\eps^2)(\gh/\min(\ga,2)-2)}} + \frac{L_\cntf(k)}{k^{(2+\ga)\gh-2}}.
\end{aligned}
\end{equation}
The exponent in the denominator of the first term can be written $\big(2+\ga+\frac{1-\eps^2}{\min(\ga,2)}\big)\gh-(4-2\eps^2)$, which is positive. Finally $S_2$ vanishes too as $k\to\infty$, and this concludes the proof of Theorem~\ref{thm:rel:shift} in the case $\ga>1$ with Lemma~\ref{lem:Afrac2}.\smallskip

\noindent \emph{Case $\ga=1$.} Choose $\gh$ such that
\begin{equation}\label{eq:gga0:ga=1}
\max\left(\frac{2}{3}\,,\,\frac{4}{4+\eps\,C_{\ref{cst:lemexpeps}}}\right)\;<\;\gh\;<\;1\;.
\end{equation}
We have $(2+\ga)\gh=3\gh>2$ again, so we only have to control $S$, which we split it again in $S_1+S_2$ as in \eqref{eq:splitS}. First we control $S_1$ with Lemma~\ref{lem:Afrac2}.
\begin{equation}
\begin{aligned}
S_1 &\leq  \sumtwo{\bi\prec\bk,}{\|\bi\|\geq u_k} \frac{L_4(k-\bi^{(2)})}{(k-\bi^{(2)})^{3\gh-2}} \frac{C_{\ref{cst:lemnum}}}{\|\bi\|^{(1+\eps\,C_{\ref{cst:lemexpeps}})\gh}}\\
&\leq \Bigg(\sum_{\bi^{(2)}=0}^{k/2}+\sum_{\bi^{(2)}=k/2+1}^{k-1}\Bigg)\frac{L_4(k-\bi^{(2)})}{(k-\bi^{(2)})^{3\gh-2}} \frac{C_\cntc}{(\max(\bi^{(2)},u_k))^{(1+\eps C_{\ref{cst:lemexpeps}})\gh-1}}\\
&\leq \frac{L_\cntf(k)}{k^{3\gh-2}}\frac{C_\cntc}{k^{(1+\eps\,C_{\ref{cst:lemexpeps}})\gh-2}} + \frac{L_\cntf(k)}{k^{3\gh-3}}\frac{C_\cntc}{k^{(1+\eps\,C_{\ref{cst:lemexpeps}})\gh-1}}
\leq \frac{L_\cntf(k)}{k^{(4+\eps\,C_{\ref{cst:lemexpeps}})\gh-4}},
\end{aligned}
\end{equation}
and this sum decays because of \eqref{eq:gga0:ga=1}. Then we control $S_2$ with Lemma~\ref{lem:Afrac1}.
\begin{equation}
\begin{aligned}
S_2 &\leq \sumtwo{\bi\prec\bk,}{1\leq \|\bi\| < u_k} \frac{L_4(k-\bi^{(2)})}{(k-i_2)^{3\gh-2}} \frac{L_5(\|\bi\|)}{\|\bi\|} \; + \; \frac{L_4(k)}{k^{3\gh-2}} A_\bzero\\
&\leq \sum_{i_2=0}^{u_k}  \frac{L_4(k-\bi^{(2)})}{(k-i_2)^{3\gh-2}} C_\cntc \log(u_k) \; + \; \frac{L_4(k)}{k^{3\gh-2}} 
\leq  \frac{L_\cntf(k)}{k^{3\gh-2}} ,
\end{aligned}
\end{equation}
so $S_2$ vanishes too as $k\to\infty$. This concludes the proof of Theorem~\ref{thm:rel:shift} for $\ga=1$.
\end{proof}

{\blue

\subsection{Fractional moment estimate: proof of Lemma~\ref{lem:Afrac2}}
\label{section:shift:m4>1}

We prove Lemma \ref{lem:Afrac2} via a change of measure procedure. Even though we obtain the same estimate on the fractional moment in both cases $m_1\neq0$ and $m_1=0$, $m_4>m_2^2$, we have to handle them separately and with different values of $h$.

\medskip
{\bf \noindent Case $m_1\neq0$.} For any $\bn\in\N^2$, let us define a new probability measure by tilting both sequences $(\hgo_i)_{i\leq \bn^{(1)}}$, $(\bgo_i)_{i\leq \bn^{(2)}}$ by some $\gd\in(-\gd_0,\gd_0)$:
\begin{equation}
\frac{\dd \bbP_{\bn,\gd}}{\dd \bbP} (\go) \,=\, \frac{e^{\gd\sum_{i_1=1}^{\bn^{(1)}} \hgo_{i_1} + \gd\sum_{i_2=1}^{\bn^{(2)}} \bgo_{i_2}}}{Q(\gd,0)^{\|\bn\|/2}}\,,
\end{equation}
where
\begin{equation}\label{eq:def:Q:m1neq0}
Q(\gd,\gb)\,:=\, \bbE\big[e^{\gb \,\go_{\bone} + \gd \,\hgo_1+\gd\,\bgo_1}\big]\,.
\end{equation}
It is well-defined for $\gb\in[0,\gb_1)$ and $\gd\in(-\gd_0,\gd_0)$, for some $\gb_1, \gd_0>0$, because
\begin{align*}
Q(\gd,\gb)&\leq\bbE\big[e^{(\gb+2|\gd|)\max(|\go_\bone|,\,\hgo_1,\,\bgo_1)}\big] \leq \bbE\big[e^{(\gb+2|\gd|)|\go_\bone|}\big] +2\, \bbE\big[e^{(\gb+2|\gd|)\hgo_1}\big] \\& \leq \bbE\big[e^{(\gb+2|\gd|)(\hgo_1^2+\bgo_1^2)/2}\big] +2\, \bbE\big[e^{(\gb+2|\gd|)(\hgo_1^2+1)}\big]<\infty\;,
\end{align*}
where we used $|\go_\bone|\leq \frac12 (\hgo_1^2+\bgo_1^2)$ and $\hgo_1\leq \hgo_1^2+1$ for all $\hgo_1, \bgo_1\in\R$. Recall $A_{\bn}=\bbE\big[\big(Z^{\quen}_{\bn}\big)^\gh\big]$ with $\gh\in(0,1)$. Using H\"{o}lder's inequality, we write for any $\bn\in\N^2$, $\gd\in(-\gd_0,\gd_0)$,
\begin{align}
\nonumber A_{\bn}=\bbE\big[\big(Z^{\quen}_{\bn}\big)^\gh\big] &= \bbE_{\bn,\gd}\bigg[\big(Z^{\quen}_{\bn}\big)^\gh\bigg(\frac{\dd \bbP_{\bn,\gd}}{\dd \bbP} (\go)\bigg)^{-1}\bigg]\\
\nonumber &\leq \big(\bbE_{\bn,\gd}\big[Z^{\quen}_{\bn}\big]\big)^\gh \Bigg(\bbE_{\bn,\gd}\Bigg[\bigg(\frac{\dd \bbP_{\bn,\gd}}{\dd \bbP} (\go)\bigg)^{-1/(1-\gh)}\Bigg]\Bigg)^{1-\gh}\\
\label{eq:holder:m1neq0} &= \big(\bbE_{\bn,\gd}\big[Z^{\quen}_{\bn}\big]\big)^\gh \Big(Q(\gd,0)^\gh Q\big(-\gd\gh/(1-\gh),0\big)^{1-\gh}\Big)^{\|\bn\|/2}.
\end{align}
Using a Taylor expansion of $Q(\gd,0)=\bbE[e^{\gd\,\hgo_1}]^2$ as $\gd\to0$ (which we will detail below in \eqref{eq:taylor:Q:m1neq0}), we have
\begin{equation}
Q(\gd,0)^\gh Q\big(-\gd\gh/(1-\gh),0\big)^{1-\gh} \,=\, 1 + \gd^2(m_2-m_1^2)\frac{\gh}{1-\gh} + O(|\gd|^3)\,,
\end{equation}
where $m_2-m_1^2=\Var(\hgo_1)>0$ (otherwise disorder is constant a.s.), so this expression is bounded by $\exp(C_\cntc\,\gd^2)$ for some $C_{\arabic{cst}}>0$, uniformly in $\gd\in(-\gd_1,\gd_1)$ for some $\gd_1>0$. Therefore,
\begin{equation}\label{eq:Afracexp:m1neq0}
A_{\bn} \;\leq\; \big(\bbE_{\bn,\gd}\big[Z^{\quen}_{\bn}\big]\big)^\gh \exp\big(C_{\arabic{cst}}\,\gd^2 \|\bn\|/2\big)\,.
\end{equation}
We fix right away $|\gd|:=\|\bn\|^{-1/2}>0$, so that the exponential is bounded by a constant ---the sign of $\gd$ will be determined below.

Let us now compute $\bbE_{\bn,\gd}[Z_{\bn}^{\quen}]$, and estimate it with our choice of $\gd$. Using Fubini's theorem, one has
\begin{equation*}
\begin{aligned}
\bbE_{\bn,\gd}\big[Z_{\bn}^{\quen}\big] 
\,&=\, \bE\bigg[\bbE_{\bn,\gd}\Big[e^{\sum_{\bi\in \llbracket 1, \bn \rrbracket } (\gb\go_\bi-\gl(\gb)+h)\ind_{\{\bi\in\btau\}}}\Big]\ind_{\{\bn \in\btau\}}\bigg]\\
&=\, \bE\,\bbE\Bigg[ e^{(-\gl(\gb)+h)|\btau\cap\bnsquare|} e^{\sum_{\bi\in\btau\cap\bnsquare}\gb\go_\bi} \prod_{i_1=1}^{\bn^{(1)}}\frac{e^{\gd \,\hgo_{i_1}}}{Q(\gd,0)^{1/2}} \prod_{i_2=1}^{\bn^{(2)}}\frac{e^{\gd \,\bgo_{i_2}}}{Q(\gd,0)^{1/2}} \, \ind_{\{\bn \in\btau\}} \Bigg]\\
&=\, \bE\Bigg[ e^{(-\gl(\gb)+h)|\btau\cap\bnsquare|}\!\! \prod_{\bi\in\btau\cap\bnsquare} \frac{\bbE\big[e^{\gb\go_\bi+\gd\hgo_{\bi^{(1)}}+\gd\bgo_{\bi^{(2)}}}\big]}{Q(\gd,0)} \\
&\hspace{5.5cm} \times \prodtwo{i_1\leq \bn^{(1)},}{i_1\notin\btau^{(1)}} \frac{\bbE[e^{\gd \,\hgo_{i_1}}]}{Q(\gd,0)^{1/2}}  \prodtwo{i_2\leq \bn^{(2)},}{i_2\notin\btau^{(2)}} \frac{\bbE[e^{\gd \,\bgo_{i_2}}]}{Q(\gd,0)^{1/2}} \; \ind_{\{\bn \in\btau\}} \Bigg]\,,
\end{aligned}
\end{equation*}
where we used that $\hgo_i$, $\bgo_j$, $i,j\in\N$ are independent variables. Noticing that the factors with $i_1\notin\btau^{(1)}$, $i_2\notin\btau^{(2)}$ simplify to 1, and recalling $e^{\gl(\gb)}=Q(0,\gb)$, this finally gives
\begin{equation}\label{eq:etilte:1:m1neq0}
\bbE_{\bn,\gd}\big[Z_{\bn}^{\quen}\big] \,=\, \bE\bigg[\Big(e^{h}\frac{Q(\gd,\gb)}{Q(\gd,0)Q(0,\gb)}\Big)^{|\btau\cap\bnsquare|} \, \ind_{\{\bn \in\btau\}}\bigg]\, .
\end{equation}
Now we have to estimate $\mathrm{Frac}(\gd,\gb):=\frac{Q(\gd,\gb)}{Q(\gd,0)Q(0,\gb)}$  as $\gb,\gd \to 0$ (recall that they are related to $h$ by \eqref{eq:gD}, \eqref{eq:k} and $|\gd|=\|\bn\|^{-1/2}$, with $k^{1-\gep^2}\leq \|\bn\| \leq 2k$). A Taylor expansion of $Q(\gd,\gb)$ to the second order in $(\gd,\gb)$, combined with the finite exponential moment, gives
\begin{equation}\label{eq:taylor:Q:m1neq0}
Q(\gd,\gb)=1+\gb m_1^2 + 2\gd m_1 + \frac12 \gb^2m_2^2 + 2\gd\gb m_2m_1 + \gd^2(m_2+m_1^2) + O(\,\cdot^3)
\,,
\end{equation}
with $O(\,\cdot^3):=O(\gb^3)+O(\gb^2\gd)+O(\gb\gd^2)+O(\gd^3)$. This leads to the following expansion:
\begin{equation}\label{eq:taylor:Q2:m1neq0}
\mathrm{Frac}(\gd,\gb)= 1 + \gd\gb m_1(m_2-m_1^2) + O(\gb^p) + O(\gd^2) + o(\gd\gb)\,,
\end{equation}
where we can push the expansion to any order $p\in\N$ in $\gb$ because $\mathrm{Frac}(0,\gb)=1$ for all $\gb\geq0$ and $\mathrm{Frac}(\gd,\gb)$ is $\cC^\infty$ on $(-\gd_0,\gd_0)\times[0,\gb_1)$. Recall that $m_2-m_1^2>0$ and that we are in the case $m_1\neq0$. Let us now prove that, with our choice of $\gd$, the order $\gd\gb$ is dominating in~\eqref{eq:taylor:Q2:m1neq0},
and deduce a bound on~$\bbE_{\bn,\gd}[Z_{\bn}^{\quen}]$.

\smallskip
\noindent
\emph{Case $\ga\in(1/2,1]$}: Because of our assumptions $k^{(1-\eps^2)}\leq\|\bn\|\leq 2k$, \eqref{eq:gD}, \eqref{eq:k} and Theorem~\ref{thm:hom}, and because we can bound the slowly varying function from Theorem~\ref{thm:hom} by $h^{\eps^{3/2}}\leq L_\ga(1/h) \leq h^{-\eps^{3/2}}$ for $h$ sufficiently small, there exists some $\gb_\eps>0$ such that for any $\gb<\gb_\eps$,
\begin{align}
\nonumber h^{\frac12(\frac{1}{\ga}+\eps^{3/2})} &\,\leq\,|\gd| \,\leq\, h^{\frac12(\frac{1}{\ga}-\eps^{3/2})(1-\eps^2)},\\  \label{eq:gdtilt:m1neq0}
\gb^{\frac{1}{2\ga-1}+\frac{1}{2\ga}\eps+c_1\,\eps^{3/2}} &\,\leq\, |\gd| \,\leq\, \gb^{\frac{1}{2\ga-1}+\frac{1}{2\ga}\eps-c_1\,\eps^{3/2}}\,,
\end{align}
for some constant $c_1>0$ provided that $\eps$ was taken sufficiently small. Notice that $\gd$ decays faster than~$\gb$ (especially if $\ga$ is close to $1/2$), so $\gd^2 \ll |\gd|\gb$. Moreover we can fix $p$ sufficiently large (depending on $\ga$) such that $\gb^{p} \ll |\gd|\gb$. Finally, letting $\gd:=-\mathrm{sign}(m_1)\|\bn\|^{-1/2}$,
there exist a constant $c>0$ such that for any $\gb<\gb_\eps$, we have
\begin{equation}\label{eq:Afrachom:taylorexp}
\mathrm{Frac}(\gd,\gb) \leq 1 - c |\gd|\gb \leq e^{-c |\gd|\gb}\,.
\end{equation}
Eventually, noticing that $h=\gb^{\frac{2\ga}{2\ga-1}+\eps}\ll \gb^{\frac{2\ga}{2\ga-1}+\frac{1}{2\ga}\eps+c_1\,\eps^{3/2}} \leq |\gd|\gb$ if $\eps$ is sufficiently small, and recalling $|\gd|=\|\bn\|^{-1/2}$, we can bound \eqref{eq:etilte:1:m1neq0} from above by
\begin{equation}\label{eq:Afrachom:m1neq0}
\bbE_{\bn,\gd}\big[Z_{\bn}^{\quen}\big] \,\leq\, \bE\Big[\big(e^{-c'\|\bn\|^{-1/2}\gb}\big)^{|\btau\cap\bnsquare|} \ind_{\{\bn \in\btau\}}\Big]\,,
\end{equation}
for some $c'>0$.

\smallskip
\noindent
\emph{Case $\ga>1$}: This is very similar to the previous case, with simpler bounds because the slowly varying function from Theorem~\ref{thm:hom} is replaced by a constant. So putting together~\eqref{eq:k}, Theorem~\ref{thm:hom} and our assumption $k^{(1-\eps^2)}\leq\|\bn\|\leq 2k$, we deduce that there exist $c_1,c_2>0$ and $\gb_\eps>0$ such that for any $\gb<\gb_{\eps}$ we have $c_1 h^{\frac{1}{2}} \leq |\gd|\leq c_2 h^{\frac{1-\eps^2}{2}}$. Recalling \eqref{eq:gD}, this yields
\begin{equation}\label{eq:gdtilt:ga>1:m1neq0}
c'_1 \, \gb^{1+\frac{\eps}{2}}\,\leq\,|\gd| \,\leq\, c'_2 \, \gb^{\frac{1}{2}(2+\eps)(1-\eps^2)}\leq c'_2 \gb^{1+\frac{\eps}{4}}.
\end{equation}
Here again $|\gd| \ll \gb$, and we can fix $p\in\N$ sufficiently large so that $\gb^{p} \ll |\gd|\gb$. Fixing $\gd:=-\mathrm{sign}(m_1)\|\bn\|^{-1/2}>0$, there is a constant $c''>0$ such that for any $\gb<\gb_\eps$, we have $\mathrm{Frac}(\gd,\gb) \,\leq\, e^{-c'' |\gd|\gb}$. Finally we have $|\btau\cap\bnsquare| \leq \|\bn\|\leq C_\cntc h^{-1}$ (recall Theorem~\ref{thm:hom}), so $e^{h|\btau\cap\bnsquare|}$ is bounded uniformly by some constant. Thus:
\begin{equation}\label{eq:Afrachom:ga>1:m1neq0}
\bbE_{\bn,\gd}\big[Z_{\bn}^{\quen}\big] \,\leq\, C_\cntc \bE\Big[\big(e^{-c''\|\bn\|^{-1/2}\gb}\big)^{|\btau\cap\bnsquare|} \ind_{\{\bn \in\btau\}}\Big]\,.
\end{equation}

\smallskip
In both cases \eqref{eq:Afrachom:m1neq0}-\eqref{eq:Afrachom:ga>1:m1neq0}, we recognize the partition function of a homogeneous gPS model with parameter $-c\|\bn\|^{-1/2}\gb$, for some $c>0$. The following result (proven in Appendix~\ref{section:appendixA}) gives an estimate of this partition function.
\begin{lemma}\label{lem:zhom}
For any $\ga>0$, fix some $0<\ga_-<\ga$. Then there exist $C_{\rcntc{cst:propshift1}}, C_\cntc>0$ and $u_0>0$ such that for any $u\in(0,u_0)$ and $\bn\in\N^2$, one has
\begin{equation}\label{eq:lem:zhom}
Z_{\bn,-u} \,\leq\, C_{\ref{cst:propshift1}} \frac{K(\|\bn\|)}{u^2} \;+\; \bP(\bn\in\btau)\,e^{-C_{\arabic{cst}}u\|\bn\|^{\min(\ga_-,1)}}\,.
\end{equation}
\end{lemma}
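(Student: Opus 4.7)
The plan is to decompose $Z_{\bn,-u}$ over the number of renewal epochs in $\bnsquare$ and treat separately the contributions from trajectories with few contacts (dominated by a ``one big jump'') and many contacts (where the exponential penalty $e^{-ul}$ takes over). Writing $K^{*l}(\bn):=\bP(\btau_l=\bn)$, and using that the event $\{\bn\in\btau\}$ is equivalent to $\{\btau_l=\bn\}$ for $l=|\btau\cap\bnsquare|\ge 1$, one has
\[
Z_{\bn,-u}\;=\;\sum_{l\ge 1} e^{-u l}\,K^{*l}(\bn).
\]
Fixing a small constant $c_0>0$ and setting $M:=\lfloor c_0\,\|\bn\|^{\min(\ga_-,1)}\rfloor$, I would split this series at $M$.

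For the tail $l>M$, the pointwise bound $e^{-ul}\le e^{-uM}$ together with $\sum_{l\ge 1}K^{*l}(\bn)=\bP(\bn\in\btau)$ immediately yields
\[
\sum_{l>M}e^{-ul}\,K^{*l}(\bn)\;\le\;\bP(\bn\in\btau)\,e^{-c_0 u\,\|\bn\|^{\min(\ga_-,1)}},
\]
matching the second summand in~\eqref{eq:lem:zhom}. For the head $l\le M$, the key input is a uniform ``one big jump'' estimate: there exists $C>0$ such that
\[
K^{*l}(\bn)\;\le\;C\,l\,K(\|\bn\|)\qquad\text{for every }1\le l\le M.
\]
Granted this bound, a direct summation gives
\[
\sum_{l=1}^{M}e^{-ul}\,K^{*l}(\bn)\;\le\;C\,K(\|\bn\|)\sum_{l\ge 1} l\,e^{-ul}\;\le\;\frac{C'\,K(\|\bn\|)}{u^{2}}
\]
for $u\in(0,u_0)$, producing the first summand in~\eqref{eq:lem:zhom}; adding the two estimates concludes the proof.

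To obtain the one big jump bound, I would split $\{\btau_l=\bn\}$ according to whether some inter-arrival $\bj_k:=\btau_k-\btau_{k-1}$ satisfies $\|\bj_k\|\ge\|\bn\|/2$. The ``big jump'' piece is controlled via exchangeability of the $\bj_k$'s, monotonicity of $K$, and Potter's bound applied to the regularly varying function $K$ of index $-(2+\ga)$: using $\sum_{\bm\prec\bn}K^{*(l-1)}(\bm)\le 1$,
\[
\bP\bigl(\btau_l=\bn,\,\exists k:\,\|\bj_k\|\ge\|\bn\|/2\bigr)\;\le\;l\,K(\|\bn\|/2)\;\le\;C\,l\,K(\|\bn\|).
\]
For the complementary ``no big jump'' part, I would invoke the bivariate renewal estimates collected in Appendix~\ref{section:appendixB}: in the regime $l\le c_0\|\bn\|^{\min(\ga_-,1)}$, the typical scale of $\|\btau_l\|$ is at most $l^{1/\min(\ga,1)}$ (up to slowly varying factors), which is strictly smaller than $\|\bn\|$ thanks to $\ga_-<\ga$; a large-deviation bound for sums of truncated jumps then shows that reaching $\bn$ using only jumps of size below $\|\bn\|/2$ has probability superpolynomially small in $\|\bn\|$, which is absorbed into $C\,l\,K(\|\bn\|)$.

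The main obstacle lies in this last step: while the ``big jump'' contribution is a routine Potter-bound computation, rigorously dominating the ``no big jump'' contribution uniformly over $1\le l\le M$ requires the heavy-tailed bivariate renewal tools of Appendix~\ref{section:appendixB}, and this is where the strict inequality $\ga_-<\ga$ is essential: it leaves the slack needed to absorb the slowly varying function $L(\cdot)$ into constants when applying Potter's bound and the large-deviation estimate.
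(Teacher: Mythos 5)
Your overall scheme is exactly the paper's: decompose $Z_{\bn,-u}=\sum_l e^{-ul}\bP(\btau_l=\bn)$, cut at $M\asymp\|\bn\|^{\min(\ga_-,1)}$, bound the tail by $\bP(\bn\in\btau)e^{-cuM}$, and bound the head by $C\,K(\|\bn\|)\sum_l l\,e^{-ul}\le C'K(\|\bn\|)/u^2$ (the paper does this for $\ga\in(0,1]$ and defers $\ga>1$ to \cite[Lemma~4.4]{BGK}). The only genuine divergence is your attempt to rederive the key local estimate $\bP(\btau_l=\bn)\le C\,l\,K(\|\bn\|)$ by splitting on a single jump of size $\ge\|\bn\|/2$, and there your argument has a real flaw: the ``no big jump'' contribution is \emph{not} superpolynomially small in $\|\bn\|$. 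With jumps capped at $\|\bn\|/2$ the dominant scenario is two jumps of order $\|\bn\|$, whose local contribution is roughly of order $l^2\,L(\|\bn\|)\,\|\bn\|^{-\ga}\,K(\|\bn\|)$ --- only polynomially small --- and it can be absorbed into $C\,l\,K(\|\bn\|)$ precisely because $l\le\|\bn\|^{\ga_-}$ with $\ga_-<\ga$; this comparison is the actual content of the estimate and is not delivered by a routine truncated-sum large-deviation bound. In addition, for $\ga\ge1$ your sketch ignores the drift: after $l$ steps the process is recentred by $b_l$ ($=l\mu$, or $l\mu(a_l)$ when $\ga=1$ with infinite mean), so the big-jump bound must be applied to $\bn-b_l\bone$, which is harmless only because $b_l\ll\|\bn\|$ (for $\ga>1$ this forces the small constant $c_0$ in your choice of $M$, consistent with the unspecified constant in the exponent of \eqref{eq:lem:zhom}). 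The clean fix, and what the paper does, is to invoke Proposition~\ref{prop:approx:btau:keme} (\cite[Thm.~2.4]{B18}) directly: it gives $\bP(\btau_l=\bn+b_l\bone)\le C\,l\,K(\|\bn\|)$ whenever $\|\bn\|\ge a_l$, a condition guaranteed throughout your range $l\le M$; with that citation replacing your hand derivation, your proof coincides with the paper's.
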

We now conclude the proof of Lemma~\ref{lem:Afrac2} by applying this result with $u:=c\|\bn\|^{-1/2}\gb$ (which decays to 0 as $\gb\searrow0$), separating again the cases $\ga\in (1/2,1]$ and $\ga>1$.

\smallskip
\noindent \emph{Case $\ga\in(1/2,1]$.} We apply Lemma~\ref{lem:zhom} to \eqref{eq:Afrachom:m1neq0}. Notice that, thanks to \eqref{eq:gdtilt:m1neq0}, we have
\begin{equation}
u \,\|\bn\|^{\ga_-} \,\geq\, \gb^{2\frac{\ga-\ga_-}{2\ga-1} \,- \,\eps \frac{2\ga_--1}{2\ga}\,+\,c_1\,\eps^{3/2}}\,.
\end{equation}
Provided that $\eps$ is sufficiently small (so that $\eps \frac{2\ga-1}{2\ga} > 2c_1\,\eps^{3/2}$), we can choose $\ga_-$ sufficiently close to $\ga$ (depending on $\eps$) so that the exponent is negative; thus $u\|\bn\|^{\ga_-}$ goes to infinity as a power of $\gb$. In particular the second term in \eqref{eq:lem:zhom} decays much faster than the first one (which decays at most polynomially in $\gb$, recall \eqref{eq:gdtilt:m1neq0}). Hence, Lemma~\ref{lem:zhom} and  \eqref{eq:gdtilt:m1neq0} give for any $\gb$ sufficiently small,
\begin{equation}
Z_{\bn,-c'\|\bn\|^{-1/2}\gb} \,\leq\, C_{\cntc} \frac{K(\|\bn\|)}{\|\bn\|^{-1}\,\gb^2} \,\leq\, \frac{C_\cntc}{\|\bn\|^{2-\ga+\eps C_\cntc}}\,.
\end{equation}
Recollecting \eqref{eq:Afracexp:m1neq0} and \eqref{eq:Afrachom:m1neq0}, this finally proves the lemma in the case $\ga\in(1/2,1]$.

\smallskip
\noindent
\emph{Case $\ga>1$.} 
We apply Lemma~\ref{lem:zhom} to \eqref{eq:Afrachom:ga>1:m1neq0}. Recalling \eqref{eq:gdtilt:ga>1:m1neq0}, we notice that  $u\|\bn\|\geq C_\cntc \gb^{-\eps/4}$ diverges as a power of $\gb$: the second term in \eqref{eq:lem:zhom} decays again much faster than the first one as $\gb\searrow0$, so Lemma~\ref{lem:zhom} and \eqref{eq:gdtilt:ga>1:m1neq0} give for any $\gb$ sufficiently small,
\begin{equation}
Z_{\bn,-c''\|\bn\|^{-1/2}\gb} \,\leq\, C_{\cntc} \frac{K(\|\bn\|)}{\|\bn\|^{-1}\,\gb^2} \,\leq\, \frac{C_\cntc}{\|\bn\|^{\ga+\eps C_\cntc}}\,.
\end{equation}
 With \eqref{eq:Afracexp:m1neq0} and \eqref{eq:Afrachom:ga>1:m1neq0}, this concludes the proof of the lemma in the case $\ga>1$.

\medskip
{\bf \noindent Case $m_1=0$, $m_4>m_2^2$.} When $m_1=0$, one could push the expansion in \eqref{eq:taylor:Q2:m1neq0} to the third order to find a dominating term $\gd\gb^2 m_3^2$. Assuming $m_3\neq0$ and adjusting the definition of~$h$ (thus of $\gd$), this would prove the fractional moment estimates with $h=\gb^{\max(\frac{4\ga}{2\ga-1},4)+\eps}$, hence a shift of the critical point of order $\gb^{\max(\frac{4\ga}{2\ga-1},4)+\eps}$. However when $m_1=0$, we can define another change of measure which yields the same estimates on $A_n$ without the additional assumption $m_3\neq0$. Indeed, let us define for all $\bn\in\N^2$ and $\gd\in[0,\gd_0)$,
\begin{equation}
\frac{\dd \bbP'_{\bn,\gd}}{\dd \bbP} (\go) \,=\, \frac{e^{-\gd\sum_{i_1=1}^{\bn^{(1)}} \hgo_{i_1}^2 - \gd\sum_{i_2=1}^{\bn^{(2)}} \bgo_{i_2}^2}}{R(\gd,0)^{\|\bn\|/2}}\,,
\end{equation}
where we define
\begin{equation}\label{eq:def:Q}
R(\gd,\gb)\,:=\, \bbE\big[e^{\gb \,\go_{\bone} - \gd \,\hgo_1^2-\gd\,\bgo_1^2}\big]\,,
\end{equation}
This change of measure is a \emph{quadratic} i.i.d tilt of the sequences $\hgo$ and $\bgo$. Similarly to \eqref{eq:def:Q:m1neq0}, it is well-defined for $\gb\in[0,\gb_1)$ and $\gd\in[0,\gd_0)$, for some $\gb_1, \gd_0>0$ (notice that here we directly choose a non-positive tilt). As in the previous case, H\"{o}lder's inequality yields for any $\bn\in\N^2$,
\begin{equation}\label{eq:holder}
A_{\bn}
\,\leq\, \big(\bbE'_{\bn,\gd}\big[Z^{\quen}_{\bn}\big]\big)^\gh \Big(R(\gd,0)^\gh R\big(-\gd\gh/(1-\gh),0\big)^{1-\gh}\Big)^{\|\bn\|/2}.
\end{equation}
Moreover a Taylor expansion of $R(\gd,\gb)$ to the third order in $(\gd,\gb)$ yields

\begin{equation}\label{eq:taylor:Q}
\begin{aligned}R(\gd,\gb)&=1-2\gd m_2+\frac{\gb^2m_2^2}2 + \gd^2(m_4+m_2^2) + \frac{\gb^3m_3^2}6\\
&\quad-\gb^2\gd m_2m_4 + \gb\gd^2 m_3^2 - \gd^3(\frac{m_6}3+m_2m_4) + O(\,\cdot^4)\,,
\end{aligned}
%
\end{equation}
with $O(\,\cdot^4):=O(\gb^4)+O(\gb^3\gd)+\cdots+O(\gd^4)$, where we used $m_1=0$. Thus,
\begin{equation}
R(\gd,0)^\gh R\big(-\gd\gh/(1-\gh),0\big)^{1-\gh} \,=\, 1 + \gd^2(m_4-m_2^2)\frac{\gh}{1-\gh} + O(\gd^3)\,,
\end{equation}
which is bounded by $\exp(C_\cntc\,\gd^2)$ for some $C_{\arabic{cst}}>0$, uniformly in $\gd\in(-\gd_1,\gd_1)$ (recall that we assumed $m_4>m_2^2$). Therefore, letting $\gd:=\|\bn\|^{-1/2}>0$, we have
\begin{equation}\label{eq:Afracexp}
A_{\bn} \;\leq\; C_\cntc \big(\bbE'_{\bn,\gd}\big[Z^{\quen}_{\bn}\big]\big)^\gh  \,,
\end{equation}
and similarly to \eqref{eq:etilte:1:m1neq0}, we write with a direct computation
\begin{equation}\label{eq:etilte:1}
\bbE'_{\bn,\gd}\big[Z_{\bn}^{\quen}\big] \,=\, \bE\bigg[\Big(e^{h}\frac{R(\gd,\gb)}{R(\gd,0)R(0,\gb)}\Big)^{|\btau\cap\bnsquare|} \, \ind_{\{\bn \in\btau\}}\bigg]\, .
\end{equation}
A Taylor expansion of $\mathrm{Frac_2}(\gd,\gb):=\frac{R(\gd,\gb)}{R(\gd,0)R(0,\gb)}$ to the third order in $(\gd,\gb)$ yields
\begin{equation}\label{eq:taylor:Q2} 
\mathrm{Frac_2}(\gd,\gb)= 1 - \gd\gb^2m_2(m_4-m_2^2) + O(\gb^p) + O(\gd\gb^3) + O(\gd^2\gb) + O(\gd^3)\,,
\end{equation}
where we can push the expansion to any order $p\in\N$ in $\gb$. Notice that this is very similar to \eqref{eq:taylor:Q2:m1neq0} above, with a leading term of order $\gb\gd^2$ instead (and $m_2(m_4-m_2^2)>0$ under our assumptions). By duplicating all arguments above, and recalling $k^{(1-\eps^2)}\leq\|\bn\|\leq 2k$, \eqref{eq:k}, Theorem~\ref{thm:hom} and our specific choice of $h$ in \eqref{eq:gD}, we finally obtain for some $c>0$ and any $\ga>1/2$,
\begin{equation}\label{eq:Afrachom}
\bbE'_{\bn,-\|\bn\|^{-1/2}}\big[Z_{\bn}^{\quen}\big] \,\leq\, \bE\Big[\big(e^{-c\|\bn\|^{-1/2}\gb^2}\big)^{|\btau\cap\bnsquare|} \ind_{\{\bn \in\btau\}}\Big] \,=\, Z_{\bn,-c\|\bn\|^{-1/2}\gb^2} \,.
\end{equation}
Eventually, we apply Lemma~\ref{lem:zhom} (with $u=c\|\bn\|^{-1/2}\gb^2$) to estimate this homogeneous partition function, and we conclude the proof by recollecting \eqref{eq:Afracexp} as in the case $m_1\neq0$ (we do not write the details again).

} 

\section{Disorder relevance: shift of the critical point when $\Pb=\Pbspinx$} \label{section:shift:m4=1}

In this section we prove the lower bound in Theorem~\ref{thm:rel:shift:m4=1} ---recall that we assume $m_1=0$ and $m_4=m_2^2$, so $\bbP=\Pbspinx$ for some $x>0$--- and in particular we discuss how the estimate of the fractional moment ({\it i.e.} Lemma~\ref{lem:Afrac2}) have to be adapted. Notice that we can assume $\bbP=\Pbspin$, that is $x=1$, without loss of generality ---one only has to replace $\gb$ with $\gb x^2$ in the partition function.

We can reproduce exactly the first part of the proof above, namely the coarse-graining procedure whose core is Lemma~\ref{lem:rel:shift}, and we can use Lemma~\ref{lem:Afrac1} as it is. On the other hand, the change of measure argument needs important adaptation. In the case of an i.i.d.\ disorder, what plays the role of Lemma~\ref{lem:Afrac2} is \cite[Prop.~4.2]{BGK}: there, the estimates of fractional moments $A_{\bi}$ use an i.i.d.\ tilt of the disorder, but only along an \emph{extended diagonal}, {\it i.e.}
\begin{equation}\label{eq:def:Jbn}
J_\bn\,:=\,\big\{\bi\in \llbracket \mathbf{1}, \bn \rrbracket \,;\,|\bi^{(1)}-\bi^{(2)}|\leq 2 \ell_\bn\big\}.
\end{equation}
The width $\ell_\bn$ is chosen depending on $\ga>1$, so that the renewal process $\btau$ is very unlikely to deviate from the diagonal by more than $\ell_\bn$ (see \cite[Thm.~A.5]{BGK}, or \cite[Thm.~4.2]{BL18} for a more general statement):
\begin{equation}\label{eq:def:lbn}
\ell_\bn \,:=\, \bigg\{\!\begin{array}{ll}
(\bn^{(1)})^{(1+\eps^2)/\ga} & \text{if } \ga\in(1,2]\,,\\
C_\cntc \sqrt{\bn^{(1)}\log \bn^{(1)}} & \text{if } \ga>2\,,
\end{array}
\end{equation}
(notice that $|J_\bn|\leq 2 \bn^{(1)} \ell_\bn \ll \bn^{(1)}\bn^{(2)}$ when $\bn^{(1)}\approx\bn^{(2)}$). In our setting the non-independent disorder adds some technicalities to this method, but they can be handled if we restrain ourselves to $\Pbspin$. 
We prove the following result, which plays the role of Lemma~\ref{lem:Afrac2} in the case $\bbP=\Pbspin$.
\begin{proposition}\label{prop:BGK4.2} Assume $\ga>1$, $\bbP=\Pbspin$, recall $k=1/\tf(0,h)$, and define $\ell_\bn$ as in \eqref{eq:def:lbn}. Then there exist $h_0>0$ and $L_{\cntf}$ such that for any $h\in(0,h_0)$ and $\sqrt{k}\leq\bn^{(1)}\leq k$, $\bn^{(1)}\leq\bn^{(2)}\leq\bn^{(1)}+\ell_\bn$, one has
\begin{equation}
A_\bn\;\leq\; \left\{\!\begin{array}{ll}
L_{\arabic{svf}}(k) \, k^{-(1+\eps^2)\gh/\ga} & \text{if }\ga\in(1,2]\,, \\
L_{\arabic{svf}}(k) \, k^{-(\ga-1)\gh/2} & \text{if }\ga>2\,.
\end{array}\right.
\end{equation}
\end{proposition}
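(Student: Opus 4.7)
The plan is to adapt the fractional-moment / change-of-measure argument of \cite[Prop.~4.2]{BGK} to the spin disorder $\bbP=\Pbspin$. The starting observation is that even though $\go$ is not i.i.d.\ under $\Pbspin$, its pairwise correlations $\bbEspin[\go_\bi\go_\bj]$ vanish for $\bi\neq\bj$ ---exactly the structural fact behind the clean second-moment identity \eqref{eq:proof:L2bound:8zero}--- so the field behaves as i.i.d.\ at the Gaussian level, which is enough to reproduce the exponents of~\cite{BGK}. The boundedness $|\go_\bi|=1$ keeps the higher-order Taylor corrections under control.

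I tilt the disorder along the extended diagonal $J_\bn$ from~\eqref{eq:def:Jbn} by the single exponential change of measure
\begin{equation*}
\frac{\dd\tP_\bn}{\dd\bbPspin}(\go)\,:=\,\frac{1}{\gL_\bn(\gd)}\exp\!\bigg(-\gd\sum_{\bi\in J_\bn}\go_\bi\bigg),\qquad \gL_\bn(\gd):=\bbEspin\Big[e^{-\gd\sum_{\bi\in J_\bn}\go_\bi}\Big],
\end{equation*}
with $\gd:=|J_\bn|^{-1/2}>0$. Unlike the tilt in~\cite{BGK}, this is not a product measure: $(\go_\bi)_{\bi\in J_\bn}$ is correlated, so $\gL_\bn(\gd)$ has to be estimated directly. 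Writing $\sum_{\bi\in J_\bn}\go_\bi=\sum_i\hgo_i B_i$ with $B_i:=\sum_{j:(i,j)\in J_\bn}\bgo_j$ and integrating out $\hgo$ first, one gets $\gL_\bn(\gd)=\bbEspin[\prod_i\cosh(\gd B_i)]$; a second-order Taylor expansion using $\bbEspin[B_i^2]=|\{j:(i,j)\in J_\bn\}|$ and $\sum_i|\{j:(i,j)\in J_\bn\}|=|J_\bn|$ yields $\gL_\bn(\gd)^{\pm 1}=\exp(O(\gd^2|J_\bn|))$. Applying H\"older's inequality gives
\begin{equation*}
A_\bn\,\leq\,\big(\tE_\bn[Z^\quen_\bn]\big)^\gh\cdot\bbEspin\Big[(\dd\tP_\bn/\dd\bbPspin)^{-\gh/(1-\gh)}\Big]^{1-\gh},
\end{equation*}
and with the choice $\gd^2=1/|J_\bn|$ the cost factor is uniformly bounded.

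For the annealed expectation $\tE_\bn[Z^\quen_\bn]$, Fubini's theorem and the same two-step integration reduce the computation to bounding $\bbEspin[\prod_i\cosh(C_i(\bgo))]$, where $C_i(\bgo)=\sum_j a_{(i,j)}\bgo_j\ind_{(i,j)\in S}$, $S=(\btau\cap\bnsquare)\cup J_\bn$, and $a_\bi\in\{\gb,-\gd,\gb-\gd\}$ according to whether $\bi$ sits in $\btau$, in $J_\bn$, or in both. A Taylor expansion in the small parameters $\gb,\gd$ shows that to leading order this factorises as $\exp(\tfrac12\sum_{\bi\in S}a_\bi^2)$ (the i.i.d.\ result), with the higher-order corrections controlled by $|\bgo|=1$ and by the vanishing pairwise correlations. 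After simplification, using $e^{\gl(\gb)}=\cosh(\gb)$ under $\Pbspin$ and the elementary identity $(\cosh(\gb)\cosh(\gd))^{-1}\cosh(\gb-\gd)=1-\tanh(\gb)\tanh(\gd)\leq e^{-c\gb\gd}$, one obtains
\begin{equation*}
\tE_\bn[Z^\quen_\bn]\,\leq\,C\,\bE\Big[e^{-c\gb\gd\,|\btau\cap J_\bn|}\,e^{h|\btau\cap\bnsquare|}\,\ind_{\{\bn\in\btau\}}\Big]\,\leq\,C'\,Z_{\bn,\,h-c'\gb\gd}\,,
\end{equation*}
where the last inequality uses the diagonal-confinement estimate \cite[Thm.~A.5]{BGK} (applicable under our hypothesis $\bn^{(1)}\leq\bn^{(2)}\leq\bn^{(1)}+\ell_\bn$) to replace $|\btau\cap J_\bn|$ by a positive fraction of $|\btau\cap\bnsquare|$ with overwhelming probability.

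To conclude, apply Lemma~\ref{lem:zhom} with $u:=c'\gb\gd-h$. A direct computation, plugging in $|J_\bn|\asymp k\ell_\bn$, $k\sim h^{-1}$, and the target value $h=\gb^{\max(2\ga/(\ga-1),4)+\eps}$ from Theorem~\ref{thm:rel:shift:m4=1}, shows that $\gb\gd$ exceeds $h$ by a factor of order $k^{c''\eps}$, so $u\|\bn\|^{\min(\ga_-,1)}$ diverges as a power of $k$ and the exponential term in~\eqref{eq:lem:zhom} is negligible. The polynomial term $K(\|\bn\|)/u^2$ then yields, upon substituting the values of $\ell_\bn$ from~\eqref{eq:def:lbn}, the bounds $k^{-(1+\eps^2)/\ga}$ (for $\ga\in(1,2]$) and $k^{-(\ga-1)/2}$ (for $\ga>2$); raising to the power $\gh$ gives the claim. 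The main obstacle is the annealed step: unlike in the i.i.d.\ case of~\cite{BGK}, the $\Pbspin$-expectation over the correlated field $\go$ does not factor \emph{a priori}, and the two-step integration over $\hgo$ and $\bgo$ combined with a careful expansion in $\gb,\gd$ is required to reduce the problem to an effective shift of the pinning. This mechanism is specific to the spin structure and fails for any $\Pb\neq\Pbspinx$ ---which is precisely why the proof of Theorem~\ref{thm:rel:shift} had to rely on a completely different change of measure.
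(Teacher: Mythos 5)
Your overall route is the same as the paper's: tilt the disorder by $e^{-\gd\sum_{\bi\in J_\bn}\go_\bi}$ along the extended diagonal, apply H\"older, bound the normalisation by integrating out $\hgo$ first (cosh) and then $\bgo$, reduce the tilted annealed expectation to a homogeneous partition function with effective pinning $-c\,\gb\gd$, and conclude with the diagonal-confinement estimate and Lemma~\ref{lem:zhom}. The gap is in the step you describe as ``a Taylor expansion in the small parameters $\gb,\gd$ shows that to leading order this factorises as $\exp(\tfrac12\sum_{\bi\in S}a_\bi^2)$, with higher-order corrections controlled by $|\bgo|=1$ and the vanishing pairwise correlations.'' This is precisely the hard point, and vanishing two-point correlations do not settle it: after integrating $\hgo$ one is left with $\bbEspin\big[\prod_i\cosh(\gb\gsb_i-\gd\gs_i)\big]$, and expanding produces the cross term $\gd\tanh(\gb)\sum_i \gs'_i\gsb_i$, where $\gs'_i$ is a sum of up to $2\ell_\bn$ spins of $\bgo$ and $\gsb_i=\pm1$ on renewal lines. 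This is a genuinely correlated random quantity of typical size $\gd\sqrt{\bn^{(1)}\ell_\bn}$ sitting inside an exponential; it is not a higher-order correction, and controlling its exponential moment is exactly what the paper's Lemma~\ref{lem:azuma} (a truncation plus Azuma--Hoeffding martingale argument for $\sum_i\gs'_i\gsb_i$, together with the Cauchy--Schwarz splitting off of $e^{-\gd\tanh(\gb)|\btau\cap J_\bn|}$) is for. As written, your proposal asserts this control rather than proving it.

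Relatedly, your choice $\gd=|J_\bn|^{-1/2}\asymp(\bn^{(1)}\ell_\bn)^{-1/2}$ sits exactly at the threshold where that control becomes problematic: the hypothesis needed for the martingale bound is $|\gd|\sqrt{\bn^{(1)}\ell_\bn\log \bn^{(1)}}\to 0$, and with your $\gd$ this quantity diverges like $\sqrt{\log\bn^{(1)}}$. The paper avoids this by taking $\gd=(\bn^{(1)}\ell_\bn)^{-\frac12(1+\eps^3)}$, slightly smaller than your choice, which costs nothing in the final exponents but makes both the normalisation bound (Lemma~\ref{lem:QJn}) and the cross-term bound (Lemma~\ref{lem:azuma}) applicable. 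So the architecture of your argument is right, but to make it a proof you need (i) an honest estimate of the exponential moment of the cross term $\gd\sum_i\gs'_i\gsb_i$ (or an equivalent device), and (ii) a choice of $\gd$ compatible with that estimate; both are supplied in the paper by Lemma~\ref{lem:azuma} and the $\eps^3$-adjusted tilt.
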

Note that these are exactly the same estimates on $A_\bn$ as in \cite[Prop.~4.2]{BGK}. Once plugged in the computations of $\gr_1$, $\gr_2$ and $\gr_3$ from Lemma~\ref{lem:rel:shift}, they give the same lower bound for the shift of the critical point as in \cite[Thm 1.4]{BGK}, which is
\begin{equation}
h_c(\gb)\,\geq\,\bigg\{\!\begin{array}{ll}
\gb^{\frac{2\ga}{\ga-1}+\eps} & \text{if } \ga\in(1,2]\,,\\
\gb^4|\log\gb|^{-6} & \text{if }\ga>2\,.
\end{array}
\end{equation}
This proves the left inequality in Theorem~\ref{thm:rel:shift:m4=1}. We do not write the details here, because once we have the estimates on $A_\bn$ from Proposition~\ref{prop:BGK4.2}, the computations of $\gr_1,\gr_2$ and $\gr_3$ are the same as in \cite{BGK} and do not depend on the setting of disorder.\smallskip

\begin{proof}[Proof of Proposition~\ref{prop:BGK4.2}]
This follows the same scheme as \cite[Prop. 4.2]{BGK}. 
We define the same change of measure on $J_\bn$ as \cite[(4.18)]{BGK}, that is
\begin{equation}
\frac{\dd \ol \bbP_{\bn,\gd}}{\dd \bbP} (\go) \,:=\, \frac{\prod_{\bi\in J_\bn} e^{-\gd\go_\bi}}{\Qb_{J_\bn}(\gd)}\,,\qquad\text{where } \;\Qb_{J_\bn}(\gd)\,:=\, \bbE\Big[\prod_{\bi\in J_\bn} e^{-\gd\go_\bi}\Big]\,.
\end{equation}
Applying H\"{o}lder's inequality to $A_\bn$ similarly to \eqref{eq:holder}, we have
\begin{align}
\nonumber A_{\bn}&\leq \big(\ol \bbE_{\bn,\gd}\big[Z^{\quen}_{\bn}\big]\big)^\gh \Bigg(\ol \bbE_{\bn,\gd}\Bigg[\bigg(\frac{\dd \ol \bbP_{\bn,\gd}}{\dd \bbP} (\go)\bigg)^{-1/(1-\gh)}\Bigg]\Bigg)^{1-\gh}\\
\label{eq:holder:m4=1} &= \big(\ol \bbE_{\bn,\gd}\big[Z^{\quen}_{\bn}\big]\big)^\gh \; \Qb_{J_\bn}(\gd)^\gh \; \Qb_{J_\bn}\big(-\gd\gh/(1-\gh)\big)^{1-\gh}.
\end{align}
Let us fix $\gd:=(\bn^{(1)}\ell_\bn)^{-\frac{1}{2}(1+\eps^3)}>0$ ---this is the same as in \cite{BGK} where we added the power $(1+\eps^3)$ to avoid technicalities. We claim the following:

\begin{lemma}\label{lem:QJn}
There exists $C_\cntc>0$ and $\gd_1>0$ such that if $\gd^2\bn^{(1)} \ell_{\bn} \leq \gd_1$, then
\begin{equation}
1\,\leq\, \Qb_{J_\bn}(\gd) \,\leq\, C_{\arabic{cst}}\;.
\end{equation}
\end{lemma}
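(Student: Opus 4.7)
The plan is to establish the two bounds separately. The lower bound $\overline Q_{J_\bn}(\gd)\ge 1$ will follow immediately from Jensen's inequality: since $\bbP=\bbPspin$ makes $\hat\go_{i_1}$ and $\bar\go_{i_2}$ symmetric and independent, one has $\bbE[\go_{\bi}]=\bbE[\hat\go_{\bi^{(1)}}]\bbE[\bar\go_{\bi^{(2)}}]=0$ for every $\bi$, and therefore
\[
\overline Q_{J_\bn}(\gd)\;=\;\bbE\Big[e^{-\gd\sum_{\bi\in J_\bn}\go_\bi}\Big]\;\ge\;\exp\!\Big(\!-\gd\,\bbE\Big[\sum_{\bi\in J_\bn}\go_\bi\Big]\Big)\;=\;1.
\]

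For the upper bound, I will exploit the product structure $\go_\bi=\hat\go_{\bi^{(1)}}\bar\go_{\bi^{(2)}}$. Conditioning on $\hat\go$ and using that $(\bar\go_{i_2})_{i_2}$ are i.i.d.\ symmetric $\pm 1$ variables, integration over the $\bar\go_{i_2}$'s gives
\[
\overline Q_{J_\bn}(\gd)\;=\;\bbE_{\hat\go}\Big[\prod_{i_2=1}^{\bn^{(2)}}\cosh(\gd\,T_{i_2})\Big],\qquad T_{i_2}:=\!\!\sum_{i_1:(i_1,i_2)\in J_\bn}\!\!\hat\go_{i_1}.
\]
Each $T_{i_2}$ is a sum of at most $4\ell_\bn+1$ independent $\pm 1$ variables. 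Using $\cosh(x)\le e^{x^2/2}$ and expanding $T_{i_2}^2$ while exploiting $\hat\go_{i_1}^2=1$ yields
\[
\sum_{i_2}T_{i_2}^2 \;=\; |J_\bn|\;+\;2\!\!\sum_{i_1<i_1'}\!\!D_{i_1,i_1'}\,\hat\go_{i_1}\hat\go_{i_1'},\qquad D_{i_1,i_1'}:=\big|\{i_2:(i_1,i_2),(i_1',i_2)\in J_\bn\}\big|.
\]
The deterministic part $\tfrac{\gd^2}{2}|J_\bn|$ is bounded by a constant multiple of $\gd_1$, since $|J_\bn|\le 5\bn^{(1)}\ell_\bn$ and by assumption $\gd^2\bn^{(1)}\ell_\bn\le\gd_1$.

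It will then remain to show that the stochastic quadratic form $\bbE_{\hat\go}\big[\exp\!\big(\gd^2\sum_{i_1<i_1'}D_{i_1,i_1'}\hat\go_{i_1}\hat\go_{i_1'}\big)\big]$ is bounded. The matrix $(D_{i_1,i_1'})$ is symmetric with entries at most $4\ell_\bn+1$ and a banded structure (nonzero only for $|i_1-i_1'|\le 4\ell_\bn$), so its operator norm is $O(\ell_\bn^{2})$ and its Frobenius norm squared is $O(\bn^{(1)}\ell_\bn^{3})$. A Hanson--Wright type inequality for $\pm 1$ variables then yields the desired bound as long as both $\gd^{2}\|D\|_{\mathrm{op}}$ and $\gd^{4}\|D\|_F^{2}$ are small; using $\gd^{2}\bn^{(1)}\ell_\bn\le\gd_1$ together with $\ell_\bn\ll\bn^{(1)}$ (which holds for $\ga>1$ by the choice \eqref{eq:def:lbn}), both quantities are controlled after possibly shrinking $\gd_1$.

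The main obstacle will be the clean treatment of this last exponential-moment step: Hanson--Wright packages everything neatly but introduces auxiliary machinery, whereas a more direct route is to keep the product form $\prod_{i_2}\cosh(\gd T_{i_2})$ and Taylor-expand $\cosh(x)=1+x^{2}/2+O(x^{4})$ (noting that $|\gd T_{i_2}|=O(\sqrt{\ell_\bn/\bn^{(1)}})\to 0$), then combinatorially sum the resulting cross terms using the banded structure of $J_\bn$ and the $\pm 1$ moments $\bbE[\hat\go_{i_1}^{2k}]=1$, $\bbE[\hat\go_{i_1}^{2k+1}]=0$. Either route requires only the hypothesis $\gd^{2}\bn^{(1)}\ell_\bn\le\gd_1$ with $\gd_1$ fixed sufficiently small, uniformly in $\bn$.
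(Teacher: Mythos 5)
Your proof is correct, and the overall skeleton (condition on one of the two i.i.d.\ sequences to reduce $\Qb_{J_\bn}(\gd)$ to a product of $\cosh$'s, then bound via $\cosh x\le e^{x^2/2}$, with the lower bound by Jensen and centering) is the same as the paper's. Where you genuinely diverge is in how the resulting Gaussian-type exponential $\bbE\big[\exp\big(\tfrac{\gd^2}{2}\sum T_{i_2}^2\big)\big]$ is controlled. You expand $\sum T_{i_2}^2$ into its deterministic part $|J_\bn|$ plus a banded off-diagonal quadratic form in the remaining $\pm1$ spins and invoke a Hanson--Wright inequality, with the checks $\gd^2\|D\|_{\mathrm{op}}=O(\gd^2\ell_\bn^2)$ and $\gd^4\|D\|_F^2=O(\gd^4\bn^{(1)}\ell_\bn^3)$ controlled by $\gd^2\bn^{(1)}\ell_\bn\le\gd_1$ together with $\ell_\bn\ll\bn^{(1)}$ (which indeed holds for $\ga>1$ by~\eqref{eq:def:lbn}). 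The paper instead never forms the quadratic form: it applies a generalized H\"older inequality with exponent $\bn^{(1)}$ to decouple the $\gs_i^2$'s, so each factor is $\bbE\big[\exp\big(\tfrac12\bn^{(1)}\gd^2\gs_i^2\big)\big]$ with $\gs_i$ a sum of at most $2\ell_\bn$ i.i.d.\ bounded centered variables, hence sub-Gaussian with variance proxy $O(\ell_\bn)$; the relevant parameter is then $\bn^{(1)}\gd^2\ell_\bn\le\gd_1$, exactly the hypothesis, and one only needs Hoeffding-type sub-Gaussianity, not a quadratic-form concentration inequality. Both arguments are valid under the same hypothesis; the paper's is more elementary, whereas yours makes the geometry of $J_\bn$ ($\|D\|_{\mathrm{op}}$, $\|D\|_F$) explicit and would adapt mechanically to other index sets, though for the banded $J_\bn$ here this generality buys nothing extra. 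Your proposed fallback (Taylor-expanding $\cosh$ and tracking cross terms combinatorially) is a third route that would also work but is the messiest of the three.
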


\begin{proof}[Proof of Lemma~\ref{lem:QJn}]
The lower bound follows directly from Jensen's inequality, so let us focus on the upper bound. For all $1\leq i \leq \bn^{(1)}$, we define
\begin{equation}\label{eq:defJb}
J_\bn(i)\,:=\,\{j\in\llbracket1,\bn^{(2)}\rrbracket;\, (i,j)\in J_\bn\}\,,\qquad \text{and} \quad \gs_i=\gs_i(\bgo):=\sum_{j\in J_\bn(i)} \bgo_j\,,
\end{equation}
in particular $|\gs_i|\leq|J_\bn(i)|\leq 2\ell_\bn$. Then, computing first the expectation conditionally on $\bgo$, we have
\begin{equation}\begin{aligned}
\Qb_{J_\bn}(\gd) &\,=\, \bbE\bigg[\exp\Big( -\gd \sum_{i=1}^{\bn^{(1)}} \hgo_i \,\gs_i(\bgo) \Big)\bigg] \,=\, \bbE\bigg[ \prod_{i=1}^{\bn^{(1)}} \cosh\big( \gd \,\gs_i(\bgo) \big) \bigg],
\end{aligned}\end{equation}
where we used that $\hgo$ is a sequence of independent variables, is independent from $\bgo$, and that $\bbE[e^{-x\hat \go_i}] = \cosh(-x \hat \go_i)= \cosh(x)$ for all $x\in\R$. Notice that for all $x\in\R$, one has $\cosh(x)\leq e^{\frac{x^2}{2}}$: we therefore have
\begin{equation}\begin{aligned}
\Qb_{J_\bn}(\gd) &\;\leq\; \bbE\bigg[ \exp\bigg( \sum_{i=1}^{\bn^{(1)}} \frac{\gd^2 \gs_i^2}{2} \bigg) \bigg] \;\leq\; \prod_{i=1}^{\bn^{(1)}} \bbE\bigg[ \exp\bigg( \frac{1}{2}\bn^{(1)} \gd^2 \gs_i^2 \bigg) \bigg]^{1/\bn^{(1)}}
\end{aligned}\end{equation}
by Cauchy-Schwarz inequality. Here $\gs_i$ is a sum of $|J_\bn(i)|$ i.i.d. bounded variables, so $\frac{\gs_i}{\sqrt{|J_\bn(i)|}}$ converges in distribution to some standart gaussian $Z\sim \cN(0,1)$ as $|J_\bn(i)|\to\infty$. This implies
\begin{equation}
\Qb_{J_\bn}(\gd) \;\leq\; C_\cntc+ C'_{\arabic{cst}}\,\bbE\Big[ \exp\Big( \bn^{(1)} \gd^2 |J_\bn(i)| Z^2 \Big) \Big]\end{equation}
uniformly in $\bn$ (where $C_{\arabic{cst}}$ handles the indexes $i$ for which $|J_\bn(i)|$ is small). Noticing that $\bn^{(1)} \gd^2 |J_\bn(i)|\leq 2\,\gd^2 \bn^{(1)} \ell_\bn \leq 2\gd_1$, and that $Z^2$ has some finite exponential moments, this concludes the proof of the lemma.
\end{proof}

Note that $\gd^2 \bn^{(1)} \ell_\bn\to0$ as $\bn^{(1)}\to\infty$, so this lemma implies that the last two factors in the right-hand side of  \eqref{eq:holder:m4=1} are uniformly bounded. It remains to deal with the expectation in~\eqref{eq:holder:m4=1}. One has
\begin{equation}\label{eq:esptilt:m4=1:0}\begin{aligned}
\ol \bbE_{\bn,\gd}\big[Z^{\quen}_{\bn}\big] &\,=\, \bE\bigg[\ol \bbE_{\bn,\gd}\Big[e^{\sum_{\bi\in \llbracket 1, \bn \rrbracket } (\gb\go_\bi-\gl(\gb)+h)\ind_{\{\bi\in\btau\}}}\Big]\ind_{\{\bn \in\btau\}}\bigg]\\
&\,=\, \bE\Bigg[\frac{e^{h|\btau\cap\bnsquare|}}{\Qb_{J_\bn}(\gd)} \bbE \bigg[e^{-\gl(\gb)|\btau\cap\bnsquare|}\prod_{\bi\in \llbracket 1, \bn \rrbracket }e^{ \gb\go_\bi\ind_{\{\bi\in\btau\}} - \gd\go_\bi \ind_{\{\bi\in J_\bn\}}} \bigg]\ind_{\{\bn \in\btau\}}\Bigg],
\end{aligned}\end{equation}
Under the assumptions of Proposition~\ref{prop:BGK4.2} we have $|\btau\cap\bnsquare| \leq \|\bn\|\leq C_\cntc h^{-1}$  (recall Theorem~\ref{thm:hom}), so this implies that $e^{h|\btau\cap\bnsquare|}$ is bounded by some uniform constant (and so is $\Qb_{J_\bn}(\gd)^{-1}$). 

Let us fix a realization of $\btau$ with $\bn\in\btau$, and let $B_{\bn,\btau}^{\gb,\gd}$ be the expectation over $\Pb$ in \eqref{eq:esptilt:m4=1:0}. Recall the definition of $J_\bn(i)$ and $\sigma_i$ in Lemma~\ref{lem:QJn}, and define
\begin{equation}
\gsb_i\,=\,\gsb_i(\bgo)\,:=\,\sum_{j=1}^{\bn^{(2)}} \bgo_j \ind_{\{(i,j)\in\btau\}} \,,
\end{equation}
and notice  that $|\gsb_i|=1$ if $i\in\llbracket 1,\bn^{(1)}\rrbracket\cap\btau^{(1)}$, and $\gsb_i=0$ otherwise.
Again, as for $\Qb_{J_\bn}$, computing the expectation first conditionally on $\bgo$ gives
\begin{equation}\label{eq:esptilt:m4=1}\begin{aligned}
B_{\bn,\btau}^{\gb,\gd} &=\bbE \bigg[e^{-\gl(\gb)|\btau\cap\bnsquare|} \exp\Big( \sum_{i=1}^{\bn^{(1)}} \hgo_i( \gb \gsb_i(\bgo) - \gd \gs_i(\bgo))\Big) \bigg]\\
&=\;\bbE \bigg[e^{-\gl(\gb)|\btau\cap\bnsquare|} \prod_{i=1}^{\bn^{(1)}} \cosh\big( \gb \gsb_i(\bgo) - \gd \gs_i(\bgo)\big) \bigg].
\end{aligned}\end{equation}
Notice that $\gl(\gb)=\log \cosh(\gb)$ when $\bbP=\Pbspin$. Using that $\cosh$ is even and that $\cosh 0=1$, we get that 
\begin{equation}\begin{array}{lll}\displaystyle
e^{\gl(\gb)|\btau\cap\bnsquare|}&=\; \displaystyle \prod_{i=1}^{\bn^{(1)}} \cosh(\gb \ind_{\{i\in\btau^{(1)}\}}) \; =\; \displaystyle \prod_{i=1}^{\bn^{(1)}} \cosh(\gb \gsb_i )\,,
\end{array}\end{equation}
where we used that $\gsb=\pm1$ when it is not equal to $0$.
Then \eqref{eq:esptilt:m4=1} becomes
\begin{equation}\label{eq:esptilt:m4=1:2}\begin{aligned}
B_{\bn,\btau}^{\gb,\gd} \;& =\; \bbE \bigg[\prod_{i=1}^{\bn^{(1)}} \frac{\cosh\big( \gb \gsb_i  - \gd \gs_i \big)}{\cosh(\gb \gsb_i )} \bigg]\\
& =\; \bbE \bigg[\prod_{i=1}^{\bn^{(1)}} \Big(\cosh (\gd \gs_i) - \sinh (\gd \gs_i) \tanh (\gb \gsb_i) \Big) \bigg],
\end{aligned}\end{equation}
where we simply used trigonometric identities to expand $\cosh( \gb \gsb_i  - \gd \gs_i)$. Notice that $\sinh (\gd \gs_i) \tanh (\gb \gsb_i) = \sinh (\gd \gs_i  \gsb_i) \tanh (\gb)$ (because $\gsb\in\{-1,0,1\}$ and $\sinh,\tanh$ are antisymmetric), and recall that $|\gd\gs_i|\ll (\ell_\bn/\bn^{(1)})^{1/2}\to 0$ as $\bn^{(1)}\to\infty$. Therefore, some standard Taylor expansions give for $\bn^{(1)}$ sufficiently large,
\begin{equation}\begin{aligned}
\cosh(\gd \gs_i) &\leq 1 + (\gd\gs_i)^2,\\
- \sinh (\gd \gs_i  \gsb_i) &\leq -\gd \gs_i \gsb_i + (\gd \gs_i  \gsb_i)^2 \leq -\gd \gs_i \gsb_i + (\gd \gs_i)^2.
\end{aligned}\end{equation}
Finally, for $\bn^{(1)}$ sufficiently large, one has for all $\bgo\in\{-1,1\}^\N$,
\begin{equation}
\cosh (\gd \gs_i) - \sinh (\gd \gs_i) \tanh (\gb \gsb_i) \;\leq\; 1 + 2(\gd \gs_i)^2 -  \gd \tanh(\gb) \gs_i \gsb_i\, ,
\end{equation}
(where we also used $\tanh(\gb)\leq1$). Since all the factors in \eqref{eq:esptilt:m4=1:2} are positive for $\bn^{(1)}$ sufficiently large, 
we get
\begin{equation}\label{eq:esptilt:m4=1:3}\begin{aligned}
B_{\bn,\btau}^{\gb,\gd}\, & \leq\; \bbE \bigg[\prod_{i=1}^{\bn^{(1)}} \big(1 + 2(\gd \gs_i)^2 - \gd \tanh(\gb) \gs_i \gsb_i \big) \bigg]\\
& \leq\; \bbE \Big[\exp\Big(\sum_{i=1}^{\bn^{(1)}} 2 (\gd \gs_i)^2 - \gd \tanh(\gb) \gs_i \gsb_i\Big) \Big] \, .
\end{aligned}\end{equation}
Let us define $\gs'_i := \sum_{j\in J_\bn(i)} \bgo_j \ind_{\{(i,j)\notin\btau\}}$, which is a small modification of $\gs_i$ (we removed $\bar \sigma_i$ if it appears in the sum in $\sigma_i$),
so that we have
\begin{equation}\begin{aligned}
\gs_i\gsb_i&\;=\; \gs'_i \gsb_i + \bigg(\sum_{j\in J_\bn(i)} \bgo_i \ind_{\{(i,j)\in\btau\}} \bigg) \bigg(\sum_{j=1}^{\bn^{(2)}} \bgo_i \ind_{\{(i,j)\in\btau\}} \bigg)\\
&\;=\; \gs'_i \gsb_i + \ind_{\{\exists j\in J_\bn(i);\, (i,j)\in\btau\}}.
\end{aligned}\end{equation}
Hence, we get that 
\begin{equation}
B_{\bn,\btau}^{\gb,\gd} \;\leq\; e^{-\gd\tanh(\gb) |\btau\cap J_{\bn}|} \hspace{1pt} \bbE\bigg[\exp\bigg( 4\sum_{i=1}^{\bn^{(1)}} \gd^2 \gs_i^2 \bigg)\bigg]^{1/2}  \hspace{1pt} \bbE\bigg[\exp\bigg( -2\gd \tanh(\gb)\!\sum_{i=1}^{\bn^{(1)}} \gs'_i \gsb_i \bigg)\bigg]^{1/2}.
\end{equation}
where we applied Cauchy-Schwarz inequality.
In the proof of Lemma~\ref{lem:QJn} we already showed that the second factor is bounded. We claim that the third factor is also bounded.

\begin{lemma}\label{lem:azuma}
There exists $\gd_2>0$ such that if $|\gd| \sqrt{\bn^{(1)} \ell_{\bn} \log(\bn^{(1)})} \leq \gd_2$, and if $\beta\leq 1$, then
\begin{equation}
1\,\leq\, \bbE\bigg[\exp\bigg( 2 \gd \tanh( \gb) \sum_{i=1}^{\bn^{(1)}} \gs'_i\gsb_i \bigg)\bigg] \,\leq\,2\, .
\end{equation}
\end{lemma}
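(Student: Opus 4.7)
The plan is to prove the two-sided bound in two steps: Jensen's inequality for the lower bound, and a conditioning argument exploiting the bilinear/quadratic structure of
\[
S\;:=\;\sum_{i=1}^{\bn^{(1)}}\gs'_i\gsb_i
\]
for the upper bound. Setting $c:=2\gd\tanh(\gb)$, denote by $j_i$ the unique index such that $(i,j_i)\in\btau$ when $i\in\btau^{(1)}$, so that $\gsb_i=\bgo_{j_i}\ind_{\{i\in\btau^{(1)}\}}$ while $\gs'_i$ is a sum over $j\in J_\bn(i)\setminus\{j_i\}$. Thus $\gs'_i$ and $\gsb_i$ depend on disjoint subsets of the independent Rademachers $(\bgo_j)$, hence are independent, and $\bbE[\gsb_i]=0$ forces $\bbE[S]=0$, giving the lower bound $\bbE[e^{cS}]\geq 1$ by Jensen.

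For the upper bound, I would rewrite $S=\sum_{i\in\btau^{(1)}}\bgo_{j_i}\gs'_i$ and split the $\bgo_j$'s into a \emph{diagonal} block $D:=\{j_i:i\in\btau^{(1)}\}$ and its complement. Writing $\gs'_i=\gs''_i+\sum_{i'\neq i:\,j_{i'}\in J_\bn(i)}\bgo_{j_{i'}}$ with $\gs''_i:=\sum_{j\in J_\bn(i)\setminus D}\bgo_j$, the sum decomposes as $S=S_1+S_2$, where $S_1:=\sum_{i\in\btau^{(1)}}\bgo_{j_i}\gs''_i$ is bilinear across the two groups of variables and $S_2:=\sum_{i\neq i'\in\btau^{(1)}}\ind\{j_{i'}\in J_\bn(i)\}\bgo_{j_i}\bgo_{j_{i'}}$ is a zero-diagonal quadratic form in the diagonal block. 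Conditioning on $(\bgo_j)_{j\in D}$ and integrating out the non-diagonal $\bgo$'s as in the proof of Lemma~\ref{lem:QJn} gives the factorisation
\[
\bbE_{\mathrm{nondiag}}\!\bigl[e^{cS_1}\bigm|(\bgo_j)_{j\in D}\bigr]=\prod_{j\notin D}\cosh\!\bigl(c\,N_j\bigr),\qquad N_j:=\!\!\sum_{i\in\btau^{(1)}:\,j\in J_\bn(i)}\!\!\bgo_{j_i}.
\]
Then $\cosh(x)\leq e^{x^2/2}$ and expansion of $\sum_{j\notin D}N_j^2$ isolates a deterministic piece $\sum_{i\in\btau^{(1)}}|J_\bn(i)\setminus D|\leq 4\bn^{(1)}\ell_\bn$ from an off-diagonal quadratic form in $(\bgo_{j_i})$, which merged with $cS_2$ yields
\[
\bbE\bigl[e^{cS}\bigr]\;\leq\;e^{2c^2\bn^{(1)}\ell_\bn}\;\bbE\bigl[\exp(\bgo^{\top}\tilde A\,\bgo)\bigr],
\]
for a symmetric matrix $\tilde A$ (indexed by $\btau^{(1)}$) with zero diagonal, entries of size $O(|c|)+O(c^2\ell_\bn)=O(|c|)$, and support confined to pairs $(k,k')$ with $|a_k-a_{k'}|\leq 4\ell_\bn$; hence $\|\tilde A\|_F^2=O(c^2\bn^{(1)}\ell_\bn)$ and $\|\tilde A\|_\mathrm{op}=O(|c|\ell_\bn)$.

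The last step is to bound the Rademacher chaos $\bbE[\exp(\bgo^{\top}\tilde A\,\bgo)]$ by $\exp(O(c^2\bn^{(1)}\ell_\bn))$. This can be done either via a Hanson--Wright-type inequality for Rademacher chaos, or, more elementarily, by an Azuma-Hoeffding/Freedman estimate along the Doob filtration obtained by revealing $\bgo_{j_1},\bgo_{j_2},\ldots$ one at a time, using the \emph{conditional} variances of the increments rather than their pointwise bounds. Putting everything together gives $\bbE[e^{cS}]\leq\exp(Cc^2\bn^{(1)}\ell_\bn)$; the hypothesis $\gd^2\bn^{(1)}\ell_\bn\log\bn^{(1)}\leq\gd_2^2$ together with $|c|\leq 2|\gd|$ (from $\tanh(\gb)\leq 1$ for $\gb\leq 1$) forces this exponent to be at most $4C\gd_2^2/\log\bn^{(1)}=o(1)$, so that picking $\gd_2$ sufficiently small (and treating small $\bn^{(1)}$ by a direct estimate) yields the bound $\leq 2$.

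The main obstacle is precisely this last step: a naive Azuma-Hoeffding bound on the Doob martingale is too crude, since the pointwise bound $O(\ell_\bn)$ on the increments would yield $\exp(O(c^2\bn^{(1)}\ell_\bn^2))$, worse by a factor $\ell_\bn$ than the hypothesis allows. One must exploit that the typical increment has size only $O(\sqrt{\ell_\bn})$---either via a Rademacher-chaos Hanson--Wright inequality or via a Freedman-type martingale estimate using conditional second moments---to recover the correct scaling in $\bn^{(1)}\ell_\bn$.
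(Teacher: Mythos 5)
Your proposal is correct, and it takes a genuinely different route from the paper's proof. The paper fixes the realization of $\btau$, splits $\sum_i \gs'_i\gsb_i = X_r+Y_r$ according to whether the column index lies before or after the renewal column $b_l$, observes that each piece is a martingale for the filtration revealing the $\bgo_j$ in increasing $j$, and then controls the exponential moment by truncating the increments at level $w\sqrt{\ell_\bn\log \bn^{(1)}}$, applying Azuma--Hoeffding to the truncated martingale and Hoeffding to the truncation event, and finally dominating $|X_r|/\sqrt{r\ell_\bn\log\bn^{(1)}}$ stochastically by a constant plus an exponential variable. You instead condition on the ``diagonal'' Rademachers $(\bgo_{j_i})$, integrate out the remaining ones exactly via the same $\cosh(x)\le e^{x^2/2}$ device as in Lemma~\ref{lem:QJn}, and reduce everything to the exponential moment of a zero-diagonal Rademacher quadratic form with Frobenius norm squared $O(\gd^2\bn^{(1)}\ell_\bn)$ and operator norm $O(|\gd|\ell_\bn)=o(1)$, which a Hanson--Wright-type moment bound turns into $\exp(O(\gd^2\bn^{(1)}\ell_\bn))$. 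Both decompositions are legitimate ways of handling the dependence created by the shared strand variables (your $S_2$, the chaos among diagonal variables, is exactly what the paper's before/after split also has to absorb). What each buys: the paper's argument is entirely self-contained (only Azuma and Hoeffding), at the price of the $\log\bn^{(1)}$ factor, which is precisely why that factor appears in the hypothesis of the lemma; your argument is shorter modulo the citation, and in fact only needs $\gd^2\bn^{(1)}\ell_\bn$ small (together with $\ell_\bn\ll\bn^{(1)}$, which holds here and is also implicitly used in the paper), so it yields a marginally stronger statement --- but the decisive quantitative step is outsourced to the Hanson--Wright inequality for Rademacher chaos, whose applicability hinges exactly on the operator-norm smallness you set up. One caveat: the alternative you mention, a plain Freedman-type bound with conditional variances, is not as immediate as you suggest, since the conditional variances are themselves a chaos in the diagonal variables and would require a truncation step essentially identical to the paper's; your diagnosis that naive Azuma on the untruncated increments loses a factor $\ell_\bn$ is correct and is precisely the obstruction the paper's truncation removes.
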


We prove this Lemma at the end of this section. 
Since $\gd \sqrt{\bn^{(1)} \ell_{\bn} \log(\bn^{(1)})} \to 0$, the assumption of the lemma is clearly satisfied by $-\gd$ if $\bn^{(1)}$ is large enough:
going back to \eqref{eq:esptilt:m4=1:0} (and writing $\tanh(\gb)>\gb/2$ for small $\gb>0$), we finally obtain
\begin{equation}
\ol \bbE_{\bn,\gd}\big[Z^{\quen}_{\bn}\big] \;\leq\; C_\cntc \, \bE\Big[ e^{-\gd\gb|\btau\cap J_\bn|/2} \ind_{\{\bn \in\btau\}}\Big],
\end{equation}
which is the same as \cite[(4.25)]{BGK}. Then we finish the proof of Proposition~\ref{prop:BGK4.2} by applying \cite[Lemma 4.3]{BGK} and Lemma~\ref{lem:zhom} (we do not write the details here because it is a replica of \cite{BGK}).

\end{proof}

\begin{proof}[Proof of Lemma~\ref{lem:azuma}]
The left inequality is a straight consequence to Jensen's inequality, so we focus on the upper bound. We introduce some notations specific to this lemma. Let $r:=|\btau\cap\bnsquare|$ and denote $(a_l,b_l)=\btau_l, 1\leq l\leq r$ (in particular $(a_r,b_r)=\btau_r=\bn$). Moreover for all $1\leq l \leq r$, denote $J_\bn(a_l) = \llbracket c_l,d_l \rrbracket$. Then we can write
\begin{equation}\begin{aligned}
\sum_{i=1}^{\bn^{(1)}} \gs'_i\gsb_i \;
& =\; \sum_{l=1}^r \bgo_{b_l} \bigg(\sum_{j=c_l}^{d_l} \bgo_j\ind_{\{(a_l,j)\notin\btau\}}\bigg)\\
&=\; \sum_{l=1}^r \bgo_{b_l} \bigg(\sum_{j=c_l}^{\min(d_l, b_l-1)} \bgo_j \;+ \sum_{j=\max(b_l+1,c_l)}^{d_l} \bgo_j\bigg),
\end{aligned}\end{equation}
where we parted the sum according to indices of $\bgo$ before and after $b_l$. Let us define $X_0=Y_0=0$ and for all $1\leq t\leq r$,
\begin{equation}
X_t := \sum_{l=1}^t \bgo_{b_l} \bigg(\sum_{j=c_l}^{\min(d_l, b_l-1)} \bgo_j\bigg) 
\qquad\text{and}\quad Y_t := \sum_{l=1}^t \bgo_{b_l} \bigg(\sum_{j=\max(b_l+1,c_l)}^{d_l} \bgo_j\bigg),
\end{equation}
so that $\sum_{i=1}^{\bn^{(1)}} \gs'_i\gsb_i = X_r+Y_r$, and by Cauchy-Schwarz inequality,
\begin{equation}\label{eq:CS:XY}
 \bbE\bigg[\exp\bigg( 2 \gd \tanh( \gb) \sum_{i=-\ell_\bn}^{\bn^{(1)}+\ell_\bn} \gs'_i\gsb_i \bigg)\bigg] \;\leq\;  \bbE\Big[\exp\big( 4 \gd \tanh( \gb)  X_r \big)\Big]^{1/2} \bbE\Big[\exp\big( 4 \gd \tanh( \gb)  Y_r \big)\Big]^{1/2}.
\end{equation}
We only treat the first factor, the second one being symmetric. We define $\cF_0$ the trivial $\gs$-algebra, and for all $1\leq t\leq r$,
\begin{equation}
\cF_t\;:=\;\gs\big(\{\bgo_j,\; 1\leq j \leq b_{t+1}-1\}\big),
\end{equation}
with $b_{r+1}:=\bn^{(2)}+1$: it is then easily checked that $(X_t)_{0\leq t\leq r}$ is a $(\cF_t)_{0\leq t\leq r}$-martingale. 
We also define a ``truncated'' version of $X$: for all $w>0$, $X_0^{(w)}:=0$ and for all $1\leq t\leq r$, we set
\begin{equation}
X^{(w)}_t \;:=\; \sum_{l=1}^t \bgo_{b_l} \bigg(\sum_{j=c_l}^{\min(d_l, b_l-1)} \bgo_j\bigg) \ind_{\big\{\big|\sum_{j=c_l}^{\min(d_l, b_l-1)} \bgo_j \big|\,\leq\, w \sqrt{\ell_n \log(\bn^{(1)})}\big\}}\,.
\end{equation}
Notice that $(X^{(w)}_t)_{0\leq t\leq r}$ is also a $(\cF_t)_{0\leq t\leq r}$-martingale, and it has bounded increments: $|X^{(w)}_t - X^{(w)}_{t-1}| \leq w \sqrt{\ell_n \log(\bn^{(1)})}$ for all $1\leq t\leq r$. Therefore we deduce from the Azuma-Hoeffding inequality that for all $u,w>0$,
\begin{equation}\label{eq:azuma}
\Pb\bigg(|X^{(w)}_r| > u \sqrt{r \ell_\bn \log(\bn^{(1)})}\bigg) \;\leq \; 2 \exp\Big(-\frac{u^2}{2 w^2}\Big).
\end{equation}
Moreover we have
\begin{equation}\begin{aligned}
\Pb\big(X_r \neq X^{(w)}_r \big) \;&=\; \Pb\bigg(\exists\, 1\leq l\leq r;\; \bigg|\sum_{j=c_l}^{\min(d_l, b_l-1)} \bgo_j \bigg| > w \sqrt{\ell_n \log(\bn^{(1)})}\bigg)\\
&\leq\; \sum_{l=1}^r \Pb\bigg(\bigg|\sum_{j=c_l}^{\min(d_l, b_l-1)} \bgo_j \bigg| > w \sqrt{\ell_n \log(\bn^{(1)})}\bigg),
\end{aligned}\end{equation}
so by Hoeffding's inequality, and because $|J_\bn(i)|\leq 2\ell_\bn$ and $r\leq \bn^{(1)}$,
\begin{equation}\label{eq:hoeffding}
\Pb\big(X_r \neq X^{(w)}_r \big) \;\leq\; \bn^{(1)} \exp\Big(-\frac{w^2}{4} \log (\bn^{(1)})\Big).
\end{equation}
We deduce from \eqref{eq:hoeffding} and \eqref{eq:azuma} that for all $u,w>0$,
\begin{equation}
\Pb\bigg(|X_r| > u \sqrt{r \ell_\bn \log(\bn^{(1)})}\bigg) \;\leq \; 2 \exp\Big(-\frac{u^2}{2 w^2}\Big) + \bn^{(1)} \exp\Big(-\frac{w^2}{4} \log (\bn^{(1)})\Big),
\end{equation}
in particular with $w^2= \frac{u\sqrt{2}}{\sqrt{\log(\bn^{(1)})}}$,
\begin{equation}\label{eq:azuma:2}\begin{aligned}
\Pb\bigg(|X_r| > u \sqrt{r \ell_\bn \log(\bn^{(1)})}\bigg) \;&\leq \; (2+\bn^{(1)}) \exp\Big(-\frac{u\sqrt{\log(\bn^{(1)})}}{2\sqrt{2}}\Big)\\
&\leq\; \exp(-Cu),
\end{aligned}\end{equation}
where $C>0$ is a constant sufficiently small such that this inequality holds for all $u\geq 1$ and $\bn^{(1)}\in\N$.
Hence, \eqref{eq:azuma:2} implies that $\frac{|X_r|}{\sqrt{r\ell_\bn\log(\bn^{(1)})}}$ is dominated under some coupling by $C'+Z$, with $C'$ a constant and $Z$ an exponential random variable with parameter $C$. In particular,
\begin{equation}\label{eq:azuma:3}
\bbE\Big[\exp\big( 4 \gd \tanh( \gb)  X_r \big)\Big] \;\leq\; \bbE\bigg[\exp\bigg( 4  |\gd|\gb\Big(C' + \sqrt{r\ell_\bn\log(\bn^{(1)})} Z  \Big)\bigg)\bigg],
\end{equation}
and  this can be made arbitrarily small if $|\gd| \gb \sqrt{r\ell_\bn\log(\bn^{(1)})}\leq |\gd|\sqrt{\bn^{(1)}\ell_\bn\log(\bn^{(1)})}$ is small, which concludes the proof of the lemma.
\end{proof}

\appendix

\section{Estimates on the homogeneous gPS model} \label{section:appendixA}

Here we prove Lemma~\ref{lem:zhom} and Lemma~\ref{lem:Afrac1}.

\begin{proof}[Proof of Lemma~\ref{lem:zhom}]
This Lemma is already proven for $\ga>1$ in \cite[Lemma 4.4]{BGK}, so we replicate the proof here for $\ga\in(0,1]$ (notice that this proof follows the lines of \cite[Prop.~A.2.]{DGLT09} in the context of the homogeneous PS model).

We decompose the partition function according to the number of renewal points until $\bn$:
\begin{equation}
Z_{\bn,-u} = \sum_{j=1}^{\|\bn\|} e^{-ju} \bP\big(\btau_j=\bn\big) = \sum_{j=1}^{\|\bn\|^{\ga_-}} e^{-ju} \bP\big(\btau_j=\bn\big) + \sum_{j=\|\bn\|^{\ga_-} +1}^{\|\bn\|} e^{-ju} \bP\big(\btau_j=\bn\big).
\end{equation}
For the first sum, (recall $\ga_-<\ga\leq1$) we use Proposition~\ref{prop:approx:btau:keme} to bound $\bP\big(\btau_j=\bn\big)$ by a constant times $j K(\|\bn\|)$, both for $\ga<1$ and $\ga=1$ (for the latter, notice that $b_j\ll \|\bn\|$). We obtain
\begin{equation}
\sum_{j=1}^{\|\bn\|^{\ga_-}\!\!} e^{-ju} \bP\big(\btau_j=\bn\big) \leq C_\cntc \frac{K(\|\bn\|)}{u^2} \sum_{j=1}^{\|\bn\|^{\ga_-}\!\!} ju^2 e^{-ju}\leq C_\cntc \frac{K(\|\bn\|)}{u^2},
\end{equation}
where we used a Riemann-sum approximation of the last sum to get that it is bounded by a constant times $\int_{\R_+} xe^{-x}\dd x=1$ for any $u\leq u_0$. For the terms $j>\|\bn\|^{\ga_-}$, we simply bound $j$ from below:
\begin{equation}
\sum_{j=\|\bn\|^{\ga_-}+1}^{+\infty} e^{-ju} \bP\big(\btau_j=\bn\big) \leq e^{-u \|\bn\|^{\ga_-}} \bP(\bn\in\btau),
\end{equation}
and the proof is complete.
\end{proof}

\begin{proof}[Proof of Lemma~\ref{lem:Afrac1}]
We first take care of the case $\ga\geq1$ with very rough estimates: if $\ga>1$, recalling Theorem~\ref{thm:hom}, there is some $c_\ga>0$ such that $\tf (0,h)\sim c_\ga h$ as $h\searrow0$. In particular $h|\btau\cap\bnsquare|$ is bounded uniformly in $h\in(0,h_1)$ and $1\leq \|\bn\| \leq 1/\tf(0,h)$. Thus,
\begin{equation}
Z_{\bn,h} \,=\, \bE[e^{h|\btau\cap\bnsquare|}\ind_{\{\bn\in\btau\}}]\,\leq\, C_\cntc \, \bP(\bn\in\btau)\,.
\end{equation}
Then we conclude with Proposition~\ref{prop:approx:btau:ga>=1}.

If $\ga=1$, then $\tf (0,h)\sim L_\ga(1/h) h$ as $h\searrow0$ where $L_\ga$ is slowly varying. Using the additional assumption $\|\bn\|\leq \tf(0,h)^{-(1-\eps^2)}$, $h|\btau\cap\bnsquare|$ is also bounded (it decays to 0) and the former upper bound still holds. Then we conclude with Proposition~\ref{prop:approx:btau:ga>=1}.\smallskip

Those rough estimates do not apply when $\ga\in(0,1)$, since we do not have uniform bounds on $h|\btau\cap\bnsquare|$. We follow the line of proof of \cite[Lem.~4.1]{DGLT09} for the PS model.
A first step in the proof consists in dealing with the indicator function in the partition function, to compare it with its free counterpart, see \eqref{eq:Znhfree} below. Let us define
\begin{equation}\label{eq:defTNM}
T_{\bn}\,:=\, \Big\{\bi \in \llbracket 1, \bn \rrbracket \,;\,\| \bi\| \leq \tfrac12 \|\bn\|\Big\}\,,
\end{equation}
the lower left half  of the rectangle $\llbracket 1, \bn \rrbracket $, where we recall that $\|\cdot\|$ is the $L^1$ norm on $\N^2$. Because, conditionally on $\bn\in \btau$, the time-reversed process $\tilde\btau$ in $\llbracket 1, \bn \rrbracket\setminus T_{\bn}$ starting from $\bn$ has same law as $\btau$ in $T_{\bn}$, the partition function is bounded with a Cauchy-Schwarz inequality by
\begin{equation}\label{eq:proof:Afrac1:2}
Z_{\bn,h} = \bE\Big[e^{ h\, |\btau \cap\llbracket 1, \bn \rrbracket |} \big| \bn\in\btau\Big] \bP\big( \bn \in\btau\big) \leq \bE\Big[e^{2 h \,|\btau \cap T_{\bn}| } \big|  \bn \in\btau\Big] \bP\big( \bn\in\btau\big) \, ,
\end{equation}
(if $\|\bn\|$ is even the anti-diagonal $\{\|\bi\|=\|\bn\|/2\}$ is counted twice, but the upper bound still holds). Let us define $X_{\bn}:=\sup\{ \bi \in T_{\bn}\cap\btau\}$ the last renewal point in $T_{\bn}$ (we take the supremum for the natural order on $\btau\subset\N^2$: recall that $\btau$ is strictly increasing on both coordinates). Because $|\btau \cap T_{\bn}| $ and $\ind_{\{\bn\in\btau\}}$ are independent conditionally to $\{X_{\bn}=\bi\}$, we can write:
\begin{equation}\label{eq:proof:Afrac1:3}
\bE\Big[e^{ 2 h  |\btau \cap T_{\bn} | } \Big| \bn \in\btau\Big]  = \!\sum_{\bi\in T_{\bn}}\! \bE\Big[e^{2 h |\btau \cap T_{\bn} | } \Big| X_{\bn}= \bi \Big] \bP\big(X_{\bn}=\bi \big| \bn\in\btau\big).
\end{equation}
then we can use the following Lemma, which is proven afterwards.

\begin{lemma}\label{lem:cond}
Assume $\ga\in (0,1)$.
There exists $C_\cntc>0$ such that for any $\bn\in\N$ and $\bi\in T_{\bn}$,
\begin{equation}
\bP\big(X_{\bn}=\bi\,,\, \bn \in\btau\big) \,\leq\,  C_{\arabic{cst}}\, L(\|\bn\|)^{-1} \, \|\bn\|^{-(2-\ga)} \bP\big(X_{\bn}=\bi \big)\,.
\end{equation}
\end{lemma}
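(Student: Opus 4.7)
My plan is to apply the renewal (Markov) property at $\bi$ to reduce the statement to a direct estimate on $\btau_1$, then handle that estimate by splitting the possible values of $\btau_1$ into \emph{bulk} jumps and \emph{close-to-$\bn$} jumps.

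Set $u(\bm):=\bP(\bm\in\btau)$ and $Q(\bi):=\bP(\bi+\btau_1\notin T_\bn)$. The renewal property gives
$$\bP(X_\bn=\bi)\,=\,u(\bi)\,Q(\bi),\qquad \bP(X_\bn=\bi,\bn\in\btau)\,=\,u(\bi)\!\!\sum_{\bj\in E_\bi}\!\!K(\|\bj\|)\,u(\bn-\bi-\bj),$$
with $E_\bi:=\{\bj\in\bbN^2:\bi+\bj\notin T_\bn,\;\bi+\bj\preceq\bn\}$. After cancelling $u(\bi)$ the claim reduces to
\begin{equation}\label{eq:planred}
\sum_{\bj\in E_\bi}\!K(\|\bj\|)\,u(\bn-\bi-\bj)\;\leq\;C\,L(\|\bn\|)^{-1}\|\bn\|^{-(2-\alpha)}\,Q(\bi).
\end{equation}
Since $\|\bi\|\leq\|\bn\|/2$, I would also record the uniform lower bound $Q(\bi)\geq\bP(\|\btau_1\|>\|\bn\|/2)\geq c\,L(\|\bn\|)\|\bn\|^{-\alpha}$ from a standard Karamata computation, to be used at the end.

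Next I would split $E_\bi=A\cup B$ according to whether $\|\bn-\bi-\bj\|\geq\|\bn\|/4$ (set $A$) or $<\|\bn\|/4$ (set $B$). On $A$, the bivariate renewal upper bound $u(\bm)\leq c\,L(\|\bm\|)^{-1}\|\bm\|^{-(2-\alpha)}$ (from Appendix~\ref{section:appendixB}), together with Potter's bounds, yields
$$\sum_{\bj\in A}K(\|\bj\|)\,u(\bn-\bi-\bj)\;\leq\;c\,L(\|\bn\|)^{-1}\|\bn\|^{-(2-\alpha)}\,Q(\bi),$$
which matches \eqref{eq:planred} directly. On $B$, the constraints $\bi\in T_\bn$ and $\|\bn-\bi-\bj\|<\|\bn\|/4$ together force $\|\bj\|\geq\|\bn\|/4$, so Potter gives $K(\|\bj\|)\leq c\,L(\|\bn\|)\|\bn\|^{-(2+\alpha)}$ uniformly over $\bj\in B$; combined with the Karamata-type estimate $\sum_{\|\bm\|<\|\bn\|/4}u(\bm)\leq c\,L(\|\bn\|)^{-1}\|\bn\|^{\alpha}$ (again a consequence of the bivariate renewal asymptotics), this yields
$$\sum_{\bj\in B}K(\|\bj\|)\,u(\bn-\bi-\bj)\;\leq\;c\,\|\bn\|^{-2}.$$
Using the lower bound on $Q(\bi)$, the right-hand side of \eqref{eq:planred} is itself at least $c\,\|\bn\|^{-2}$ (the slowly varying factors cancel and $-(2-\alpha)-\alpha=-2$), so the $B$-contribution is also absorbed and \eqref{eq:planred} is established.

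The main obstacle is really the invocation of the bivariate renewal estimates, namely the uniform upper bound on $u(\bm)$ and the Karamata-type partial sum $\sum_{\|\bm\|<M}u(\bm)\asymp L(M)^{-1}M^{\alpha}$ for $\alpha\in(0,1)$. These are classical in one dimension but rely on a bivariate local limit theorem in our setting; once they are granted from Appendix~\ref{section:appendixB}, the remainder of the argument is a routine Potter/Karamata computation.
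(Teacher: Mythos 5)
Your argument is correct and follows essentially the same route as the paper: apply the renewal property at $\bi$ (cancelling $u(\bi)$), split the sum over the outgoing jump $\bj$ according to whether the residual $\bn-\bi-\bj$ is large or small, bound the first part with the bivariate renewal mass estimate and the second with a uniform bound on $K(\|\bj\|)$ plus a Karamata-type partial sum, and absorb the resulting $O(\|\bn\|^{-2})$ contribution into the target by the uniform lower bound $Q(\bi)\geq c\,L(\|\bn\|)\|\bn\|^{-\alpha}$. The only cosmetic difference is the dichotomy: the paper conditions on $\|\bj\|\leq\|\bn\|/4$ rather than on $\|\bn-\bi-\bj\|\geq\|\bn\|/4$, which is interchangeable given $\|\bi\|\leq\tfrac12\|\bn\|$.
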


Thanks to this Lemma and \eqref{eq:proof:Afrac1:3}, the inequality \eqref{eq:proof:Afrac1:2} becomes
\begin{equation}
\label{eq:Znhfree}
Z_{\bn,h}\,\leq\, \frac{C_{\arabic{cst}}\,L(\|\bn\|)^{-1}}{\|\bn\|^{2-\ga}}\bE\Big[e^{2 h \,|\btau \cap T_{\bn}| } \Big]\,,
\end{equation}
so Lemma~\ref{lem:Afrac1} will be proven once we show that the above expectation is bounded by a constant, uniformly for $\|\bn\| \leq 1/\tf(0,h)$. Notice that $\|\btau \| = ( \|\btau_{i}\|)_{i\geq 1}$ is a renewal process on $\N$, so we may write the upper bound
\begin{equation}
\label{eq:normtau}
\bE\Big[e^{2 h \,|\btau \cap T_{\bn}| } \Big] \,\leq\, \bE\Big[\exp \Big( 2 h \, \sum_{i=1}^{\|\bn\|/2} \ind_{\{i\in \|\btau\|\}} \Big)\Big]\,,
\end{equation}
which is the partition function of a homogeneous PS model with underlying univariate renewal $\|\btau\|$, which easily verifies $\bP(\|\btau_1\|=k)  \sim  L(k) k^{-(1+\ga)}$ as $k\to +\infty$. The right side of \eqref{eq:normtau} has already been studied in \cite[Lemma 4.1]{DGLT09} when $\ga\in(0,1)$, and we therefore get that the expectation is bounded by a constant uniformly in $\|\bn\| \leq 1/\tf(0,h)$.
\end{proof}

\begin{proof}[Proof of Lemma~\ref{lem:cond}]
Fix $\bn, \bi\in\N^2$. Recall and the definitions of $T_\bn$ in \eqref{eq:defTNM} and $X_\bn$, and that we assumed $\ga\in(0,1)$. We write:
\begin{equation}\label{eq:proof:cond:1}
\begin{aligned}
\bP\big(X_\bn=\bi\,,\, \bn\in\btau\big) 
&= \bP\big(\bi\in\btau\big) \bP \Big(\|\btau_1\|>\tfrac{1}{2}\|\bn\|-\|\bi\|\,,\,\bn-\bi\in\btau\Big)\\
&= \bP\big(\bi\in\btau\big) \sum_{\bj\in Q_\bi^\bn} \bP\big(\btau_1=\bj\big) \bP\big(\bn-\bi-\bj\in\btau\big),
\end{aligned}
\end{equation}
with
\begin{equation}
Q_\bi^\bn:= \Big\{\bj\in\N^2\,;\, \bj \preceq \bn -\bi \,,\, 
\|\bj\| > \tfrac{1}{2}\|\bn\| -\|\bi\|\Big\}.
\end{equation}
Then, we can use Proposition~\ref{prop:approx:btau:ga<1} to get that $\bP\big(\bn-\bi-\bj\in\btau\big)$ is bounded by a constant times $L(\| \bn-\bi-\bj\|)^{-1}\| \bn-\bi-\bj\|^{-(2-\ga)}$. We get that the sum in \eqref{eq:proof:cond:1} is bounded by a constant times
\begin{align*}
&\sum_{\bj\in Q_\bi^\bn} \bP\big(\btau_1 =\bj\big) L(\| \bn-\bi-\bj\|)^{-1} \| \bn-\bi-\bj\|^{-(2-\ga)} \\
&\quad\qquad\leq C_\cntc  L(\|\bn\|)^{-1} \| \bn\|^{-(2-\ga)} \sum_{\bj\in Q_\bi^\bn, \|\bj\| \leq \|n\|/4} \bP\big(\btau_1 =\bj\big)  \\
&\qquad\qquad + C_{\arabic{cst}} L(\|\bn\|) \| \bn\|^{-(2+\ga)}  \sum_{\boldsymbol{\ell} \leq \|n\|} L(\| \boldsymbol{\ell} \|)^{-1} \| \boldsymbol{\ell}\|^{-(2-\ga)} \, ,
\end{align*}
where we decomposed the sum according to whether $\|\bj\| \leq \|\bn\|/4$ or not. We used that  if $\|\bj\| \leq \|\bn\|/4$ then $\| \bn-\bi-\bj\| \geq \|\bn \|/4$, and if $\|\bj\| > \|\bn\|/4$ then we can bound $\bP(\bj\in \btau)$ uniformly thanks to \eqref{eq:interarrival:tau}, and we wrote  $\boldsymbol{\ell}:= \bn -\bi -\bj$.

Using that the last sum above is bounded by a constant times $L(\| \bn\|)^{-1} \| \bn\|^{\ga} $, we therefore get that \eqref{eq:proof:cond:1} is bounded by a constant times
\begin{align*}
&\bP ( \bi \in \btau)   L(\|\bn\|)^{-1} \| \bn\|^{-(2-\ga)} \Big( \bP\big(  \|\btau_1\| > \tfrac{1}{2}\|\bn\|-\|\bi\| \big) + L(\|\bn\|) \|\bn\|^{-\ga} \Big) \\
&\qquad \leq C_\cntc  L(\|\bn\|)^{-1} \| \bn\|^{-(2-\ga)}  \bP(\bi\in \btau) \bP\big(  \|\btau_1\| > \tfrac{1}{2}\|\bn\|-\|\bi\| \big)   \\
&\qquad \leq   C_{\arabic{cst}}  L(\|\bn\|)^{-1} \| \bn\|^{-(2-\ga)}  \bP( X_{\bn} =\bi)\, ,
\end{align*}
which concludes the proof.
\end{proof}

\section{Some properties of bivariate renewals}\label{section:appendixB}
We provide here some estimates on the bivariate renewal $\btau$ that we use (recall its inter-arrival distribution \eqref{eq:interarrival:tau}). They can be found in \cite[Appendix A]{BGK} or in \cite{B18} in a more general setting, and rely on the fact that $\btau$ is in the domain of attraction of a $\min(\ga,2)$-stable distribution. We define the \emph{scaling sequence} $a_n$,
\begin{equation}
a_n\,:=\,\psi(n)n^{1/\min(\ga,2)}\,,
\end{equation}
where $\psi$ is some slowly varying function (we do not detail it here, see \cite{B18, BGK} ; notice that if $\ga>2$, $\psi$ is a constant). We also define the \emph{recentering sequence} $b_n$,
\begin{equation}
b_n := 0  \ \ \text{if } \ga\in(0,1), \qquad
b_n:= n\, \mu(a_n) \ \ \text{if } \ga=1,   \qquad
b_n := n\,\mu \  \ \text{if } \ga>1.
\end{equation}
Here $\mu(x) = \bE[\btau^{(1)}_1\ind_{\{\btau^{(1)}_1 \leq \,x\}}]$ is the truncated moment, and $\mu=\lim_{x\to+\infty} \mu(x)$.
When $\ga=1$, notice that either $\mu<+\infty$ or $\mu(x)\to+\infty$ as a slowly varying function.

\begin{proposition}[\cite{B18}, Thm.~2.4]
\label{prop:approx:btau:keme} 
There exists some $C_\cntc>0$ such that
\begin{equation}
\bP\big(\btau_j=\bn+b_j\bone\big)\;\leq \; C_{\arabic{cst}} \, j\,\bP(\btau_1=\bn)\;,
\end{equation}
for any $j\in\N$ and $\bn\in\N^2$ such that $\|\bn\|\geq a_j$.
\end{proposition}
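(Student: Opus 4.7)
The statement is a classical \emph{one-big-jump} estimate for a bivariate renewal in the $\min(\ga,2)$-stable domain of attraction: on the event $\{\btau_j=\bn+b_j\bone\}$ with $\|\bn\|\geq a_j$, the dominant contribution comes from configurations where a single increment $\bY_i:=\btau_i-\btau_{i-1}$ takes a value close to $\bn$, while the remaining $j-1$ ones cluster at their typical scale $a_{j-1}$ near $b_{j-1}\bone$; the factor $j$ in the bound accounts for the choice of which increment is the big one.

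The plan is to use a symmetry/union-bound argument followed by a decomposition. Since at least one of the $\|\bY_i\|$ must be $\geq \|\bn+b_j\bone\|/j$ on the target event, exchangeability of the $\bY_i$ and independence of $\bY_j$ from $\btau_{j-1}$ yield
\begin{equation*}
\bP(\btau_j=\bn+b_j\bone) \;\leq\; j\sum_{\bm\in\N^2}\bP(\btau_{j-1}=\bm)\,K(\|\bn+b_j\bone-\bm\|)\,\ind_{\{\|\bn+b_j\bone-\bm\|\,\geq\, \|\bn+b_j\bone\|/j\}}\,.
\end{equation*}
I would then split this sum according to whether $\|\bm\|\leq \|\bn\|/2$ or not.

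On the region $\|\bm\|\leq \|\bn\|/2$, the argument is direct: the displacement satisfies $\|\bn+b_j\bone-\bm\|=\|\bn\|(1+o(1))$ (using $b_j=o(\|\bn\|)$, which is guaranteed by $\|\bn\|\geq a_j$ and the regularly varying scaling of $a_j,b_j$), so Potter's bounds for the slowly varying function $L$ in~\eqref{eq:interarrival:tau} give $K(\|\bn+b_j\bone-\bm\|)\leq C\,K(\|\bn\|)=C\,\bP(\btau_1=\bn)$; summing $\bP(\btau_{j-1}=\bm)\leq 1$ yields the required contribution $Cj\,\bP(\btau_1=\bn)$. On the complementary region $\|\bm\|>\|\bn\|/2$, I would proceed by induction on $j$: the inductive hypothesis $\bP(\btau_{j-1}=\bm)\leq C(j-1)\,K(\|\bm-b_{j-1}\bone\|)$ combined with a standard convolution estimate for regularly varying sequences (of the type $\sum_\bm K(\|\bm\|)K(\|\bn-\bm\|)\leq C\,K(\|\bn\|)$ when one of the two factors lives on the tail, see e.g.\ the appendix of~\cite{B18}) bounds the remaining contribution by $C'j\,K(\|\bn\|)$.

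The main obstacle is to carry out the induction uniformly across the three regimes $\ga\in(0,1)$, $\ga=1$ and $\ga>1$, particularly in the case $\ga=1$ with $\mu=\infty$ where the recentering $b_j$ can grow faster than $a_j$: the shift from $\bn$ to $\bn+b_j\bone$ has to be controlled at every step of the induction so that the regular variation arguments remain applicable uniformly, and this regime-dependent bookkeeping is the delicate point that makes the proof of~\cite[Thm.~2.4]{B18} non-trivial. Once this uniformity is established, the constant in the final bound emerges as a universal constant coming from Potter's bounds and the convolution estimate.
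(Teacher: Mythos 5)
First, note that the paper contains no proof of this proposition: it is imported verbatim from \cite{B18}, Thm.~2.4 (the appendix only collects the statement), so your proposal is an attempt to reprove an external result rather than something to compare with an internal argument.

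As written, the plan has a genuine gap, concentrated in the region $\|\bm\|>\|\bn\|/2$. A preliminary (repairable) flaw: the claim that $\|\bn\|\geq a_j$ guarantees $b_j=o(\|\bn\|)$ is false for $\ga>1$ (and can fail for $\ga=1$, $\mu=\infty$), since there $a_j\approx\psi(j)j^{1/\min(\ga,2)}\ll b_j\approx\mu j$; on that first region the conclusion still holds because the argument of $K$ is $\geq\|\bn\|/2$, so Potter bounds suffice, but your justification is not the right one. The serious problem is the second region. (i) Your inductive hypothesis $\bP(\btau_{j-1}=\bm)\leq C(j-1)K(\|\bm-b_{j-1}\bone\|)$ only applies when $\|\bm-b_{j-1}\bone\|\geq a_{j-1}$, i.e.\ it says nothing about the bulk window of width $a_{j-1}$ around $b_{j-1}\bone$, which is exactly where most of the mass of $\btau_{j-1}$ sits; there you need an additional uniform local-limit input of the type $\sup_\bm\bP(\btau_{j-1}=\bm)\leq C a_{j-1}^{-2}$, which your sketch never introduces. (ii) Even where the hypothesis applies, the proposed convolution step does not close: bounding $K(\|\bm-b_{j-1}\bone\|)\leq CK(\|\bn\|)$ and summing the last-jump kernel over the big-jump indicator gives at best
\begin{equation*}
C\,(j-1)\,K(\|\bn\|)\,\bP\big(\|\btau_1\|\geq \|\bn+b_j\bone\|/j\big)\;\approx\; C\,(j-1)\,K(\|\bn\|)\, L\big(\tfrac{\|\bn\|}{j}\big)\Big(\tfrac{\|\bn\|}{j}\Big)^{-\ga},
\end{equation*}
and at the boundary $\|\bn\|\approx a_j$ (where $jL(a_j)a_j^{-\ga}\approx 1$ for $\ga<1$) the tail factor is of order $j^{\ga-1}$, so the inner sum is of order $j^{\ga}K(\|\bn\|)$ and the total bound is of order $j^{1+\ga}K(\|\bn\|)$, overshooting the claimed $j\,K(\|\bn\|)$ by the unbounded factor $j^{\ga}$. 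The union bound ``some increment exceeds $\|\bn+b_j\bone\|/j$'' combined with a crude convolution is simply too lossy in the intermediate regime $\|\bn\|$ comparable to $a_j$; the actual proof in \cite{B18} needs precisely the extra ingredient your plan lacks, namely truncation/Fuk--Nagaev-type estimates showing that, once all increments are constrained to be $\lesssim\|\bn\|$, the recentred walk reaches distance of order $\|\bn\|$ only with super-polynomially small probability, which is what removes the spurious $j^{\ga}$. Without that (and the local-CLT bound in the bulk), the induction does not yield the stated estimate, so the right course here is to keep the statement as a citation, as the paper does.
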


\begin{proposition}[\cite{B18}, Thms.~3.1-4.1]\label{prop:approx:btau:ga<1} Assume $\ga\in(0,1)$. There is some constant $C>0$ such that for all $\bn \in \N^2$,
\begin{equation}
\bP(\bn\in\btau) \leq C L(\|\bn\|)^{-1} \|\bn\|^{-(2-\ga)} \, .
\end{equation}
\end{proposition}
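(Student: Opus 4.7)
The plan is to decompose $\bP(\bn\in\btau)=\sum_{j\geq 1}\bP(\btau_j=\bn)$ and to control each term in two regimes, according to whether the scaling sequence $a_j$ (roughly $j^{1/\ga}$, up to a slowly varying factor) is smaller or larger than $\|\bn\|$. Recall that since $\ga\in(0,1)$, the recentering sequence satisfies $b_j=0$ for every $j$, so we can apply Proposition~\ref{prop:approx:btau:keme} without shifts. Define the threshold
\begin{equation*}
j^{\star}\,:=\,\inf\{j\geq 1\,:\, a_j>\|\bn\|\}\,,
\end{equation*}
so by Karamata's theorem $j^{\star}\asymp \|\bn\|^{\ga}\tilde L_1(\|\bn\|)$ for some slowly varying function $\tilde L_1$.

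For the \emph{small-$j$} regime ($1\leq j\leq j^{\star}$, so $a_j\leq\|\bn\|$), Proposition~\ref{prop:approx:btau:keme} gives $\bP(\btau_j=\bn)\leq C\,j\,\bP(\btau_1=\bn)=C\,j\, L(\|\bn\|)\,\|\bn\|^{-(2+\ga)}$, hence
\begin{equation*}
\sum_{j=1}^{j^{\star}}\bP(\btau_j=\bn)\;\leq\; C\,(j^{\star})^{2}\,L(\|\bn\|)\,\|\bn\|^{-(2+\ga)}\;\leq\;C'\,L(\|\bn\|)^{-1}\,\|\bn\|^{-(2-\ga)}\,,
\end{equation*}
after absorbing the slowly varying contributions coming from $(j^{\star})^{2}\asymp \|\bn\|^{2\ga}\tilde L_1^{2}(\|\bn\|)$ into a single slowly varying factor and using that $L\cdot\tilde L_1^{2}$ and $L^{-1}$ differ by a slowly varying function.

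For the \emph{large-$j$} regime ($j>j^{\star}$, so $a_j>\|\bn\|$), the idea is to invoke a bivariate local limit theorem for heavy-tailed renewals: since $\btau_1$ lies in the domain of attraction of an $\ga$-stable law on $\R^{2}$, one has a uniform bound $\bP(\btau_j=\bm)\leq C\,a_j^{-2}$ for all $\bm\in\N^{2}$ and $j$ large. Summing, and using Karamata again,
\begin{equation*}
\sum_{j>j^{\star}}\bP(\btau_j=\bn)\;\leq\;C\sum_{j>j^{\star}}a_j^{-2}\;\asymp\; \frac{j^{\star}}{a_{j^{\star}}^{2}}\;\asymp\;\|\bn\|^{-(2-\ga)}\,\tilde L_2(\|\bn\|)\,,
\end{equation*}
where the convergence of the series comes from $a_j^{-2}\asymp\psi(j)^{-2}j^{-2/\ga}$ with $2/\ga>2$. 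Combining the two regimes gives the announced bound with a slowly varying prefactor of the form $L(\|\bn\|)^{-1}$.

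\textbf{Main obstacle.} The delicate point is the large-$j$ regime, which hinges on the uniform local limit estimate $\bP(\btau_j=\bm)\leq C\,a_j^{-2}$ for a lattice-valued, heavy-tailed, bivariate random walk. Establishing this rigorously requires a Fourier/characteristic function analysis of $\btau_1$, showing that the rescaled walk $\btau_j/a_j$ converges to an $\ga$-stable distribution on $\R^{2}$ with a bounded continuous density, and upgrading this convergence to a uniform local version. This is precisely the content of the bivariate renewal estimates proved in~\cite{B18} (Theorems~3.1 and~4.1), which we would invoke directly rather than redo; apart from that, the argument is elementary splitting and Karamata asymptotics.
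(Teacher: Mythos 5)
The paper does not actually prove this proposition: it is imported verbatim from \cite{B18} (Thms.~3.1 and 4.1), so there is no internal proof to compare against. Your outline is nevertheless essentially the standard argument (and in substance the one used in \cite{B18}): write $\bP(\bn\in\btau)=\sum_{j\ge1}\bP(\btau_j=\bn)$, cut the sum at the scale $j^\star$ where $a_{j}\approx\|\bn\|$, control the small-$j$ part by the local large-deviation bound of Proposition~\ref{prop:approx:btau:keme} (legitimate here since $b_j=0$ for $\ga<1$), and the large-$j$ part by the uniform local limit bound $\sup_{\bm}\bP(\btau_j=\bm)\le C a_j^{-2}$, then sum with Karamata. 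Both regimes produce the correct order $\|\bn\|^{-(2-\ga)}$, and the argument is uniform in the direction of $\bn$, so the skeleton is sound.

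Two caveats. First, your slowly-varying bookkeeping is stated too loosely: saying that $L\cdot\tilde L_1^2$ and $L^{-1}$ ``differ by a slowly varying function'' is vacuous (any two slowly varying functions do), and the proposition asserts the specific prefactor $L(\|\bn\|)^{-1}$, not an arbitrary slowly varying one. What you actually need, and what is true, is that the defining relation of the scaling sequence, $j\,L(a_j)\,a_j^{-\ga}\asymp 1$, forces $j^\star\asymp\|\bn\|^{\ga}/L(\|\bn\|)$; with this, both regimes give exactly $C\,L(\|\bn\|)^{-1}\|\bn\|^{-(2-\ga)}$. Second, the ingredient you defer — the uniform bound $\sup_{\bm}\bP(\btau_j=\bm)\le C a_j^{-2}$ — is the multivariate local limit theorem (Rva\v{c}eva-type, Section~2 of \cite{B18}), not Theorems~3.1 and 4.1 of \cite{B18}: those are precisely the renewal estimates you are trying to prove, so invoking them for this step would be circular. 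With the citation redirected to the local limit theorem (or with a Fourier proof of it), your reduction is a valid and complete route to the stated bound.
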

Note that this upper bound is sharp when $\bn^{(1)}$ and $\bn^{(2)}$ are of the same order ({\it i.e.} when $\bn$ is close to the diagonal: we refer to \cite[Cor.~3-B]{Will68} for a precise statement).

\begin{proposition}[\cite{B18}, Thms.~3.3-3.4 and 4.2-4.3]
\label{prop:approx:btau:ga>=1}
Assume $\ga\geq 1$, then there is a constant $C>0$ such that for any $\bn \in \N^2$,
\begin{equation}
\bP(\bn\in \btau) \leq  \frac{C}{ \mu(n) a_{n/\mu(n)}} \, ,
\end{equation}
with $n = \min(\bn^{(1)}, \bn^{(2)})$. When $\ga>1$, $\mu(n)$ can be replaced by $1$.
\end{proposition}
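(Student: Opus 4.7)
The plan is to establish this bivariate local renewal estimate via the classical decomposition
\[
\bP(\bn \in \btau) \,=\, \sum_{k=1}^{n} \bP(\btau_k = \bn),
\]
where $n := \min(\bn^{(1)},\bn^{(2)})$, and the upper summation bound reflects the fact that each inter-arrival satisfies $(X_i,Y_i) \succeq \bone$. Without loss of generality assume $\bn^{(1)} \leq \bn^{(2)}$, and denote $\kappa := n/\mu(n)$ the typical number of renewals required to reach the first coordinate $n$. The strategy is to split the sum into three regimes according to how $k$ compares to $\kappa$.

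\textbf{Bulk regime:} $k \in [\kappa/2,\, 2\kappa]$. Here one applies a bivariate local limit theorem to the random walk $(\btau_k)_{k \geq 1}$: when $\ga > 2$ the fluctuations of $\btau_k - k(\mu,\mu)$ rescaled by $a_k = c\sqrt{k}$ converge to a (non-degenerate) Gaussian law on $\R^2$, and a standard local CLT yields $\bP(\btau_k = \bn) \leq C\, a_k^{-2}\, \phi_k\!\big((\bn - k(\mu,\mu))/a_k\big)$ where $\phi_k$ is bounded by the density of the limit law; in the case $\ga \in (1,2]$ the analogous stable local limit theorem (with a logarithmic truncation if $\ga = 2$) gives the same kind of bound with $a_k = \psi(k) k^{1/\min(\ga,2)}$. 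Summing over $k \in [\kappa/2,2\kappa]$, the point $(\bn - k(\mu,\mu))/a_k$ moves along a diagonal direction at speed $\mu/a_\kappa$, so
\[
\sum_{k \asymp \kappa} \bP(\btau_k = \bn) \,\leq\, \frac{C}{a_\kappa^2} \cdot \frac{a_\kappa}{\mu(n)} \int_{\R} \phi(t)\, \dd t \,\leq\, \frac{C'}{\mu(n)\, a_\kappa}\;,
\]
which matches the claimed bound. When $|\bn^{(2)} - \bn^{(1)}| \gg a_\kappa$ the tails of $\phi$ make this contribution even smaller.

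\textbf{Small-$k$ regime:} $k \leq \kappa/2$. Here $\bn$ is much larger than the typical value $k(\mu,\mu)$, so $\|\bn - b_k \bone\| \geq a_k$, which is precisely the regime where Proposition~\ref{prop:approx:btau:keme} applies and yields $\bP(\btau_k = \bn) \leq C k K(\|\bn\|)$. Summing gives $\sum_{k \leq \kappa/2} \bP(\btau_k = \bn) \leq C \kappa^2 K(\|\bn\|) \leq C' L(\|\bn\|)\|\bn\|^{-\ga}/\mu(n)^{2}$, which (using $\|\bn\| \geq n$ and the asymptotics of $a_{\cdot}$) is negligible compared to the bulk contribution.

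\textbf{Large-$k$ regime:} $2\kappa < k \leq n$. Here both coordinates $\btau_k^{(r)} = \bn^{(r)}$ must be \emph{smaller} than their typical value $k\mu$, which is a large deviation in the opposite direction; since the inter-arrivals are bounded below by $\bone$, this forces a large fraction of the $k$ steps to be atypically small. A standard exponential Chernoff bound on $\sum_{i=1}^k X_i$ (or more precisely on $X_i - \mu$), or a direct counting argument when $k$ is very close to $n$ (most steps forced to be $(1,1)$, which has probability $K(2) < 1$), yields an exponentially small contribution in $k - \kappa$, again negligible.

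The main technical difficulty lies in the bulk regime: establishing a uniform bivariate local limit theorem for $\btau_k$ in the entire range $1 \le \ga \le 2$, including the marginal case $\ga = 1$ with $\mu = \infty$, requires careful handling of the two-dimensional $\ga$-stable domain of attraction and the slowly varying corrections encoded in $a_n$ and $\mu(n)$. These are precisely the estimates proved in \cite{B18}, so I would quote them as black boxes rather than reprove them from scratch.
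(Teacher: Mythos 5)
The paper does not actually prove this proposition: it is imported verbatim from \cite{B18} (Thms.~3.3--3.4 and 4.2--4.3), as announced at the head of Appendix~\ref{section:appendixB}, so there is no internal proof to compare against. Your sketch is a sensible reconstruction of the standard route: write $\bP(\bn\in\btau)=\sum_{k\leq n}\bP(\btau_k=\bn)$ with $n=\min(\bn^{(1)},\bn^{(2)})$, control the bulk $k\asymp n/\mu(n)$ by a local limit theorem, the small-$k$ range via Proposition~\ref{prop:approx:btau:keme}, and the large-$k$ range by a lower-tail Chernoff bound (legitimate since the increments are bounded below by $\bone$, so the lower tail always has exponential moments). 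Two caveats keep it from being self-contained. First, in the bulk the uniform Gnedenko-type bound $\bP(\btau_k=\bn)\leq C a_k^{-2}$ is not sufficient (summing it over a window of width $n/\mu(n)$ gives a contribution of order $1$ when $\ga>2$); your Riemann-sum step needs a local estimate with an integrable decay profile, valid uniformly in $k$ and in the off-diagonal direction, which is exactly the content of the local estimates of \cite{B18}. Second, in the marginal case $\ga=1$, $\mu=\infty$, the phrase ``typical value $k\mu$'' is meaningless: both the definition of the bulk window and the large-$k$ deviation argument must be run with the truncated mean $\mu(a_k)$ and the recentering $b_k$, and matching $b_k\approx n$ with $\kappa=n/\mu(n)$ up to slowly varying corrections is where most of the technical work lies. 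Since you explicitly quote those estimates from \cite{B18} as black boxes, your argument ultimately rests on the same source that the paper cites wholesale, so the two ``proofs'' coincide in their logical dependence; what your outline adds is a correct explanation of how the renewal bound follows from the local limit theory, which is fine, but it is not more elementary than citing \cite{B18} directly.
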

We stress that this upper bound is sharp when $\bn$ is close to the diagonal, we refer to \cite[Thms.~3.3-3.4]{B18} for the precise statements.
Also, notice that $n\mapsto \mu(n) a_{n/\mu(n)}$ is regularly varying with index $1/\min(\ga,2)$.

\smallskip
Let us also recall some results on intersections of renewals, either univariate or bivariate.

\begin{proposition}\label{prop:intertau}\mbox{}
\begin{enumerate}[leftmargin=*]
\item Let $\tau$ be a renewal in $\N$, with $\bP(\tau_1=n)=L(n)/n^{1+\ga}$, for  $\ga >0 $ and  $L$ slowly varying. Let $\tau'$ be an independent copy of $\tau$. Then $\tau\cap\tau'$ is also a renewal process, and it is terminating a.s. if $\sum_{n\geq1} \frac{n^{2\ga-2}}{L(n)^2}<\infty$ (in particular $\ga\leq1/2$ is necessary, while $\ga<1/2$ is sufficient). Otherwise it is persistent a.s..
\item Let $\btau$ be a renewal in $\N^2$, with inter-arrival distribution as in \eqref{eq:interarrival:tau}. Let $\btau'$ be an independent copy of $\btau$. Then $\btau\cap\btau'$ is also a renewal process, and it is terminating a.s. if $\ga<1$, or if $\ga=1$ and $\sum_{n\geq1}\frac1{nL(n)\mu(n)}<\infty$. Otherwise it is persistent a.s..
\end{enumerate}\end{proposition}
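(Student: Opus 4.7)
My approach rests on two classical facts. First, the intersection of two independent renewals is itself a renewal, by the strong Markov property: given that a common point has been observed at $n$ (resp.\ $\bn$), the increments of both $\tau$ and $\tau'$ (resp.\ $\btau$ and $\btau'$) start afresh from $n$ with their original independent laws, so the distribution of the next common point only depends on the renewal law of each process. Second, a possibly defective renewal is terminating iff its total number of points is a.s.\ finite iff this number has finite expectation (since the number of points is geometric when terminating, a.s.\ infinite when persistent). Combining these, both $\tau\cap\tau'$ and $\btau\cap\btau'$ are renewals, and they are terminating if and only if
\begin{equation*}
\sum_{n\geq 0}\bP(n\in\tau)^2 < \infty, \qquad \text{resp.} \qquad \sum_{\bn\in\N^2_0}\bP(\bn\in\btau)^2 < \infty,
\end{equation*}
where we used independence and identical distribution of the two copies to write $\bP(n\in\tau\cap\tau')=\bP(n\in\tau)^2$.

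\textbf{Univariate case.} I would use the Doney--Garsia--Lamperti renewal theorem: for $\ga\in(0,1)$, one has $\bP(n\in\tau) \sim c_\ga\, n^{\ga-1}/L(n)$, and for $\ga\geq 1$ the elementary renewal theorem gives $\bP(n\in\tau)\to 1/\mu > 0$ when $\mu:=\bE[\tau_1]<\infty$ and $\bP(n\in\tau)\sim 1/\mu(n)$ with $\mu(n)\to\infty$ slowly varying when $\ga=1$ and $\mu=\infty$. Squaring and summing,
\begin{equation*}
\sum_n \bP(n\in\tau)^2 \;\asymp\; \sum_n \frac{n^{2\ga-2}}{L(n)^2}
\end{equation*}
when $\ga<1$, giving exactly the stated sharp criterion. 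For $\ga\geq 1$ the tail $\bP(n\in\tau)^2$ either tends to the positive constant $1/\mu^2$ or behaves like $1/\mu(n)^2$ with $\mu(n)$ slowly varying, which in both situations yields a divergent sum and hence persistence; the naive formula $n^{2\ga-2}/L(n)^2$ is trivially summable-only if $\ga\leq 1/2$, matching the ``necessary'' claim.

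\textbf{Bivariate case.} For the upper bound on $\bP(\bn\in\btau)$, I would invoke Proposition~\ref{prop:approx:btau:ga<1} when $\ga<1$ and Proposition~\ref{prop:approx:btau:ga>=1} when $\ga\geq 1$. In the regime $\ga<1$, squaring the bound and regrouping by level sets of $\|\bn\|$ yields
\begin{equation*}
\sum_{\bn}\bP(\bn\in\btau)^2 \;\leq\; C \sum_{k\geq 1} \frac{k}{L(k)^2\, k^{2(2-\ga)}} \;=\; C\sum_{k\geq 1}\frac{1}{L(k)^2\, k^{3-2\ga}},
\end{equation*}
which converges for all $\ga<1$ since $3-2\ga>1$; hence $\btau\cap\btau'$ is terminating. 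In the regime $\ga>1$, near the diagonal $\bn=(n,n)$, Proposition~\ref{prop:approx:btau:ga>=1} gives a \emph{matching} lower bound $\bP(\bn\in\btau)\geq c/n$ (see \cite[Thms.~3.3--3.4]{B18}), and since there are $\asymp n$ diagonal points with $\min(\bn^{(1)},\bn^{(2)}) = n$, one gets $\sum \bP(\bn\in\btau)^2 \geq c\sum_n 1/n = \infty$, so $\btau\cap\btau'$ is persistent.

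\textbf{The marginal bivariate case $\ga=1$.} This is where the main subtlety lies. The bound from Proposition~\ref{prop:approx:btau:ga>=1} is too coarse away from the diagonal (it only uses $n=\min(\bn^{(1)},\bn^{(2)})$), while the relevant quantity is $\|\bn\|$. The plan is to use the sharper local limit estimate of~\cite{B18} for $\ga=1$, which provides a density $g$ (typically Cauchy-like) controlling the deviation from the diagonal: $\bP(\bn\in\btau)$ behaves like $\tfrac{1}{\mu(\|\bn\|)\, a_{\|\bn\|/\mu(\|\bn\|)}}$ multiplied by a factor depending on $(\bn^{(1)}-\bn^{(2)})/a_{\|\bn\|/\mu(\|\bn\|)}$ which is integrable over transverse slices. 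Summing first over the anti-diagonal (width $\asymp a_{n/\mu(n)}/\mu(n)$) and then along the diagonal converts the condition $\sum \bP(\bn\in\btau)^2<\infty$ into $\sum 1/(nL(n)\mu(n))<\infty$, using $a_n = n/L(n)$ for $\ga=1$ (up to slowly varying corrections inherent to this regime). This identification of the marginal summability condition is the main technical obstacle, and it is where the results of~\cite{B18} on bivariate renewals in the marginal Cauchy-like domain of attraction are essential.
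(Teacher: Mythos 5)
Your overall strategy is the same as the paper's (which essentially reduces the proposition to known renewal mass function estimates): the intersection of two independent renewals is again a renewal, it terminates if and only if $\sum_n \bP(n\in\tau)^2<\infty$, resp. $\sum_{\bn}\bP(\bn\in\btau)^2<\infty$, and one plugs in the Doney/Erickson/renewal-theorem asymptotics in the univariate case and the bivariate estimates of Appendix~\ref{section:appendixB} (taken from \cite{B18}, as exploited in \cite[Prop.~A.3, Rem.~A.7]{BGK}). Your univariate argument and your bivariate termination argument for $\ga<1$ are correct.

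The gap is in the bivariate persistence part. For $\ga>1$ you use a lower bound $\bP(\bn\in\btau)\geq c/n$ on the diagonal and then count ``$\asymp n$ diagonal points with $\min(\bn^{(1)},\bn^{(2)})=n$''; but there is exactly one diagonal point at each scale $n$, and the bound $c/n$ does not extend to the $\asymp n$ points of the set $\{\min(\bn^{(1)},\bn^{(2)})=n\}$ at transverse distance of order $n$ from the diagonal, where the mass function is of order $n\,K(n)\asymp L(n)\,n^{-1-\ga}\ll 1/n$. Taken at face value, your premises only yield $\sum_n c^2/n^2<\infty$, which proves nothing, and the displayed inequality $\sum_{\bn}\bP(\bn\in\btau)^2\geq c\sum_n 1/n$ does not follow. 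The correct route is the sharp two-sided near-diagonal estimate behind Proposition~\ref{prop:approx:btau:ga>=1} (see \cite[Thms.~3.3--3.4]{B18}): $\bP(\bn\in\btau)\asymp 1/a_n$ uniformly for $|\bn^{(1)}-\bn^{(2)}|\lesssim a_n$, with $a_n$ the scaling sequence of Appendix~\ref{section:appendixB}, regularly varying of index $1/\min(\ga,2)<1$; summing over this window of width $\asymp a_n$ gives $\sum_{\bn}\bP(\bn\in\btau)^2\geq c\sum_n a_n\cdot a_n^{-2}=c\sum_n a_n^{-1}=\infty$. In other words, your window is too wide and your per-point bound too weak, and the two errors do not compensate. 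The same defect affects your sketch of the marginal case $\ga=1$: there the scaling is $a_m\asymp m\,\tilde L(m)$ (defined by $m\,\tilde L(a_m)/a_m\to \mathrm{const}$), not $m/L(m)$, and with the correct window width $\asymp a_{n/\mu(n)}$ and mass $\asymp (\mu(n)\,a_{n/\mu(n)})^{-1}$ the contribution of scale $n$ is $(\mu(n)^2 a_{n/\mu(n)})^{-1}\asymp (n\,L(n)\,\mu(n))^{-1}$, which is exactly how the stated criterion arises; your bookkeeping (width $a_{n/\mu(n)}/\mu(n)$ together with $a_n=n/L(n)$) does not reproduce it, and the conversion is asserted rather than derived, precisely at the step where the slowly varying corrections decide between termination and persistence.
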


This comes from the following observation: $\tau\cap\tau'$ (resp. $\btau\cap\btau'$) is a univariate (resp. bivariate) renewal process, so $|\tau\cap\tau'|$ (resp. $|\btau\cap\btau'|$) is either infinite a.s., or a finite geometric variable. So one only has to estimate $\bE[|\tau\cap\tau'|]$ (resp. $\bE[|\btau\cap\btau'|]$) to determine if the intersection is persistent or terminating. For univariate processes, those estimate can be obtained thanks to the asymptotic behavior of the renewal mass function $ \bP(i\in\tau)$ (see \cite{Doney97} if $\ga\in(0,1)$, \cite{Erik70} if $\ga=1$, the case $\ga>1$ being simply the renewal theorem).  Regarding the bivariate case, it has recently been treated in \cite[Prop. A.3 and Rem. A.7]{BGK}, thanks to the estimates on the renewal mass function $ \bP(\bi\in\btau)$ collected in \cite{B18}.


\section{Computation of the second moment in the Gaussian case}\label{section:appendixC}
In this appendix we compute the upper bound of the second moment of the partition function under the assumption that $\hgo$, $\bgo$ have a centered Gaussian distribution, for $\gb\in(0,\gb_1)$ sufficiently small (recall Remark~\ref{rem:bornesup}). We can assume without loss of generality that $\hgo,\bgo\sim \cN(0,1)$ (by adjusting $\gb$). 
Recalling our computations from Section~\ref{subsection:secondmoment} and the decomposition from Proposition~\ref{prop:decomp}, we have to derive the correlations along a chain of points.
\begin{proposition}\label{prop:appC}
Let $(X_k)_{k\geq1}$ be a sequence of i.i.d. variables with distribution $\cN(0,1)$. Then we have for all $\gb\leq 1/2$, $\ell\in\N$,
\begin{equation}
\bbE\Big[e^{\gb(X_1X_2+X_2X_3+\ldots+X_\ell X_{\ell+1})}\Big]\;=\; \xi_1\xi_2\ldots\xi_\ell\;,
\end{equation}
where $\xi_0=1$ and for all $k\geq0$, $\xi_{k+1}=(1-\gb^2\xi_k^2)^{-1/2}$.
\end{proposition}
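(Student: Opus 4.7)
The proof proceeds by induction on $\ell$, integrating out the variables $X_1, X_2, \ldots, X_\ell$ one at a time in order. The key ingredient is the elementary Gaussian identity: for $X \sim \mathcal{N}(0,1)$ and any real $b$ and any $a < 1$,
\begin{equation}\label{eq:appC:gaussidentity}
\mathbb{E}\bigl[e^{\frac{a}{2} X^{2} + b X}\bigr] \;=\; (1-a)^{-1/2}\, \exp\!\Bigl(\frac{b^{2}}{2(1-a)}\Bigr)\;,
\end{equation}
obtained by completing the square. One first checks (using $\beta \leq 1/2$) that $\beta^{2}\xi_{k}^{2} < 1$ for all $k \geq 0$; indeed, the recursion $y_{k+1} := \beta^{2}\xi_{k+1}^{2} = \beta^{2}/(1 - y_{k})$ with $y_{0} = \beta^{2} \leq 1/4$ is non-decreasing and admits $(1-\sqrt{1-4\beta^{2}})/2 \leq 1/2$ as attractive fixed point, so $y_{k} \leq 1/2 < 1$ for all $k$. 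This guarantees that all the Gaussian integrals in the inductive computation converge.

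The induction runs as follows. I would prove by induction on $k \in \{1, \ldots, \ell\}$ the following intermediate formula: after integrating out $X_{1}, \ldots, X_{k}$ (conditionally on $X_{k+1}, \ldots, X_{\ell+1}$), one has
\begin{equation*}
\mathbb{E}\bigl[e^{\beta(X_{1}X_{2} + \cdots + X_{k}X_{k+1})} \bigm| X_{k+1}\bigr] \;=\; \xi_{1}\xi_{2}\cdots\xi_{k-1}\,\exp\!\Bigl(\frac{\beta^{2}\xi_{k-1}^{2}}{2}X_{k+1}^{2}\Bigr)\;,
\end{equation*}
with the convention that the empty product equals $1$. For $k=1$ this is just $\mathbb{E}[e^{\beta X_{1}X_{2}} | X_{2}] = e^{\beta^{2}X_{2}^{2}/2}$, which matches the formula since $\xi_{0} = 1$. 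For the inductive step, apply \eqref{eq:appC:gaussidentity} with $a = \beta^{2}\xi_{k-1}^{2}$ and $b = \beta X_{k+2}$: the $X_{k+1}$-integral produces a factor $(1 - \beta^{2}\xi_{k-1}^{2})^{-1/2} = \xi_{k}$ together with the exponential $\exp\bigl(\beta^{2}X_{k+2}^{2}/(2(1-\beta^{2}\xi_{k-1}^{2}))\bigr) = \exp(\beta^{2}\xi_{k}^{2} X_{k+2}^{2}/2)$, which is exactly the desired form at level $k+1$.

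Finally, applying this intermediate formula with $k = \ell$ and then taking the expectation over $X_{\ell+1} \sim \mathcal{N}(0,1)$ using once more \eqref{eq:appC:gaussidentity} (with $b=0$, $a = \beta^{2}\xi_{\ell-1}^{2}$) yields
\begin{equation*}
\mathbb{E}\bigl[e^{\beta(X_{1}X_{2} + \cdots + X_{\ell}X_{\ell+1})}\bigr] \;=\; \xi_{1}\cdots\xi_{\ell-1}\cdot (1 - \beta^{2}\xi_{\ell-1}^{2})^{-1/2} \;=\; \xi_{1}\cdots\xi_{\ell}\;,
\end{equation*}
which is the claimed identity. There is no real obstacle here: the computation is entirely routine, and the only mild subtlety is checking uniform boundedness $\beta^{2}\xi_{k}^{2} < 1$ so that every Gaussian integral in the recursion is finite, which is handled a priori by the fixed-point analysis of the map $y \mapsto \beta^{2}/(1-y)$ under the assumption $\beta \leq 1/2$.
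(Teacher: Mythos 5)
Your proof is correct and takes essentially the same approach as the paper: peel off the chain variables one at a time by completing the square in a single Gaussian integral, tracking the coefficient of the residual quadratic term via the recursion $\xi_{k+1}=(1-\beta^2\xi_k^2)^{-1/2}$. The only cosmetic difference is the direction of integration (you integrate out $X_1,\ldots,X_{\ell+1}$ from the left, the paper integrates out $X_{\ell+1},\ldots,X_1$ from the right), which is a mirror image under the reversal symmetry $X_k\leftrightarrow X_{\ell+2-k}$ of the bilinear form; your explicit fixed-point check that $\beta^2\xi_k^2\leq 1/2<1$ is a nice addition that the paper only addresses informally after the statement.
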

Note that the assumption $\gb\leq1/2$ is required for $\xi_k$ to be well-defined for all $k\geq1$.
\begin{proof} The proof relies on the following identity: if $X\sim\cN(0,1)$, then for all $t\in\R$, $\gb<1$ and $k\geq0$,
\begin{equation}\label{eq:appC:induction}
\bbE\Big[e^{\gb tX} e^{\gb^2\xi_k^2X^2/2}\Big] = \xi_{k+1} e^{\gb^2\xi_{k+1}^2t^2/2}\;,
\end{equation}
(this follows from a direct computation). Hence for any $\ell\geq1$, we condition the expectation over $(X_1,\ldots,X_\ell)$ to write
\begin{align*}
\bbE\Big[e^{\gb\sum_{k=1}^\ell X_kX_{k+1}}\Big]\;&=\; \bbE\Big[e^{\gb\sum_{k=1}^{\ell-1} X_kX_{k+1}} \hspace{1pt} \bbE\big[e^{\gb X_\ell X_{\ell+1}}\big| (X_1,\ldots,X_\ell)\big]\Big]\\
&=\; \bbE\Big[e^{\gb\sum_{k=1}^{\ell-1} X_kX_{k+1}} \hspace{1pt}  e^{\gb^2X_\ell^2/2}\Big]\;.
\end{align*}
Then, by conditionning over $(X_1,\ldots,X_{\ell-1})$ and applying \eqref{eq:appC:induction}, we obtain by induction,
\begin{align*}
\bbE\Big[e^{\gb\sum_{k=1}^\ell X_kX_{k+1}}\Big]\;&=\; \bbE\Big[e^{\gb\sum_{k=1}^{\ell-2} X_kX_{k+1}} \hspace{1pt} \bbE\big[e^{\gb X_{\ell-1} X_\ell} e^{\gb^2X_\ell^2/2} \big| (X_1,\ldots,X_{\ell-1}) \big]\Big]\\
&=\; \xi_1\hspace{1pt} \bbE\Big[e^{\gb\sum_{k=1}^{\ell-2} X_kX_{k+1}} e^{\gb^2\xi_1^2X_{\ell-1}^2/2}\Big]\\
&=\; \xi_1\ldots\xi_{\ell-1} \hspace{1pt} \bbE\Big[e^{\gb^2\xi_{\ell-1}^2X_{1}^2/2} \Big]\;=\; \prod_{k=1}^\ell \xi_k
\;,
\end{align*}
which concludes the proof.
\end{proof}
When $\gb\leq1/2$, one can prove that the sequence $(\xi_k)_{k\geq1}$ is non-decreasing and that it converges to $\xi_\infty:= \frac1{\gb\sqrt2}\sqrt{1-\sqrt{1-4\gb^2}}$. Moreover the application $f:x\mapsto (1-\gb^2x^2)^{-1/2}$ is convex, hence we have for all $k\geq1$,
\begin{equation}
\xi_{k+1}-\xi_k \;\leq\; f'(\xi_\infty)(\xi_k-\xi_{k-1})\;,
\end{equation}
with $f'(\xi_\infty)=\gb^2\xi_\infty^4 \leq C_\cntc \gb^2$ for some $C_{\arabic{cst}}>0$. Thus we can prove by induction,
\begin{equation}
\xi_{k} \;\leq\; \xi_1 + (\xi_1-1)(C_{\arabic{cst}}\gb^2 +\big(C_{\arabic{cst}}\gb^2)^2 +\ldots + (C_{\arabic{cst}}\gb^2)^{k-1}\big)\;.
\end{equation}
Assuming $\gb<\gb_1<C_{\arabic{cst}}^{-1/2}$, there exist $C_\cntc>0$ such that for all $k\geq1$,
\begin{equation}
\sum_{i=1}^k (C_1\gb^2)^i \;\leq\; \frac{C_1\gb^2}{1-C_1\gb^2}\;\leq\; C_{\arabic{cst}} \gb^2\;.
\end{equation}
Moreover a Taylor expansion yields $\xi_1-1=\frac12\gb^2+o(\gb^2)$. Hence there exist $\gb_1>0$ and $C_\cntc>0$ such that for all $\gb\in(0,\gb_1)$ and $k\geq1$, one has
\begin{equation}
\frac{\xi_k}{\xi_1} \;\leq\; 1 + C_{\arabic{cst}} \gb^4 \;\leq e^{C_{\arabic{cst}} \gb^4}\;.
\end{equation}
Finally, noticing that $e^{\gl(\gb)}=\xi_1$, we conclude
\begin{equation}
\label{estimchainGauss}
\bbE\Big[e^{\gb\sum_{k=1}^\ell X_kX_{k+1} -\ell\gl(\gb)}\Big] \;=\; \prod_{k=1}^{\ell}\frac{\xi_k}{\xi_1} \;\leq\; e^{C_{\arabic{cst}} \ell \gb^4}\; .
\end{equation}
The left-hand side in \eqref{estimchainGauss}
corresponds exactly to the contribution of a chain of length $\ell$ to the second moment of the partition function.
Going back to \eqref{eq:proof:L2bound:3}
and recalling that the expectation with respect
to intersection points gives a contribution $e^{|\bgn_n|(\gl(2\gb)-2\gl(\gb))}$
and that the isolated points do not contribute, we get that
\begin{align*}
\bbE\Big[\big(Z^{\gb,\quen,\textit{free}}_{\bn,0}\big)^2\Big]
\leq \bE_{(\btau,\btau')}\bigg[ e^{|\bgn_n|(\gl(2\gb)-2\gl(\gb))}  \prod_{m\in\N} e^{C_3 \gb^4 | \bgs_{n,m}|}\bigg]\, .
\end{align*}
Applying Cauchy-Schwarz inequality,
we obtain
\begin{equation}
\label{Gauss-almostthere}
\bbE\Big[\big(Z^{\gb,\quen,\textit{free}}_{\bn,0}\big)^2\Big]
\leq \bE_{(\btau,\btau')}\bigg[ e^{ 2 (\gl(2\gb)-2\gl(\gb)) |\bgn_n|} \bigg]^{1/2}  \bE_{(\btau,\btau')}\bigg[  e^{ 4 C_3 \gb^4 (|\btau^{(1)}_{\preceq \bn}\cap\btau'^{(1)}_{\preceq \bn}|+|\btau_{\preceq \bn}^{(2)}\cap\btau'^{(2)}_{\preceq \bn}|)}\bigg]^{1/2}\,,
\end{equation}
where we also used that  $\sum_{m} | \bgs_{n,m}| = |\mathfrak{S}_n| \leq  2\,(|\btau^{(1)}_{\preceq \bn}\cap\btau'^{(1)}_{\preceq \bn}|+|\btau_{\preceq \bn}^{(2)}\cap\btau'^{(2)}_{\preceq \bn}|)$,
recall~\eqref{eq:comparechain}.
Now, for $\alpha<1$, Proposition~\ref{prop:intertau}-(2)
gives that $\bgn = \btau \cap \btau'$ is a.s. finite;
in fact, $|\bgn|$ is a geometric random variable. 
Hence, if $\gb$ is small enough
the first factor in the r.h.s.\ of~\eqref{Gauss-almostthere}
is bounded by $2$.
Applying Cauchy-Schwarz inequality for the
other factor, we therefore end up with
\begin{align*}
\bbE\Big[\big(Z^{\gb,\quen,\textit{free}}_{\bn,0}\big)^2\Big]
\le 2\, \bE_{(\btau,\btau')}\bigg[  e^{ 8 C_3 \gb^4 |\btau^{(1)}_{\preceq \bn}\cap\btau'^{(1)}_{\preceq \bn}|}\bigg]^{1/2} \, ,
\end{align*}
and one can remove the exponent $1/2$ since the expectation on the r.h.s.\ is larger than $1$.
This proves the inequality~\eqref{momentGauss} claimed in Section~\ref{sec:hcub:nonopt} when $\ga<1$, and thus gives the expected $n_\gb$ and upper bound on the critical point shift.

If $\ga\geq1$, one concludes by proving that the first factor in the r.h.s. of \eqref{Gauss-almostthere} remains bounded as long as $\gb^2 N^{1/\min(\ga,2)}\sim1$, and the second factor does as long as $\gb^4 N\sim1$ (up to slowly varying factors). Hence we have $n_\gb\sim\gb^{-4}$, which gives the expected upper bound on the critical point shift. All the required estimates can be found in \cite{BGK, Erik70}, so we do not write all the details here.

\bibliographystyle{plain}
\bibliography{biblio.bib}

\begin{thebibliography}{10}

\bibitem{A08}
K.~S. Alexander.
\newblock The effect of disorder on polymer depinning transitions.
\newblock {\em Comm. Math. Phys.}, 279:117--146, 2008.

\bibitem{AZ10}
K.~S. Alexander, N.~Zygouras, et~al.
\newblock Equality of critical points for polymer depinning transitions with
  loop exponent one.
\newblock {\em The Annals of Applied Probability}, 20(1):356--366, 2010.

\bibitem{B18}
Q.~Berger.
\newblock Strong renewal theorems and local large deviations for multivariate
  random walks and renewals.
\newblock {\em Electron. J. Probab.}, 24(46):1--47, 2019.

\bibitem{BGK18}
Q.~Berger, G.~Giacomin, and M.~Khatib.
\newblock {DNA} melting structures in the generalized {P}oland-{S}cheraga
  model.
\newblock {\em ALEA, Lat. Am. J. Probab. Math. Stat.}, 15:993--1025, 2018.

\bibitem{BGK}
Q.~Berger, G.~Giacomin, and M.~Khatib.
\newblock Disorder and denaturation transition in the generalized
  {P}oland-{S}cheraga model.
\newblock {\em Ann. Henri Lebesgue}, to appear.

\bibitem{BL18}
Q.~Berger and H.~Lacoin.
\newblock Pinning on a defect line: characterization of marginal disorder
  relevance and sharp asymptotics for the critical point shift.
\newblock {\em Journal of the Institute of Mathematics of Jussieu},
  17(2):305--346, 2018.

\bibitem{BL20}
Q.~Berger and A.~Legrand.
\newblock Scaling limit of the disordered generalized {P}oland-{S}cheraga
  model.
\newblock {\em work in progress}.

\bibitem{BGT87}
N.~H. Bingham, C.~M. Goldie, and J.~L. Teugels.
\newblock {\em Regular Variation}.
\newblock Encyclopedia of Mathematics and its Applications. Cambridge
  University Press, 1987.

\bibitem{BBBDDKMS99}
R.~D. Blake, J.~W. Bizzaro, J.~D. Blake, G.~R. Day, S.~G. Delcourt, J.~Knowles,
  K.~A. Marx, and J.~Jr Santalucia.
\newblock Statistical mechanical simulation of polymeric {DNA} melting with
  {MELTSIM}.
\newblock {\em Bioinformatics (Oxford, England)}, 15:370--5, 06 1999.

\bibitem{BD98}
R.~D. Blake and S.~G. Delcourt.
\newblock {Thermal stability of {DNA}}.
\newblock {\em Nucleic Acids Research}, 26(14):3323--3332, 07 1998.

\bibitem{CdH}
F.~Caravenna and F.~den Hollander.
\newblock A general smoothing inequality for disordered polymers.
\newblock {\em Report Eurandom}, 2013.

\bibitem{CTT15}
F.~Caravenna, F.~Toninelli, and N.~Torri.
\newblock Universality for the pinning model in the weak coupling regime.
\newblock {\em Ann. Probab.}, to appear.

\bibitem{Com06}
F.~Comets.
\newblock Weak disorder for low dimensional polymers: The model of stable laws.
\newblock {\em Markov Processes and Related Fields}, 04 2006.

\bibitem{DHR79}
L.~{d}e Haan and S.~I. Resnick.
\newblock Conjugate $\pi$-variation and process inversion.
\newblock {\em The Annals of Probability}, pages 1028--1035, 1979.

\bibitem{DGLT09}
B.~Derrida, G.~Giacomin, H.~Lacoin, and F.~L. Toninelli.
\newblock Fractional moment bounds and disorder relevance for pinning models.
\newblock {\em Comm. Math. Phys.}, 287:867--887, 2009.

\bibitem{Doney97}
R.~A. Doney.
\newblock One-sided local large deviation and renewal theorems in the case of
  infinite mean.
\newblock {\em Probab. Theory Related Fields}, 107(4):451--465, 1997.

\bibitem{Erik70}
K.~B. Erickson.
\newblock Strong renewal theorems with infinite mean.
\newblock {\em Transactions of the American mathematical society},
  151(1):263--291, 1970.

\bibitem{Fish84}
M.~E. Fisher.
\newblock Walks, walls, wetting, and melting.
\newblock {\em Journal of Statistical Physics}, 34:667--729, 03 1984.

\bibitem{GO03}
T.~Garel and H.~Orland.
\newblock On the role of mismatches in dna denaturation.
\newblock {\em arXiv:cond-mat/0304080}, 05 2003.

\bibitem{GO04}
T.~Garel and H.~Orland.
\newblock Generalized poland-scheraga model for dna hybridization.
\newblock {\em Biopolymers}, 75:453--467, 2004.

\bibitem{Giac07}
G.~Giacomin.
\newblock {\em Random Polymer Models}.
\newblock Imperial College Press, World Scientific, 2007.

\bibitem{Giac10}
G.~Giacomin.
\newblock {\em Disorder and Critical Phenomena Through Basic Probability
  Models}, volume 2025 of {\em Ecole d'Et{\'e} de probabilit{\'e}s de
  {S}aint-{F}lour}.
\newblock Springer-Verlag Berlin Heidelberg, 2010.

\bibitem{GH20}
G.~Giacomin and B.~Havret.
\newblock Localization, big-jump regime and the effect of disorder for a class
  of generalized pinning models.
\newblock {\em J. Stat. Phys.}, 2020.

\bibitem{GK17}
G.~Giacomin and M.~Khatib.
\newblock Generalized {P}oland-{S}cheraga denaturation model and
  two-dimensional renewal processes.
\newblock {\em Stoch. Proc. Appl.}, 127(526-573), 2017.

\bibitem{GLT11}
G.~Giacomin, H.~Lacoin, and F.~L. Toninelli.
\newblock Disorder relevance at marginality and critical point shift.
\newblock {\em Ann. Inst. H. Poincar\'e: Prob. Stat.}, 47:148--175, 2011.

\bibitem{GTsmooth}
G.~Giacomin and F.~L. Toninelli.
\newblock Smoothing effect of quenched disorder on polymer depinning
  transitions.
\newblock {\em Comm. Math. Phys.}, 266:1--16, 2006.

\bibitem{H74}
A.~B. Harris.
\newblock Effect of random defects on the critical behaviour of ising models.
\newblock {\em J. Phys.}, pages 1671--1692, 1974.

\bibitem{FdH07}
F.~den Hollander.
\newblock {\em Random polymers}, volume 1974 of {\em Ecole d'Et{\'e} de
  probabilit{\'e}s de {S}aint-{F}lour}.
\newblock Springer-Verlag Berlin Heidelberg, 2007.

\bibitem{KMP01}
Y.~Kafri, D.~Mukamel, and L.~Peliti.
\newblock Why is the {DNA} denaturation transition first order?
\newblock {\em Physical review letters}, 85:4988--91, 01 2001.

\bibitem{L10}
H.~Lacoin.
\newblock The martingale approach to disorder irrelevance for pinning models.
\newblock {\em Electron. Commun. Probab.}, 15:418--427, 2010.

\bibitem{NG06}
R.~A. Neher and U.~Gerland.
\newblock Intermediate phase in {DNA} melting.
\newblock {\em Phys. Rev. E}, 73, 2006.

\bibitem{PS70}
D.~Poland and H.~A. Scheraga.
\newblock {\em Theory of helix-coil transitions in biopolymers; statistical
  mechanical theory of order-disorder transitions in biological
  macromolecules}.
\newblock Academic Press, 1970.

\bibitem{Ton08}
F.~L. Toninelli.
\newblock A replica-coupling approach to disordered pinning models.
\newblock {\em Communications in Mathematical Physics}, 280(2):389--401, 2008.

\bibitem{Ton08annealed}
Fabio~Lucio Toninelli.
\newblock Disordered pinning models and copolymers: Beyond annealed bounds.
\newblock {\em Ann. Appl. Probab.}, 18(4):1569--1587, 08 2008.

\bibitem{Wei16}
R.~Wei.
\newblock On the long-range directed polymer model.
\newblock {\em Journal of Statistical Physics}, 165(2):320--350, 2016.

\bibitem{Will68}
J.~A. Williamson.
\newblock Random walks and {R}iesz kernels.
\newblock {\em Pacific Journal of mathematics}, 25(2), 1968.

\end{thebibliography}

\end{document}